\theoremstyle{plain}
\newtheorem{theorem}{Theorem}[section]
\newtheorem{lemma}[theorem]{Lemma}
\newtheorem{corollary}[theorem]{Corollary}
\newtheorem{proposition}[theorem]{Proposition}
\newtheorem{definition}[theorem]{Definition}
\newtheorem{notation}[theorem]{Notation}
\newtheorem{question}[theorem]{Question}
\theoremstyle{definition}
\newtheorem{remark}[theorem]{Remark}
\newtheorem{example}[theorem]{Example}
\newcommand{\ZZ}{\mathbb{Z}}			% The set of integers
\newcommand{\NN}{\mathbb{N}}			% The set of natural numbers
\newcommand{\RR}{\mathbb{R}}			% The set of reals
\newcommand{\QQ}{\mathbb{Q}}			% The set of rationals
\newcommand{\CC}{\mathbb{C}}			% The set of complex numbers
\newcommand{\Map}{{\rm Map}}
\newcommand{\IE}{\mathtt{IE}}
\newcommand{\R}{\mathtt{A}}
\newcommand{\FF}{\mathcal{F}}
\newcommand{\diam}{{\rm diam}}
\newcommand{\Sym}{{\rm Sym}}
\newcommand{\supp}{%					% Support
	\operatorname{\mathrm{supp}}%
}
\newcommand{\htop}{h_{\rm{top}}}
\newcommand{\hnaive}{h^{\rm{nv}}_{\rm{top}}}
\newcommand{\hsof}{h^{\Sigma}_{\rm{top}}}
\newcommand{\define}[1]{\textbf{#1}}
\newcommand{\cross}{
	\begin{tikzpicture}[scale = 0.06]
	\draw [black!30, fill = black] (0,0) rectangle (1,1);
	\draw [black!30, fill = black] (1,0) rectangle (2,1);
	\draw [black!30, fill = black] (-1,0) rectangle (0,1);
	\draw [black!30, fill = black] (0,1) rectangle (1,2);
	\draw [black!30, fill = black] (0,-1) rectangle (1,0);
	\end{tikzpicture}
}
\title{%The title can change
	Markovian properties of continuous group actions: algebraic actions, entropy and the homoclinic group
}
\author{Sebasti\'an Barbieri, Felipe Garc\'ia-Ramos and Hanfeng Li}
\newcommand{\Addresses}{{
  \bigskip

\hskip-\parindent   S.~Barbieri, \textsc{DMCC, Universidad de Santiago de Chile,
	Las Sophoras 173. Estaci\'on Central. Santiago. Chile.}\par\nopagebreak
\textit{E-mail address}: \texttt{sebastian.barbieri@usach.cl}

  \medskip

\hskip-\parindent   F.~Garc\'ia-Ramos, \textsc{CONACyT, M\'exico. \\ Instituto de F\'isica, Universidad Aut\'onoma de San Luis Potos\'i, M\'exico.}\par\nopagebreak
  \textit{E-mail address}: \texttt{fgramos@conacyt.mx}

 \medskip

\hskip-\parindent  H.~Li, \textsc{Center of Mathematics, Chongqing University,
Chongqing 401331, China.\\
Department of Mathematics, SUNY at Buffalo,
Buffalo, NY 14260-2900, USA.}\par\nopagebreak
  \textit{E-mail address}: \texttt{hfli@math.buffalo.edu}

}}
\date{}
\begin{document}

	\maketitle
	%\vspace{-4em}
	%\renewcommand{\contentsname}{}
	%{\footnotesize\tableofcontents}
	
		\begin{abstract}
			We provide a unifying approach which links results on algebraic actions by Lind and Schmidt, Chung and Li, and a topological result by Meyerovitch that relates entropy to the set of asymptotic pairs. In order to do this we introduce a series of Markovian properties and, under the assumption that they are satisfied, we prove several results that relate topological entropy and asymptotic pairs (the homoclinic group in the algebraic case). As new applications of our method, we give a characterization of the homoclinic group of any finitely presented expansive algebraic action of (1) any elementary amenable group with an upper bound on the orders of finite subgroups or (2) any left orderable amenable group, using the language of independence entropy pairs.
		\end{abstract}
	
	\medskip	
		
	\noindent
	\textbf{Key words and phrases:} topological entropy, sofic entropy, naive entropy, algebraic actions, expansive actions, asymptotic pairs, homoclinic points, local entropy theory, topological Markov properties, strong Atiyah conjecture.
	\smallskip
	
	\noindent
	\textbf{MSC2010:} \textit{Primary:}
	37B40, %topological entropy
	22D40, %Ergodic theory on groups	
	20C07. %Group rings of infinite groups and their modules
	\textit{Secondary:}
	22F05, %General theory of group and pseudogroup actions
	37C85, %Dynamics of group actions other than ${\bf Z}$ and ${\bf R}$, and foliations
	37C29, %Homoclinic and heteroclinic orbits
	37B05.  %Transformations and group actions with special properties (minimality, distality, proximality, etc.)
	
	\section{Introduction}\label{section:intro}
	
	Inspired by properties introduced in the context of symbolic dynamical systems in~\cite{ChandgotiaHanMarcusMeyerovitchPavlov2014, Richifest} we define a series of Markovian properties for actions of countable groups $G$ on compact metrizable spaces $X$ by homeomorphisms. Among them, the topological Markov property (TMP) and the strong topological Markov property (strong TMP). Every group action which satisfies the pseudo-orbit tracing property (or shadowing) has the strong TMP, and every action with the strong TMP has the TMP. Although these properties were introduced to study supports of measures that arise in the theory of thermodynamic formalism, it turns out that they are specially relevant in the context of algebraic actions on compact metrizable abelian groups.

	A group action $G \curvearrowright X$ satisfies the TMP, if for every $\varepsilon >0$, there is $\delta>0$ such that for every finite set $A\subset G$ there exists a finite subset $B \supset A$ of $G$ such that for every pair $x,y \in X$ whose $G$-orbits are at distance at most $\delta$ in $B\setminus A$, there is $z\in X$ whose $G$-orbit is at distance at most $\varepsilon$ from the $G$-orbit of $x$ in $B$ and at distance at most $\varepsilon$ from the $G$-orbit of $y$ in $G\setminus A$. Intuitively, satisfying the TMP means that for every finite set $A \subset G$, there is a finite subset $B \supset A$ of $G$ such that knowledge of the values (up to $\delta$) of any $G$-orbit on $C\setminus A$ for some $C\supset B$ does not provide further information about the values (up to $\varepsilon$) of the $G$-orbit in $A$ than the mere knowledge of the values in $B\setminus A$, hence the name Markovian. The strong TMP imposes that the set $B$ should take the form $FA$ for some fixed finite set $F \subset G$ which does not depend on $A$, and hence gives a bounded variant of the TMP.
	
An action $G\curvearrowright X$ of a countable group $G$ on a compact metrizable space $X$ by homeomorphisms is called expansive if there is some $c>0$ such that $\sup_{s\in G}d(sx, sy)>c$ for all distinct $x, y\in X$, where $d$ is a given compatible metric on $X$. A pair $(x, y)$ in $X^2$ is called asymptotic if $d(sx, sy)\to 0$ as $G \ni s\to \infty$. The TMP and the strong TMP are especially useful for establishing relations between independence entropy pairs and asymptotic pairs for expansive actions (Theorems~\ref{T-A to IE}, \ref{Theorem_asymptotic_pairs_give_SOFIC_entropy_pairs} and \ref{T-exp sTMP IE to asym}).

By an \define{algebraic action} we mean an action of a countable group $G$ on a compact metrizable abelian group $X$ by continuous automorphisms.
The study of algebraic actions has been an active field, because of rich connections with commutative algebra, operator algebras, and $L^2$-invariants. See for example \cite{Lind1977, MilesThomas1978, Yuzvinskii1968} for algebraic actions of $\ZZ$, \cite{EinsiedlerWard2005, KitchensSchmidt2000, LindSchmidt1999, LindSchmidtVerbitskiy2013, LindSchmidtWard1990, RudolphSchmidt1995, Schmidt1995, SchmidtWard1993} for algebraic actions of $\ZZ^d$, and \cite{Bhattacharya2019, BhattacharyaCeccherini-SilbersteinCoornaert2019, Bowen2011, ChungLi2015, Deninger2006, DeningerSchmidt2007, GaboriauSeward2019, GollSchmidtVerbitskiy2014, Hayes2016, Hayes2019, Hayes2019max, Li2012, LiLiang2018, LiLiang2019, LiThom2014, LindSchmidt2015} for algebraic actions of countable groups. Denote by $\ZZ G$ and $\CC G$ the group rings of $G$ with coefficients in $\ZZ$ and $\CC$ respectively (see Section~\ref{section:algebraico}). For any algebraic action $G \curvearrowright X$, the Pontryagin dual $\widehat{X}$ of $X$ as a compact abelian group is naturally a countable left $\ZZ G$-module.
 In fact, up to isomorphism, there is a natural one-to-one correspondence between algebraic actions of $G$ and countable left $\ZZ G$-modules.
Recall that a unital ring is \define{(left/right) Noetherian} if every (left/right) ideal is finitely generated.
For any positive integers $m,n$ and any $a\in M_{m\times n}(\CC G)$, the von Neumann dimension $\dim_{\rm vN} \ker a$ of the kernel of the bounded operator $M_{n\times 1}(\ell^2(G))\rightarrow M_{m\times 1}(\ell^2(G))$ sending $z$ to $az$ is a real number in $[0, n]$ (see Section~\ref{section:algebraico}). The group $G$ is said to satisfy the \define{strong Atiyah conjecture} if $\dim_{\rm vN} \ker a$ lies in the subgroup of $\QQ$ generated by $1/|H|$ for $H$ ranging over finite subgroups of $G$.
For algebraic actions $G\curvearrowright X$, the asymptotic pairs are determined by the homoclinic group $\Delta(X, G)$ consisting of $x\in X$ such that $sx\to e_X$ as $G \ni s\to \infty$, where $e_X$ denotes the identity element of $X$. For any algebraic action $G\curvearrowright X$ of an amenable group $G$, we may either view the action as a topological dynamical system and speak about the topological entropy, or view it as an action preserving the normalized Haar measure of $X$ and speak about the measure-theoretical entropy (see~\Cref{section:preliminaries}). It turns out that these two entropies always coincide~\cite{Deninger2006}. Furthermore, the action has complete positive entropy in the topological sense (i.e. every nontrivial topological factor has positive topological entropy) exactly when it has completely positive entropy in the measure-preserving sense (i.e. every nontrivial measurable factor has positive measure-theoretical entropy)~\cite{ChungLi2015}.
One of our main results is the following:

\begin{theorem} \label{T-main}
Let $G\curvearrowright X$ be an expansive algebraic action of a countably infinite amenable group.
Assume that at least one of the following conditions holds:
\begin{enumerate}
\item the integral group ring $\ZZ G$  is left Noetherian;
\item $G$ satisfies the strong Atiyah conjecture, there is an upper bound on the orders of finite subgroups of $G$, and  the Pontryagin dual $\widehat{X}$ of $X$ is a finitely presented left $\ZZ G$-module.
\end{enumerate}
Then $G \curvearrowright X$ has the strong TMP. As a consequence, the following hold:
\begin{enumerate}
\item[i.] $G\curvearrowright X$ has positive entropy if and only if $\Delta(X, G)$ is nontrivial;
\item[ii.] $G\curvearrowright X$ has completely positive entropy if and only if $\Delta(X, G)$ is dense in $X$.
\end{enumerate}
\end{theorem}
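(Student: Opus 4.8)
The plan is to treat the strong TMP as the only substantial point, since once it is established the two consequences follow formally. Fix a translation-invariant compatible metric on the compact abelian group $X$; then a pair $(x,y)$ is asymptotic if and only if $x-y\in\Delta(X,G)$, so the nondiagonal asymptotic pairs are exactly $\{(x,x+w):w\in\Delta(X,G)\setminus\{e_X\}\}$, and these are dense in $X\times X$ precisely when $\Delta(X,G)$ is dense in $X$. Combining Theorem~\ref{T-A to IE} (asymptotic pairs are IE pairs) with Theorem~\ref{T-exp sTMP IE to asym} (for expansive actions with the strong TMP, IE pairs are asymptotic) shows that the nondiagonal IE pairs and the nondiagonal asymptotic pairs coincide as sets. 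Since by the local entropy theory of Kerr--Li positive topological entropy is equivalent to the existence of a nondiagonal IE pair, and completely positive entropy corresponds to the IE pairs being dense in $X\times X$, statements (i) and (ii) drop out once the strong TMP is in hand.

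To prove the strong TMP I would first reduce both hypotheses to one concrete situation. An expansive algebraic action has finitely generated dual (cf.\ \cite{ChungLi2015}); in case (1) left Noetherianity of $\ZZ G$ upgrades this to finite presentation, while in case (2) finite presentation is assumed outright. Thus in both cases one fixes a presentation $\widehat{X}=(\ZZ G)^{n}/(\ZZ G)^{m}a$ with $a\in M_{m\times n}(\ZZ G)$ and dualizes it to the explicit model $X=\ker\widehat{a}=\{x\in(\mathbb{T}^{n})^{G}:\widehat{a}(x)=0\}$, where $\widehat{a}\colon(\mathbb{T}^{n})^{G}\to(\mathbb{T}^{m})^{G}$ is \emph{local}: $(\widehat a x)(g)$ depends only on the coordinates of $x$ in $g\cdot S$ for a fixed finite set $S=\supp(a)$ of radius $R$. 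This realizes $X$ as a linear subshift of finite type over the continuous alphabet $\mathbb{T}^{n}$.

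The crux is the analytic input that expansiveness supplies a summable local inverse (a Green's function): a matrix $\psi\in M_{n\times m}(\ell^{1}(G,\RR))$ correcting a unit residual, so that any finitely supported $r$ in the image of $\widehat a$ has a preimage $e=\psi * r$ with $\bignorm{e}_{\infty}\le\bignorm{\psi}_{1}\bignorm{r}_{\infty}$; for case (1) this $\ell^1$-invertibility follows from expansiveness and Noetherianity as in \cite{ChungLi2015, DeningerSchmidt2007}, and for case (2) it is precisely here that the strong Atiyah conjecture and the bound on orders of finite subgroups are used. Granting this, the gluing is additive. Given $x,y\in X$ whose orbits are $\delta$-close on $B\setminus A$, set $v=x-y\in X$, so $\widehat a v=0$ and $sv$ is small for $s\in B\setminus A$. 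Choose a cutoff $\phi\colon G\to[0,1]$ equal to $1$ on $A$, equal to $0$ off $B$, and constant outside a band lying at distance $>R$ from both $A$ and $G\setminus B$. Then $w=\phi v$ agrees with $v$ on $A$, vanishes off $B$, and its residual $r=\widehat a(\phi v)=\widehat a(\phi v)-\phi\,\widehat a(v)$ is a discrete commutator term supported within the $R$-neighborhood of the band, hence inside $B\setminus A$ where $v$ is small, so $\bignorm{r}_{\infty}\le C\delta$. Setting $u=w-\psi * r$ gives $u\in\ker\widehat a=X$ with $\bignorm{u-w}_{\infty}\le\bignorm{\psi}_{1}C\delta$, and $z=y+u$ then satisfies $d(sz,sx)\le\varepsilon$ for $s\in B$ and $d(sz,sy)\le\varepsilon$ for $s\in G\setminus A$ once $\delta$ is small relative to $\varepsilon$. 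The band has width determined solely by $R$, so $B=FA$ for a ball $F$ depending only on $\supp(a)$ and not on $A$; this is exactly the strong TMP.

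The main obstacle will be the existence of the summable local inverse $\psi$ in case (2): without Noetherianity one cannot pass through a good resolution of $\widehat X$, and the $\ell^{1}$-invertibility of $a$ must be extracted from expansiveness by controlling von Neumann dimensions of the relevant kernels. This is where the rationality furnished by the strong Atiyah conjecture, together with the uniform bound on the orders of finite subgroups, is indispensable: it prevents those dimensions from accumulating and yields the gap needed to deduce invertibility over $\ell^{1}(G,\RR)$ rather than merely over the group von Neumann algebra. A secondary, purely technical point is the passage from a finite alphabet to the torus: one must lift from $\mathbb{T}$ to $\RR$ to form $\phi v$ and $\psi * r$ and check that all estimates are uniform in $A$, which is routine once the Green's function is available. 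With the strong TMP in place, the deduction of (i) and (ii) proceeds as in the first paragraph.
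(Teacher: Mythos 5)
Your overall architecture matches the paper's: the consequences (i) and (ii) are deduced exactly as in the paper (via Theorem~\ref{T-A to IE}, Theorem~\ref{T-exp sTMP IE to asym} and the Chung--Li characterizations of positive and completely positive entropy in terms of $\IE(X,G)$; note only that Theorem~\ref{T-exp sTMP IE to asym} gives $\IE_k(X,G)\subseteq\overline{\R_k(X,G)}$, so the correct identification is $\overline{\Delta(X,G)}=\IE(X,G)$, not an equality of sets of pairs), and your plan for the strong TMP --- present $\widehat{X}$, realize $X$ as the kernel of a local map, cut off along a band, correct with an $\ell^1$ inverse --- is also the paper's strategy. The genuine gap is the single sentence ``Setting $u=w-\psi * r$ gives $u\in\ker\widehat a=X$.'' For that you need $\widehat{a}(\psi * r)=r$, which requires the real lift of the residual $r$ to lie in the image of right multiplication by $a$ on bounded real-valued configurations. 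It does not, in general: $r$ is a commutator/cutoff term, and a pointwise cutoff of an element of the image of $a$ has no reason to remain in that image. The operator $\psi$ one can actually build satisfies a one-sided identity (it recovers $x$ from $xa^*$), so $\widehat{a}(\psi*r)$ is the image of $r$ under an idempotent, not $r$ itself, unless $r$ is already in the image. Hence nothing in your argument certifies that the glued point lies in $X$ --- and that certification is the entire difficulty.

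The tell is where you place the algebraic hypotheses. The summable local inverse does \emph{not} need the Atiyah conjecture or Noetherianity: by \cite[Theorem 3.1]{ChungLi2015}, invoked in the paper's Lemma~\ref{L-divisor to sTMP}, expansiveness alone produces a square matrix $f\in M_n(\ZZ G)$, invertible in $M_n(\ell^1(G))$, whose rows lie in the row module of $a$ (note $a$ itself is rectangular, so ``$\ell^1$-invertibility of $a$'' does not typecheck); your $\psi$ is built from $f^{-1}$. If your gluing step were valid, the strong TMP would follow for \emph{every} finitely presented expansive algebraic action of an amenable group --- including groups like the lamplighter, for which the strong Atiyah conjecture fails --- which would go strictly beyond Theorem~\ref{T-expansive algebraic to STMP} and essentially settle, in the expansive case, the open problem recalled in the paper's introduction; this is strong evidence the step cannot be repaired for free. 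What hypotheses (1) and (2) actually buy is Lemma~\ref{L-kernel}, an integrality gap: for integral $x$, the element $xf^{-1}g$ is either exactly in $\ZZ G$ or at $\ell^\infty$-distance at least $\varepsilon$ from $\ZZ G$. This is proved with Elek's analytic zero-divisor lemma and von Neumann dimension counting, and it is precisely here that Atiyah rationality (resp.\ Noetherianity, via finite generation of a right ideal of test vectors) enters. The paper's gluing then cuts off the \emph{exactly} integer-valued configuration $\tilde{z}f^*$ (image side, rather than your domain-side cutoff of $v$), applies $(f^*)^{-1}$, checks the result is $\eta$-close to integral against each defining relation $g_j^*$, and invokes the gap to upgrade ``close to integral'' to ``integral,'' i.e., to membership in $X$. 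That dichotomy is exactly the certificate missing from your construction.
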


The ``if'' direction of consequences (i) and (ii) in the above statement, i.e.  non-triviality and denseness of the homoclinic group $\Delta(X, G)$, imply that $G\curvearrowright X$ has positive entropy and completely positive entropy respectively, is in fact valid in much wider generality. It is known for all amenable groups \cite{ChungLi2015}, and for all groups in the sense of naive topological entropy \cite{KerrLi2013}. It is even true without assuming that $X$ is abelian, for all groups in the sense of naive topological entropy and  for all sofic groups in the sense of sofic topological entropy (see \Cref{coralg}). A recent result of Hayes shows that when $\Delta(X, G)$ is replaced by the subgroup of square summable homoclinic points, it is also true for sofic groups without assuming expansivity~\cite[Theorem 1.3]{Hayes2019max}. The ``only if'' direction (the existence of non-trivial homoclinic points from positive entropy and denseness of the homoclinic group from completely positive entropy) is much more difficult to establish. It is still open whether the ``only if'' direction holds for all finitely presented algebraic actions of amenable groups. The crucial statement of the second part of the theorem is
that the ``only if'' direction holds in the aforementioned cases.
Both directions were first established by Lind and Schmidt~\cite{LindSchmidt1999} in the case $G=\ZZ^d$, using commutative algebra tools. The case where $\ZZ G$ is left Noetherian was proven by Chung and Li~\cite{ChungLi2015} using local entropy theory, Peters' entropy formula and Yuzvinskii's addition formula for entropy. Besides local entropy theory, our proof uses techniques of von Neumann algebras to prove the strong TMP, and then straightforward combinatorial arguments to obtain the conclusion. We also present examples where this conclusion does not hold when assuming weaker hypotheses.

For every polycyclic-by-finite group $G$, the group ring $\ZZ G$ is left Noetherian \cite{Hall1954} \cite[Theorem 1.5.12]{McConnellRobson2001}. It is a long standing conjecture that the converse holds.
Recall that the class of
\define{elementary amenable groups} is the smallest class of groups containing all finite groups and all abelian groups and is closed under taking subgroups, quotient groups, extensions, and inductive limits~\cite{Day1957}.
A result of Linnell says that if there is an upper bound on the orders of finite subgroups of an elementary amenable group $G$, then the strong Atiyah conjecture holds for $G$ \cite[Theorem 1.5]{Linnell1993} \cite[Theorem 10.19]{Luck2002}.
In particular, for any polycyclic-by-finite group $G$, there is an upper bound on the orders of  finite subgroups of $G$ and the strong Atiyah conjecture holds for $G$.
Recall that a unital ring $R$ is called a \define{domain} if for any $a, b\in R$ with $ab=0$ one must have either $a=0$ or $b=0$.
If $\CC G$ is a domain, then $G$ is torsion-free. Kaplansky's  zero-divisor conjecture asserts that the converse holds.
For torsion-free amenable groups, the strong Atiyah conjecture is equivalent to Kaplansky's  zero-divisor conjecture \cite[Lemma 10.16]{Luck2002}.
In particular, if $G$ is amenable and $\CC G$ is a domain, then $G$ satisfies the strong Atiyah conjecture and $1$ is an upper bound on the orders of finite subgroups.
Also recall that $G$ is said to be \define{left orderable} \cite{MuraRhemtulla1977} if there is a total order $\le$ on $G$ such that $s\le t$ implies $gs\le gt$ for all $g, s, t\in G$.
Based on his work on the strong Atiyah conjecture, Linnell showed that if $G$ has  a  torsion-free elementary amenable normal subgroup $H$ such that $G/H$ is left orderable, then $\CC G$ is a domain \cite[Theorem 8.9]{Linnell1998}. In particular, if $G$ is torsion-free elementary amenable or left orderable,
then $\CC G$ is a domain.
In \cite{Grigorchuk1986} Grigorchuk constructed a finitely generated group of intermediate growth (so amenable but not elementary amenable \cite{Chou1980}), which was shown to left orderable by Grigorchuk and Maki \cite{GrigorchukMaki1993}.

The strong Atiyah conjecture fails for the lamplighter group $(\ZZ/2\ZZ)\wr\ZZ$ \cite{GrigorchukZuk2001}. So far there is no counterexample for the strong Atiyah conjecture in the case there is an upper bound on the orders of finite subgroups.

The finitely presented condition in (2) of Theorem~\ref{T-main} is natural since for any expansive algebraic action $G\curvearrowright X$ of  a countable group $G$,
the  Pontryagin dual $\widehat{X}$ of $X$ is a finitely generated left $\ZZ G$-module \cite{Schmidt1995}. In particular, when $\ZZ G$ is left Noetherian, $\widehat{X}$ is finitely presented  for  every expansive algebraic action $G\curvearrowright X$ of $G$. In general, when $\ZZ G$ is not left Noetherian, we
expect much better dynamical properties for algebraic actions $G\curvearrowright X$ with $\widehat{X}$ finitely presented than just $\widehat{X}$ finitely generated. For instance, Meyerovitch \cite{Meyerovitch2017} has constructed an expansive algebraic action of an infinite locally finite abelian group with positive entropy and trivial homoclinic group, in which $\widehat{X}$ is finitely generated but not finitely presented.

Homoclinic points were first studied by Poincar\'e \cite{Andersson1994}, and are used in the study of smooth dynamical systems \cite{PujalsSambarino2000, Crovisier2010} (note that Anosov diffeomorphisms are expansive and have the pseudo-orbit tracing property \cite[Theorem 1.2.1]{AokiHiraide1994}).
Recently, Meyerovitch~\cite{Meyerovitch2017} showed that every expansive action of an amenable group on a compact metrizable space by homeomorphisms, which has positive topological entropy and which satisfies the pseudo-orbit tracing property, must have off-diagonal asymptotic pairs. Schmidt showed that expansive actions of polycyclic-by-finite groups on zero-dimensional compact metrizable groups by continuous automorphisms satisfy the pseudo-orbit tracing property \cite[Corollary 2.3, Theorems 3.8 and 4.2]{Schmidt1995}, thus Meyerovitch's result applies to such actions. However, Meyerovitch's result does not apply to all expansive algebraic actions of polycyclic-by-finite groups as Bhattacharya constructed an expansive algebraic action of a polycyclic group which does not have the pseudo-orbit tracing property~\cite{Bhattacharya2019}. We improve Meyerovitch's result by showing that the pseudo-orbit tracing condition in his result may be replaced by the condition of satisfying the strong TMP (Corollary~\ref{C-exp sTMP IE to asym}). This provides a broader dynamical setting than the pseudo-orbit tracing property, which also assures the existence of off-diagonal asymptotic pairs under the assumption of positive topological entropy. Our proof of~\Cref{T-main} rests on the fact that both of the conditions stated in the theorem imply that the action has the strong TMP (\Cref{T-expansive algebraic to STMP}).

	We also study analogous results in the context of non-amenable group actions. Entropy theory beyond the scope of actions of amenable groups has only been introduced recently. For sofic group actions the theory of measurable entropy was introduced by Bowen in~\cite{Bowen2010_2} and its topological counterpart along with a variational principle by Kerr and Li~\cite{KerrLi2011}. Another notion, naive entropy, which applies to actions of any countable group was formally introduced respectively in the measurable and topological settings by Bowen~\cite{bowen2017examples} and Burton~\cite{burton2017naive} although in the amenable case the notion is already present in the work of Downarowicz, Frej and Romagnoli~\cite{DownarowiczFrejRomagnoli2016}.
The naive entropy can only take the values $0$ or $+\infty$ for action of non-amenable groups \cite{bowen2017examples, burton2017naive}, thus is not suitable as an invariant for classifying actions. However, it is very suitable for discussing whether the entropy is positive or not \cite{LiRong2019}.
We provide an example of an expansive algebraic symbolic action of the free group with two generators which has uniformly positive naive entropy but whose homoclinic group is trivial~(\Cref{example1}), thus showing that Theorem~\ref{T-main} does not extend naively into the context of non-amenable group actions.

	The TMP provides a condition which ensures that under simple conditions a group action has positive topological entropy. Assuming the TMP, we show that expansivity and the existence of an off-diagonal asymptotic pair (and a necessary technical condition) is enough to ensure both positive topological naive and sofic entropy~(Theorems~\ref{T-A to IE} and~\ref{Theorem_asymptotic_pairs_give_SOFIC_entropy_pairs}). This gives easy criteria to show positive entropy for all sofic approximation sequences. For example, we use this result to show that expansive algebraic actions with nontrivial homoclinic group (see~\Cref{coralg}) and hard-square models (see~\Cref{hsmodel}) in sofic groups have positive sofic entropy for any sofic approximation sequence.
	
	The results explained in the previous paragraphs are given in the context of local entropy theory. Classical local entropy theory was initiated by Blanchard when he introduced the concept of entropy pairs~\cite{Blanchard1992, Blanchard1993} and developed quickly in \cite{BlanchardGlasnerHost1997, BlanchardGlasnerKolyadaMaass2002, BlanchardHostMaassMartinezRudolph1995, BlanchardHostRuette2002, GlasnerBook2003, HuangMaassRomagnoliYe2004, HuangYe2006, HuangYeZhang2011} (for a survey on local entropy theory see~\cite{GlasnerYe2009}). Later on a combinatorial approach was given by Kerr and Li in \cite{KerrLI2007},
%characterizing entropy pairs using independence,
	and further developed  in \cite{ChungLi2015, Hayes2017, HuangYe2009, KerrLi2009, KerrLi2013, KerrLiBook2016, LiRong2019}. One advantage of local entropy theory is that it provides necessary and sufficient conditions for uniform positive entropy (an action which has positive topological entropy with respect to any standard open cover).
	
	The paper is organized as follows. In~\Cref{section:preliminaries} we provide the definitions of several classical notions which will be used through the paper. Particularly, we provide definitions of topological classical, sofic and naive entropy and their local versions. We also provide a few notions on shift spaces and we define the pseudo-orbit tracing property.
	
	In~\Cref{section:resultados_basicos} we introduce our topological Markov properties and their uniform versions. We prove several structural results of the topological Markov properties, in particular, we describe the connection between uniformity and expansivity, which shall be used extensively in the remainder of the paper. We also present several examples of group actions which satisfy different types of Markovian properties, which show that all the classes we introduce are relevant.
	
	In~\Cref{section:algebraico} we study the Markovian properties in the setting of algebraic actions. We show that every action of a countable group on a compact metrizable group by continuous automorphisms has the TMP~(\Cref{P_algebraic_have_wtmp}) and that a large class of finitely presented expansive algebraic actions of amenable groups have the strong TMP~(\Cref{T-expansive algebraic to STMP}).
	
	In~\Cref{section:minimial} we study the Markovian properties in the setting of minimal group actions. In particular we show that minimal expansive group actions have the TMP if and only if they do not admit off-diagonal asymptotic pairs~(Theorem~\ref{T-minimal exp tmp iff noasym}).
	
	In~\Cref{section:ent_asympt} we present our results regarding the connection between topological entropy and asymptotic pairs in the setting of Markovian properties. First we give conditions under which the existence of off-diagonal asymptotic pairs of an action which satisfies the TMP give rise to positive entropy~(\Cref{color_sofic_UPE,color_naive_UPE}). Then we give conditions under which an action with positive entropy which satisfies the strong TMP has off-diagonal asymptotic pairs (Corollary~\ref{C-exp sTMP IE to asym}).
	
	In~\Cref{section:applications} we put together the results from the previous sections and present applications to algebraic actions, minimal actions and subshifts which are the support of some Markovian measure. In particular we prove Theorem~\ref{T-main} and show that a minimal expansive action of an amenable group which satisfies the strong TMP always has zero topological entropy~(\Cref{cor_minimalhasentropy0}). We also provide an example which shows that an analogue of~\Cref{T-main} does not hold in the free group for naive entropy, even for subshifts of finite type.
	
	\noindent {\bf Acknowledgments.}
	The authors wish to thank Tom Meyerovitch for interesting discussions and Ville Salo for sharing a beautiful proof that minimal subshifts on finitely generated groups cannot admit interchangeable patterns, on which our proof of~\Cref{P-minimal uniform TMP no asym} is based, and Tim Austin for pointing out the reference \cite{LyonsNazarov2011} to us. They are also grateful to the referee for helpful comments. Sebasti\'an Barbieri wishes to acknowledge that a considerable portion of this work was done while he was affiliated to the University of British Columbia.
Sebasti\'an Barbieri was partially supported by the ANR project CoCoGro (ANR-16-CE40-0005), the ANR project CODYS (ANR-18-CE40-0007) and FONDECYT grant 11200037.
Felipe Garc\'ia-Ramos was partially supported by CONACyT (287764). Hanfeng Li was partially supported by NSF grants DMS-1600717 and DMS-1900746.
	
	\section{Preliminaries}\label{section:preliminaries}
	
	Throughout this paper $\NN$ denotes the set of positive integers and $G$ denotes a countably infinite group with identity $e_G$. We denote by $F\Subset G$ a finite subset of $G$.
	
	Let $\delta >0$ and $K \Subset G$. A nonempty set $F \Subset G$ is said to be left $(K,\delta)$-invariant if $|KF \Delta F| \leq \delta |F|$. A sequence of nonempty  finite subsets $\{F_{n}\}_{n\in\mathbb{N}}$ of $G$ is said to be left
	\define{asymptotically invariant} or left \define{F\o lner} if it is eventually left $(K,\delta)$-invariant for every nonempty $K \Subset G$ and $\delta >0$. From this point forward we shall omit the usage of the word left and speak plainly about a F\o lner sequence. A countable group $G$ is \define{amenable} if it admits a F\o lner sequence. Elementary amenable groups and finitely generated groups of subexponential growth are all amenable.
	
For $n\in \NN$ we write $\Sym(n)$ for the group of permutations of $\{1,\dots,n\}$. A group $G$ is \define{sofic} if there exist a sequence $\left\{ n_{i}\right\}_{i\in \NN}$ of positive integers which goes to infinity and a sequence $\Sigma=\{\sigma_i \colon G\rightarrow \Sym(n_i) \}_{i=1}^{\infty}$ that satisfies
\begin{align*}
\lim_{i\rightarrow\infty} \frac{1}{n_i} \left\vert \left\{ v\in\left\{ 1,\dots,n_{i}\right\}
: \sigma_{i}(st)v=\sigma_{i}(s)\sigma_{i}(t)v\right\}  \right\vert  &
=1 \mbox{ for every } s,t\in G\\
\lim_{i\rightarrow\infty} \frac{1}{n_i}\left\vert \left\{  v\in\left\{  1,\dots,n_{i}\right\}
: \sigma_{i}(s)v\neq\sigma_{i}(t)v\right\}  \right\vert  & =1 \mbox{ for every } s\neq t\in G.
\end{align*}
In this case we say $\Sigma$ is a \define{sofic approximation sequence} of $G$. Amenable groups are all sofic. We refer the reader to \cite{CapraroLupini2015, Ceccherini-SilbersteinCoornaert2010, Pestov2008} for general information about amenable groups and sofic groups.

A (left) \define{action} of the group $G$ on $X$ is represented by $G \curvearrowright X$. In this paper we shall always assume that $X$ is a compact metrizable space and that $G$ acts by homeomorphisms. We denote by $d$ a compatible metric on $X$.

	We say $G \curvearrowright X$ is \define{expansive} if there exists $c >0$ such that whenever $x,y \in X$, if $x \neq y$ then there exists $g \in G$ such that $d(gx,gy)> c$. The value $c$ is called an \define{expansivity constant} of $G \curvearrowright X$.

Let $G\curvearrowright X$ be an action, $k\ge 2$ and $\varepsilon>0$. We say $(x_1, \dots, x_k)\in X^k$ is an \define{$\varepsilon$-asymptotic tuple} if there exists $F \Subset G$ such that for every $g \notin F$ and $1\le i, j\le k$, one has $d(gx_i, gx_j)\leq\varepsilon$. Furthermore, we say $(x_1, \dots, x_k)$ is an \define{asymptotic tuple} if it is $\varepsilon$-asymptotic for every $\varepsilon>0$.
\begin{notation}
We denote by $\R^{\varepsilon}_k(X,G)$ the set of all $(x_1, \dots, x_k)\in X^k$ which are $\varepsilon$-asymptotic and by $\R_k(X,G) = \bigcap_{\varepsilon >0}\R^{\varepsilon}_k(X,G)$ the set of asymptotic $k$-tuples.
\end{notation}

\begin{remark}
	When $G=\ZZ$, asymptotic pairs are sometimes called \define{bilateral asymptotic pairs} or \define{two-sided asymptotic pairs} to avoid confusion with the weaker notion of forward asymptotic pairs, for which the only requirement is that $\lim_{n \to +\infty}d(nx_1,nx_2) = 0$.
\end{remark}
Let $G \curvearrowright X$ and $G\curvearrowright Y$ be two actions of $G$. A function $\pi\colon X \to Y$ is \define{$G$-equivariant} if $g\pi(x)=\pi(gx)$ for every $g \in G$ and $x \in X$. A $G$-equivariant function as above is called a \define{topological factor} if it is continuous and surjective, and is called a \define{topological conjugacy} if it is a homeomorphism. If there exists a topological conjugacy between $X$ and $Y$ we say $G \curvearrowright X$ and $G\curvearrowright Y$ are \define{topologically conjugate}.

Let $G \curvearrowright X$ be an action. A Borel probability measure $\mu$ on $X$ is \define{$G$-invariant} if for every Borel set $A \subset X$ and $g \in G$, we have $\mu(A) = \mu(g^{-1}A)$. In this case we say that $G \curvearrowright (X, \mu)$ is a \define{probability measure preserving (p.m.p.) action}. For p.m.p. actions $G \curvearrowright (X,\mu)$ and $G\curvearrowright (Y,\nu)$ we say that $G\curvearrowright (Y,\nu)$ is a \define{factor} of $G \curvearrowright (X, \mu)$ if there is a $G$-invariant conull set $X' \subset X$ and a $G$-equivariant measurable map $\pi\colon X' \to Y$ such that $\mu(\pi^{-1}(A)) = \nu(A)$ for every Borel set $A \subset Y$. Furthermore, we say that $G \curvearrowright (X, \mu)$ and $G\curvearrowright (Y,\nu)$ are \define{isomorphic} if there are $G$-invariant conull sets $X' \subset X$ and $Y' \subset Y$ and a $G$-equivariant bimeasurable bijection $\pi\colon X' \to Y'$ such that $\mu(\pi^{-1}(A)) = \nu(A)$ for every Borel set $A \subset Y'$.
	\subsection{Entropy theory}
	
	In what follows we shall provide several definitions and results on entropy theory. For a more detailed exposition of these topics we refer the reader to~\cite{Downarowicz2011, pollicott1998dynamical, Walters1982} for ample background on entropy theory of $\ZZ$-actions, and to~\cite{KerrLiBook2016, Ollagnier1985book} for entropy theory of  actions of amenable and sofic groups.
	
	\subsubsection{Topological entropy for actions of amenable groups}
	
	Given two open covers $\mathcal{U},\mathcal{V}$ of $X$ we define their \define{join} by $\mathcal{U} \vee \mathcal{V} = \{U \cap V : U \in \mathcal{U}, V\in \mathcal{V}   \}$. For $g \in G$ let $g\mathcal{U} = \{gU : U \in \mathcal{U}\}$ and denote by $N(\mathcal{U})$ the smallest cardinality of a subcover of $\mathcal{U}$. If $F$ is a nonempty finite subset of $G$, denote by $\mathcal{U}^F$ the join
	
	$$\mathcal{U}^F = \bigvee_{g \in F}g^{-1}\mathcal{U}.$$

		Let $G$ be an amenable group, $G \curvearrowright X$ an action, $\mathcal{U}$ an open cover of $X$ and $\{F_n\}_{n \in \NN}$ a F\o lner sequence for $G$. We define the \define{topological entropy of
			$G \curvearrowright X$ with respect to $\mathcal{U}$} as%
		\[
		\htop(G \curvearrowright X,\mathcal{U})=\lim_{n\rightarrow\infty}\frac{1}{\left\vert
			F_{n}\right\vert }\log N(\mathcal{U}^{F_{n}}).
		\]
	The function $F \mapsto \log N(\mathcal{U}^{F})$ is subadditive and thus the limit exists and does not depend on the choice of F\o lner sequence, see for instance~\cite{OrnWei1987,Krieger2007} \cite[page 220]{KerrLiBook2016}. The \define{topological entropy} of $G \curvearrowright X$ is defined as \[
	\htop(G \curvearrowright X)=\sup_{\mathcal{U}}\htop(G \curvearrowright X,\mathcal{U}).
	\]
	
	\subsubsection{Naive topological entropy}

	 Let $G \curvearrowright X$ be an action and $\mathcal{U}$ an open cover of $X$. We define the
		\define{naive topological entropy of $G\curvearrowright X$ with respect to $\mathcal{U}$} as%
		\[
		\hnaive(G \curvearrowright X,\mathcal{U})=\inf_{\varnothing \neq F\Subset G}\frac{1}{\left\vert
			F\right\vert }\log N(\mathcal{U}^{F}).
		\]
  The \define{naive topological
		entropy} of $G\curvearrowright X$ is defined as
	\[
	\hnaive(G \curvearrowright X)=\sup_{\mathcal{U}}\hnaive(G \curvearrowright X,\mathcal{U}).
	\]

	The notion of naive entropy was introduced by Burton~\cite{burton2017naive}. He showed that in the case of a non-amenable group $\hnaive(G \curvearrowright X)$ can only take the values $\{0, +\infty\}$.

	\subsubsection{Sofic topological entropy}
	
	The following notion of topological entropy for sofic group actions was introduced by Kerr and Li~\cite{KerrLi2011} following the breakthrough of Bowen for probability-measure-preserving actions~\cite{Bowen2010_2}.
	
    Let $G\curvearrowright X$ be an action.
	Let $F\Subset G$, $\delta>0,$ $n\in\mathbb{N}$, and $\sigma\colon G\to
	\Sym(n)$. We define $\Map(d,F,\delta,\sigma)$ as the set of all maps
	$\varphi\colon \left\{  1,\dots,n\right\}  \to  X$ such that
	\[
	\left(  \frac{1}{n}\sum_{v=1}^{n}d(\varphi(\sigma(s)v),s\varphi
	(v))^{2}\right)  ^{1/2}\leq\delta \ \mbox{for every }s \in F.
	\]

Write $N_{\varepsilon}(Y, d_{\infty})$ for the maximum cardinality of a subset $Y'$ of $Y \subset X^{\{1,\dots,n\}}$ such that whenever $\varphi_1,\varphi_2$ are distinct in $Y'$ then $\max_{v \in \{1,\dots,n\}} d(\varphi_1(v),\varphi_2(v)) \geq \varepsilon$.

	Let $G$ be a sofic group and $\Sigma = \{\sigma_i \colon G \to \Sym(n_i)\}$ a sofic approximation sequence for $G$. The \define{topological sofic entropy of }$G\curvearrowright X$
	\define{with respect to }$\Sigma$ is
	\[
	\hsof(G\curvearrowright X)=\sup_{\varepsilon>0}\inf_{F\Subset G}\inf_{\delta>0}\limsup_{i\to\infty}\frac{1}{n_{i}}\log N_{\varepsilon}%
	(\Map(d,F,\delta,\sigma_i),d_{\infty}).
	\]	
The value of $\hsof(G\curvearrowright X)$ does not depend on the choice of $d$ by \cite[Proposition 10.25]{KerrLiBook2016}.

\begin{remark}
	If $G$ is a countable amenable group, then the classical topological entropy of a $G$-action coincides with the naive topological entropy~\cite[Theorem 6.8]{DownarowiczFrejRomagnoli2016} and the topological sofic entropy~\cite[Theorem 5.3]{kerrli2013soficity} \cite[Theorem 10.37]{KerrLiBook2016} for any sofic approximation sequence.
\end{remark}

\subsubsection{Measure-theoretical entropy and the variational principle}

Let $G \curvearrowright (X,\mu)$ be a p.m.p. action. For a finite partition $\mathcal{P}$ of $X$ consisting of Borel sets the \define{Shannon entropy} of $\mathcal{P}$ with respect to $\mu$ is given by \[ H_{\mu}(\mathcal{P}) = \sum_{A \in \mathcal{P}} -\mu(A)\log \mu(A). \]
For an amenable group $G$, the \define{measure-theoretical entropy of $G \curvearrowright (X,\mu)$ with respect to $\mathcal{P}$} is given by\[ h_{\mu}(G\curvearrowright X, \mathcal{P}) = \lim_{n\rightarrow\infty}\frac{1}{\left\vert
	F_{n}\right\vert }\log H_{\mu}(\bigvee_{g \in F_n} g^{-1}\mathcal{P}), \]
where $\{F_n\}_{n \in \NN}$ is a F\o lner sequence. The \define{measure-theoretical entropy} of $G\curvearrowright (X,\mu)$ is the supremum of $h_{\mu}(G\curvearrowright X, \mathcal{P})$ taken over all finite partitions: \[h_{\mu}(G\curvearrowright X) = \sup_{\mathcal{P} \mbox{ finite}}h_{\mu}(G\curvearrowright X, \mathcal{P}).\]

The variational principle relates the topological entropy with the measure-theoretical entropy through the following formula.

\begin{theorem}\cite[Theorem 5.2.7]{Ollagnier1985book}
	Let $G$ be an amenable group and $G \curvearrowright X$ an action, we have\[
	\htop(G \curvearrowright X) = \sup_{\mu \in \mathcal{M}(G\curvearrowright X)}h_{\mu}(G\curvearrowright X).
	\]
	where $\mathcal{M}(G\curvearrowright X)$ denotes the space of all Borel $G$-invariant probability measures on $X$.
\end{theorem}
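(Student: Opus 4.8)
The plan is to establish the two inequalities separately. I fix once and for all a F\o lner sequence $\{F_n\}_{n\in\NN}$; by the Ornstein--Weiss lemma every limit of the form $\lim_n\frac{1}{|F_n|}\log N(\mathcal U^{F_n})$ or $\lim_n\frac{1}{|F_n|}H_\mu(\mathcal P^{F_n})$ exists and is independent of the sequence, so I may compute all entropies along $\{F_n\}$ throughout.

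\textbf{(i) $h_\mu\le\htop$ for every invariant $\mu$.} It suffices to bound $h_\mu(G\curvearrowright X,\mathcal P)$ for an arbitrary finite Borel partition $\mathcal P=\{A_1,\dots,A_k\}$. Using regularity of $\mu$ I choose compact sets $B_i\subset A_i$ whose complements in $A_i$ have measure so small that, with $B_0=X\setminus\bigcup_{i\ge1}B_i$ and $\eta=\{B_0,\dots,B_k\}$, the conditional entropy $H_\mu(\mathcal P\mid\eta)$ is negligible; the inequality $h_\mu(G\curvearrowright X,\mathcal P)\le h_\mu(G\curvearrowright X,\eta)+H_\mu(\mathcal P\mid\eta)$ then reduces the problem to $\eta$. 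Since the $B_i$ are compact and pairwise disjoint, the sets $U_i=X\setminus\bigcup_{j\ge1,\,j\ne i}B_j$ form a finite open cover $\mathcal U$. A nonempty atom of $\eta^{F_n}$ lies in some element of a minimal subcover of $\mathcal U^{F_n}$, and within one such element each coordinate $g\in F_n$ is forced to take one of only two labels, so each subcover element meets at most $2^{|F_n|}$ atoms. As Shannon entropy is maximised by the uniform distribution, this yields $H_\mu(\eta^{F_n})\le|F_n|\log 2+\log N(\mathcal U^{F_n})$, hence $h_\mu(G\curvearrowright X,\eta)\le\log2+\htop(G\curvearrowright X)$. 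A standard refinement argument removes the spurious additive $\log 2$, giving $h_\mu(G\curvearrowright X)\le\htop(G\curvearrowright X)$.

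\textbf{(ii) $\htop\le\sup_\mu h_\mu$.} Here I would use Misiurewicz's averaging method. Fixing $\varepsilon>0$, let $E_n$ be a maximal $(F_n,\varepsilon)$-separated set; by the Bowen--Dinaburg description of topological entropy for amenable actions, $\sup_{\varepsilon>0}\limsup_n\frac{1}{|F_n|}\log|E_n|$ recovers $\htop(G\curvearrowright X)$. I form the empirical measures $\nu_n=\frac{1}{|E_n|}\sum_{x\in E_n}\delta_x$ and their orbit averages $\mu_n=\frac{1}{|F_n|}\sum_{g\in F_n}g_*\nu_n$, and pass to a weak$^*$ limit $\mu$ along a subsequence realising the relevant $\limsup$. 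The F\o lner property forces $\|g_*\mu_n-\mu_n\|\to0$, so $\mu$ is $G$-invariant. To bound the entropy of $\mu$ from below I pick a finite partition $\mathcal P$ with atoms of diameter $<\varepsilon$ and $\mu$-null boundaries; then distinct points of $E_n$ sit in distinct atoms of $\mathcal P^{F_n}$, so $H_{\nu_n}(\mathcal P^{F_n})=\log|E_n|$. A concavity (Jensen) estimate for the map $\lambda\mapsto H_\lambda(\mathcal P^{F_m})$, combined with a tiling of $F_n$ by translates of a fixed F\o lner set $F_m$, gives $\frac{1}{|F_n|}\log|E_n|\le\frac{1}{|F_m|}H_{\mu_n}(\mathcal P^{F_m})+o_m(1)$; the null-boundary condition lets me pass to the weak$^*$ limit in $n$, and then sending $m\to\infty$ yields $\htop(G\curvearrowright X)\le h_\mu(G\curvearrowright X,\mathcal P)\le\sup_\nu h_\nu(G\curvearrowright X)$.

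\textbf{Main obstacle.} The delicate point is the tiling step in (ii): over $\ZZ$ one tiles $\{0,\dots,|F_n|-1\}$ exactly by offset blocks of length $m$ and averages over the $m$ offsets, but for a general amenable $G$ no exact tiling of $F_n$ by translates of $F_m$ exists. This is resolved by the Ornstein--Weiss quasi-tiling theorem: one covers all but an $\varepsilon$-fraction of $F_n$ by disjoint left translates of finitely many almost-invariant F\o lner sets, and the uncovered portion contributes only an error that vanishes as the invariance improves. Controlling this error uniformly while simultaneously keeping the concavity bound and the weak$^*$ convergence intact is the technical heart of the argument; it is exactly here, and not in the mere existence of a F\o lner sequence, that amenability is used in full strength.
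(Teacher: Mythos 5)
First, a point of comparison: the paper does not prove this theorem at all --- it is imported verbatim from Moulin Ollagnier's book \cite{Ollagnier1985book} --- so your proposal can only be measured against the standard literature proofs (Stepin--Tagi-Zade, Moulin Ollagnier--Pinchon, Kerr--Li \cite{KerrLiBook2016}), and your two-inequality Goodwyn/Misiurewicz scheme is indeed the classical route. Part (i) is sound in outline except for its last sentence: the additive $\log 2$ is produced \emph{per coordinate}, so no ``refinement argument'' (of partitions or covers) can remove it. Over $\ZZ$ one kills it with the power trick $h(T^n)=n\,h(T)$, which has no analogue for a general amenable $G$; the standard substitute is the product trick: apply the inequality-with-constant to the product action $G\curvearrowright X^m$ with measure $\mu^{\otimes m}$, use $h_{\mu^{\otimes m}}(G\curvearrowright X^m)\ge m\,h_\mu(G\curvearrowright X)$ together with subadditivity of topological entropy under products, and divide by $m$. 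This is fixable, but it is a genuinely missing step, not a routine one.

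The more serious gap is in part (ii), in the derivation of the displayed inequality $\frac{1}{|F_n|}\log|E_n|\le\frac{1}{|F_m|}H_{\mu_n}(\mathcal{P}^{F_m})+o_m(1)$. With \emph{any} tiling-type decomposition --- exact or Ornstein--Weiss quasi-tiling --- subadditivity gives $H_{\nu_n}(\mathcal{P}^{F_n})\le\sum_j H_{(c_j)_*\nu_n}(\mathcal{P}^{F_m})+\mathrm{err}$, where the $c_j$ are the tile centers, and concavity then produces $H_{\lambda_n}(\mathcal{P}^{F_m})$ for the measure $\lambda_n$ obtained by averaging $\nu_n$ over the \emph{centers only}. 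But $\lambda_n$ is an average over roughly $|F_n|/|F_m|$ sparse points of $F_n$, not over all of $F_n$: it is not close to $\mu_n$, and its weak$^*$ limits need not be $G$-invariant. In the $\ZZ$ argument this defect is exactly what the average over the $m$ offsets repairs --- the union over offsets of the center sets is all of $\{0,\dots,n-1\}$, so the offset-averaged concavity bound yields $\mu_n$ itself. A quasi-tiling provides a single system of centers and no family of ``offsets'' to average over, so it does not resolve the issue; your ``main obstacle'' paragraph identifies the wrong obstruction. The standard resolution for amenable $G$ is different: take as tiles \emph{all} translates $F_m c$ with $c\in F_n$, note that every point of the $F_m^{-1}$-core of $F_n$ is covered exactly $|F_m|$ times, and control the overlaps with Shearer-type subadditivity of Shannon entropy (or an elementary conditional-entropy computation), plus a F\o lner-negligible boundary term. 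Then the concavity step is applied to the average over all $c\in F_n$, which is precisely $\mu_n$, and the rest of your argument (null-boundary partitions, weak$^*$ passage, $m\to\infty$) goes through. With that replacement, and the product trick in part (i), your outline becomes the standard proof.
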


	For a p.m.p. action $G \curvearrowright (X,\mu)$ of a sofic group $G$, there is the notion of the \define{measure-theoretical sofic entropy} $h_{\mu}^{\Sigma}(G \curvearrowright X)$ with respect to a sofic approximation sequence $\Sigma$ for $G$ introduced by Bowen~\cite{Bowen2010_2}. An analogous variational principle for sofic entropy was established by Kerr and Li~\cite{KerrLi2011}.
	
\subsection{Local entropy theory}
	
In the seminal papers~\cite{Blanchard1992, Blanchard1993}, Blanchard introduced
	the notion of entropy pairs. These can be used to give a local characterization of positive entropy. This work was the
	birth of what is now called local entropy theory (see~\cite{GlasnerYe2009}
	for a survey). A combinatorial counterpart of that theory is the notion of independence which can also be used in the context of non-amenable group actions. For an introduction to the subject see~\cite[Chapter 12]{KerrLiBook2016}.
	
\begin{notation}
	 For each $k\in \NN$, we represent by $\triangle_k(X)$ the diagonal of $X^k$, that is, the set of all tuples $(x,x,\dots,x) \in X^k$.
\end{notation}	

	\subsubsection{Orbit independence entropy tuples}

		Let $G\curvearrowright X$ be an action and $\mathbf{A}=(A_{1},\dots,A_{n})$ a tuple of
		subsets of $X$. We say $J\subset G$ is an \define{independence set}\textit{
			for} $\mathbf{A}$ if for every nonempty $I\Subset J$ and every $\phi \colon I\rightarrow\{1,\dots,n\}$ we have
		\[
		\bigcap_{s\in I}s^{-1}A_{\phi(s)}\neq\varnothing\text{.}%
		\]
		We define the \define{independence density of} $\mathbf{A}$ (over $G$)
		to be the largest $q\geq0$ such that every set $F\Subset G$ has a
		subset of cardinality at least $q\left\vert F\right\vert $ which is an
		independence set for $\mathbf{A}$.

	\begin{definition}\cite[Definition 3.2]{KerrLi2013}
		Fix an integer $k \geq 1$. We say a tuple $\mathbf{x}=(x_{1},\dots,x_{k})\in X^{k}$ is an \define{orbit independence entropy tuple (orbit IE-tuple)} if for every product neighborhood
		$U_{1}\times \dots \times U_{k}$ of $\mathbf{x}$ the tuple $\textbf{U}= (U_{1},\dots,U_{k})$ has
		positive independence density. We denote the set of orbit IE-tuples of length $k$ by
		$\IE_{k}(X,G)$.
	\end{definition}

		An open cover $\mathcal{U}$ of $X$ is called standard if $\mathcal{U}=\{U_1, U_2\}$ such that none of $U_1, U_2$ is dense in $X$. We say an action $G\curvearrowright X$ has \define{naive uniform positive entropy (naive UPE)} if for
		each standard  cover $\mathcal{U}$ we have that $\hnaive(G\curvearrowright X, \mathcal{U})>0$. The notion of UPE was defined in the context of $\ZZ$-actions by Blanchard~\cite{Blanchard1992} and naive UPE is a natural generalization in the context of naive topological entropy.
	
		\begin{theorem}\label{theorem_naive_IE_cites}
		Let $G\curvearrowright X$ be an action.
		
		\begin{enumerate}
			\item~\cite[Theorem 2.5]{LiRong2019} If $\IE_2(X,G)\setminus \triangle_2(X) \neq \varnothing$, then $\hnaive(G \curvearrowright X)>0$.

			\item~\cite[Theorems 12.19 and 12.23]{KerrLiBook2016} If $G$ is amenable and $\htop(G\curvearrowright X) >0$, then for each $k\in \NN$ there is some $(x_1, \dots, x_k)\in \IE_k(X,G)$ such that $x_1, \dots, x_k$ are distinct.
			
			\item~\cite[Theorem 2.5]{LiRong2019} $\mathtt{IE}_2(X,G)=X^{2}$ if and only if $G\curvearrowright X$ has naive UPE.
			
			\item~\cite[Lemma 6.2]{KerrLi2013}
			We have that $\overline{\bigcup_{\mu\in \mathcal{M}(G \curvearrowright X)}\mathtt{supp}(\mu)}\subset \mathtt{IE}_1(X,G)$, where $\mathcal{M}(G \curvearrowright X)$ is the set of $G$-invariant Borel probability measures on $X$ and $\mathtt{supp}(\mu)$ denotes the support of $\mu$.
			
		\end{enumerate}
	\end{theorem}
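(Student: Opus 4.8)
Since each of the four items is quoted verbatim from an established reference, the formally correct ``proof'' is simply to invoke \cite{LiRong2019}, \cite{KerrLiBook2016} and \cite{KerrLi2013}; it is nonetheless worth recording the mechanism behind them, as the same independence-density bookkeeping recurs throughout the paper. For item (1) I would begin with a non-diagonal pair $(x_1,x_2)\in\IE_2(X,G)$ and pick open neighborhoods $U_1\ni x_1$, $U_2\ni x_2$ with $\overline{U_1}\cap\overline{U_2}=\varnothing$; by definition $\mathbf{U}=(U_1,U_2)$ has some independence density $q>0$. Forming the open cover $\mathcal{U}=\{X\setminus\overline{U_1},\,X\setminus\overline{U_2}\}$, an independence set $J\subseteq F$ with $|J|\ge q|F|$ yields, for each of the $2^{|J|}$ functions $\phi\colon J\to\{1,2\}$, a nonempty intersection $\bigcap_{s\in J}s^{-1}U_{\phi(s)}$; a witness for $\phi$ lies in a uniquely determined element of $\mathcal{U}^{J}$, so these $2^{|J|}$ elements are pairwise distinct and indispensable, giving $N(\mathcal{U}^{J})\ge 2^{|J|}$ and hence $N(\mathcal{U}^{F})\ge N(\mathcal{U}^{J})\ge 2^{q|F|}$. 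As this holds for every $F\Subset G$, the infimum defining $\hnaive(G\curvearrowright X,\mathcal{U})$ is at least $q\log 2>0$.

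Item (3) is the two-sided refinement of the same computation, localized to a prescribed cover: if $\IE_2(X,G)=X^{2}$ then every standard cover contains a non-diagonal IE-pair spread across its two members and hence has positive naive entropy, i.e. the action has naive UPE; conversely naive UPE forces every product neighborhood of every pair to have positive independence density, so $\IE_2(X,G)=X^{2}$. Both implications are \cite[Theorem 2.5]{LiRong2019} and I would simply cite them. Item (4) is equally direct: a point $x$ in the support of a $G$-invariant measure $\mu$ has $\mu(U)>0$ for every neighborhood $U$, and an ergodic-theoretic estimate then shows every such $U$ has positive independence density, so $x\in\IE_1(X,G)$; since $\IE_1(X,G)$ is closed, the stated containment of the closure follows, exactly as in \cite[Lemma 6.2]{KerrLi2013}.

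The genuinely substantial input is item (2), which for an amenable $G$ extracts non-diagonal IE-$k$-tuples from nothing more than $\htop(G\curvearrowright X)>0$. This is \cite[Theorems 12.19 and 12.23]{KerrLiBook2016}, and it rests on the full combinatorial independence machinery of local entropy theory: one converts the exponential growth of $N(\mathcal{U}^{F})$ into large independence sets by way of the Sauer--Shelah / Karpovsky--Milman lemma, and then analyzes the resulting product structure to produce genuinely distinct coordinates. This is the step I would not attempt to reprove, and the main obstacle were one to seek a self-contained argument; the other three items reduce to the elementary independence-versus-entropy correspondence sketched above.
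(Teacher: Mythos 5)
Your proposal is correct and takes essentially the same approach as the paper: the paper gives no proof of this theorem beyond the citations to \cite{LiRong2019}, \cite{KerrLiBook2016} and \cite{KerrLi2013}, which is exactly what you do. Your supplementary sketches of the independence-density mechanisms behind items (1), (3) and (4), and your identification of item (2) as the one requiring the full Karpovsky--Milman machinery, are accurate but not part of the paper's argument.
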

	
	Consequently with the third point of~\Cref{theorem_naive_IE_cites}, for $k\ge 2$ we shall say that $G \curvearrowright X$ has \define{naive UPE of order $k$} if $\mathtt{IE}_k(X,G)=X^{k}$. We also say that $G \curvearrowright X$ has \define{naive UPE of all orders} if it has naive UPE of order $k$ for all $k\ge 2$.

	\subsubsection{Sofic independence entropy tuples}

	Let $G$ be a sofic group, $G\curvearrowright X$ an action and $\mathbf{A}=(A_{1},\dots,A_{k})$ a tuple of
	subsets of $X$. Given $F\Subset G$, $\delta>0$, $n\in\mathbb{N}$ and
	$\sigma\colon G\rightarrow \Sym(n)$, we say $J\subset\left\{  1,\dots,n\right\}$ is a
	$(d,F,\delta,\sigma)$-\define{independence set for $\mathbf{A}$} if for every
	$\omega\colon J\rightarrow\left\{  1,\dots,k\right\}  $ there exists $\varphi\in
	\Map(d,F,\delta,\sigma)$ such that $\varphi(a)\in A_{\omega(a)}$ for every $a\in
	J$.

\begin{definition} \cite[Definitions 12.33 and 12.34]{KerrLiBook2016}
	Let $G$ be a sofic group, $G\curvearrowright X$ an action and $\Sigma=\left\{  \sigma_{i} \colon G \to \Sym(n_{i})\right\}
	_{i\in\mathbb{N}}$ a sofic approximation sequence for $G$. We say  a tuple $\mathbf{A}=(A_{1},\dots,A_{k})$ of subsets of $X$
has  \define{positive upper independence density over $\Sigma$} if there exists $q>0$ such that for every $F\Subset G$ and
	$\delta>0$ there exists an infinite set of  $i$ for which $\mathbf{A}$ has a $(d,F,\delta,\sigma_{i})$-independence set of cardinality at least $qn_{i}$. This property does not depend on the choice of the metric $d$, see \cite[Lemma 10.24]{KerrLiBook2016}.

We say $\mathbf{x}=(x_{1},\dots,x_{k})\in X^{k}$ is a \define{sofic independence entropy tuple with respect to $\Sigma$ ($\Sigma$-IE-tuple)} if for every product
	neighborhood $U_{1}\times\cdot\cdot\cdot\times U_{k}$ of
	$\mathbf{x}$ the tuple $\mathbf{U}=(U_1,\dots,U_k)$ has positive upper independence density over $\Sigma$.
We denote
	the set of $\Sigma$-\define{IE-tuples} of length $k$ by $\IE_{k}^{\Sigma}(X,G)$.
\end{definition}

	\begin{theorem}\cite[Theorem 12.39]{KerrLiBook2016} \label{theorem_sofic_IE_cites}
		 Let $G$ be sofic, $\Sigma$ a sofic approximation sequence for $G$,  and
		$G\curvearrowright X$ an action. Then
		\begin{enumerate}
			\item $\IE_{1}^{\Sigma}(X,G) \neq \varnothing$ if and only if $\hsof(G \curvearrowright X)\geq 0$.
			\item $\IE_{2}^{\Sigma}(X,G) \setminus \triangle_2(X) \neq \varnothing$ if and only if $\hsof(G \curvearrowright X)>0$.
		\end{enumerate}
	\end{theorem}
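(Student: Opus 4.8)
The plan is to prove the easy implications of both parts by direct counting, to obtain length-one $\Sigma$-IE-tuples from $\hsof\ge 0$ by a splitting-and-nesting argument, and to extract an off-diagonal $\Sigma$-IE-pair from $\hsof>0$ through the Sauer--Shelah--Karpovsky--Milman combinatorial lemma. Throughout I use the conventions $N_\varepsilon(\varnothing,d_\infty)=0$ and $\log 0=-\infty$, so that $\hsof(G\curvearrowright X)\ge 0$ is equivalent to the statement that for every $F\Subset G$ and $\delta>0$ the set $\Map(d,F,\delta,\sigma_i)$ is nonempty for infinitely many $i$; I will also use that $\Map(d,F,\delta,\sigma)$ is monotone, shrinking as $F$ grows and $\delta$ decreases.

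For the easy directions I would argue as follows. If $x\in\IE_1^\Sigma(X,G)$, then for any neighborhood $U\ni x$ and any $F,\delta$ the positive upper independence density of $(U)$ forces $\Map(d,F,\delta,\sigma_i)\neq\varnothing$ for infinitely many $i$, whence $\hsof\ge 0$. If $(x_1,x_2)\in\IE_2^\Sigma(X,G)$ with $x_1\neq x_2$, I would pick neighborhoods $U_1\ni x_1,\ U_2\ni x_2$ with disjoint closures and set $\varepsilon:=d(\overline{U_1},\overline{U_2})>0$; for $(U_1,U_2)$ and given $F,\delta$ there are infinitely many $i$ with an independence set $J$, $|J|\ge qn_i$, and the $2^{|J|}$ witnesses attached to the functions $\omega\colon J\to\{1,2\}$ are pairwise $\varepsilon$-separated, so $N_\varepsilon(\Map(d,F,\delta,\sigma_i),d_\infty)\ge 2^{qn_i}$ and $\hsof\ge q\log 2>0$.

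For the hard direction of part (1) I would start from $(X)$, which has upper independence density $1$ once $\Map$ is cofinally nonempty, and repeatedly apply the splitting observation: if $A$ has a $(d,F,\delta,\sigma)$-independence set $J$ witnessed by $\varphi$ and $A=A'\cup A''$, then splitting $J$ according to whether $\varphi(a)\in A'$ or $\varphi(a)\in A''$ shows that one of $A',A''$ has an independence set of size at least $|J|/2$ with the same witness. Fixing a sequence of finite covers of $X$ by sets of diameter tending to $0$ and combining this observation with the finiteness of each cover and a diagonal selection over a cofinal sequence of pairs $(F,\delta)$ (legitimate by monotonicity of $\Map$), I would produce a decreasing sequence of closed sets of positive upper independence density and shrinking diameter; their intersection is a single point $x$ every neighborhood of which inherits positive upper independence density, so $x\in\IE_1^\Sigma(X,G)$.

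The hard direction of part (2) is where the real work lies. Fixing $\varepsilon>0$ witnessing $\hsof>0$ and a finite cover $\mathcal{W}=\{W_1,\dots,W_m\}$ by sets of diameter less than $\varepsilon$, I would note that for every $F,\delta$ there are, for a cofinal set of $i$, at least $\lambda^{n_i}$ pairwise $\varepsilon$-separated maps in $\Map(d,F,\delta,\sigma_i)$ with $\lambda>1$; coding each by an index function $\{1,\dots,n_i\}\to\{1,\dots,m\}$ is injective because $\diam W_j<\varepsilon$, giving at least $\lambda^{n_i}$ distinct codewords. The combinatorial core is the Sauer--Shelah--Karpovsky--Milman lemma: any family of at least $\lambda^{n}$ functions $\{1,\dots,n\}\to\{1,\dots,m\}$ realizes, on some coordinate set of size at least $cn$ (with $c=c(\lambda,m)>0$), all functions into a pair $\{a,b\}$, $a\neq b$; this says precisely that $(W_a,W_b)$ has a $(d,F,\delta,\sigma_i)$-independence set of size at least $cn_i$. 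Using the finiteness of the set of pairs and again a diagonal selection over $(F,\delta)$, I would fix a single pair with positive upper independence density and then run the nesting argument of part (1) simultaneously inside $\overline{W_a}$ and $\overline{W_b}$ to extract an IE-pair $(x_1,x_2)$. The main obstacle is ensuring this pair is off-diagonal: a priori the shattered cover elements might have intersecting closures, so the two limit points could coincide. I expect this to be the delicate step, to be handled by choosing the cover fine and sufficiently separated so that the shattered pair can always be taken with disjoint closures (equivalently, showing that $\hsof>0$ forces a pair of closed sets at positive distance with positive upper independence density); disjointness of the closures then gives $x_1\neq x_2$, which completes the argument.
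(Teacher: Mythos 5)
First, a point of comparison: the paper does not prove this statement at all — it is quoted as a black box from Kerr and Li's book \cite{KerrLiBook2016} (Theorem 12.39 there), so your attempt has to be measured against the proof in that reference. Your two easy implications are correct, and your hard direction of part (1) is essentially the standard argument and sound: for a tuple of length one the splitting observation really does work naively (an independence set for a single set has a \emph{single} witness $\varphi\in\Map(d,F,\delta,\sigma_i)$, so $J$ splits along $\varphi$), the pigeonhole/diagonalization over a cofinal sequence of pairs $(F,\delta)$ is legitimate by monotonicity of $\Map$, and the nested intersection of closed sets of shrinking diameter and positive upper independence density yields a point of $\IE_1^{\Sigma}(X,G)$.

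The genuine gap is in the hard direction of part (2), exactly at the step you flag. Two problems. (a) The lemma you invoke is not the Karpovsky--Milman lemma (Lemma~\ref{L-KM} of this paper): that lemma requires $|S|\ge((m-1)\lambda)^n$ functions into $m$ symbols, i.e.\ nearly full exponential size, whereas positive sofic entropy only gives $|S|\ge\lambda_0^{n_i}$ with $\lambda_0$ possibly barely above $1$ and $m$ large. The pair-shattering statement you actually wrote down is true (it follows from Natarajan's multiclass Sauer--Shelah bound plus a pigeonhole on pairs), but it is a different and harder lemma, and — decisively — (b) it gives no control over \emph{which} pair gets shattered. A finite cover of a compact connected space can never have pairwise positively separated members, so the shattered pair may be two adjacent cover elements whose closures meet, and then your simultaneous nesting can converge to a diagonal pair; your proposed repair (``choose the cover fine and sufficiently separated'') is not available, since a ``separated cover'' does not exist. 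Note also that the nesting itself needs a splitting lemma for \emph{pairs} (if $(A_1,A_2)$ has positive upper independence density and $A_1=A_1'\cup A_1''$, then one of $(A_1',A_2)$, $(A_1'',A_2)$ does), and unlike the $k=1$ case this cannot be done by splitting along one witness, because each $\omega\colon J\to\{1,2\}$ has its own $\varphi_\omega$. The actual proof resolves both issues structurally: one codes the $\varepsilon$-separated elements of $\Map(d,F,\delta,\sigma_i)$ relative to pairs of \emph{genuinely disjoint} closed sets — for instance $A_1=\overline{V}$ and $A_2=\{x\in X: d(x,V)\ge\varepsilon/2\}$ for $V$ ranging over a cover by sets of diameter $<\varepsilon/4$, or complements of two-element open covers — using a three-symbol alphabet $\{0,1,2\}$, and applies Kerr--Li's key combinatorial lemma, whose extra hypothesis (distinct codewords must differ at a coordinate where \emph{both} symbols are nonzero, which is precisely what $\varepsilon$-separation of maps supplies) forces the shattered pair to be $\{1,2\}$, i.e.\ the disjoint pair. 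Off-diagonality of the resulting IE-pair is then automatic. This disjointness mechanism is the heart of the implication $\hsof(G\curvearrowright X)>0\Rightarrow\IE_2^{\Sigma}(X,G)\setminus\triangle_2(X)\neq\varnothing$, and it is absent from your proposal.
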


	Given a sofic group $G$ and a sofic approximation sequence $\Sigma$ for $G$, for $k\ge 2$ we say $G\curvearrowright X$ has \define{sofic UPE of order $k$} if $\mathtt{IE}_k^{\Sigma}(X,G)=X^{k}$. We also say that $G \curvearrowright X$ has \define{sofic UPE of all orders} if it has sofic UPE of order $k$ for all $k\ge 2$.

\begin{remark}
	Every $\Sigma$-IE-tuple is an orbit IE-tuple~\cite[Proposition 4.6]{KerrLi2013} but there might exist orbit IE-tuples that are not $\Sigma$-IE-tuples for any $\Sigma$. Nonetheless, if $G$ is amenable then every orbit IE-tuple is a $\Sigma$-IE-tuple for every $\Sigma$~\cite[Theorem 4.8]{KerrLi2013}. In this case we call naive/sofic UPE simply \define{UPE}.
\end{remark}

	\subsection{Shift spaces and the pseudo-orbit tracing property}
	
		Given a nonempty finite set $\Lambda$, we say $X \subset \Lambda^G$ is a \define{$G$-subshift} or \define{$G$-shift space} if $X$ is closed under the product topology and $G$-invariant under the left shift action $G\curvearrowright \Lambda^G$ given by
		\[  gx(h) = x(g^{-1}h) \mbox{ for every }g,h \in G. \]
		
		Elements $x \in \Lambda^G$ are called \define{configurations}. A \define{pattern} is an element $p \in \Lambda^A$ for some $A \Subset G$. For a pattern $p\in \Lambda^A$ let us denote by $[p] = \{x \in \Lambda^G : x|_A = p \}$ the \define{cylinder} centered at $p$. A set $X \subset \Lambda^G$ is a subshift if and only if there exists a set $\mathcal{F}$ of patterns which generates $X$, that is, $X = X_{\mathcal{F}}$ where,
		
		\[ X_{\mathcal{F}} = \Lambda^G \setminus \bigcup_{g \in G, p \in \mathcal{F}} g[p].  \]
				In other words, $X$ is the set of all configurations $x \in \Lambda^G$ where no translation of a pattern $p \in \FF$ appears in $x$. If there exists a finite set $\mathcal{F}$ of patterns such that $X = X_{\mathcal{F}}$, then we say that $X$ is a \define{subshift of finite type (SFT)}.
		
			Let $G\curvearrowright X$ be an action, $\delta>0$ and $S\Subset G$. An $(S,\delta)$
			\define{pseudo-orbit} is a sequence $\{x_{g}\}_{g\in G}$ of elements in $X$ such that $d(s
			x_{g},x_{sg})< \delta$ for every $s\in S$ and $g\in G.$ We say a
			pseudo-orbit is $\mathbf{\varepsilon}$-\define{traced} \define{by }$x\in X$ if
			$d(gx,x_{g})\leq\varepsilon$ for every $g\in G.$

			An action $G\curvearrowright X$ has the \define{pseudo-orbit tracing property (POTP)} if
			for  every $\varepsilon>0$ there exist $\delta>0$ and $S \Subset G$ such
			that  any $(S,\delta)$ pseudo-orbit is $\varepsilon$-traced by some point
			$x\in X.$
	The POTP is also known as \define{shadowing}. For a shift space, POTP coincides with the notion of SFT~\cite{Walters1978,Oprocha2008} \cite[Theorem 3.2]{ChungLee2018}.

\section{Topological Markov properties: results and examples}\label{section:resultados_basicos}

	\subsection{Topological Markov properties}
	
	In what follows the following notation shall be useful.

\begin{notation}
For an action $G \curvearrowright X$, a set $K \subset G$, and $x,y \in X$, we shall write
	\[d_{K}(x,y) = \sup_{g \in K}d(gx,gy).\]
\end{notation}	

	\begin{definition}
		Let $G \curvearrowright X$ be an action, and $\varepsilon, \delta>0$. We say that $B \Subset G$ is an
		\define{$(\varepsilon,\delta)$-memory set} for $A\subset B$ if for every pair
		$x,y\in X$  satisfying $d_{B\setminus A}(x,y)\leq\delta$ there exists
		$z\in X$ such that $d_B(x,z)\leq\varepsilon$ and
		$d_{G \setminus A}(y,z)\leq\varepsilon$.
	\end{definition}
	
In~\cite{ChandgotiaHanMarcusMeyerovitchPavlov2014} the authors introduced the notion of topological Markov field. A \define{topological Markov field} is a subshift $X \subset \Lambda^{\ZZ}$ which has the following property: for every pair of configurations $x,y \in X$ for which there are $n,m \in \ZZ$ so that $n <m$, $x_n = y_n$ and $x_m = y_m$, we have that the configuration $z \in \Lambda^{\ZZ}$ given by \[ z(i) = \begin{cases}
x(i) & \mbox{ if } i \in \{ n,\dots,m\}\\
y(i) & \mbox{ if } i \in \ZZ \setminus \{ n,\dots,m\}\\
\end{cases} \]
belongs to $X$. In terms of memory sets, this property is stating that for an adequate choice of $\varepsilon,\delta$ (which depends upon $d$) an $(\varepsilon,\delta)$-memory set for an interval $A = \{n+1,\dots, m-1\}$ is the interval $B = \{n,\dots,m\}$. The notion of topological Markov field was in turn generalized to the concept of (strong) topological Markov property for subshifts on arbitrary countable groups in~\cite{Richifest}. We shall give a natural generalization of these properties to the non-symbolic setting.

	\begin{definition}
		An action $G \curvearrowright X$  has the:
		\begin{enumerate}
			\item \define{Topological Markov property (TMP)} if for
			every $\varepsilon>0$ there exists $\delta>0$ such that every $A\Subset G$
			admits an $(\varepsilon,\delta)$-memory set for $A$.
			\item \define{Strong topological Markov property (strong TMP)} if for
			every $\varepsilon>0$ there exist $\delta>0$ and $F \Subset G$ containing the identity,  such that
			every $A\Subset G$ admits $FA$ as an $(\varepsilon,\delta)$-memory set
			for $A$.
			\item \define{Mean topological Markov property (mean TMP)} if for
			every $\varepsilon>0$ there exists $\delta>0$ and an increasing sequence $\{F_{n}\}_{n \in\mathbb{N}}$ of finite subsets of $G$ with union $G$ such that  for each $n \in\mathbb{N}$ there is an
			$(\varepsilon,\delta)$-memory set $\widetilde{F_{n}}$ for $F_{n}$ so that
			$|\widetilde{F_{n}}\setminus F_{n}| = o(|F_{n}|)$.
			
		\end{enumerate}
	\end{definition}

	Now we introduce uniform versions of the topological Markov properties. These are mainly technical conditions needed for the results in the following sections.
	
	Let $B \Subset G$. We say that a set $V \subset G$ is \define{$B$-separated} if $Bv_1 \cap Bv_2 = \varnothing$ whenever $v_1 \neq v_2$ and $v_1,v_2 \in V$.
	
	\begin{definition}
		Let $G\curvearrowright X$ be an action, $\varepsilon, \delta>0$,  and $A \Subset G$. We say that an $(\varepsilon,\delta)$-memory set $B$ for  $A$ is \define{uniform}, if for any $B$-separated $V\Subset G$, $x_v\in X$ for $v\in V$ and  $y \in X$  such that $d_{(B\setminus A)v}(x_v,y) \leq \delta$ for every $v \in V$, there exists $z \in X$ such that $d_{Bv}(z,x_v) \leq \varepsilon$ for every $v \in V$ and $d_{G \setminus AV}(z,y) \leq \varepsilon$.
	\end{definition}
	
	\begin{remark}
		If an $(\varepsilon,\delta)$-memory set $B$ is uniform, it follows by compactness that in fact the above property holds for any $B$-separated $V\subset G$, even if it is not finite.
	\end{remark}
	
	\begin{definition}
		We say that an action $G\curvearrowright X$ has the \textbf{uniform (mean/strong) topological Markov property} if it satisfies the (mean/strong) topological Markov property with a memory set which is uniform.
	\end{definition}
	
	If two metrics $d$ and $d'$ on $X$ are both compatible with the given compact topology on $X$, then for any $\varepsilon>0$ there is some $\varepsilon'>0$ such that for any $x, y\in X$ satisfying $d'(x, y)\le \varepsilon'$ one has $d(x, y)\le \varepsilon$. It follows that the topological Markov properties do not depend upon the choice of the metric as long as it is compatible with the topology of $X$. Hence, they are invariants of topological conjugacy.

\subsection{Structural results}

	We say that a continuous pseudometric $\rho $ on $X$ is \define{dynamically
		generating} for an action $G \curvearrowright X$ \cite[Definition 9.35]{KerrLiBook2016} if for any distinct $x,y\in X$ one has $\sup_{s\in G}\rho
	(sx,sy)>0$.
As in the case of a metric, for $K \subset G$ we denote $\rho_K(x,y)= \sup_{s \in K}\rho(sx,sy)$.

\begin{proposition}
	\label{P-generating} Let $\rho $ be a dynamically generating continuous
	pseudometric on $X$ for an action $G\curvearrowright X$. Then the following hold.
	
	\begin{enumerate}
		\item $G\curvearrowright X$ has the TMP if and only if for any $\varepsilon >0$
		there is some $\delta >0$ such that for any $A\Subset G$ there is
		some $B\Subset G$ containing $A$ so that for any $x,y\in X$ with $\rho_{B\setminus A}(x,y)\le \delta$ there is some $z\in X$ with $\rho_{B}(z,x)\le \varepsilon$ and $\rho_{G \setminus A}(z,y)\le \varepsilon$.
		
		\item $G\curvearrowright X$ has the strong TMP if and only if for any $\varepsilon >0$
		there are some $\delta >0$ and $F\Subset G$ containing $e_G$ such that for any $A\Subset G$ and any $x,y\in X$ with $\rho_{FA\setminus A}
		(x,y)\le \delta $ there is some $z\in X$ with $\rho_{FA} (z,x)\le \varepsilon $ and $\rho_{G\setminus A}(z,y)\le \varepsilon$.
	\end{enumerate}
\end{proposition}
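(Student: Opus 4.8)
The plan is to show that both gluing conditions are instances of a single fact: the defining properties of the TMP and the strong TMP are insensitive to which dynamically generating continuous pseudometric is used to measure them. Since the fixed compatible metric $d$ is itself such a pseudometric, each ``if and only if'' then follows by applying this principle to the pair $(d,\rho)$ in both directions. So I would first prove a \emph{comparison lemma}: if $\mu,\nu$ are dynamically generating continuous pseudometrics for $G\curvearrowright X$, then for every $\varepsilon>0$ there are $\eta>0$ and a finite symmetric $F\Subset G$ with $e_G\in F$ such that $\mu_{Fg}(u,v)\le\eta$ forces $\nu(gu,gv)\le\varepsilon$ for all $u,v\in X$, $g\in G$; taking suprema over $g\in K$ gives $\mu_{FK}(u,v)\le\eta\Rightarrow\nu_K(u,v)\le\varepsilon$. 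This is a routine compactness argument: were it false there would be $x_n\to x$, $y_n\to y$ and $F_n\nearrow G$ with $\mu_{F_n}(x_n,y_n)\to0$ and $\nu(x_n,y_n)>\varepsilon$, whence $\mu(sx,sy)=0$ for all $s$, so $x=y$ because $\mu$ is dynamically generating, contradicting $\nu(x,y)\ge\varepsilon>0$. Applying this with $(\mu,\nu)=(d,\rho)$ and with $(\mu,\nu)=(\rho,d)$ gives the two inequalities I use to translate hypotheses and conclusions back and forth.

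Fix $\varepsilon>0$ for the $\rho$-condition. Using the comparison lemma I choose, in this order and depending only on $\varepsilon$: a pair $(\delta_A,F)$ with $d_{FK}\le\delta_A\Rightarrow\rho_K\le\varepsilon/2$; a threshold $\delta'$ witnessing the (strong) TMP for $d$ at accuracy $\delta_A/2$, shrunk so that $\delta'\le\delta_A/2$; and a pair $(\delta_B,F)$ with $\rho_{FK}\le\delta_B\Rightarrow d_K\le\delta'$. Enlarging $F$ I may take the same finite symmetric $F\ni e_G$ for both comparisons, since enlarging the shift set only strengthens the hypotheses of these implications. Put $\delta:=\min(\delta_B,\varepsilon/2)$. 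Given $A\Subset G$, the decisive idea is to \emph{fatten} rather than erode: take the hole $H:=FA\supseteq A$, let $\Phi_0$ be a $d$-memory set for $H$ from the TMP for $d$, and enlarge it to $\Phi:=\Phi_0\cup FFFA$. A one-line triangle-inequality argument shows that enlarging a memory set outside its hole keeps it a memory set while replacing its accuracy $\alpha$ by $\alpha+\delta'$; hence $\Phi$ is a $(\delta_A,\delta')$-memory set for $H$. Finally set $B:=A\cup FFA\cup F(\Phi\setminus H)$, a finite set containing $A$.

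To verify $B$, assume $\rho_{B\setminus A}(x,y)\le\delta$. As $H=FA$, the window $\Phi\setminus H$ is disjoint from $FA$, so $F(\Phi\setminus H)\subseteq B\setminus A$ and the comparison $\rho_{F(\cdot)}\le\delta_B\Rightarrow d_{(\cdot)}\le\delta'$ verifies the hypothesis $d_{\Phi\setminus H}(x,y)\le\delta'$ of the memory set $\Phi$. This yields $z$ with $d_\Phi(z,x)\le\delta_A$ and $d_{G\setminus H}(z,y)\le\delta_A$. I then read off the $\rho$-conclusions via the first comparison and the triangle inequality: for $g\notin FFA$ the window $Fg$ misses $H$, so $\rho(gz,gy)\le\varepsilon/2$ directly; for $g\in FFA\setminus A\subseteq B\setminus A$ one has $Fg\subseteq\Phi$, giving $\rho(gz,gx)\le\varepsilon/2$, and combining with $\rho(gx,gy)\le\delta\le\varepsilon/2$ gives $\rho(gz,gy)\le\varepsilon$; together these give $\rho_{G\setminus A}(z,y)\le\varepsilon$. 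The estimate $\rho_B(z,x)\le\varepsilon$ is symmetric, using $Fg\subseteq\Phi$ where available and otherwise the already-proved $\rho(gz,gy)\le\varepsilon/2$ together with $\rho(gx,gy)\le\delta$ on $B\setminus A$. For the strong TMP I enlarge $F$ to contain the fixed finite set furnishing the $d$-memory sets $F'H$; then $\Phi_0=F'FA\subseteq FFFA$, so $\Phi=FFFA$ and $B\subseteq F^4A$, and a harmless further enlargement of $B$ to the fixed multiple $F^4A$ (again valid by the triangle-inequality argument) exhibits a memory set of the required uniform form $\widehat{F}A$ with $\widehat F=F^4$.

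The step I expect to be the main obstacle is precisely the matching of index sets across the finite shift $F$. A pseudometric detects the separation of two points only after testing it over a finite window, so every translation between $d$ and $\rho$ thickens the relevant subset of $G$ by $F$; a naive choice of hole equal to $A$ forces, through the inner conclusion $\rho_B(z,x)\le\varepsilon$, the frame $\Phi$ to reach into $FA$, which makes the memory set's own hypothesis refer to the boundary layer $FA\setminus A$ where $x,y$ need not be $\rho$-close, and one is driven into an unbounded regress demanding $B\supseteq F^2B$. The resolution above breaks this by fattening the hole to $FA$, so the hypothesis window stays clear of the region where $x,y$ may differ, and by the observation that the inner estimate is \emph{free} on $B\setminus A$, where $z$ is close to $y$ by the outer estimate and $y$ is close to $x$ by hypothesis. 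This decouples $\Phi$ from $B$, so only fixed finite multiples of $A$ appear and the construction closes; checking that the inclusions $F(\Phi\setminus H)\subseteq B\setminus A$, $FFA\subseteq B$ and $Fg\subseteq\Phi$ hold simultaneously, and tracking the accuracy losses so every bound lands at $\le\varepsilon$, is the only genuinely delicate bookkeeping.
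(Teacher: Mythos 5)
Your proposal is correct, and it reaches the statement by a genuinely different mechanism than the paper. The paper's proof manufactures an explicit auxiliary compatible metric $d(x,y)=\sum_{s\in G}f(s)\rho(sx,sy)$ with summable weights, so that $\rho\le d$ trivially and, conversely, $d$-closeness follows from $\rho$-closeness on a finite window $K$ plus a small tail estimate $\mathrm{diam}(X,\rho)\sum_{s\notin K}f(s)$; it then writes out the ``only if'' and ``if'' directions separately, each being a bookkeeping transfer through this explicit metric (implicitly invoking the fact that the TMP does not depend on the choice of compatible metric). You instead prove an abstract comparison lemma by compactness: for any two dynamically generating continuous pseudometrics $\mu,\nu$ and any $\varepsilon>0$ there exist $\eta>0$ and finite symmetric $F\ni e_G$ with $\mu_{FK}\le\eta\Rightarrow\nu_K\le\varepsilon$ uniformly in the window $K$; your contradiction argument for this is sound (the limit points satisfy $\mu(sx,sy)=0$ for all $s$ yet $\nu(x,y)\ge\varepsilon$). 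With that in hand you run one symmetric transfer argument — fattened hole $H=FA$, memory set $\Phi\supseteq FFFA$ via your (correct) enlargement lemma, output set $B=A\cup FFA\cup F(\Phi\setminus H)$ — whose estimates I checked close properly (the inclusions $F(\Phi\setminus H)\subseteq B\setminus A$ and $Fg\cap FA=\varnothing$ for $g\notin FFA$ both use symmetry of $F$, which you imposed, and the final enlargement to $F^4A$ in the strong case works because points of $F^4A\setminus B$ lie outside $FFA$, where the sharper $\varepsilon/2$ bound holds, and inside $F^4A\setminus A$, where the strengthened hypothesis applies). The trade-off: the paper's route is elementary and quantitative, with all constants explicit in terms of the weight function, and needs no sequence extraction; your route is shorter conceptually, treats both directions and both parts (1)--(2) by a single argument, and in fact proves the stronger statement that the TMP and strong TMP conditions agree across \emph{all} dynamically generating continuous pseudometrics simultaneously, at the cost of burying the quantitative dependence inside a compactness contradiction. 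The core bookkeeping difficulty you flag — index sets drifting by the thickening $F$ — is the same one the paper resolves with its sets $KA$, $B_{KA}$, $K^2A$, $B'_{K^2A}$; your resolution by fattening the hole and decoupling $\Phi$ from $B$ is a clean alternative.
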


\begin{proof}
	We shall prove (1). The proof for (2) is similar.
	
	Take a function $f\colon G\rightarrow (0,1]$ with $\sum_{s\in G}f(s)<\infty $ and $%
	f(e_{G})=1$. For any $x,y\in X$, put
	\begin{equation*}
	d(x,y)=\sum_{s\in G}f(s)\rho (sx,sy).
	\end{equation*}%
	Then it is easily checked that $d$ is a compatible metric on $X$ and $d\geq
	\rho $.
	
	We prove the \textquotedblleft only if\textquotedblright\ part first. Assume
	that $G\curvearrowright X$ has the TMP. Let $\varepsilon >0$. Then there is
	some $\delta >0$ such that for any $A\Subset G$ there is some
	$B_{A}\Subset G$ containing $A$ so that for any $x,y\in X$ with $d_{B_{A}\setminus A}(x,y)\le \delta $ there is some $z\in X$ with $d_{B_A}(z,x)\le \frac{\varepsilon}{2}$ and $d_{G\setminus A}(z,y)\le \frac{\varepsilon}{2}$. Take a $K\Subset G$ with $e_{G}\in K=K^{-1}$
	such that $\mathrm{diam}(X,\rho)\sum_{s\in G\setminus K}f(s)<\frac{\delta}{2}$.
	Put $\delta ^{\prime }=\frac{1}{2|K|}\min(\varepsilon,\delta)>0$. For $A \Subset G$, let $B^{\prime }=KB_{KA}$. Then $B^{\prime }$ is a finite subset of $G$ containing $A$, and $K(B_{KA}\setminus KA)\subset
	B^{\prime }\setminus A$. Let $x,y\in X$ with $\rho_{B^{\prime
		}\setminus A} (x,y)\le \delta^{\prime }$. For any $s\in B_{KA}\setminus
	KA$, we have
	\begin{equation*}
	d(sx,sy)=\sum_{t\in K}f(t)\rho (tsx,tsy)+\sum_{t\in G\setminus K}f(t)\rho
	(tsx,tsy)<|K|\delta ^{\prime }+\frac{\delta}{2}\leq \delta .
	\end{equation*}%
	Thus there is some $z\in X$ with $d_{B_{KA}}(z,x)\le \frac{\varepsilon}{2}$
	and $d_{G\setminus KA}(z,y)\le \frac{\varepsilon}{2}$. For any $s\in B_{KA}$, we have $\rho (sz,sx)\leq d(sz,sx)\le \frac{\varepsilon}{2}$. For any $s\in
	G\setminus KA$, we have $\rho (sz,sy)\leq d(sz,sy)\le \frac{\varepsilon}{2}$. For any $s\in KA\setminus A$, we have
	\begin{equation*}
	\rho (sz,sy)\leq \rho (sz,sx)+\rho (sx,sy)\leq d(sz,sx)+\delta^{\prime}<\frac{\varepsilon}{2}+\delta ^{\prime }\leq \varepsilon .
	\end{equation*}
	For any $s\in B^{\prime}\setminus B_{KA}$, we have
	\begin{equation*}
	\rho(sz, sx)\le \rho(sz, sy)+\rho(sy, sx)\le \frac{\varepsilon}{2}+\delta'\le \varepsilon.
	\end{equation*}
	This proves the \textquotedblleft only if\textquotedblright\ part.
	
	Next we prove the \textquotedblleft if\textquotedblright\ part. Let $%
	\varepsilon >0$. Take a  $K\Subset G$ with $e_{G}\in K=K^{-1}$ such that $\mathrm{diam}(X,\rho )\sum_{s\in G\setminus K}f(s)<\frac{\varepsilon}{4}$. Put $\varepsilon ^{\prime }=\frac{\varepsilon}{8|K|}>0$. By assumption, there is some $\delta ^{\prime }>0$ such that for any  $A\Subset G$ there is some $B_{A}^{\prime }\Subset G$ containing $A$ so that for any $x,y\in X$
	with $\rho_{B_{A}^{\prime }\setminus A}(x,y)\le \delta ^{\prime }$
	there is some $z\in X$ with $\rho_{B_A^{\prime}} (z,x)\le \varepsilon ^{\prime }$ and $\rho_{G \setminus A} (z,y)\le \varepsilon^{\prime}$. Put $\delta =\min (\delta ^{\prime },\varepsilon ^{\prime })>0$. Let $A\Subset G$. Put $B=B_{K^{2}A}^{\prime }\supset
	K^{2}A\supset A$. Let $x,y\in G$ with $d_{B\setminus
		A}(x,y)\le \delta $. Then
	\begin{equation*}
	\max_{s\in B_{K^{2}A}^{\prime }\setminus K^{2}A}\rho (sx,sy)\leq
	d_{B\setminus A}(x,y)\le \delta \leq \delta ^{\prime }.
	\end{equation*}%
	Thus there is some $z\in X$ with $\rho_{B^{\prime}_{K^{2}A}} (z,x)\le \varepsilon ^{\prime }$ and $\rho_{G\setminus K^{2}A}(z,y)\le \varepsilon ^{\prime }$. For any $s\in KA$, we have
	\begin{equation*}
	d(sz,sx)=\sum_{t\in K}f(t)\rho (tsz,tsx)+\sum_{t\in G\setminus K}f(t)\rho
	(tsz,tsx)<|K|\varepsilon ^{\prime }+\frac{\varepsilon}{4}\leq \frac{\varepsilon}{2}.
	\end{equation*}%
	For any $\gamma \in K^{2}A\setminus A\subset B\setminus A$, we have
	\begin{equation*}
	\rho (\gamma z,\gamma y)\leq \rho (\gamma z,\gamma x)+\rho (\gamma x,\gamma
	y)\le \varepsilon ^{\prime }+d(\gamma x,\gamma y)\le \varepsilon ^{\prime }+\delta.
	\end{equation*}%
	Thus $\rho (\gamma z,\gamma y)\le \varepsilon ^{\prime }+\delta $ for all $%
	\gamma \in G\setminus A$. Then for any $s\in G\setminus KA$, we get
	\begin{equation*}
	d(sz,sy)=\sum_{t\in K}f(t)\rho (tsz,tsy)+\sum_{t\in G\setminus K}f(t)\rho
	(tsz,tsy)\le |K|(\varepsilon ^{\prime }+\delta )+\frac{\varepsilon}{4}\le \frac{\varepsilon}{2}.
	\end{equation*}%
	For any $s\in KA\setminus A$, we have
	\begin{equation*}
	d(sz,sy)\leq d(sz,sx)+d(sx,sy)\le \frac{\varepsilon}{2}+\delta <\varepsilon.
	\end{equation*}%
	For any $s\in B\setminus KA$, we have
	\begin{equation*}
	d(sz,sx)\leq d(sz,sy)+d(sy,sx)\le \frac{\varepsilon}{2}+\delta <\varepsilon.
	\end{equation*}%
	Therefore $G\curvearrowright X$ has the TMP. This proves the \textquotedblleft
	if\textquotedblright\ part.
\end{proof}

Let $X\subset \Lambda^G$ be a $G$-subshift. For any $x,y\in X$, put
\begin{equation*}
\rho (x,y)= \begin{cases}
1 & \mbox{ if } x(e_G) \neq y(e_G)\\
0 & \mbox{ otherwise}
\end{cases}.
\end{equation*}%
Then $\rho $ is a dynamically generating continuous pseudometric for $G\curvearrowright X$.
Thus from~\Cref{P-generating} we get the following corollary.

\begin{corollary}
	\label{C-subshift} Let $X\subset \Lambda^G$ be a $G$-subshift. The following hold.
	
	\begin{enumerate}
		\item $G\curvearrowright X$ has the TMP if and only if for any $A\Subset G$ there is some $B\Subset G$ containing $A$ so that for
		any $x,y\in X$ with $x=y$ on $B\setminus A$ the configuration $z\in \Lambda^G$ such that $z|_{A}=x|_{A}$ and $z|_{G \setminus A} =y|_{G\setminus A}$ lies in $X$.
		
		\item $G\curvearrowright X$ has the strong TMP if and only if there is some $F\Subset G
		$ containing the identity, such that for any $A\Subset G$ and any $x,y\in X$ with $x=y$ on $%
		AF\setminus A$ the configuration $z\in \Lambda^G$ such that $z|_{A}=x|_{A}$ and $z|_{G \setminus A} =y|_{G\setminus A}$ lies in $X$.
	\end{enumerate}
\end{corollary}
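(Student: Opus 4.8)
The plan is to deduce Corollary~\ref{C-subshift} directly from Proposition~\ref{P-generating} by specializing it to the concrete dynamically generating pseudometric $\rho$ defined just before the corollary statement, namely $\rho(x,y)=1$ if $x(e_G)\neq y(e_G)$ and $\rho(x,y)=0$ otherwise. The first task is to verify that $\rho$ is indeed a dynamically generating continuous pseudometric, so that Proposition~\ref{P-generating} applies. Continuity is clear since $\rho$ depends only on the coordinate at $e_G$, which is a clopen condition in the product topology, and the triangle inequality holds because $\rho$ takes values in $\{0,1\}$. The generating property holds because if $x\neq y$ in $\Lambda^G$, then $x(h)\neq y(h)$ for some $h\in G$, and then $\rho(h^{-1}x,h^{-1}y)=1$ (using $(h^{-1}x)(e_G)=x(h)$ from the shift action definition $gx(h)=x(g^{-1}h)$), so $\sup_{s\in G}\rho(sx,sy)=1>0$.

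The key observation driving everything is that for this particular $\rho$, the quantity $\rho_K(x,y)=\sup_{s\in K}\rho(sx,sy)$ is exactly $0$ when $x=y$ on $K$ and $1$ otherwise. Indeed $\rho(sx,sy)=1$ precisely when $(sx)(e_G)\neq(sy)(e_G)$, i.e. $x(s^{-1}e_G)=x(s^{-1})\neq y(s^{-1})$. Here I must be careful about the direction of the index set: $\rho_K(x,y)=0$ iff $x(s^{-1})=y(s^{-1})$ for all $s\in K$, i.e. $x=y$ on $K^{-1}$. So the condition ``$\rho_K(x,y)\le\delta$'' for any $\delta<1$ is equivalent to ``$x=y$ on $K^{-1}$,'' and this forces the memory-set conditions in Proposition~\ref{P-generating} to translate into statements about $x=y$ on the \emph{inverses} of the relevant sets. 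To match the corollary's clean statement (with $B\setminus A$ in part (1) and $AF\setminus A$ in part (2)), I would either absorb the inverse into a reindexing of the memory set or, more cleanly, note that since the class of memory sets and the class of finite sets $F$ are closed under passing to inverses in the appropriate sense, one may replace $B$ by $B^{-1}$ and $F$ by $F^{-1}$ throughout. This bookkeeping with inverses is the one spot demanding genuine care.

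With this translation in hand, the proof is mechanical. For part~(1): since $\rho$ is $\{0,1\}$-valued, fix $\varepsilon\in(0,1)$ in Proposition~\ref{P-generating}(1). The resulting $\delta$ may be taken in $(0,1)$, and then the hypotheses ``$\rho_{B\setminus A}(x,y)\le\delta$'' become ``$x=y$ on $B\setminus A$'' (after the inverse reindexing), while the conclusions ``$\rho_B(z,x)\le\varepsilon$'' and ``$\rho_{G\setminus A}(z,y)\le\varepsilon$'' become ``$z=x$ on $B$'' and ``$z=y$ on $G\setminus A$,'' which together say exactly that $z$ is the configuration agreeing with $x$ on $A$ and with $y$ on $G\setminus A$ (agreement on $B\setminus A$ is automatic since $z=x=y$ there). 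The converse direction is immediate: if the combinatorial splicing property holds, one reads off the pseudometric version with any $\delta<1$. Part~(2) is identical, with the memory set constrained to the form $AF$ (respectively $FA$ up to the inverse convention), so the quantifier on $F$ moves outside the quantifier on $A$.

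The main obstacle is not conceptual but notational: correctly tracking how the left-shift convention $gx(h)=x(g^{-1}h)$ converts the pseudometric $\rho_K$ into a condition on the set $K^{-1}$ rather than $K$, and ensuring that the final statement is phrased with $B\setminus A$ and $AF\setminus A$ as written rather than their inverses. Everything else is the observation that a $\{0,1\}$-valued pseudometric collapses the $\varepsilon$-$\delta$ formulation of Proposition~\ref{P-generating} into exact equality of configurations on the relevant supports.
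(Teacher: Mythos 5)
Your proposal is correct and is essentially the paper's own proof: the paper likewise introduces the $\{0,1\}$-valued pseudometric $\rho(x,y)=\indicator{x(e_G)\neq y(e_G)}$, notes it is dynamically generating, and reads off the corollary from Proposition~\ref{P-generating}. The inverse bookkeeping you flag (which turns the memory set $FA$ of the strong TMP into the set $AF$ appearing in the corollary, via replacing $A$ by $A^{-1}$ and $F$ by $F^{-1}$) is exactly the step the paper leaves implicit, and you handle it correctly.
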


In particular, we recover the original definitions introduced for subshifts in~\cite[Section 2.5.1]{Richifest}. Before stating the relation between the POTP and the TMP properties, we shall state the following technical lemma.

\begin{lemma}
	\label{L-PO} Let $G \curvearrowright X$ be an action, $F\Subset G$ with $e_{G}\in F=F^{-1}$ and $\delta >0$.
	There exists $\eta >0$ such that for any $A\Subset G$, any $FA$-separated $%
	V\subset G$, any $x_{v}\in X$ for $v\in V,$ and $y\in X$ satisfying $%
	d_{(FA\setminus A)v}(x_{v},y)\leq\eta $ for all $v\in V$, we have that $\{w_{\gamma }\}_{\gamma \in G}$ is an $(F,\delta )$
	pseudo-orbit, where
	\begin{equation*}
	w_{\gamma }=\left\{
	\begin{array}{cc}
	\gamma x_{v} & \mbox{if \ }\gamma \in Av\mbox{ for some }v\in V\\
	\gamma y & \mbox{if }\gamma \in G\setminus AV%
	\end{array}%
	\right. .
	\end{equation*}
\end{lemma}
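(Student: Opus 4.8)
The plan is to verify the pseudo-orbit inequality $d(s w_{\gamma}, w_{s\gamma}) < \delta$ for every $s \in F$ and $\gamma \in G$ directly, by splitting into cases according to where $\gamma$ and $s\gamma$ sit relative to the partition of $G$ into the sets $Av$ (for $v \in V$) and the complement $G \setminus AV$. Before that I would fix the constant $\eta$: since $X$ is compact and each $s \in F$ acts as a homeomorphism, the finite family $\{x \mapsto sx : s \in F\}$ is uniformly equicontinuous, so I can choose $\eta > 0$, taken moreover to satisfy $\eta < \delta$, with the property that $d(a,b) \le \eta$ forces $d(sa, sb) < \delta$ for all $s \in F$. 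I would also record the structural consequence of the hypotheses: because $e_G \in F$ we have $Av \subseteq FAv$, so the $FA$-separation of $V$ makes the sets $\{Av\}_{v \in V}$ pairwise disjoint and guarantees that each $\gamma$ belongs to at most one $Av$, which makes $w_{\gamma}$ well defined.

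The two ``diagonal'' cases are immediate from equivariance. If $\gamma$ and $s\gamma$ lie in the same $Av$, then $s w_{\gamma} = (s\gamma) x_v = w_{s\gamma}$; if both lie in $G \setminus AV$, then $s w_{\gamma} = (s\gamma) y = w_{s\gamma}$; either way the distance is $0$. The case $\gamma \in Av_1$ and $s\gamma \in Av_2$ with $v_1 \ne v_2$ is ruled out by separation: writing $\gamma = a_1 v_1$ gives $s\gamma = (sa_1)v_1 \in FAv_1$, while also $s\gamma \in Av_2 \subseteq FAv_2$, contradicting $FAv_1 \cap FAv_2 = \varnothing$.

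The two remaining boundary cases are where the actual work lies. If $\gamma \in Av$ but $s\gamma \notin AV$, then $s\gamma = (sa)v$ with $sa \in FA$, and $s\gamma \notin Av$ forces $sa \in FA \setminus A$, so $s\gamma \in (FA\setminus A)v$; the hypothesis $d_{(FA\setminus A)v}(x_v, y) \le \eta$ then bounds $d(s w_{\gamma}, w_{s\gamma}) = d((s\gamma)x_v,(s\gamma)y)$ by $\eta < \delta$ directly. The symmetric case $\gamma \notin AV$ and $s\gamma \in Av$ is more delicate: here the quantity to estimate, $d((s\gamma)y,(s\gamma)x_v)$, is evaluated at the interior point $s\gamma \in Av$, where the hypothesis gives no control. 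The remedy is to use $F = F^{-1}$: writing $s\gamma = a'v$ yields $\gamma = (s^{-1}a')v$ with $s^{-1} \in F$, and $\gamma \notin Av$ forces $\gamma \in (FA\setminus A)v$, so the hypothesis applies at $\gamma$ to give $d(\gamma x_v, \gamma y) \le \eta$; transporting this forward by the homeomorphism $s$ and invoking the uniform-continuity choice of $\eta$ yields $d(s(\gamma y), s(\gamma x_v)) < \delta$.

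The only genuine obstacle is this last case, and it is exactly the reason the lemma is phrased with an auxiliary $\eta$ depending on $F$ and $\delta$ rather than simply taking $\eta = \delta$: the hypothesis controls distances only on the boundary layer $(FA\setminus A)v$, whereas the pseudo-orbit condition forces us to estimate a distance at the interior point $s\gamma \in Av$. Pulling the estimate back to the boundary point $\gamma$, where control is available, and pushing it through the single group element $s$ is what makes the argument close, and it is precisely here that compactness of $X$ and finiteness of $F$ are used.
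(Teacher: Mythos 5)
Your proof is correct and follows essentially the same route as the paper's: the same choice of $\eta$ via uniform equicontinuity of the finite family $F$ on the compact space $X$, the same reduction to the two boundary cases, and the same use of $F=F^{-1}$ to pull the second boundary case back to a point of $(FA\setminus A)v$ where the hypothesis applies before pushing the estimate through $s$. The paper merely phrases that last step as an application of the first case to the pair $(s^{-1}, s\gamma)$, which is the same computation you carry out explicitly.
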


\begin{proof}
	Take $0<\eta < \delta $ such that, for any $x,y\in X$ with $d(x,y)\leq \eta$, we have $d_F(x,y)<\delta $. Let $A,V,x_{v}$ and $y$ be as in the statement. We need to check $%
	d(sw_{\gamma },w_{s\gamma })<\delta $ for all $s\in F$ and $\gamma \in G$.
	Fix $s\in F$. The only nontrivial cases are (1) $\gamma \in Av$ for some $%
	v\in V$ and $s\gamma \in G\setminus AV$, and (2) $\gamma \in G\setminus AV$
	and $s\gamma \in Av$ for some $v\in V$.
	
	Consider the case (1). Then $s\gamma \in (FA\setminus A)v$, and hence
	\begin{equation*}
	d(sw_{\gamma },w_{s\gamma })=d(s\gamma x_{v},s\gamma y)\leq\eta < \delta .
	\end{equation*}
	
	Now consider the case (2). Note that $s\gamma \in Av$ and $s^{-1}(s\gamma
	)\in G\setminus AV$, and hence from the case (1) we have
	\begin{equation*}
	d(s^{-1}w_{s\gamma },w_{\gamma })\leq\eta .
	\end{equation*}%
	From our choice of $\eta $ we get $d(w_{s\gamma },sw_{\gamma })<\delta $ as
	desired.
\end{proof}

\begin{proposition}\label{prop_POTP_implies_sTMP}
	Let $G \curvearrowright X$ be an action. The following implications hold.
	
	\begin{center}
		\begin{tikzpicture}
		\node (A) at (0,0) {$G \curvearrowright X$ \mbox{has POTP}};
		\node (B) at (5.5,0) {$G \curvearrowright X$ \mbox{has uniform strong TMP}};
		\node (C) at (11,0) {$G \curvearrowright X$ \mbox{has uniform TMP}};
		\node (D) at (5.5,-1.5) {$G \curvearrowright X$ \mbox{has strong TMP}};
		\node (E) at (11,-1.5) {$G \curvearrowright X$ \mbox{has TMP}};
		\draw[ -implies,double equal sign distance] (A) -- (B);
		\draw[ -implies,double equal sign distance] (B) -- (C);
		\draw[ -implies,double equal sign distance] (B) -- (D);
		\draw[ -implies,double equal sign distance] (D) -- (E);
		\draw[ -implies,double equal sign distance] (C) -- (E);
		\end{tikzpicture}
	\end{center}
\end{proposition}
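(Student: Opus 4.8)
The plan is to dispatch the four ``forgetful'' implications first and then concentrate on the single substantive one, $\text{POTP} \Rightarrow \text{uniform strong TMP}$, which is where essentially all the work lies.

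For the forgetful implications the key observation is twofold. First, a uniform $(\varepsilon,\delta)$-memory set $B$ is in particular an ordinary one: taking the singleton $V = \{e_G\}$ (which is trivially $B$-separated) in the definition of uniformity, the hypothesis collapses to $d_{B\setminus A}(x,y) \le \delta$ and the conclusion supplies $z$ with $d_{B}(z,x) \le \varepsilon$ and $d_{G\setminus A}(z,y) \le \varepsilon$, which is exactly the definition of a memory set. This at once gives uniform TMP $\Rightarrow$ TMP and uniform strong TMP $\Rightarrow$ strong TMP. Second, a strong-TMP memory set $FA$ is, after forgetting its special shape, merely a memory set, so strong TMP $\Rightarrow$ TMP; and the same uniform memory set $FA$ directly witnesses uniform TMP, giving uniform strong TMP $\Rightarrow$ uniform TMP. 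Each of these four is a one-line verification.

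For the main implication I would argue as follows. Fix $\varepsilon > 0$ and apply the POTP to $\varepsilon/2$ to obtain $\delta_0 > 0$ and $S \Subset G$ such that every $(S,\delta_0)$ pseudo-orbit is $(\varepsilon/2)$-traced. Set $F = S \cup S^{-1} \cup \{e_G\}$, so that $e_G \in F = F^{-1}$ and $S \subset F$, and feed $F$ together with $\delta_0$ into Lemma~\ref{L-PO} to obtain $\eta > 0$, shrinking $\eta$ if necessary so that $\eta \le \varepsilon/2$. I then claim that for every $A \Subset G$ the set $FA$ is a uniform $(\varepsilon,\eta)$-memory set for $A$, which is precisely the uniform strong TMP with data $(\varepsilon,\eta,F)$. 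To check the claim, take an $FA$-separated $V \subset G$, points $x_v \in X$ for $v \in V$, and $y \in X$ with $d_{(FA\setminus A)v}(x_v,y) \le \eta$ for all $v$. Lemma~\ref{L-PO} guarantees that the configuration $\{w_\gamma\}_{\gamma \in G}$, equal to $\gamma x_v$ on $Av$ and to $\gamma y$ off $AV$, is an $(F,\delta_0)$ pseudo-orbit, hence an $(S,\delta_0)$ pseudo-orbit since $S \subset F$. Applying the POTP yields $z \in X$ with $d(\gamma z, w_\gamma) \le \varepsilon/2$ for all $\gamma \in G$.

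It then remains to read off the two conclusions of uniformity from this tracing estimate. The bound $d_{G\setminus AV}(z,y) \le \varepsilon/2 \le \varepsilon$ is immediate, since $w_\gamma = \gamma y$ there. For $d_{FAv}(z,x_v) \le \varepsilon$ I would split $FAv$ into $Av$ and the ``collar'' $FAv \setminus Av$: on $Av$ one has $w_\gamma = \gamma x_v$, so $d(\gamma z, \gamma x_v) \le \varepsilon/2$ directly, while on the collar the $FA$-separation of $V$ forces $\gamma \notin AV$ (any $\gamma \in FAv \cap Av'$ with $v' \ne v$ would lie in $FAv \cap FAv'$, impossible, and $\gamma \in Av$ is excluded), whence $w_\gamma = \gamma y$ and the triangle inequality gives $d(\gamma z, \gamma x_v) \le d(\gamma z, \gamma y) + d(\gamma y, \gamma x_v) \le \varepsilon/2 + \eta \le \varepsilon$, using the hypothesis $d_{(FA\setminus A)v}(x_v,y)\le\eta$. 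This collar estimate is the one place where the separation hypothesis and the choice of constants genuinely interact, and it is exactly why I invoke the POTP at $\varepsilon/2$ and shrink $\eta$. The truly nontrivial combinatorial content, namely that the glued data forms a pseudo-orbit, has already been isolated in Lemma~\ref{L-PO}, so I anticipate no further obstacle beyond this bookkeeping.
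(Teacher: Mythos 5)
Your proof is correct and follows essentially the same route as the paper: the four forgetful implications via $V=\{e_G\}$ and forgetting the shape of $FA$, and the main implication by feeding $F \supset S$ with $e_G \in F = F^{-1}$ into Lemma~\ref{L-PO}, tracing the glued pseudo-orbit with the POTP at tolerance $\varepsilon/2$, and splitting $FAv$ into $Av$ and the collar $(FA\setminus A)v$ exactly as in the paper. The only cosmetic difference is that you shrink $\eta \le \varepsilon/2$ directly rather than first shrinking the POTP tolerance $\delta$ and taking $\eta \le \delta$, which is immaterial.
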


\begin{proof}
	The fact that the uniform versions imply the non-uniform ones follows by taking $V = \{e_G\}$. The implications showing that (uniform) strong TMP implies (uniform) TMP are also trivial. Let us show that if $G \curvearrowright X$ has the POTP then $G\curvearrowright X$ has the uniform strong TMP.
	
	Let $\varepsilon >0$. By the POTP, there exist $\delta >0$ and $S \Subset G$ such that every $(S,\delta)$ pseudo-orbit is $\frac{\varepsilon}{2}$-traced by some $z \in X$. Take $F\Subset G$ such that $F \supset S$ and $e_G \in F = F^{-1}$. Choosing $\delta \leq \frac{\varepsilon}{2}$ we can apply~\Cref{L-PO} to obtain $\eta \leq \delta$, such that for any $A\Subset G$, any $FA$-separated set $V\subset G$, any $x_v\in X$ for $v \in V$, and $y \in X$ satisfying $d_{(FA\setminus A)v}(x_v,y) \leq \eta$ for all $v \in V$, we have that $\{w_{\gamma}\}_{\gamma\in G}$ defined by
	\[ w_{\gamma} =  \begin{cases}
	\gamma x_v & \mbox{ if } \gamma \in Av \mbox{ for some }v\in V\\
	\gamma y & \mbox{ if }\gamma \in G\setminus AV
	\end{cases} \]
	is an $(S,\delta)$ pseudo-orbit. By the POTP, there exists $z \in X$ which $\frac{\varepsilon}{2}$-traces $\{w_{\gamma}\}_{\gamma\in G}$. Therefore, for every $v\in V$ and $g \in FAv$ we have that either $g \in Av$, and thus $d(gz,gx_v) = d(gz,w_g) \leq \frac{\varepsilon}{2}$, or $g \in (FA \setminus A)v$ and thus $d(gz,gx_v) \leq d(gz,gy)+d(gy,gx_v) \leq  d(gz,w_g)+\eta \leq \varepsilon$. For $g \in G\setminus AV$ we have $d(gz,gy)=d(gz,w_g) \leq \frac{\varepsilon}{2}$. This shows that $FA$ is a uniform $(\varepsilon,\eta)$-memory set for $A$ and hence that $G\curvearrowright X$ has the uniform strong TMP.\end{proof}

\begin{proposition}\label{P-amenable_implications_trivial}
	Let $G$ be an amenable group and $G \curvearrowright X$ an action. The following implications hold.
	\begin{center}
		\begin{tikzpicture}
		\node (A) at (0,0) {$\underset{\mbox{has uniform strong TMP}}{G \curvearrowright X}$};
		\node (B) at (5.5,0) {$\underset{\mbox{has uniform mean TMP}}{G \curvearrowright X}$};
		\node (C) at (11,0) {$\underset{\mbox{has uniform TMP}}{G \curvearrowright X}$};
		\node (AA) at (0,-1.5) {$G \curvearrowright X$ \mbox{has strong TMP}};
		\node (D) at (5.5,-1.5) {$G \curvearrowright X$ \mbox{has mean TMP}};
		\node (E) at (11,-1.5) {$G \curvearrowright X$ \mbox{has TMP}};
		\draw[ -implies,double equal sign distance] (A) -- (B);
		\draw[ -implies,double equal sign distance] (A) -- (AA);
		\draw[ -implies,double equal sign distance] (AA) -- (D);
		\draw[ -implies,double equal sign distance] (B) -- (C);
		\draw[ -implies,double equal sign distance] (B) -- (D);
		\draw[ -implies,double equal sign distance] (D) -- (E);
		\draw[ -implies,double equal sign distance] (C) -- (E);
		\end{tikzpicture}
	\end{center}
\end{proposition}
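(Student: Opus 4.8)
The seven arrows split into three kinds, and I would organise the argument around that split. \emph{First}, the three vertical implications $(A)\Rightarrow(AA)$, $(B)\Rightarrow(D)$, $(C)\Rightarrow(E)$ (uniform version implies the plain one) are immediate from the definitions: a singleton $V=\{e_G\}$ is trivially $B$-separated, so specialising the uniform memory-set property to $V=\{e_G\}$, $x_{e_G}:=x$, $y:=y$ recovers exactly the non-uniform $(\varepsilon,\delta)$-memory-set property. Hence every uniform property implies its plain counterpart with the same $\delta$ (and the same $F$, in the strong case), which settles arrows $(A)\Rightarrow(AA)$, $(B)\Rightarrow(D)$ and $(C)\Rightarrow(E)$.

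\emph{Second}, the arrows strong $\Rightarrow$ mean, namely $(AA)\Rightarrow(D)$ and its uniform analogue $(A)\Rightarrow(B)$, are where I would use amenability. Fix $\varepsilon>0$ and let $\delta>0$ and $F\Subset G$ with $e_G\in F$ be the data given by the (uniform) strong TMP, so that $FA$ is a (uniform) $(\varepsilon,\delta)$-memory set for every $A\Subset G$. Choose an increasing left-F\o lner sequence $\{F_n\}_{n\in\NN}$ with $\bigcup_n F_n=G$, which exists for any countably infinite amenable group. Set $\widetilde{F_n}:=FF_n$; this is a (uniform) $(\varepsilon,\delta)$-memory set for $F_n$. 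Since $e_G\in F$ we have $F_n\subseteq FF_n$, so $FF_n\setminus F_n=FF_n\,\Delta\,F_n$, and the left-F\o lner property applied to $K=F$ yields $|\widetilde{F_n}\setminus F_n|=|FF_n\,\Delta\,F_n|=o(|F_n|)$. This is precisely the (uniform) mean TMP.

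\emph{Third}, the arrows mean $\Rightarrow$ TMP, namely $(D)\Rightarrow(E)$ and its uniform analogue $(B)\Rightarrow(C)$, I would derive from a \textbf{restriction lemma}: if $B$ is a (uniform) $(\varepsilon,\delta)$-memory set for $A$ and $A'\subseteq A$, then for every $0<\delta'\le\delta$ the \emph{same} set $B$ is a (uniform) $(\varepsilon+\delta',\delta')$-memory set for $A'$. To prove it one feeds the hypothesis for $A'$ into the property for $A$ — legitimate because $B\setminus A\subseteq B\setminus A'$ and $\delta'\le\delta$ — producing $z$ controlling $x$ on $B$ and $y$ on $G\setminus A$; it then remains to control $y$ against $z$ on the extra region $A\setminus A'\subseteq B\setminus A'$, which follows from the triangle inequality $d(sy,sz)\le d(sy,sx)+d(sx,sz)\le\delta'+\varepsilon$. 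Granting this, mean TMP gives TMP by simple bookkeeping: for a target $\varepsilon^{*}>0$ apply (uniform) mean TMP with $\varepsilon=\varepsilon^{*}/2$, obtaining $\delta$ and $\{F_n\}$ with memory sets $\widetilde{F_n}$; put $\delta^{*}:=\min(\delta,\varepsilon^{*}/2)$; and given any $A\Subset G$ pick $n$ with $A\subseteq F_n$ (possible since the $F_n$ exhaust $G$). The restriction lemma (with the roles $A'=A$, $A=F_n$, $\delta'=\delta^{*}$) makes $\widetilde{F_n}$ an $(\varepsilon^{*}/2+\delta^{*},\delta^{*})$-memory set for $A$, hence an $(\varepsilon^{*},\delta^{*})$-memory set since $\varepsilon^{*}/2+\delta^{*}\le\varepsilon^{*}$ and enlarging $\varepsilon$ weakens the conclusion. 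As $\delta^{*}$ depends only on $\varepsilon^{*}$, this is exactly (uniform) TMP. Note this direction uses only that $\bigcup_n F_n=G$, not the size bound $|\widetilde{F_n}\setminus F_n|=o(|F_n|)$.

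I expect the \emph{uniform} restriction lemma to be the main obstacle; the $\varepsilon$–$\delta$ arithmetic and the monotonicity remarks (enlarging $\varepsilon$ or shrinking $\delta$ preserves the memory-set property) are routine. The delicate point is that a single $B$-separated family $V$ must serve simultaneously for $A'$ and for $A$, and one must verify the disjoint decomposition $AV\setminus A'V=\bigsqcup_{v\in V}(A\setminus A')v$ — which uses that $V$ is $B$-separated together with $A',A\subseteq B$ — before running the per-translate triangle-inequality estimate on each $(A\setminus A')v\subseteq Bv\cap(B\setminus A')v$. Once that combinatorial bookkeeping is in place, all three groups combine to give the full diagram.
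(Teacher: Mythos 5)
Your proposal is correct and follows essentially the same route as the paper: uniform implies plain by specializing to $V=\{e_G\}$, strong implies mean by taking $\widetilde{F_n}=FF_n$ along an increasing F\o lner sequence exhausting $G$, and mean implies TMP by picking $n$ with $A\subset F_n$ and absorbing the discrepancy on $F_n\setminus A$ via the triangle inequality. Your ``restriction lemma'' (including its uniform version with the decomposition $F_nV\setminus AV=\bigsqcup_{v\in V}(F_n\setminus A)v$ from $B$-separation) is simply an explicit write-up of the step the paper compresses into ``thus $\widetilde{F_n}$ is a (uniform) $(\varepsilon,\delta)$-memory set for $A$,'' so the two arguments coincide.
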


\begin{proof}
	As before, the uniform versions imply the non-uniform versions by choosing $V = \{e_G\}$. Since $G$ is amenable, there exists an increasing F\o lner sequence $\{F_n\}_{n \in \NN}$ for $G$ with union $G$. If $G\curvearrowright X$ has the (uniform) strong TMP, for any $\varepsilon>0$ there are $\delta>0$ and $K \Subset G$ containing the identity such that $KF_n$ is a (uniform) $(\varepsilon,\delta)$-memory set for $F_n$. As $\{F_n\}_{n\in \mathbb{N}}$ is F\o lner, we have that $|KF_n \setminus F_n| = o(|F_n|)$. Thus $G \curvearrowright X$ has the (uniform) mean TMP.
	
	If $G \curvearrowright X$ has the (uniform) mean TMP then for every $\varepsilon>0$ there are $0<\delta<\frac{\varepsilon}{2}$ and an increasing sequence $\{F_n\}_{n \in \NN}$ of finite subsets of $G$ with union $G$ for which there are finite (uniform) $(\frac{\varepsilon}{2},\delta)$-memory sets $\widetilde{F_n}$. It suffices to choose for every $A \Subset G$ a value of $n$ large enough such that $A \Subset F_n$ and thus $\widetilde{F_n}$ is a (uniform) $(\varepsilon,\delta)$-memory set for $A$.
\end{proof}

Now we will see that an expansive action has the TMP if and only if it has the uniform TMP.
Note that for any $F\Subset G$ and $\delta >0$, if $\{x_{t}\}_{t\in G}$ is an $(F,\delta )$ pseudo-orbit, then so is $\{x_{tg }\}_{t\in G}$ for any $g \in G$. The following lemma is only stated for $G={\mathbb{Z}}$ in the reference, but its proof works for all countable groups.

\begin{lemma}[Propositions 1 and 2 of~\cite{Achigar2019}] \label{L-close PO}
	 Let $G\curvearrowright X$ be an action and $c>0$. The following are equivalent:
	\begin{enumerate}
		\item $c$ is an expansivity constant for $G\curvearrowright X$.
		
		\item For any $\varepsilon >0$, there exist $F\Subset G$ and $\delta >0$
		such that for any two $(F,\delta )$ pseudo-orbits $\{x_{t}\}_{t\in G}$ and $%
		\{y_{t}\}_{t\in G}$, if $d(x_{t},y_{t})\leq c$ for all $t\in G$, then $%
		d(x_{t},y_{t})\leq \varepsilon $ for all $t\in G$.
		
		\item For any $\varepsilon >0$, there exists $W\Subset G$ such that for any $%
		x,y\in X$, if $d_W(x,y)\leq c$, then $d(x,y)<\varepsilon $.
	\end{enumerate}
\end{lemma}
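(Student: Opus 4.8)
The three conditions are most naturally shown to be equivalent by treating (1) as a hub: I would prove the two ``easy'' implications $(2)\Rightarrow(1)$ and $(3)\Rightarrow(1)$ directly, and the two ``hard'' implications $(1)\Rightarrow(2)$ and $(1)\Rightarrow(3)$ through a single auxiliary compactness statement. For $(2)\Rightarrow(1)$, suppose $\sup_{t\in G}d(tx,ty)\le c$; the genuine orbit sequences $x_t=tx$ and $y_t=ty$ satisfy $d(sx_t,x_{st})=0$, so they are $(F,\delta)$ pseudo-orbits for every $F\Subset G$ and $\delta>0$, and $d(x_t,y_t)\le c$ for all $t$. Applying (2) for each $\varepsilon>0$ forces $d(x,y)=d(x_{e_G},y_{e_G})\le\varepsilon$, whence $x=y$; thus $c$ is an expansivity constant. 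The implication $(3)\Rightarrow(1)$ is even simpler: if $\sup_t d(tx,ty)\le c$ then $d_W(x,y)\le c$ for every finite $W$, so (3) gives $d(x,y)<\varepsilon$ for all $\varepsilon$, i.e.\ $x=y$.

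The crux is the following robust window claim, which I would extract from (1): for every $\varepsilon>0$ there exist $W\Subset G$ and $\beta>0$ such that $d_W(x,y)\le c+\beta$ implies $d(x,y)<\varepsilon$ for all $x,y\in X$. I would prove this by contradiction and compactness. Fix an exhaustion $W_1\subseteq W_2\subseteq\cdots$ of $G$ by finite sets with $\bigcup_n W_n=G$ and put $\beta_n=1/n$. If the claim failed, then for each $n$ there would be $x_n,y_n\in X$ with $d_{W_n}(x_n,y_n)\le c+\beta_n$ but $d(x_n,y_n)\ge\varepsilon$. Since $X\times X$ is compact and metrizable, I pass to a subsequence with $x_n\to x$ and $y_n\to y$; then $d(x,y)\ge\varepsilon>0$, so $x\ne y$. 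For each fixed $t\in G$ we have $t\in W_n$ for all large $n$, so $d(tx_n,ty_n)\le c+\beta_n$; letting $n\to\infty$ and using that each $t\in G$ acts as a homeomorphism gives $d(tx,ty)\le c$. Hence $\sup_t d(tx,ty)\le c$, contradicting that $c$ is an expansivity constant. This step is the main obstacle, and the key point is that one must allow the slightly enlarged radius $c+\beta$: a fixed expansivity constant $c$ cannot in general be increased, but because $\beta_n\to0$ the limit recovers exactly the bound $c$, which is all that expansivity controls.

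Granting the claim, both hard implications are short. For $(1)\Rightarrow(3)$, the window $W$ from the claim works verbatim, since $d_W(x,y)\le c\le c+\beta$. For $(1)\Rightarrow(2)$, given $\varepsilon>0$ I take $W$ and $\beta$ from the claim, set $F=W$ and $\delta=\beta/2$, and let $\{x_t\},\{y_t\}$ be $(F,\delta)$ pseudo-orbits with $d(x_t,y_t)\le c$ for all $t$. For each $g\in G$ and $s\in W$ the triangle inequality gives
\[ d(sx_g,sy_g)\le d(sx_g,x_{sg})+d(x_{sg},y_{sg})+d(y_{sg},sy_g)<\delta+c+\delta=c+\beta, \]
so $d_W(x_g,y_g)<c+\beta$ and the claim yields $d(x_g,y_g)<\varepsilon$ for every $g$, which is exactly the conclusion of (2). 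The only subtlety is that the pseudo-orbit drift inflates the effective window radius from $c$ to $c+2\delta$, which is precisely why the robust form of the claim (rather than plain (3)) is needed; the choice $\delta=\beta/2$ absorbs this inflation. I expect no further difficulties, the remaining verifications being routine.
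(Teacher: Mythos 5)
Your proof is correct, but there is nothing in the paper to compare it against: the authors do not prove this lemma at all, instead citing Propositions 1 and 2 of Achigar's paper (stated there for $G=\ZZ$) and remarking that the proof carries over to all countable groups. Your argument is a correct, self-contained reconstruction of what such a proof must look like. The hub structure at (1) is natural; the implications $(2)\Rightarrow(1)$ and $(3)\Rightarrow(1)$ via genuine orbits are routine and handled correctly; and the compactness step behind the robust window claim is sound (it uses exactly the standing hypotheses of the paper: $G$ countable, so an exhaustion $W_1\subseteq W_2\subseteq\cdots$ by finite sets exists, $X\times X$ compact metrizable, so sequential compactness applies, and each $t\in G$ acting continuously, so $d(tx_n,ty_n)\le c+1/n$ passes to $d(tx,ty)\le c$ in the limit). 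The one genuinely non-routine point in the whole lemma is the one you isolate: condition (3) with radius exactly $c$ cannot absorb the pseudo-orbit drift, so one must prove the strengthened claim with radius $c+\beta$, and the margin survives the limiting argument precisely because $\beta_n\to 0$ recovers the bound $\le c$ that expansivity controls; your choice $F=W$, $\delta=\beta/2$ then closes the triangle inequality $d(sx_g,sy_g)<\delta+c+\delta=c+\beta$ and yields $(1)\Rightarrow(2)$, while $(1)\Rightarrow(3)$ is immediate from the same claim. In short: the proposal is a complete and correct proof, and in effect it fills in the argument the paper outsources to the reference.
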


\begin{theorem}\label{P-exp TMP to uniform TMP}
	Every expansive action $G \curvearrowright X$ with the TMP has the uniform TMP.
\end{theorem}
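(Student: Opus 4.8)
The plan is to fix the target $\varepsilon>0$ (we may assume $\varepsilon\le c$, where $c$ is an expansivity constant) and to manufacture a single \emph{uniform} memory set out of the non-uniform memory sets granted by the TMP, exploiting expansivity twice through Lemma~\ref{L-close PO}: once via part (3), to localise all information on a finite window, and once via part (2), to convert pseudo-orbit estimates into genuine uniform closeness. First I would record a translation-covariance observation: if $B$ is an $(\varepsilon_1,\delta_1)$-memory set for $A$, then for any $v\in G$ the translate $Bv$ serves for $Av$, in the sense that given $x_v,y\in X$ with $d_{(B\setminus A)v}(x_v,y)\le\delta_1$, one applies the homeomorphism $v$ to $x_v$ and $y$, invokes the memory set property for $A$, and translates back by $v^{-1}$ to obtain $z_v\in X$ with $d_{Bv}(z_v,x_v)\le\varepsilon_1$ and $d_{G\setminus Av}(z_v,y)\le\varepsilon_1$. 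This produces one local patch at every $v\in V$ from a single memory set.

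Next I would pin down the constants in the correct order. Apply Lemma~\ref{L-close PO}(2) to $\varepsilon$ to obtain a symmetric $F\Subset G$ with $e_G\in F$ and some $\delta_{PO}>0$ such that two $(F,\delta_{PO})$ pseudo-orbits that remain within $c$ of each other are in fact within $\varepsilon$ of each other everywhere, and apply Lemma~\ref{L-close PO}(3) to obtain a symmetric window $W\Subset G$ with $e_G\in W$ and $d_W(u,u')\le c\Rightarrow d(u,u')<\varepsilon$. Then invoke the TMP with a precision $\varepsilon_1$ much smaller than $\delta_{PO}$ and $c$ to get $\delta_1$ and, for each $A$, a memory set $B_A$. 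For a fixed $A$, take the uniform candidate $B$ to be an enlargement of $B_A$ that (i) still contains $A$ and is a memory set for $A$ (enlarging a memory set only degrades the precision additively, so $B$ is an $(\varepsilon_1+\delta_1,\delta_1)$-memory set), (ii) contains $FA$, and (iii) is bulky enough that $B^{-1}B$ absorbs $A^{-1}W^2A$. Property (iii) forces the influence regions $WAv$ ($v\in V$) to be pairwise disjoint for \emph{every} $B$-separated $V$, while property (ii) guarantees that no single $s\in F$ can carry a point from the interior $Av$ of one block into a neighbouring block. Given a $B$-separated $V$, local patches $z_v$, and background $y$, I then assemble the patchwork $w_\gamma=\gamma z_v$ for $\gamma\in Bv$ and $w_\gamma=\gamma y$ otherwise, and check, exactly as in Lemma~\ref{L-PO} and using (ii)--(iii), that $\{w_\gamma\}_{\gamma\in G}$ is an $(F,\delta_{PO})$ pseudo-orbit.

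The decisive step is to produce a single point $z\in X$ whose orbit stays within $c$ of $\{w_\gamma\}$ everywhere. I would obtain it by a compactness/diagonal argument over finite subsets $V_0\subseteq V$: on each finite block I glue the finitely many local patches using the (translated) memory set property, and the disjointness of the windows $WAv$ from (iii), combined with the window characterisation of expansivity in Lemma~\ref{L-close PO}(3), keeps each local modification from disturbing the already-assembled blocks beyond the tolerance $c$; a limit point of these partial assemblies yields $z$. Once $z$ is available, the orbit $\{\gamma z\}_{\gamma\in G}$ and $\{w_\gamma\}_{\gamma\in G}$ are two $(F,\delta_{PO})$ pseudo-orbits within $c$ of each other, so Lemma~\ref{L-close PO}(2) upgrades this to $d(\gamma z,w_\gamma)\le\varepsilon$ for all $\gamma$. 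Reading this on $Bv$ (where $w_\gamma=\gamma z_v$, and $\gamma z_v$ is within $\varepsilon_1+\delta_1$ of $\gamma x_v$) and on $G\setminus AV$ (where $w_\gamma=\gamma y$) gives $d_{Bv}(z,x_v)\le\varepsilon+\varepsilon_1+\delta_1$ and $d_{G\setminus AV}(z,y)\le\varepsilon$, so that $B$ is a uniform memory set for $A$ after harmlessly rescaling $\varepsilon_1,\delta_1$ at the outset.

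I expect the main obstacle to be precisely the construction of the global point $z$: expansivity by itself does not yield shadowing (in contrast to the POTP route of \Cref{prop_POTP_implies_sTMP}), so the argument must use that $\{w_\gamma\}$ is \emph{piecewise a genuine orbit} glued along widely separated regions, with the TMP supplying the local gluings and the bulk of $B$ from (iii) ensuring that the gluing errors cannot accumulate across blocks. Everything else---the translation-covariance of memory sets, the pseudo-orbit verification, and the final comparison---is routine once the constants in the second paragraph are arranged so that $B$-separation leaves room for the expansive window $W$ and for the generator set $F$.
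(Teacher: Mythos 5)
Your skeleton matches the paper's proof of this theorem --- translation-covariance of memory sets, a patchwork pseudo-orbit handled through Lemma~\ref{L-PO}, and the two parts of Lemma~\ref{L-close PO} --- but the step you yourself flag as decisive, the construction of the single global point $z$, is exactly where your mechanism breaks down, and the failure is quantitative rather than cosmetic. Gluing the patches one at a time, each application of the (translated) memory-set property for $A$ needs the hypothesis $d_{(B\setminus A)v_{k+1}}(x_{v_{k+1}},u_k)\le\delta_1$, where $u_k$ is the current partial assembly. Already at the second gluing this fails: $u_1$ differs from $y$ by up to $\varepsilon_1$ on $(B\setminus A)v_2$, so you only know $d_{(B\setminus A)v_2}(x_{v_2},u_1)\le\delta_1+\varepsilon_1$, and after $k$ gluings the drift is of order $k\varepsilon_1$, both in the hypothesis you must verify and in the distance of the assembly from the patches already placed. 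Neither the disjointness of the windows $WAv$ (your condition (iii)) nor Lemma~\ref{L-close PO}(3) stops this: part (3) converts closeness $\le c$ on a $W$-window into closeness $<\tau$ at a point, so it is not a contraction and cannot shrink an error that is already small but above $\delta_1$; since $|V|$ is unbounded, the accumulated drift eventually exceeds $\delta_1$ and then $c$, destroying both the gluing hypothesis and your ``within tolerance $c$'' invariant. Deferring Lemma~\ref{L-close PO}(2) to a single application at the end therefore leaves the iterative construction unjustified.

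The paper's proof (an induction on $|V|$) resolves precisely this with two moves absent from your outline. First, the memory set invoked at each gluing is not one for $A$ translated, but one for the $W$-inflated set $WA$, and the candidate $B$ is taken to contain $WB_{WA}$; the fine hypothesis $d_{B_{WA}\setminus WA}(v_0u,v_0x_{v_0})<\tau$ is then \emph{re-derived at every step} from the coarse inductive bound: on each $W$-window around a point of $(B_{WA}\setminus WA)v_0$ one only knows closeness $\le\frac{\varepsilon}{2}+\tau\le c$, and Lemma~\ref{L-close PO}(3) upgrades this to $<\tau$ pointwise. Second, and crucially, Lemma~\ref{L-close PO}(2) is applied after \emph{every} gluing, not once at the end: the genuine orbit of the newly glued point and the patchwork pseudo-orbit built from the \emph{original} data $(x_v)_{v\in V}$, $y$ are two $(F,\delta)$ pseudo-orbits within $c$ of each other, hence within $\frac{\varepsilon}{2}$ everywhere, which resets the accumulated error to $\frac{\varepsilon}{2}$ and restores the inductive invariant before the next patch is glued. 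With these two repairs --- per-step use of part (3) on $W$-inflated memory sets to recover the gluing hypothesis, and per-step use of part (2) to reset the error --- your outline becomes the paper's argument; without them it collapses for large $V$.
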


\begin{proof}
	Let $c>0$ be an expansivity constant for $G\curvearrowright X$. Let $0<\varepsilon \le c$. We have $F$ and $\delta $ in~\Cref{L-close PO}%
	.(2) for $\frac{\varepsilon}{2}$. Replacing $F$ by $F\cup F^{-1}\cup \{e_{G}\}$, we
	may assume that $e_{G}\in F=F^{-1}$. Then we have $\eta $ in~\Cref{L-PO}
	for $F$ and $\delta $.
	
	Since $G\curvearrowright X$ has the TMP, there is some $0<\tau <\min (\eta, \frac{\varepsilon}{2})
	$ such that for any $A\Subset G$ there is some $B_{A}\Subset G$ containing $A
	$ such that for any $x,y\in X$ satisfying $d_{B_{A}\setminus A}(x,y)\le \tau $, there is some $z\in X$ such that $d_{B_A}(z,x)\leq
	\frac{\varepsilon}{2}$ and $d_{G \setminus A}(z,y)\leq \frac{\varepsilon}{2}$. Take $W$ in~\Cref{L-close PO}.(3) for $\tau $. Replacing $W$ by $W\cup
	W^{-1}\cup \{e_{G}\}$, we may assume that $e_{G}\in W=W^{-1}$. Let $A\Subset G$. Then we have $B_{A}$ and $B_{WA}$ as above. Put $B=B_{A}\cup WB_{WA}\cup FA\Subset G$. We shall prove the following claim.\\
	
	\noindent \textbf{Claim:} For any $B$-separated $V\Subset G$ and any $%
	x_{v}\in X$ for $v\in V$ and $y\in X$ satisfying $d_{(B\setminus A)v}(x_{v},y)\le \tau $ for
	all $v\in V$, there is some $z\in X$
	such that $d_{Av}(z,x_{v})\leq \frac{\varepsilon}{2} $ for all $v\in V$
	and $d_{G\setminus AV}(z,y)\leq \frac{\varepsilon}{2} $.\\
	
	Assume the claim holds. As $\tau < \frac{\varepsilon}{2}$, we have that for any such finite set $V$, $d_{Bv}(z,x_{v})\leq \varepsilon$ for every $v \in V$ and $d_{G\setminus AV}(z,y)\leq \varepsilon$. This is exactly what we wanted to show.

	To prove the claim, we argue by induction on $|V|$. Consider first the case $%
	|V|=1$. Say, $V=\{v\}$. Let $x_{v},y\in X$ with $d_{(B\setminus A)v}(x_{v},y)\le \tau$. Then $d(s(vx_{v}),s(vy))\le \tau $ for all $%
	s\in B\setminus A$, in particular for all $s\in B_{A}\setminus A$. Thus by our choice of $B_{A}$ there is some $z\in X$ such that $d_{B_A}(z,vx_{v})\leq \frac{\varepsilon}{2}$ and $d_{G\setminus A}(z,vy)\leq
	\frac{\varepsilon}{2}$. Putting $z^{\prime} =  v^{-1}z$, we have
	$d_{B_Av}(z^{\prime },x_{v})\leq \frac{\varepsilon}{2}$ and $d_{G\setminus Av}(z^{\prime },y)\leq \frac{\varepsilon}{2}$. This
	proves the case $|V|=1$.
	
	Assume that the claim holds for $|V|=n$. Let $V\Subset G$ be $B$%
	-separated with $|V|=n+1$, and let $x_{v}\in X$ for $v\in V$ and $y\in X$
	with $d_{(B\setminus
		A)v}(x_{v},y)\le \tau $ for all $v\in V$. Take $v_{0}\in V$. Applying the inductive hypothesis to $V\setminus
	\{v_{0}\}$ we find some $u\in X$ such that $d_{Av}(u,x_{v})\leq \frac{\varepsilon}{2} $
	for all $v\in V\setminus \{v_{0}\}$, and $d_{G\setminus A(V\setminus \{v_{0}\})}(u,y)\leq
	\frac{\varepsilon}{2}$. For any $g\in B_{WA}\setminus WA$ and any $s\in W$, we have $sg\in WB_{WA}\setminus
	A\subset B\setminus A$, and hence
	\begin{equation*}
	d(sgv_{0}u,sgv_{0}x_{v_{0}})\leq
	d(sgv_{0}u,sgv_{0}y)+d(sgv_{0}x_{v_{0}},sgv_{0}y)\le \frac{\varepsilon}{2} +\tau \leq c.
	\end{equation*}%
	From our choice of $W$, we get $d_{B_{WA}\setminus WA}(v_{0}u,v_{0}x_{v_{0}})<\tau$. From the TMP and our choice of $B_{WA}$, we obtain there is $u'\in X$
	such that $d_{B_{WA}}(u',v_{0}x_{v_{0}})\leq \frac{\varepsilon}{2} $ and $d_{G\setminus WA}(u',v_{0}u)\leq \frac{\varepsilon}{2}$. Put $%
	z=v_{0}^{-1}u'\in X$. Then $d_{B_{WA}v_{0}}(z,x_{v_{0}})\leq \frac{\varepsilon}{2} $  and $d_{G\setminus WAv_{0}}(z,u)\leq \frac{\varepsilon}{2}$.
	For any $v\in V\setminus \{v_{0}\}$ and $s\in Av$, we have
	\begin{equation*}
	d(sz,sx_{v})\leq d(sz,su)+d(su,sx_{v})\leq \frac{\varepsilon}{2} +\frac{\varepsilon}{2} \leq c.
	\end{equation*}%
	For any $s\in Av_{0}\subset WAv_{0}$, we have $d(sz,sx_{v_{0}})\leq
	\frac{\varepsilon}{2} <c$. For any $s\in G\setminus (AV\cup WAv_{0})$, we have
	\begin{equation*}
	d(sz,sy)\leq d(sz,su)+d(su,sy)\leq \frac{\varepsilon}{2} +\frac{\varepsilon}{2} \leq c.
	\end{equation*}%
	For any $s\in WAv_{0}\setminus Av_{0}$, we have
	\begin{equation*}
	d(sz,sy)\leq d(sz,sx_{v_{0}})+d(sx_{v_{0}},sy)\le \frac{\varepsilon}{2} +\tau <c.
	\end{equation*}%
	Put $w_{\gamma }=\gamma x_{v}$ for all $v\in V$ and $\gamma \in Av$ and $%
	w_{\gamma }=\gamma y$ for all $\gamma \in G\setminus AV$. Then $d(\gamma
	z,w_{\gamma })\leq c$ for all $\gamma \in G$. Since $\tau <\eta $ and $%
	B\supset FA$, by our choice of $\eta $ we know that $\{w_{\gamma
	}\}_{\gamma \in G}$ is an $(F,\delta )$ pseudo-orbit. Then from our choice of
	$F$ and $\delta $ we conclude $d(\gamma z,w_{\gamma })\leq \frac{\varepsilon}{2} $ for
	all $\gamma \in G$. This finishes the induction step, and proves the claim.
\end{proof}

\begin{remark}
	The previous proof can be easily adapted to show that every expansive action with the strong TMP has the uniform strong TMP.
\end{remark}

\Cref{P-exp TMP to uniform TMP} does not hold if we do not assume expansivity. Indeed, Lind and Schmidt~\cite[Example 7.5]{LindSchmidt1999} constructed an algebraic action of $\ZZ^3$ with off-diagonal asymptotic pairs and zero topological entropy. As a consequence of Theorem~\ref{theorem_naive_IE_cites} and Proposition~\ref{P-A to IE} which we shall prove later on, this action does not have the uniform TMP, but it has the TMP by~\Cref{P_algebraic_have_wtmp}. Nonetheless, as we saw in~\Cref{prop_POTP_implies_sTMP}, whenever an action has the POTP it will have the uniform TMP even if it is not expansive.

\subsection{Examples}

Classic examples of actions with the POTP are subshifts of finite type (SFTs). Schmidt showed that expansive actions of polycyclic-by-finite groups on zero-dimensional compact metrizable groups by continuous automorphisms are SFTs \cite[Corollary 2.3, Theorems 3.8 and 4.2]{Schmidt1995}, thus have the POTP.
Other examples of group actions with the POTP are Axiom A
diffeomorphisms~\cite[Theorem 1.2.1]{AokiHiraide1994}, and expansive principal algebraic actions of countable groups~\cite[Theorem 1.5]{Meyerovitch2017}.
%and group shifts (subshifts for which the underlying finite set $\Lambda$ is itself a group and which are groups under the coordinate-wise operations of %$\Lambda$) where the acting group is polycyclic-by-finite~\cite[Theorems 3.8 and 4.2]{Schmidt1995}.

There are known examples of $\ZZ$-subshifts with the strong TMP but which are not SFTs, see~\cite[Proposition 3.6]{ChandgotiaHanMarcusMeyerovitchPavlov2014}. For any fixed countable group there are countably many SFTs, but in contrast, there are uncountably many $\ZZ^2$-subshifts $X$ such that $\ZZ^2 \curvearrowright X$ has the strong TMP~\cite{chandgotiamey}. Indeed, if $X \subset \Lambda^{\ZZ}$ is any $\ZZ$-subshift, then the $\ZZ^2$-subshift consisting of all $x \in \Lambda^{\ZZ^2}$ whose restriction to $\ZZ \times \{0\}$ is an element of $X$ and such that $\{0\}\times \ZZ$ acts trivially has the strong TMP but is not necessarily an SFT.

We shall see that subshifts with the strong TMP also arise naturally as supports of Markovian measures. A $G$-invariant Borel probability measure $\mu$ on a $G$-subshift $X$ is \define{Markovian} if there exists $F \Subset G$ containing the identity such that for every $A \Subset G$, $p \in \Sigma^A$ and $x \in \supp(\mu)$ we have that for every $B\Subset G$ which contains $AF$, \[ \mu( [p] \mid [x|_{B\setminus A}] ) = \mu( [p] \mid [x|_{AF \setminus A}] ), \]
where $\mu([p] \mid [q]) = \frac{\mu([p] \cap [q])}{\mu([q])}$ denotes the conditional probability of $[p]$ given $[q]$.

The following proposition is essentially a rephrasing of~\cite[Lemma 2.0.1]{chandgothese}.

\begin{proposition}\label{lemma_Markovian_measures_have_STMP}
	Let $X\subset \Lambda^G$ be a $G$-subshift and $\mu$ a Markovian measure on $X$. Then $G \curvearrowright \supp(\mu)$ has the strong TMP.
\end{proposition}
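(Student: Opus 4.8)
The plan is to reduce everything to the combinatorial characterization of the strong TMP for subshifts given by \Cref{C-subshift}(2), and then to verify its gluing condition directly from the Markovian hypothesis. First I would note that $\supp(\mu)$ is itself a $G$-subshift: it is closed by definition of support, and it is $G$-invariant because $\mu$ is $G$-invariant. Hence $G \curvearrowright \supp(\mu)$ is an action on a subshift, and by \Cref{C-subshift}(2) it suffices to exhibit a finite set $F \Subset G$ containing $e_G$ such that for every $A \Subset G$ and every pair $x, y \in \supp(\mu)$ with $x = y$ on $AF \setminus A$, the configuration $z$ defined by $z|_A = x|_A$ and $z|_{G \setminus A} = y|_{G \setminus A}$ lies in $\supp(\mu)$. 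I would take $F$ to be exactly the finite set furnished by the definition of a Markovian measure, so that the boundary sets $AF \setminus A$ in the two statements coincide.

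To prove $z \in \supp(\mu)$ I would show that every cylinder neighborhood of $z$ has positive measure. Since a cylinder on a larger finite set is contained in the cylinder on a smaller one, and since $e_G \in F$ gives $A \subset AF$, it is enough to check $\mu([z|_B]) > 0$ for all $B \Subset G$ with $B \supset AF$ (any given finite window can be enlarged to such a $B$). The key computation is then
\[
\mu([z|_B]) = \mu([x|_A] \cap [y|_{B \setminus A}]) = \mu([x|_A] \mid [y|_{B \setminus A}]) \cdot \mu([y|_{B \setminus A}]),
\]
where I have used that $z|_B$ agrees with $x|_A$ on $A$ and with $y|_{B \setminus A}$ on $B \setminus A$. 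Since $y \in \supp(\mu)$, the factor $\mu([y|_{B \setminus A}])$ is positive, so it remains to show that the conditional factor is positive.

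Here I would invoke the Markovian property with conditioning point $y \in \supp(\mu)$ and pattern $p = x|_A$: because $B \supset AF$, it yields $\mu([x|_A] \mid [y|_{B \setminus A}]) = \mu([x|_A] \mid [y|_{AF \setminus A}])$. The hypothesis that $x = y$ on $AF \setminus A$ then lets me replace $y$ by $x$ in the boundary condition, so this equals $\mu([x|_A] \mid [x|_{AF \setminus A}]) = \mu([x|_{AF}]) / \mu([x|_{AF \setminus A}])$, which is strictly positive because $x \in \supp(\mu)$ forces both $\mu([x|_{AF}])$ and $\mu([x|_{AF \setminus A}])$ to be positive. Combining the three observations gives $\mu([z|_B]) > 0$, and hence $z \in \supp(\mu)$, as required.

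I do not expect a serious obstacle, since the argument is a direct unwinding of conditional probabilities. The only points requiring care are bookkeeping of the boundary set $AF \setminus A$, so that the single set $F$ plays its role in both \Cref{C-subshift}(2) and the definition of a Markovian measure, and the repeated appeal to $x, y \in \supp(\mu)$ to guarantee that every conditional probability appearing above is well defined, i.e. has a positive denominator.
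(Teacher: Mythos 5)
Your proposal is correct and follows essentially the same route as the paper: reduce to the gluing characterization of the strong TMP for subshifts (\Cref{C-subshift}(2)), then show $\mu([z|_B])>0$ for all $B\supset AF$ by factoring $\mu([z|_B])=\mu([x|_A]\mid[y|_{B\setminus A}])\cdot\mu([y|_{B\setminus A}])$ and using the Markovian property to rewrite the conditional as $\mu([x|_A]\mid[x|_{AF\setminus A}])$, which is positive since $x\in\supp(\mu)$. Your version is if anything slightly more careful, making explicit that $\supp(\mu)$ is itself a $G$-subshift and writing the final ratio as $\mu([x|_{AF}])/\mu([x|_{AF\setminus A}])$, correcting a harmless slip in the paper's denominator.
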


\begin{proof}
	As $\mu$ is Markovian there is $F\Subset G$ containing the identity such that for every $A \Subset G$ and $p \in \Lambda^A$ we have that for every $B\Subset G$ which contains $AF$ and $x \in \supp(\mu)$ we have $\mu( [p] \mid [x|_{B\setminus A}] ) = \mu( [p] \mid [x|_{AF \setminus A}] )$.
	Let us fix $A \Subset G$ and let $x,y \in \supp(\mu)$ such that $x|_{AF\setminus A} = y|_{AF\setminus A}$. We shall show that $z \in \Lambda^G$, defined by \[ z(g) = \begin{cases}
	x(g) & \mbox{ if } g \in A\\
	y(g) & \mbox{ otherwise }\\
	\end{cases}, \]
	 is in $\supp(\mu)$. Indeed, it suffices to show that for every large enough $B \Subset G$ we have $\mu([z|_{B}])>0$. For any $B \Subset G$ which contains $AF$ we have\begin{align*}
	\mu([z|_{B}])  = \mu([z|_{A}] \mid [z|_{B\setminus A}]) \cdot \mu([z|_{B\setminus A}]) = \mu([x|_{A}] \mid [y|_{B\setminus A}]) \cdot \mu([y|_{B\setminus A}]).
	\end{align*}
	As $y \in \supp(\mu)$, we have $\mu([y|_{B\setminus A}]) >0$. Furthermore, by our choice of $F$ we also get\begin{align*}
	\mu([x|_{A}] \mid [y|_{B\setminus A}]) = \mu([x|_{A}] \mid [y|_{AF\setminus A}])= \mu([x|_{A}] \mid [x|_{AF\setminus A}])= \frac{\mu([x|_{AF}])}{\mu([x|_A])}.
	\end{align*}
	Since $x\in \supp(\mu)$, we obtain $\mu([x|_{A}] \mid [y|_{B\setminus A}])>0$ and thus $\mu([z|_B])>0$.\end{proof}
	
%	\begin{align*}

%	\mu([x|_{A}] \mid [y|_{B\setminus A}])& = \int_{X} \mu([x|_{A}] \mid \xi_{G \setminus A})(x) \cdot \indicator{[y|_{B \setminus A}]}(x) \dd \mu \\
%	& = \int_{X} \mu([x|_{A}] \mid \xi_{AF \setminus A})(x) \cdot \indicator{[y|_{B \setminus A}]}(x) \dd \mu \\
%	& = \int_{X} \mu([x|_{A}] \mid \xi_{AF \setminus A})(x) \cdot \indicator{[y|_{AF \setminus A}]}(x)\cdot \indicator{[y|_{B \setminus AF}]}(x) \dd \mu \\
%	& = \mu([x|_{A}] \mid [y|_{AF\setminus A}])\cdot \mu([y|_{B\setminus AF}])  = \mu([x|_{A}] \mid [x|_{AF\setminus A}])\cdot \mu([y|_{B\setminus AF}]).
%	\end{align*}
%	Both $\mu([x|_{A}] \mid [x|_{AF\setminus A}])$ and $\mu([y|_{B\setminus AF}])$ are positive as $x,y \in \supp(\mu)$.

%Classic examples of Markovian measures are \define{Gibbs measures} associated to finite range potentials on a subshift of finite type. The Lanford-Ruelle Theorem~\cite{LanRue69} states that \define{equilibrium measures} (a generalization of a measure of maximal entropy) of ``good'' potentials are always Gibbs whenever the space is a $\ZZ$-subshift of finite type. That theorem has been generalized to countable amenable groups and to the case where the space is not necessarily an SFT, but has the TMP~\cite{Richifest}. In particular, the support of any equilibrium measure associated to a finite range potential on an SFT yields an example of a subshift with the strong TMP. For more information on this topic we refer the reader to Ruelle's book~\cite{Ruelle2004} or Keller's book~\cite{Keller1998}.

\begin{example}[mean TMP but not strong TMP]
	Let $G$ be a locally finite group (for instance $G = \bigoplus_{i \in \NN}\ZZ/2\ZZ$) and $X$ be the subshift $\{0^{G},1^G\}\subset \{0, 1\}^G$. Recall that $G$ is infinite as assumed at the beginning of Section~\ref{section:preliminaries}. We claim that $G \curvearrowright X$ does not have the strong TMP. Indeed, assume $G \curvearrowright X$ has the strong TMP. Then for every $\varepsilon>0$ there are $\delta >0$ and $F \Subset G$ containing the identity, such that for every $A\Subset G$ we have that $FA$ is an $(\varepsilon,\delta)$-memory set for $A$. In particular taking $A = \langle F \rangle$ the subgroup generated by $F$, we get that $\langle F \rangle$ is an $(\varepsilon,\delta)$-memory set for $\langle F \rangle$ which is clearly false. On the other hand, $G \curvearrowright X$ does have the mean TMP. Indeed, for any sufficiently small $\delta>0$, any $A\Subset G$ and any $g \in G\setminus A$ we have that $A\cup\{g\}$ is an  $(\varepsilon,\delta)$-memory set for $A$.
\end{example}

\begin{example}[TMP but not mean TMP. Lemmas 4.2 and 4.3 of~\cite{Meyerovitch2017}] \label{example_TOM}
Let $G_i$ be a sequence of finite groups of order $a_i$ and consider their direct sum $G = \bigoplus_{i \in \NN} G_i$. Identify $g \in G_i$ with the tuple in $G$ for which the only non-identity coordinate is $g$ at position $i$. Consider the subshift $X \subset (\ZZ/2\ZZ)^G$ consisting of all $x\in (\ZZ/2\ZZ)^G$ such that for every $i \in \NN$ and $h \in G$ we have $\sum_{g \in G_i}{x(hg)} = 0$. This example has no off-diagonal asymptotic pairs and, if $a_i$ grows rapidly enough (so that $\prod_{i \geq 1}(1-a_i^{-1}) >0)$ then $G \curvearrowright X$ has positive topological entropy.  We shall later see that, by~\Cref{P_algebraic_have_wtmp} the action $G \curvearrowright X$ has the TMP, while as a consequence of Remark~\ref{R-exp mTMP IE to asym} it does not satisfy the mean TMP.
\end{example}

Another example with the TMP and not the strong TMP can be found in~\cite[Example 2.4]{Richifest}. This example has both positive topological entropy and off-diagonal asymptotic pairs.

\begin{example}[not TMP] \label{example_sunny_side_up}
	Consider the sunny-side up $G$-subshift consisting on all points $x \in \{0,1\}^G$ where $x(g)=1$ for at most one value of $g$, that is, \[X_{\leq 1} = \left\{ x \in \{0,1\}^G : |x^{-1}(1)|\leq 1   \right\}. \]
	We claim that $G\curvearrowright X_{\leq 1}$ does not have the TMP. Let $A = \{e_G\}$. For every $B\Subset G$ which contains $A$, let $h \in G\setminus B$ and $x,y \in X_{\leq 1}$ such that $x(e_G)=1$ and $y(h)=1$. Then $x|_{B \setminus A} = y|_{B \setminus A}$ but $z \in \{0,1\}^G$ such that $z|_{A}=x|_{A}$ and $z|_{G \setminus A} = y|_{G\setminus A}$ does not belong to $X_{\leq 1}$. By Corollary~\ref{C-subshift}
the action $G\curvearrowright X_{\leq 1}$ does not have the TMP.\end{example}

When $G = \ZZ$ the sunny-side up subshift $X_{\leq 1}$ is a topological factor of an SFT. Indeed, let $X \subset \{a,b\}^{\ZZ}$ be the subshift of all configurations which satisfy that if $x(n)=b$, then $x(n+1)=b$ for every $n \in \ZZ$. Since $X$ is an SFT, it satisfies the TMP by~\Cref{prop_POTP_implies_sTMP}. The map $\pi\colon X \to X_{\leq 1}$ given by \[\pi(x)(n) = \begin{cases}
1 & \mbox{ if } x(n-1)=a \mbox{ and } x(n)=b\\
0 & \mbox{ otherwise }
\end{cases} \mbox{ for every }n \in \ZZ, \]
is a factor map. It follows that even for $\ZZ$-actions the property of having the TMP does not pass to topological factors.

\section{Markovian properties of algebraic actions}\label{section:algebraico}

In this section we prove Theorem~\ref{T-expansive algebraic to STMP}, which establishes the strong TMP for finitely presented expansive algebraic actions of a large class of amenable groups.

\begin{proposition}\label{P_algebraic_have_wtmp}
	Every action $G \curvearrowright X$ of a group $G$ on a compact metrizable group $X$ by continuous automorphisms has the TMP.
\end{proposition}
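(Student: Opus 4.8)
The plan is to fix a convenient compatible metric and translate the memory-set condition into a purely group-theoretic ``cutoff'' problem that I then solve by compactness. Since the topological Markov properties do not depend on the choice of compatible metric, by the Birkhoff--Kakutani theorem I may take $d$ to be \emph{left-invariant}, and set $\ell(w)=d(e_X,w)$. Because each $g$ acts by an automorphism, $\ell$ is symmetric ($\ell(w^{-1})=\ell(w)$) and subadditive ($\ell(w_1w_2)\le\ell(w_1)+\ell(w_2)$), and one has the key identity $d(gx,gy)=\ell(g(x^{-1}y))$ for all $g\in G$ and $x,y\in X$. With this dictionary, to produce a witness $z$ for a pair $x,y$ it is natural to look for $z$ of the form $z=ys'$: then $d(gz,gy)=\ell(gs')$, while $d(gz,gx)=d(gs',g(y^{-1}x))$, so the whole problem reduces to cutting the single element $s:=y^{-1}x$ down to the set $A$.

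Concretely, I would isolate the following \emph{cutoff lemma}: for every $\eta>0$ and every $A\Subset G$ there is a finite $B\supseteq A$ such that every $s\in X$ with $\ell(gs)\le\eta$ for all $g\in B\setminus A$ admits some $s'\in X$ with $d(gs',gs)\le\eta$ for $g\in A$ and $\ell(gs')\le\eta$ for all $g\in G\setminus A$. Note that here $\delta:=\eta$ depends only on $\eta$, while $B$ may depend on $A$, which is exactly what the TMP allows. This is the crux, and I would prove it by contradiction using compactness of $X$. If no finite $B$ worked, then exhausting $G$ by finite sets $B_1\subseteq B_2\subseteq\cdots$ with $B_1\supseteq A$ and $\bigcup_k B_k=G$ would produce counterexamples $s_k$ with $\ell(gs_k)\le\eta$ on $B_k\setminus A$ but with no admissible $s'$. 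Passing to a convergent subsequence $s_k\to s^\ast$, for each fixed $g\in G\setminus A$ one has $g\in B_k\setminus A$ for large $k$, hence $\ell(gs^\ast)=\lim_k\ell(gs_k)\le\eta$; thus $s^\ast$ satisfies $\ell(gs^\ast)\le\eta$ on \emph{all} of $G\setminus A$, while $d(gs^\ast,gs_k)\to0$ on the finite set $A$. Consequently $s^\ast$ is itself an admissible cutoff for $s_k$ once $k$ is large, contradicting the choice of $s_k$. The main obstacle is precisely this passage from the \emph{finite} buffer appearing in the hypothesis to the \emph{global} smallness off $A$ demanded in the conclusion: the limit converts the growing finite buffers into control over the whole of $G\setminus A$, whereas only finitely many ($A$-indexed) coordinates need to be matched back. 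A purely algebraic descending-chain argument (as for subshifts over a finite alphabet) is unavailable here, since closed subgroups of a general compact group need not satisfy the descending chain condition, so the metric compactness argument seems essential.

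Granting the cutoff lemma, I would finish as follows. Given $\varepsilon>0$, apply the lemma with $\eta=\varepsilon/2$ to obtain $\delta:=\varepsilon/2$ (independent of $A$) and, for each $A\Subset G$, a finite candidate $B\supseteq A$. If $x,y\in X$ satisfy $d_{B\setminus A}(x,y)\le\delta$, then $s:=y^{-1}x$ obeys $\ell(gs)=\ell(g(y^{-1}x))=\ell(g(x^{-1}y))\le\delta=\eta$ on $B\setminus A$ by symmetry of $\ell$, so the lemma furnishes an $s'$ and I set $z:=ys'$. Writing $x=ys$ and using $d(gx,gy)=\ell(g(x^{-1}y))$ together with subadditivity, a short computation gives $d(gz,gx)=d(gs',gs)\le\eta$ for $g\in A$ and $d(gz,gx)=d(gs',gs)\le\ell(gs')+\ell(gs)\le\eta+\delta=\varepsilon$ for $g\in B\setminus A$, hence $d_B(z,x)\le\varepsilon$, while $d(gz,gy)=\ell(gs')\le\eta\le\varepsilon$ for $g\in G\setminus A$. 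Thus $B$ is an $(\varepsilon,\delta)$-memory set for $A$ with $\delta$ depending only on $\varepsilon$, which is exactly the TMP. (For uncountable $G$ one simply replaces the exhausting sequence by the net of finite subsets of $G$ directed by inclusion, and convergent subsequences by convergent subnets.)
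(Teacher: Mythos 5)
Your proof is correct. It rests on the same two pillars as the paper's: passing to an invariant compatible metric so that, via $s=y^{-1}x$ and $z=ys'$, the two-point gluing problem reduces to a one-point ``cutoff'' problem at $e_X$, and then using compactness of $X$ along an exhaustion of $G$ by finite sets. But you implement the key step genuinely differently. The paper chooses the memory set $B$ constructively: it fixes a finite $\delta$-cover $\mathcal{C}$ of $X$, considers for each finite $B\supseteq A$ the \emph{finite} set $L_{A|B}$ of $\mathcal{C}$-colorings of $A$ realized by points that are $\delta$-close to $e_X$ on $B\setminus A$, and takes $B$ at the index where the decreasing sequence $L_{A|B_0}\supseteq L_{A|B_1}\supseteq\cdots$ stabilizes; for a given $x$ this stabilization yields witnesses $z_m$ for every larger buffer, and an accumulation point of these is the desired $z$. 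You choose $B$ non-constructively, by contradiction: a convergent subsequence of putative counterexamples $s_k$ has a limit $s^{\ast}$ which is $\eta$-small on all of $G\setminus A$ and matches $s_k$ on the finite set $A$ for large $k$, so the limit of the alleged counterexamples is itself an admissible cutoff. (Your parenthetical remark that a descending-chain argument ``is unavailable here'' is therefore slightly off the mark: the paper's cover trick is precisely a way to run such a stabilization argument in the metric setting, by discretizing so that the relevant data become finite sets.) Your route is shorter and avoids the coloring bookkeeping, and it uses only left-invariance of the metric (Birkhoff--Kakutani) rather than the bi-invariant metric the paper constructs, although inspection shows the paper too only ever invokes left-invariance; the paper's route exhibits $B$ more explicitly, as the stabilization index of a concrete decreasing sequence of finite sets. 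Both arguments produce $\delta$ depending only on $\varepsilon$ (indeed $\delta=\varepsilon/2$), with $B$ depending on $A$, exactly as the TMP permits.
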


\begin{proof}
    We can find a compatible metric $d$ on $X$ which is translation-invariant, that is, $d(zxw, zyw)=d(x, y)$ for all $x, y, z, w\in X$. Indeed, given any compatible metric $d'$ on $X$, we can define $d(x,y) = \sup_{z, w \in X}d'(zxw, zyw)$ which satisfies the property.
Let $\varepsilon>0$ be arbitrary and fix $\delta = \frac{\varepsilon}{2}$. Let $\mathcal{C} = \{C_1,C_2,\dots,C_n\}$ be a $\delta$-cover of $X$, that is, for every $C_i$, $\sup_{x,y \in C_i}d(x,y) \leq \delta$, and for $A,B \Subset G$ define $L_{A|B}$ as the set of all functions $\varphi\colon A \to \{1,\dots,n\}$ such that there exists $z \in X$ satisfying:
	\begin{enumerate}
		\item For every $g \in A$ we have $gz \in C_{\varphi(g)}$.
		\item We have $d_{B\setminus A}(z,e_{X})\leq \delta$, where $e_X$ denotes the identity element of $X$.

	\end{enumerate}
	Clearly if $B_1 \subset B_2$ then $L_{A|B_1} \supset L_{A|B_2}$, also, these sets are all finite. It follows that if we fix an enumeration $g_0,g_1,\dots$ of $G$ and set $B_n = \{g_0,\dots,g_n\}$ then the decreasing sequence
	\[   \dots \subset L_{A|B_n} \subset \dots \subset L_{A|B_1} \subset L_{A|B_0}\]
	stabilizes for some $N_{A} \in \NN$. Let us fix $B = A \cup B_{N_A}$.
	
	Suppose we have $x \in X$ such that $d_{B\setminus A}(x,e_{X})\leq \delta$. By definition any function $\varphi_x\colon A \to \{1,\dots,n\}$ such that $\varphi_x(g) = k$ implies $gx \in C_k$ is in $L_{A |B}$. By our choice of $N_A$, for each $m > N_A$ we may extract a point $z_m\in X$ for which $d_{B_m\setminus A}(z_m,e_{X})\leq \delta$  and $gz_m \in C_{\varphi_x(g)}$ for every $g \in A$. As $\mathcal{C}$ is a $\delta$-cover, we obtain that $d_A(x, z_m) \leq \delta$ for every $m > N_A$. By compactness of $X$, we may extract an accumulation point $z \in X$ which satisfies that $d_{G\setminus A}(z,e_{X})\leq \delta < \varepsilon$  and $d_A(z, x)\leq \delta < \varepsilon$. Then $d_{B\setminus A}(z, x)\leq d_{B\setminus A}(z,e_X)+d_{B\setminus A}(e_X, x) \leq 2\delta \leq \varepsilon$.
	
	Now let $x,y \in X$ be arbitrary points such that $d_{B \setminus A}(x,y)\leq \delta$. Then $x' = y^{-1}x$ satisfies that $d_{B\setminus A}(x',e_{X}) \leq \delta$. Consequently we may extract $z'$ with the aforementioned properties. Let $z = yz'$. As the metric is translation-invariant, by left-multiplying by $y^{-1}$ we have that $d_B(z, x) = d_B(z', x')\leq \varepsilon$  and $d_{G\setminus A}(z, y)=d_{G\setminus A}(z',e_{X}) \leq \varepsilon$.
\end{proof}

	We say $G\curvearrowright X$ is an \define{algebraic action} if $X$ is a compact metrizable abelian group and $G$ acts by continuous automorphisms.

 Schmidt~\cite[Theorems 3.8 and 4.2]{Schmidt1995} showed that every group shift of a polycyclic-by-finite group  has the POTP (i.e. it is an SFT). Nonetheless, Bhattacharya~\cite[Section 3]{Bhattacharya2019} showed that not every expansive algebraic action of a polycyclic group has the POTP (negatively answering~\cite[Question 3.11]{Meyerovitch2017}). We will now show that a larger class of algebraic actions, including Bhattacharya's example, always satisfy the strong TMP.

 Let us briefly recall the notion of group ring. Given a  group $G$ and a unital ring $R$, the \define{group ring} $RG$ consists of  all functions $f\colon G \to R$ of finite support. We shall write the elements of $RG$ as $f=\sum_{s\in G}f_s s$, where $f_s\in R$ is zero except for finitely many $s\in G$. The addition and multiplication of $RG$ are given by
 \begin{align}
 \sum_{s\in G}f_s s+\sum_{s\in G}g_s s &=\sum_{s\in G}(f_s+g_s)s, \nonumber \\
 (\sum_{s\in G}f_s s)(\sum_{s\in G}g_ss)&=\sum_{s\in G}(\sum_{t\in G}f_tg_{t^{-1}s})s. \label{E-convolution}
 \end{align}
We refer the reader to \cite{Passman1985} for general information about group rings. The ring $\CC G$ has also a $*$-operation given by
\begin{align} \label{E-*}
(\sum_{s\in G}f_s s)^*=\sum_{s\in G}\overline{f_s}s^{-1}.
\end{align}
Under these operations $\CC G$ becomes a $*$-algebra, that is, $(fg)^*=g^*f^*$, $(f+g)^*=f^*+g^*$, and $(\lambda f)^*=\bar{\lambda}f^*$ for all $f, g\in \CC G$ and $\lambda\in \CC$. For $g\in (\RR/\ZZ)^G$ and $f\in \ZZ G$, the convolution products $fg, gf\in (\RR/\ZZ)^G$ are also defined via \eqref{E-convolution}.

  Let us also denote for $p \in [1,+\infty)$ by $\ell^p(G) = \{ f\colon G \to \CC : \sum_{s \in G}|f_s|^p < \infty \}$ and $\ell^{\infty}(G) = \{ f \colon G \to \CC : \sup_{s \in G}|f_s| < \infty \}$ together with the norms $\|\cdot\|_p$ on $\ell^p(G)$ and $\|\cdot \|_\infty$ on $\ell^\infty(G)$. The convolution products $fg, gf\in \ell^p(G)$ are defined by \eqref{E-convolution} for all $1\le p\le \infty$ and $f\in \ell^1(G)$, $g\in \ell^p(G)$.
  The $*$-operation also extends to $\ell^p(G)$ for $p \in [1,+\infty]$ via \eqref{E-*}. Then $\ell^1(G)$ is also a $*$-algebra.

  For $m,n \in \NN$ and $a\in M_{m, n}(\CC G)$, denote by $\ker a$ the kernel of the bounded linear operator $M_{n\times 1}(\ell^2(G))\rightarrow M_{m\times 1}(\ell^2(G))$ sending $z$ to $az$, and by $P$ the orthogonal projection from $M_{n\times 1}(\ell^2(G))$ to $\ker a$. Then $P=(P_{jk})_{1\le j, k\le n}\in M_{n\times n}(\mathcal{B}(\ell^2(G)))$, where $\mathcal{B}(\ell^2(G))$ denotes the algebra of all bounded linear operators from $\ell^2(G)$ to itself. We have the canonical orthonormal basis $\{\delta_s\}_{s\in G}$ of $\ell^2(G)$, where $\delta_s$ is the unit vector in $\ell^2(G)$ taking value $1$ at $s$ and $0$ everywhere else. The \define{von Neumann dimension} of $\ker a$ is
  $$\dim_{\rm vN}\ker a:=\sum_{j=1}^n \left< P_{jj} \delta_{e_G}, \delta_{e_G}\right>.$$

  For a gentle introduction to group von Neumann algebras and a definition of the von Neumann dimension in a general context we refer the reader to Sections 1.1.1 to 1.1.3 of~\cite{Luck2002}. The following lemma is well known. For convenience of the reader, we give a proof here.

  \begin{lemma} \label{L-dim basic}
  Let $a\in M_{m, n}(\CC G)$. The following hold:
  \begin{enumerate}
  \item $\dim_{\rm vN}\ker a\in [0, n]$,
  \item $\dim_{\rm vN}\ker a=0$ if and only if $\ker a=\{0\}$,
  \item $\dim_{\rm vN}\ker a=n$ if and only if $a=0$.
  \end{enumerate}
  \end{lemma}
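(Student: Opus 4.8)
The plan is to realize $\dim_{\rm vN}\ker a$ as the value of the canonical trace of the group von Neumann algebra on the projection $P$, and then to read off all three items from positivity and faithfulness of that trace. Since the statement is standard, the work is almost entirely formal; the only structural point is to check that $P$ actually lies in the relevant matrix algebra.

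First I would set up the algebra. Let $\rho$ be the right regular representation of $G$ on $\ell^2(G)$, and let $L(G)=\rho(G)'$ be the group von Neumann algebra, equipped with its canonical trace $\tau(T)=\langle T\delta_{e_G},\delta_{e_G}\rangle$, a faithful positive normal trace with $\tau(1)=1$. Letting $\rho$ act diagonally on $M_{n\times 1}(\ell^2(G))$, the commutant of this diagonal representation is exactly $M_{n\times n}(L(G))$, on which one has the trace $\mathrm{Tr}\big((T_{jk})_{j,k}\big)=\sum_{j=1}^n\tau(T_{jj})$; it is again faithful and positive, with $\mathrm{Tr}(I_n)=n$. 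By the definition of the von Neumann dimension recalled above, $\dim_{\rm vN}\ker a=\mathrm{Tr}(P)$.

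The one step requiring care, which I regard as the main (and only) obstacle, is verifying that $P\in M_{n\times n}(L(G))$. Since each entry $a_{jk}$ acts by left convolution, the operator $a$ intertwines the diagonal right regular representations on $M_{n\times 1}(\ell^2(G))$ and $M_{m\times 1}(\ell^2(G))$, i.e. $a\,\rho_g=\rho_g\,a$ for all $g\in G$. Consequently $\ker a$ is invariant under the diagonal $\rho$, so its orthogonal projection $P$ commutes with every $\rho_g$, placing $P$ in the commutant $M_{n\times n}(L(G))$. Everything else is then purely formal.

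With this in hand, item (1) is immediate from $0\le P\le I_n$ together with positivity of $\mathrm{Tr}$ and $\mathrm{Tr}(I_n)=n$. For items (2) and (3) I would invoke faithfulness in the concrete form $\mathrm{Tr}(P)=\mathrm{Tr}(P^*P)=\sum_{j,k}\tau(P_{kj}^*P_{kj})=\sum_{j,k}\|P_{kj}\delta_{e_G}\|^2$. Thus $\mathrm{Tr}(P)=0$ forces $P_{kj}\delta_{e_G}=0$ for all $j,k$; since $P_{kj}\in L(G)$ commutes with $\rho$ and $\{\rho_g\delta_{e_G}\}_g$ has dense linear span, each $P_{kj}=0$, so $P=0$ and $\ker a=\{0\}$, the converse being trivial; this gives (2). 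For (3) I would apply the same argument to the projection $I_n-P$ onto $(\ker a)^\perp$: the hypothesis $\mathrm{Tr}(P)=n$ reads $\mathrm{Tr}(I_n-P)=0$, whence $P=I_n$ and $\ker a=M_{n\times 1}(\ell^2(G))$. Evaluating $a$ on the vector with $\delta_{e_G}$ in coordinate $k$ and $0$ elsewhere then shows $a_{jk}\delta_{e_G}=0$, so every entry $a_{jk}$ vanishes and $a=0$; the converse is again clear.
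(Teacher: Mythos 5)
Your proof is correct and is essentially the paper's own argument repackaged in the language of the matrix group von Neumann algebra $M_{n\times n}(L(G))$ and its canonical trace: your verification that $P$ commutes with the right translations, your identity $\mathrm{Tr}(P)=\mathrm{Tr}(P^*P)=\sum_{j,k}\|P_{kj}\delta_{e_G}\|^2$, and your density argument using the $\rho$-orbit of $\delta_{e_G}$ are exactly the computations the paper carries out directly with the vectors $\delta_{e_G}\otimes\delta^j$ and the unitaries $\rho_{s^{-1},n}$. The paper simply avoids naming $L(G)$ and its trace, obtaining positivity from $\langle Pv,v\rangle=\|Pv\|^2$ (and the same for $I-P$) and the ``faithfulness'' step from the same commutation-plus-orthonormal-basis mechanism, applied to $P$ as a whole rather than entrywise.
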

  \begin{proof} For each $z\in \ell^2(G)$ and $1\le j\le n$ denote by $z\otimes \delta^j$ the column vector in $M_{n\times 1}(\ell^2(G))$ which is equal to $z$ at the $j$-th row and $0$ everywhere else. Then
  $$\dim_{\rm vN}\ker a=\sum_{j=1}^n \left< P(\delta_{e_G}\otimes \delta^j), \delta_{e_G}\otimes \delta^j\right>.$$

  Since $P$ is an orthogonal projection, we have $P^2=P=P^*$. Thus
  \begin{align} \label{E-basic 1}
  \dim_{\rm vN}\ker a=\sum_{j=1}^n \left< P^2(\delta_{e_G}\otimes \delta^j), \delta_{e_G}\otimes \delta^j\right>=\sum_{j=1}^n \left< P(\delta_{e_G}\otimes \delta^j), P(\delta_{e_G}\otimes \delta^j)\right>\ge 0.
  \end{align}
  Denote by $I$ the identity map from $M_{n\times 1}(\ell^2(G))$ to itself. Then $I-P$ is the orthogonal projection from $M_{n\times 1}(\ell^2(G))$ to the orthogonal complement of $\ker a$, and hence $(I-P)^2=I-P=(I-P)^*$. Thus
  \begin{align} \label{E-basic 2}
  n-\dim_{\rm vN}\ker a&=\sum_{j=1}^n \left< I(\delta_{e_G}\otimes \delta^j), \delta_{e_G}\otimes \delta^j\right>-\sum_{j=1}^n \left< P(\delta_{e_G}\otimes \delta^j), \delta_{e_G}\otimes \delta^j\right>\\
 \nonumber &=\sum_{j=1}^n \left< (I-P)(\delta_{e_G}\otimes \delta^j), \delta_{e_G}\otimes \delta^j\right>\\
 \nonumber &=\sum_{j=1}^n \left< (I-P)^2(\delta_{e_G}\otimes \delta^j), \delta_{e_G}\otimes \delta^j\right>\\
  \nonumber &=\sum_{j=1}^n \left< (I-P)(\delta_{e_G}\otimes \delta^j), (I-P)\delta_{e_G}\otimes \delta^j\right>\ge 0.
    \end{align}
  This proves $\dim_{\rm vN}\ker a\in [0, n]$.

  For each $s\in G$, we have the unitary $\rho_{s^{-1}, n}$ on $M_{n\times 1}(\ell^2(G))$ given by $\rho_{s^{-1}, n}z=zs$.
	Since the bounded linear operator $T:M_{n\times }(\ell^2(G))\rightarrow M_{m\times 1}(\ell^2(G))$ sending $z$ to $az$ satisfies  $T\rho_{s^{-1}, n}=\rho_{s^{-1}, m}T$, we have $\rho_{s^{-1}, n}\ker a=\ker a$, and hence the projections $P$  and $I-P$ commute with $\rho_{s^{-1}, n}$.

  If $\ker a=\{0\}$, then $P=0$, and hence $\dim_{\rm vN}\ker a=0$. Conversely, assume that $\dim_{\rm vN}\ker a=0$. From \eqref{E-basic 1} we
  have $P(\delta_{e_G}\otimes \delta^j)=0$ for all $1\le j\le n$, and hence
  $$P(\delta_s\otimes \delta^j)=P\rho_{s^{-1}, n}(\delta_{e_G}\otimes \delta^j)=\rho_{s^{-1}, n}P(\delta_{e_G}\otimes \delta^j)=0$$
  for all $s\in G$ and $1\le j\le n$. Since $\delta_s\otimes \delta^j$ for all $s\in G$ and $1\le j\le n$ is an orthonormal basis of $M_{n\times 1}(\ell^2(G))$, we conclude that $P=0$. Therefore $\ker a=\{0\}$.

  If $a=0$, then $P=I$, and hence $\dim_{\rm vN}\ker a=n$. Conversely, assume that $\dim_{\rm vN}\ker a=n$. From \eqref{E-basic 2} we
  have $(I-P)(\delta_{e_G}\otimes \delta^j)=0$ for all $1\le j\le n$, and hence
  $$(I-P)(\delta_s\otimes \delta^j)=(I-P)\rho_{s^{-1}, n}(\delta_{e_G}\otimes \delta^j)=\rho_{s^{-1}, n}(I-P)(\delta_{e_G}\otimes \delta^j)=0$$
  for all $s\in G$ and $1\le j\le n$. Since $\delta_s\otimes \delta^j$ for all $s\in G$ and $1\le j\le n$ is an orthonormal basis of $M_{n\times 1}(\ell^2(G))$, we conclude that $I-P=0$. Then $\ker a=M_{n\times 1}(\ell^2(G))$, and  hence $a=0$.
  \end{proof}

  The strong Atiyah conjecture asserts that $\dim_{\rm vN}\ker a$ is in the subgroup of $\QQ$ generated by $1/|H|$ for $H$ ranging over all finite subgroups of $G$.
  %In particular, if there is an upper bound on the orders of finite subgroups of $G$ and the strong Atiyah conjecture holds, then $\{\dim_{\rm vN}\ker a: a\in \CC G\}$  is a finite subset of $[0, 1]$ and thus every point of this set is isolated.
  We refer the reader to \cite[Chapter 10]{Luck2002} for information about the strong Atiyah conjecture and related conjectures.

For each locally compact abelian group $Y$, we denote by $\widehat{Y}$ its Pontryagin dual. It consists of all continuous group homomorphisms $Y\rightarrow \RR/\ZZ$, and becomes a locally compact abelian group under pointwise addition and the topology of uniform convergence on compact subsets.
Then $Y$ is compact metrizable if and only if $\widehat{Y}$ is countable discrete.

For a compact metrizable abelian group $X$, there is a natural one-to-one correspondence between algebraic actions of $G$ on $X$ and actions of $G$ on $\widehat{X}$ by automorphisms. There is also a natural one-to-one correspondence between the latter and the left $\ZZ G$-module structure on $\widehat{X}$.  Thus, up to isomorphism, there is a natural one-to-one correspondence between algebraic actions of $G$ and countable left $\ZZ G$-modules.
We say an algebraic action $G\curvearrowright X$ is \define{finitely generated} (\define{finitely presented} resp.) if $\widehat{X}$ is a finitely generated (finitely presented resp.) left $\ZZ G$-module. Every expansive algebraic action of $G$ is finitely generated~\cite[Proposition 2.2 and Corollary 2.16]{Schmidt1995}.

Using the $*$-operation it is easy to see  that $\ZZ G$ is left Noetherian if and only if it is right Noetherian.
Also note that if a unital ring $R$ is left Noetherian, then every finitely generated left $R$-module is finitely presented \cite[Proposition 4.29]{Lam1999}.

\begin{theorem}\label{T-expansive algebraic to STMP}
	Let $G$ be an amenable group. The following results hold:
	\begin{enumerate}
		\item If $\ZZ G$ is left Noetherian, then every expansive algebraic action of $G$ has the strong TMP.
		\item If $G$ satisfies the strong Atiyah conjecture and there is an upper bound on the orders of finite subgroups of $G$, then every finitely presented expansive algebraic action of $G$ has the strong TMP.
	\end{enumerate}
\end{theorem}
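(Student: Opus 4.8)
The plan is to translate the statement into module language and reduce the strong TMP to the solvability of a \emph{localized} linear correction problem, whose control is precisely what the von Neumann dimension machinery is meant to provide. First I would note that in both cases the dual module $\widehat X$ is finitely presented: in case (1), expansivity forces $\widehat X$ to be finitely generated (Schmidt), and left Noetherianity of $\ZZ G$ upgrades finite generation to finite presentation, whereas case (2) assumes finite presentation outright. Fixing a finite presentation yields a matrix $a\in M_{m\times n}(\ZZ G)$ with $\widehat X=(\ZZ G)^{n}/(\ZZ G)^{m}a$, and Pontryagin duality identifies $X$ with the solution set $X_a=\{x\in((\RR/\ZZ)^{n})^{G}:ax=0\}$ of a finite system of convolution equations (with the appropriate side conventions). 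Since $a$ has finite support, I fix $F_0\Subset G$ with $e_G\in F_0=F_0^{-1}$ containing the supports of all entries. Using \Cref{P-generating} I then replace the metric by the dynamically generating pseudometric $\rho(x,y)$ measuring the $(\RR/\ZZ)^{n}$-distance of the coordinates at $e_G$, so that $\rho_K$ controls agreement on the coordinates indexed by $K$.

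Next, exploiting linearity exactly as in the translation-invariance trick of \Cref{P_algebraic_have_wtmp}, I would reduce the memory-set condition to a single point: given $w=x-y\in X_a$ that is $\delta$-small on a collar $FA\setminus A$, produce $z\in X_a$ that is $\varepsilon$-close to $w$ on $FA$ and $\varepsilon$-close to $0$ off $A$; adding back $y$ recovers the desired $z$. The naive candidate is the truncation $\widetilde z$ obtained by lifting $w$ to $\RR^{n\times G}$ and zeroing it outside $A$. The point is that $\widetilde z$ differs from $w$ only on $A^{\complement}$, so the defect $a\widetilde z$ differs modulo $\ZZ^{m\times G}$ from the integer vector $a\widetilde w\equiv 0$ only through the small collar values of $w$; hence $a\widetilde z$ is supported in a bounded neighbourhood of $\partial A$ and has size $O(\delta)$ there. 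The whole problem thus reduces to correcting this boundary defect: find $u\in\RR^{n\times G}$, small in norm and supported in a bounded neighbourhood of the collar, with $au\equiv a\widetilde z\pmod{\ZZ^{m\times G}}$; then $z:=\widetilde z-u\bmod\ZZ$ lies in $X_a$ and enjoys all the required proximities, the collar being absorbed into the final $F$.

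The crux, and where I expect the real difficulty to lie, is solving this correction with a \emph{uniform finite} support, since this is exactly the gap between the strong TMP and the weaker mean TMP. Expansivity should give, through the standard characterization, that $a\colon M_{n\times1}(\ell^2(G))\to M_{m\times1}(\ell^2(G))$ has closed range, so that the partial inverse from $\operatorname{im}a$ back onto $(\ker a)^{\perp}$ is a bounded operator lying in $M_{n\times m}(\mathcal N G)$ for the group von Neumann algebra $\mathcal N G$; applying it to the defect produces a small $\ell^2$-correction, while \Cref{L-dim basic} tracks when $\ker a$ is trivial so that this inverse is genuinely determined. The hard part will be converting this $\ell^2$/von Neumann inverse into a correction of genuinely \emph{bounded width}, independent of $A$, and this is exactly where the two hypotheses are indispensable: under left Noetherianity one keeps all syzygies finitely generated and runs a finite free resolution so that the integer correction is solved over $\ZZ G$ itself with finite support, whereas under the strong Atiyah conjecture with a bound $L$ on the orders of finite subgroups the relevant values of $\dim_{\rm vN}$ lie in $\tfrac1L\ZZ$, which rigidifies $\ker a$ into an essentially projective object and forces the local complexity of $\operatorname{im}a\cap\ZZ^{m\times G}$ to be uniformly bounded, again yielding a finite $F$. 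Once $F$ and the norm bound are secured, the remaining verification of the memory-set inequalities is the ``straightforward combinatorial'' bookkeeping, and since the action is expansive one could alternatively upgrade to the uniform strong TMP for free via \Cref{P-exp TMP to uniform TMP}.
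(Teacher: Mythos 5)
Your skeleton (reduce to a single point by linearity, truncate to $A$, analyze the defect on a collar, then correct) matches the broad shape of the paper's argument, but the two steps you flag as "the crux" are resolved in the paper by tools that your proposal either replaces with something too weak or omits entirely, and in both cases the gap is fatal as written. First, the correction step: you propose to invert the presentation matrix $a$ using a closed-range/von Neumann--algebra partial inverse and apply it to the defect. But the defect $a\tilde z \bmod \ZZ^{m\times G}$ is supported on a collar of $\partial A$ with entries of size $O(\delta)$, so its $\ell^2$-norm is of order $\delta\sqrt{|\partial A|}$; any bound obtained from an $\ell^2$-bounded inverse therefore blows up with $A$, while the strong TMP requires $\delta$ and $F$ to be chosen \emph{before} $A$. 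The paper avoids this by invoking the characterization of expansiveness for finitely presented algebraic actions (\cite[Theorem 3.1]{ChungLi2015}): there exists $f\in M_n(\ZZ G)$, \emph{invertible in $M_n(\ell^1(G))$}, whose rows lie in the relation submodule $(\ZZ G)^k g$. Every estimate in Lemma~\ref{L-divisor to sTMP} is then of the form $\|u'(f^*)^{-1}\|_\infty\le \|u'\|_\infty\|(f^*)^{-1}\|_1$, i.e.\ $\ell^\infty$-against-$\ell^1$, which is uniform in $A$; the finite memory set $F$ comes precisely from truncating the $\ell^1$-tails of $(f^*)^{-1}$ and $(f^*)^{-1}g_j^*$. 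Note also that the correction point is \emph{not} finitely supported (it merely decays), so your insistence on a correction of bounded width is both stronger than needed and not what the hypotheses deliver.

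Second, and more importantly, you misassign the role of the Noetherian/Atiyah hypotheses: they are not there to produce bounded-width corrections (your "finite free resolution" and "local complexity of $\operatorname{im}a\cap\ZZ^{m\times G}$" remarks do not correspond to any step of the actual proof). The genuine difficulty is \emph{integrality}, equivalently membership in $X$: the point built from $f^{-1}$ automatically satisfies the relations coming from $f$, but $X$ is cut out by the full relation matrix $g$, and the $f$-relations are in general a strictly weaker system. What the algebraic hypotheses buy is Lemma~\ref{L-kernel}: for $f$ invertible in $M_n(\ell^1(G))$ and integral $x$, the element $xf^{-1}g_j^*$ either lies in $\ZZ G$ or is at uniform $\ell^\infty$-distance $\ge\varepsilon$ from $\ZZ G$. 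This dichotomy is proved using Elek's analytic zero-divisor lemma for amenable groups together with von Neumann dimension counting (a finitely generated right ideal of relations in the Noetherian case; the gap in the spectrum of possible values of $\dim_{\rm vN}$ in the Atiyah case), and it is exactly what forces the near-part correction $u'(f^*)^{-1}g_j^*$ (which the collar estimates show is within $\eta$ of an integral element) to actually be integral, i.e.\ the constructed point satisfies \emph{all} defining relations of $X$. Your proposal contains no version of this dichotomy, so even granting a satisfactory correction scheme, the point you produce would only be known to satisfy the $f$-relations and there is no argument that it lies in $X$. This integrality step is the mathematical heart of Theorem~\ref{T-expansive algebraic to STMP}, and it is the one place where amenability, the zero-divisor/Atiyah-type input, and the finite presentation all interact.
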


If $G$ satisfies the strong Atiyah conjecture and there is an upper bound on the orders of finite subgroups of $G$, then $\{\dim_{\rm vN}\ker a: a\in \ZZ G\}$  is a finite subset of $[0, 1]$ and thus every point of this set is isolated. Therefore
Theorem~\ref{T-expansive algebraic to STMP} follows from Lemmas~\ref{L-kernel} and \ref{L-divisor to sTMP} below.

For each nonempty $F\Subset G$, denote by $\CC F$ (resp. $\ZZ F$) the set of $f\in \CC G$ ($f\in \ZZ G$ resp.)  with support in $F$.
 We shall need the following result of Elek in~\cite{Elek2003} on the analytic zero divisor conjecture. The result is proven for a finitely generated amenable group and $m=n=1$, but the arguments work for any countable amenable group and any $m, n\in \NN$.

 \begin{lemma} \label{L-Elek}
 	Let $G$ be an amenable group, and $a\in M_{m\times n}(\CC G)$ for some $m, n\in \NN$. Then the following hold:
 \begin{enumerate}
\item For any $F\Subset G$, one has $\dim_{\rm vN}\ker a\ge \frac{\dim_{\CC} (\ker a\cap M_{n\times 1}(\CC F))}{|F|}$,
\item If $az=0$ for some nonzero $z\in M_{n\times 1}(\ell^2(G))$, then $ab=0$ for some nonzero $b\in M_{n\times 1}(\CC G)$.
 \end{enumerate}
 \end{lemma}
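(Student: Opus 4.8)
The plan is to treat the two parts separately, deriving (1) from an exact trace identity that needs no amenability, and reducing (2) to the amenable approximation of von Neumann dimension by finite‑dimensional kernels, where amenability enters essentially. For (1), write $P$ for the orthogonal projection of $M_{n\times 1}(\ell^2(G))$ onto $\ker a$, and let $P_F$ denote the coordinate projection onto $M_{n\times 1}(\CC F)=\operatorname{span}\{\delta_s\otimes\delta^j: s\in F,\ 1\le j\le n\}$. As recorded in the proof of \Cref{L-dim basic}, $P$ commutes with every right translation $\rho_{s^{-1},n}$, and $\rho_{s^{-1},n}(\delta_{e_G}\otimes\delta^j)=\delta_s\otimes\delta^j$. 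Since the $\rho_{s^{-1},n}$ are unitary, the quantity $\sum_{j=1}^n\left<P(\delta_s\otimes\delta^j),\delta_s\otimes\delta^j\right>$ is independent of $s\in G$ and equals $\dim_{\rm vN}\ker a$. Summing over $s\in F$ yields the exact identity
\[
\dim_{\rm vN}\ker a=\frac{1}{|F|}\,\mathrm{tr}\big(P_F P P_F\big),
\]
valid for every $F\Subset G$. Now $P_F P P_F$ is a positive operator on $M_{n\times 1}(\CC F)$ of norm at most $1$, and every $w\in\ker a\cap M_{n\times 1}(\CC F)$ satisfies $P_F P P_F\,w=P_F P\,w=P_F\,w=w$; hence $\ker a\cap M_{n\times 1}(\CC F)$ lies in the $1$‑eigenspace of $P_F P P_F$, so $\mathrm{tr}(P_FPP_F)\ge\dim_{\CC}(\ker a\cap M_{n\times 1}(\CC F))$. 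Dividing by $|F|$ gives (1). I note that this argument uses neither amenability nor any property of $\CC G$ beyond the right‑invariance of $\ker a$.

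For (2), I would first reduce to a statement about $\dim_{\rm vN}$: by \Cref{L-dim basic}, the hypothesis that $az=0$ for some nonzero $z\in M_{n\times 1}(\ell^2(G))$ is equivalent to $\dim_{\rm vN}\ker a>0$, and the desired conclusion is exactly that $\ker a\cap M_{n\times 1}(\CC F)\neq\{0\}$ for some $F\Subset G$. Thus it suffices to prove, for a F\o lner sequence $\{F_k\}$, the approximation
\[
\dim_{\rm vN}\ker a=\lim_{k\to\infty}\frac{\dim_{\CC}\big(\ker a\cap M_{n\times 1}(\CC F_k)\big)}{|F_k|},
\]
since a positive left‑hand side then forces the numerator to be positive for large $k$. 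Part (1) already gives that the right‑hand limsup is at most $\dim_{\rm vN}\ker a$, so the whole content is the reverse inequality: the honest finite‑dimensional kernels are \emph{at least} as large as the von Neumann dimension predicts.

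This reverse inequality is Elek's amenable approximation theorem, and it is the step I expect to be the main obstacle. Writing $\mathrm{rank}_{\CC}(a|_{\CC F})$ for the rank of $a$ viewed as a map $M_{n\times 1}(\CC F)\to M_{m\times 1}(\CC G)$ (whose image lies in $M_{m\times 1}(\CC(KF))$ for $K=\supp a$), one has $\dim_{\CC}(\ker a\cap M_{n\times 1}(\CC F))=n|F|-\mathrm{rank}_{\CC}(a|_{\CC F})$, so it is equivalent to show $\mathrm{rank}_{\CC}(a|_{\CC F_k})/|F_k|\to n-\dim_{\rm vN}\ker a$. First I would establish, via an Ornstein--Weiss subadditivity argument for the right‑invariant, subadditive set function $F\mapsto\mathrm{rank}_{\CC}(a|_{\CC F})$, that the limit exists and is independent of the F\o lner sequence; subadditivity holds because $a(\CC(F\sqcup F'))\subseteq a(\CC F)+a(\CC F')$, and right‑invariance because $a\rho_{g^{-1},n}=\rho_{g^{-1},m}a$ with $\rho$ invertible. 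The genuinely hard point is then to identify this limit with $n-\dim_{\rm vN}\ker a$ from below, i.e.\ to manufacture \emph{exact}, finitely supported kernel vectors out of the $\ell^2$‑kernel. Amenability is essential here: approximate kernel vectors, arising as the small eigenvalues of the compression $P_{F_k}\,a^*a\,P_{F_k}$ whose spectral distribution converges to that of $a^*a$ by F\o lner moment estimates, do not by themselves yield elements of the algebraic kernel. Elek's resolution, which I would follow, exploits the F\o lner structure together with the finite support of $a$ and elementary linear algebra over $\CC$ to convert the near‑kernel into an honest kernel up to a negligible boundary error of size $O(|KF_k\setminus F_k|)=o(|F_k|)$, thereby upgrading the inequality of Part (1) to an equality. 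The only modifications needed to pass from Elek's setting (finitely generated $G$, $m=n=1$) to the present generality are bookkeeping: replacing scalars by their $n$‑ and $m$‑fold block versions and dropping finite generation in favour of an arbitrary F\o lner sequence, neither of which affects the boundary estimates.
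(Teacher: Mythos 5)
The paper offers no proof of Lemma~\ref{L-Elek} at all: it is quoted as a theorem of Elek from \cite{Elek2003}, with only the remark that his arguments (written for finitely generated $G$ and $m=n=1$) extend to countable amenable $G$ and matrices. Measured against that, your proposal does strictly more. Your proof of part (1) is correct and complete: the identity $\dim_{\rm vN}\ker a=\frac{1}{|F|}\mathrm{tr}(P_FPP_F)$ follows from the commutation of $P$ with the right translations $\rho_{s^{-1},n}$ recorded in the proof of Lemma~\ref{L-dim basic}, and since $P_FPP_F$ is a positive contraction whose $1$-eigenspace contains $\ker a\cap M_{n\times 1}(\CC F)$, its trace dominates $\dim_{\CC}(\ker a\cap M_{n\times 1}(\CC F))$. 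As you observe, this uses no amenability. Your reduction of part (2), via Lemma~\ref{L-dim basic}, to the inequality $\liminf_k \dim_{\CC}\bigl(\ker a\cap M_{n\times 1}(\CC F_k)\bigr)/|F_k|\ge \dim_{\rm vN}\ker a$ along a F\o lner sequence is also correct, and your ultimate appeal to Elek for that inequality is no weaker than what the paper itself does.

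The one substantive criticism concerns your account of that last step as the "genuinely hard point", requiring approximate $\ell^2$-kernel vectors to be upgraded to exact finitely supported ones via the spectral distributions of the compressions $P_{F_k}a^*aP_{F_k}$. That route would not work as described: weak convergence of spectral distributions gives no control of the mass at $0$, which is precisely the obstruction in L\"uck-type approximation with complex coefficients. The inequality is in fact soft, and follows from a second application of your own part (1) argument, this time to the closed right-invariant subspace $H=\overline{\mathrm{im}\,a}\subset M_{m\times 1}(\ell^2(G))$. Indeed, $a\bigl(M_{n\times 1}(\CC F)\bigr)\subset H\cap M_{m\times 1}(\CC(KF))$, where $K\Subset G$ is the union of the supports of the entries of $a$, so the trace argument gives $\mathrm{rank}_{\CC}\bigl(a|_{M_{n\times 1}(\CC F)}\bigr)\le |KF|\,\dim_{\rm vN}H$; polar decomposition and the trace property of the group von Neumann algebra give $\dim_{\rm vN}H=n-\dim_{\rm vN}\ker a$; and then rank--nullity together with $|KF_k|/|F_k|\to 1$ (the only place amenability enters) yields $\dim_{\CC}(\ker a\cap M_{n\times 1}(\CC F_k))/|F_k|\ge n-\frac{|KF_k|}{|F_k|}(n-\dim_{\rm vN}\ker a)\to\dim_{\rm vN}\ker a$. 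With this substitution your proposal becomes fully self-contained (no Ornstein--Weiss lemma is needed either, since only a $\liminf$ bound is required), which is more than the paper provides.
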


For amenable groups, in view of Lemmas~\ref{L-dim basic} and \ref{L-Elek}, the condition (2) of the following lemma implies that $\dim_{\rm vN}\ker a$ for $a\in \ZZ G$ is equal to $1$ or $0$ depending on $a=0$ or not,  which in turn implies the condition (3). Nevertheless we give a separate proof for the case (2) since the proof in this case is easier.

\begin{lemma} \label{L-kernel}
	Let $G$ be an amenable group. Assume that at least one of the following conditions holds:
\begin{enumerate}
\item $\ZZ G$ is right Noetherian,
\item  $\ZZ G$ is a domain,
\item $1$ is an isolated point in $\{\dim_{\rm vN}\ker a : a\in \ZZ G \}$.
\end{enumerate}
Let $f\in M_n(\ZZ G)$ be invertible in $M_n(\ell^1(G))$, and let $g\in M_{n\times 1}(\ZZ G)$. Then there is some $\varepsilon>0$ such that for any $x\in M_{1\times n}(\ZZ G)$, if
	$xf^{-1}g\not\in \ZZ G$, then $\|xf^{-1}g-y\|_\infty\ge \varepsilon$ for all $y\in \ZZ G$.
\end{lemma}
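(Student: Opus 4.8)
The plan is to translate the statement into the language of the dual $\RR/\ZZ$-valued sup-norm and the expansive algebraic action attached to $f$. Since $xf^{-1}g\in\ell^1(G)$ tends to $0$, the quantity $\inf_{y\in\ZZ G}\norm{xf^{-1}g-y}_\infty$ is attained on a finitely supported $y$ and equals the supremum over $s\in G$ of the distance from $(xf^{-1}g)(s)$ to $\ZZ$; writing $\theta_x=xf^{-1}g\bmod\ZZ\in(\RR/\ZZ)^G$ and $\norm{\cdot}_\infty$ for this $\RR/\ZZ$-sup-norm, the conclusion is exactly that $0$ is isolated in $Q=\{\theta_x:x\in M_{1\times n}(\ZZ G)\}$. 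First I would record that $u_x:=xf^{-1}\in M_{1\times n}(\ell^1(G))$ satisfies $u_xf=x\in M_{1\times n}(\ZZ G)$, so that $\omega_x:=u_x\bmod\ZZ$ lies in $X_f:=\{\omega\in((\RR/\ZZ)^G)^{1\times n}:\omega f=0\}$ and is a summable (hence homoclinic) point, with $\theta_x=\omega_x g$. Because $f$ is invertible in $M_n(\ell^1(G))$, the action $X_f$ is expansive \cite{DeningerSchmidt2007}, so there is $c>0$ with $\norm{\omega}_\infty\ge c$ for every nonzero $\omega\in X_f$. Thus $Q$ is the image, under the coordinate map $\omega\mapsto\omega g$, of the summable points of an expansive action, and the task is to prove that this image is uniformly discrete at $0$.

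Next I would argue by contradiction: if no $\varepsilon$ works, there are $x_k$ with $\theta_k:=\theta_{x_k}\ne0$ and $\norm{\theta_k}_\infty\to0$. Using the $G$-invariance of $Q$ and of the hypotheses, translate so that the sup-norm of $\theta_k$ is attained at $e_G$ and pass, along a subsequence, to a pointwise (product-topology) limit $\omega_k\to\omega_\infty\in X_f$. Expansiveness of $X_f$ keeps $\omega_\infty$ away from $0$, while $\norm{\theta_k}_\infty\to0$ forces $\omega_\infty g=0$. The point to stress is that this by itself is \emph{not} a contradiction: a quotient of an expansive algebraic action need not be expansive, and a nonzero $\omega_\infty$ with $\omega_\infty f=\omega_\infty g=0$ may well exist. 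Closing this gap — i.e.\ controlling the (a priori $\ell^1$-unbounded) sequence $\omega_k$ itself rather than just its limit — is precisely where the three ring-theoretic hypotheses enter, through Elek's \Cref{L-Elek} as the bridge between the analytic ($\ell^2$) kernels arising in the limit and the algebraic ($\CC G$) kernels the hypotheses act on, together with the comparison $\dim_{\rm vN}\ker a\ge\dim_\CC(\ker a\cap M_{n\times 1}(\CC F))/|F|$ and the dichotomy of \Cref{L-dim basic}.

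I would then split into the three cases. Under (2), $\CC G$ a domain, any nonzero $\CC G$-kernel element produced by Elek's lemma would be a zero divisor for $f$, impossible since $f$ is invertible; this gives the short argument the statement alludes to. Under (3), the strong Atiyah conjecture with a bound $L$ on the orders of finite subgroups confines the relevant von Neumann dimensions to $\tfrac1L\ZZ$, so $\dim_{\rm vN}$ cannot take arbitrarily small positive values; combined with \Cref{L-dim basic}, this quantization converts the "collapsing'' into a jump of definite size and extracts a uniform $\varepsilon$. Under (1), $\ZZ G$ Noetherian (the left and right notions coincide via the $*$-operation), the left submodule $P=\{x:xf^{-1}g\in\ZZ G\}$ of $M_{1\times n}(\ZZ G)$ is finitely generated, so $Q\cong M_{1\times n}(\ZZ G)/P$ is finitely presented; this finiteness is what lets one pass from the (individually available) discreteness of the finitely many generators $\theta_{e_i}=(f^{-1}g)_i\bmod\ZZ$, inherited from expansiveness of $X_f$ and $\ell^1$-summability of $f^{-1}g$, to a single $\varepsilon$ valid on all of $Q$.

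The main obstacle is exactly the step flagged above: the limiting relation $\omega_\infty g=0$ is not contradictory on its own, so the heart of the proof is to control a sequence of summable homoclinic points whose $g$-images are nonzero but shrink to $0$, and to encode this failure in a matrix over $\CC G$ to which \Cref{L-Elek} and the hypotheses apply. I expect the two delicate points to be (a) identifying the correct $\CC G$-matrix (built from $f$ and $g$) whose $\ell^2$-kernel or von Neumann dimension records the failure of the gap, so that Elek's lemma is genuinely applicable, and (b) the Noetherian case (1), where neither a domain structure nor dimension quantization is available and one must instead use finite presentation to uniformize the separately-obtained bounds.
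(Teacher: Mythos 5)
Your reformulation is sound as far as it goes: the reduction to ``$0$ is isolated in $Q$'', the identification of $\omega_x=xf^{-1}\bmod\ZZ$ as a summable homoclinic point of the expansive action $X_f$, and, importantly, the observation that the naive compactness argument only produces a nonzero $\omega_\infty\in X_f$ with $\omega_\infty g=0$, which contradicts nothing. But the proposal stops exactly at that point. The two items you yourself flag as ``delicate'' --- (a) exhibiting a concrete $\CC G$-matrix to which Lemma~\ref{L-Elek} applies, and (b) extracting a uniform $\varepsilon$ in the Noetherian case --- are not technical details to be filled in later; they are the entire content of the lemma, and nothing in your text constructs them. Concretely: in case (2) the phrase ``zero divisor for $f$'' has no referent, because the algebraic relation that Elek's lemma is supposed to contradict is never written down; in case (3) you never specify which operator's von Neumann dimension is being quantized; and in case (1) the ``individually available discreteness of the generators $(f^{-1}g)_i\bmod\ZZ$'' is not available at all --- showing that the cyclic module $\ZZ G\cdot\theta_{e_i}$ is uniformly separated from $0$ is not easier than the full statement, and no mechanism is offered by which finite presentation of $Q$ as an abstract $\ZZ G$-module would control a sup-norm inequality.

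What actually closes the gap in the paper is a device absent from your proposal, and it involves no limiting procedure at all. One considers the \emph{right} multipliers $W(F,\ZZ)=\{w\in\ZZ F : f^{-1}gw\in M_{n\times 1}(\ZZ F)\}$ and $W(\ZZ)=\bigcup_F W(F,\ZZ)$. A linear-algebra count, $\dim_\CC W(F,\CC)\ge (n+1)|F|-n|KF|$ with $K$ the union of the supports of $f$ and $g$, shows along a F\o lner sequence that $\dim_\CC W(F_k,\CC)/|F_k|\to 1$; in particular $W(\ZZ)\neq\{0\}$. One then fixes finitely many nonzero $w_1,\dots,w_m\in W(\ZZ)$ and takes $\varepsilon<\min_j\|w_j\|_1^{-1}$. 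The crucial trick, which replaces your limit argument, is \emph{exact annihilation}: if $b=xf^{-1}g-y$ satisfies $\|b\|_\infty<\varepsilon$, then $bw_j=xf^{-1}gw_j-yw_j$ is integral (because $f^{-1}gw_j$ is) and $\|bw_j\|_\infty<1$, hence $bw_j=0$ exactly. Now if $b\neq 0$, Lemma~\ref{L-Elek}(2), applied to the column matrix with rows $w_j^*$ and the nonzero $\ell^2$-vector $b^*$, yields a nonzero $a\in\CC G$ with $aw_j=0$ for all $j$; the three hypotheses are then applied to this one relation. If $\CC G$ is a domain this is already absurd. If $\ZZ G$ is Noetherian one chooses the $w_j$ to generate the right ideal $W(\ZZ)$, so $a$ annihilates all of $W(\CC)$, and Lemma~\ref{L-Elek}(1) with the F\o lner density gives $\dim_{\rm vN}\ker a\ge 1$, whence $a=0$ by Lemma~\ref{L-dim basic}. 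Under the Atiyah hypothesis one chooses the $w_j$ to be a basis of some $W(F_k,\CC)$ contained in $W(F_k,\ZZ)$ with $\dim_\CC W(F_k,\CC)/|F_k|\ge\delta$, so that $\dim_{\rm vN}\ker a\ge\delta$, and the quantization of dimensions forces $\dim_{\rm vN}\ker a=1$, again giving $a=0$. Note finally that the relevant finiteness object is this right ideal of $\ZZ G$, not your left submodule $P$ or the quotient $Q$; this is precisely why \emph{right} Noetherianness appears in the hypothesis, and it is used in a way entirely different from the finite-presentation argument you sketch.
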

\begin{proof} Denote by $K$ the union of the supports of $f$ and $g$, which is a nonempty finite subset of $G$.
	
	For each nonempty  $F\Subset G$, denote by $V(F, \ZZ)$ ($V(F, \CC)$ resp.) the set of $(h, w)\in M_{n\times 1}(\ZZ F)\times \ZZ F$
	($(h, w)\in M_{n\times 1}(\CC F)\times \CC F$ resp.)
	satisfying $gw=fh$.
	Note that $gw=fh$ is a finite system of linear equations with coefficients in $\ZZ$. Thus $V(F, \CC)$ is the $\CC$-linear  span of $V(F, \ZZ)$.
	Since $f$ is invertible in $M_n(\ell^1(G))$, for any $(h, w)\in V(F, \CC)$, we have $h=f^{-1}gw$, and hence $h$ is determined by $w$.
	Denote by $W(F, \CC)$ the image of $V(F, \CC)$ under the projection $M_{n\times 1}(\CC F)\times \CC F\rightarrow \CC F$ sending $(h, w)$ to $w$, and by $W(F, \ZZ)$ the image of $V(F, \ZZ)$ under the same map. Then $W(F, \CC)$ is the $\CC$-linear span of $W(F, \ZZ)$.
	
	Consider the map $\varphi_F\colon  M_{n\times 1}(\CC F)\times \CC F\rightarrow M_{n\times 1}(\CC KF)$ sending $(h, w)$ to $fh-gw$. Then $V(F, \CC)$ is the kernel of $\varphi_F$. Thus
	\begin{align*}
	\dim_\CC W(F, \CC)&=\dim_\CC V(F, \CC)\\
	&=\dim_\CC \ker(\varphi_F)\\
	&\ge \dim_\CC(M_{n\times 1}(\CC F)\times \CC F)-\dim_\CC M_{n\times 1}(\CC KF)\\
	&= (n+1)|F|-n|KF|.
	\end{align*}

	As $G$ is amenable, there exists a left F{\o}lner sequence $\{F_k\}_{k\in \NN}$ such that $F_k\subseteq F_{k+1}$ for all $k\in \NN$ and $\bigcup_{k\in \NN}F_k=G$. Then
	\[\liminf_{k\to \infty}\frac{\dim_\CC W(F_k, \CC)}{|F_k|}\ge \lim_{k\to \infty}\frac{(n+1)|F_k|-n|KF_k|}{|F_k|}=1.\]
	
Denote by $W(\ZZ)$ (resp. $W(\CC)$) the union of $W(F, \ZZ)$ ($W(F, \CC)$ resp.) over all nonempty $F\Subset G$. Then $W(\ZZ)$ contains nonzero elements. Let $\mathcal{W}=\{w_1,\dots, w_m\}$ be a nonempty finite set of nonzero elements in $W(\ZZ)$ which we shall determine later.
Take $0<\varepsilon<\min_{1\le j\le m}\frac{1}{\|w_j\|_1}$. Let $x\in M_{1\times n}(\ZZ G)$ and $y\in \ZZ G$ with $\|xf^{-1}g-y\|_\infty<\varepsilon$. Then
	$$\|xf^{-1}gw_j-yw_j\|_\infty\le \|xf^{-1}g-y\|_\infty \|w_j\|_1<\varepsilon \|w_j\|_1<1.$$
	Since $w_j\in W(F_k, \ZZ)$, we have $f^{-1}gw_j\in M_{n\times 1}(\ZZ F_k)$, and hence $xf^{-1}gw_j-yw_j\in \ZZ G$. Therefore $xf^{-1}gw_j-yw_j=0$.
Put $b:=xf^{-1}g-y\in \ell^1(G)\subseteq \ell^2(G)$.
Then $b(w_1, \dots, w_m)=0$, and hence $\left[\begin{matrix} w_1^* \\ \vdots \\ w_m^* \end{matrix}\right]b^*=0$. Assume $b\neq 0$. Then by Lemma~\ref{L-Elek} there is some nonzero $c\in \CC G$ such that $\left[\begin{matrix} w_1^* \\ \vdots \\ w_m^* \end{matrix}\right]c^*=0$, and hence $c w_j=0$ for all $1\le j\le m$. Denote by $K'$ the support of $c$, which is a nonempty finite subset of $G$. Note that $z(w_1, \dots, w_m)=0$ for $z\in \CC K'$ is a finite system of linear equations with coefficients in $\ZZ$ and has a nonzero solution $c$. Thus it has a nonzero solution $a\in \ZZ K'$.

	Now consider the case $\ZZ G$ is a domain.  Since $\ZZ G$ is a domain, we have $aw\neq 0$ for all nonzero $w\in \ZZ G$, which is a contradiction.
Thus $xf^{-1}g=y\in \ZZ G$.
	
	Next consider the case $\ZZ G$ is right Noetherian.
%Denote by $W(\ZZ)$ (resp. $W(\CC)$) the union of $W(F, \ZZ)$ ($W(F, \CC)$ resp.) over all nonempty $F\Subset G$. Then
Note that $W(\ZZ)$ is a right ideal of $\ZZ G$, and hence is finitely generated. We may take $\mathcal{W}$ to generate $W(\ZZ)$.
Since $aw=0$ for every $w\in \mathcal{W}$, we have $aw=0$ for every $w\in W(\ZZ)$, and hence $aw=0$ for all $w\in W(\CC)$. By Lemma~\ref{L-Elek} we have
$\dim_{\rm vN}\ker a\ge \sup_{k\in \NN}\frac{\dim_\CC W(F_k, \CC)}{|F_k|}\ge 1$. From Lemma~\ref{L-dim basic} we get $a=0$, which is a contradiction. Thus $xf^{-1}g=y\in \ZZ G$.

Finally consider the case where $1$ is an isolated point in $V=\{\dim_{\rm vN}\ker z : z\in \ZZ G \}$. Take $0<\delta<1$ close to $1$ such that $V\cap [\delta, 1]=\{1\}$. Take $k\in \NN$ such that $ \frac{\dim_\CC W(F_k, \CC)}{|F_k|}\ge \delta$. We may take $\mathcal{W}$ to be a basis of $W(F_k, \CC)$ contained in $W(F_k, \ZZ)$. Then $aw=0$ for all $w\in W(F_k, \CC)$. By Lemma~\ref{L-Elek} we have $\dim_{\rm vN}\ker a\ge \frac{\dim_\CC W(F_k, \CC)}{|F_k|}\ge \delta$. Then $\dim_{\rm vN}\ker a=1$. From Lemma~\ref{L-dim basic} we get $a=0$, which is a contradiction. Thus $xf^{-1}g=y\in \ZZ G$.
	\end{proof}

For each $n\in \NN$, we write $M_{1\times n}(\ZZ G)$ and $M_{1\times n}((\RR/\ZZ)^G)$ as $(\ZZ G)^n$ and $((\RR/\ZZ)^G)^n$ respectively.
For any finitely generated algebraic action $G\curvearrowright X$, if we write $\widehat{X}$ as $(\ZZ G)^n/J$ for some $n\in \NN$ and some left $\ZZ G$-submodule $J$ of $(\ZZ G)^n$, then we may identify $X$ with
$$\{x\in ((\RR/\ZZ)^G)^n : xg^*=0_{(\RR/\ZZ)^G} \mbox{ for all } g\in J\}$$
with the $G$-action on $X$ being the restriction of the left shift action of $G$ on $((\RR/\ZZ)^G)^n=((\RR/\ZZ)^n)^G$ to $X$ \cite[page 312]{KerrLiBook2016}.

\begin{lemma} \label{L-divisor to sTMP}
	Assume that for any $n\in \NN$, any $f\in M_n(\ZZ G)$ which is invertible in $M_n(\ell^1(G))$ and any $g\in M_{n\times 1}(\ZZ G)$, there is some $\varepsilon>0$ such that for any $x\in (\ZZ G)^n$, if
	$xf^{-1}g\not\in \ZZ G$, then $\|xf^{-1}g-y\|_\infty\ge \varepsilon$ for all $y\in \ZZ G$. Then every finitely presented expansive algebraic action of $G$ has the strong TMP.
\end{lemma}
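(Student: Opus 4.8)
The plan is to make the algebraic action concrete and to extract from expansiveness a single integer matrix $f$ that is invertible over $\ell^1(G)$ and whose rows are relations of $X$; the standing hypothesis then upgrades an approximate splicing to exact membership in $X$. First I would fix a presentation: since the action is finitely presented, write $\widehat X=(\ZZ G)^{1\times n}/J$ with $J=\sum_{i=1}^m \ZZ G\, g^{(i)}$, assemble the generators into $R\in M_{m\times n}(\ZZ G)$ (rows $g^{(i)}$), and identify $X=\{x\in ((\RR/\ZZ)^G)^n : x(g^{(i)})^*=0 \text{ for } 1\le i\le m\}$. By the standard characterization of expansiveness for algebraic actions \cite{Schmidt1995, ChungLi2015}, expansiveness is equivalent to $\ell^1(G)\cdot J=(\ell^1(G))^{1\times n}$, i.e.\ to the existence of $S\in M_{n\times m}(\ell^1(G))$ with $SR=I_n$. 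I would approximate $S$ entrywise by a finitely supported $S'\in M_{n\times m}(\QQ G)$ so closely that $S'R$ is still invertible in $M_n(\ell^1(G))$ (invertibility is open), then clear denominators: for suitable $N\in\NN$ the matrix $f:=N\,S'R\in M_n(\ZZ G)$ is invertible in $M_n(\ell^1(G))$, and since $f=(NS')R$ each row of $f$ is a $\ZZ G$-combination of the $g^{(i)}$, hence lies in $J$. Thus $X\subseteq X_f:=\{x : xf^*=0\}$, every $x\in X$ satisfies $\tilde x f^*\in M_{1\times n}(\ZZ^G)$ for any lift $\tilde x$, and $(f^*)^{-1}=(f^{-1})^*\in M_n(\ell^1(G))$. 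Applying the standing hypothesis to the pair $(f^*,(g^{(i)})^*)$ gives, for each $i$, a constant $\varepsilon_i>0$ such that for every $x\in(\ZZ G)^n$ either $x(f^*)^{-1}(g^{(i)})^*\in \ZZ G$ or it is $\ell^\infty$-distance $\ge\varepsilon_i$ from $\ZZ G$.

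Next I would reduce the strong TMP to a gluing statement. Using the dynamically generating pseudometric $\rho(x,y)=d_0(x(e_G),y(e_G))$ on $((\RR/\ZZ)^G)^n$ and Proposition~\ref{P-generating}(2) (as in Corollary~\ref{C-subshift}), it suffices to find one finite $F\ni e_G$ and a $\delta>0$, depending only on a target accuracy $\varepsilon>0$, so that whenever $x,y\in X$ agree up to $\delta$ on $FA\setminus A$ there is $z\in X$ within $\varepsilon$ of $x$ on $FA$ and within $\varepsilon$ of $y$ on $G\setminus A$. I construct $z$ by splicing in syndrome coordinates: with $u_x:=\tilde x f^*$ and $u_y:=\tilde y f^*\in M_{1\times n}(\ZZ^G)$, set
$$q:=(u_x-u_y)\mathbbm{1}_A\in M_{1\times n}(\ZZ G),\qquad \tilde z:=\tilde y+q(f^*)^{-1},$$
and let $z=\tilde z \bmod \ZZ^G$. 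The key point of passing through $f$ is that $q$ has finite support in $A$ and integer entries, converting the real, non-integral naive splice into an integral finitely supported vector. Since $\tilde z f^*=u_y+q$ equals $u_x$ on $A$ and $u_y$ off $A$, we get $z\in X_f$. The $\ell^1$-summability of $(f^*)^{-1}$ together with the boundary agreement lets me choose $F$ (a fixed symmetric neighbourhood of $e_G$, independent of $A$, large enough to control the tail of $(f^*)^{-1}$ and the supports of $f$ and the $g^{(i)}$) and $\delta$ so that $z$ is $\varepsilon$-close to $x$ on $FA$ and to $y$ on $G\setminus A$; here one uses that $z-y=q(f^*)^{-1}$ and $z-x=-\big((u_x-u_y)\mathbbm{1}_{A^{\complement}}\big)(f^*)^{-1}$ are convolutions of boundary-supported terms with a summable kernel.

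Finally I would force $z\in X$. For each $i$ we have $\tilde z(g^{(i)})^* = \tilde y(g^{(i)})^* + q(f^*)^{-1}(g^{(i)})^*$, where the first summand lies in $\ZZ^G$ because $y\in X$. The second summand is precisely $x(f^*)^{-1}(g^{(i)})^*$ with $x=q\in(\ZZ G)^n$, i.e.\ the quantity governed by the hypothesis with matrix $f^*$ and column $(g^{(i)})^*$. Its $\ell^\infty$-distance to $\ZZ G$ equals the $\sup$-distance of $z(g^{(i)})^*$ to $0$ in $(\RR/\ZZ)^G$, which is small: on each finite window $z$ is $\varepsilon$-close either to $x$ or to $y$ — and to both near the seam, thanks to the agreement on $FA\setminus A$ — so $z(g^{(i)})^*$ is within $\|g^{(i)}\|_1\,\varepsilon$ of $x(g^{(i)})^*=0$ or $y(g^{(i)})^*=0$. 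Choosing $\varepsilon$ (hence $\delta$) small enough that $\|g^{(i)}\|_1\,\varepsilon<\varepsilon_i$ for all $i$ rules out the ``$\ge\varepsilon_i$'' alternative, so $q(f^*)^{-1}(g^{(i)})^*\in\ZZ G$ and therefore $z(g^{(i)})^*=0$. Hence $z\in X$, and since $\widehat X$ has finitely many such relations this yields the strong TMP.

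I expect the main obstacle to be the seam analysis across the second and third steps: arranging a single, $A$-independent finite collar $FA\setminus A$ of agreement that simultaneously makes the spliced $z$ genuinely $\varepsilon$-close to $x$ and to $y$ in the two regions and makes every defect $z(g^{(i)})^*$ smaller than $\min_i\varepsilon_i$, so that the integrality dichotomy converts ``small'' into ``exactly $0$''. Producing the integral $\ell^1$-invertible $f$ with rows in $J$ is the other essential step, but it is comparatively soft once the expansiveness characterization is available.
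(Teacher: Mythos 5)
Your overall architecture is the same as the paper's: reduce to the pseudometric formulation via Proposition~\ref{P-generating}, use the Chung--Li expansiveness characterization to produce an integral $f$ with rows among the relations and $f$ invertible in $M_n(\ell^1(G))$ (the paper simply cites \cite[Theorem 3.1]{ChungLi2015} for this, where you re-derive it by perturbing $S$ and clearing denominators --- that part is fine), then splice by truncating the integer-valued ``syndrome'' and pulling back through $(f^*)^{-1}$, and finally use the hypothesis as an integrality dichotomy to force the spliced point into $X$. However, the seam analysis --- which you correctly flag as the main obstacle --- contains two genuine gaps, and it is exactly there that the paper's proof does its real work.

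First, you form $u_x-u_y=\tilde x f^*-\tilde y f^*$ from \emph{separate} lifts of $x$ and $y$. Closeness of $x$ and $y$ on the collar $FA\setminus A$ only says $\tilde x-\tilde y$ is small \emph{modulo} $\ZZ^n$ there, so on the collar $u_x-u_y$ equals $mf^*$ for an integer-valued lift-discrepancy $m$ that need not vanish (e.g.\ $x_s\approx 1^-$, $y_s\approx 0^+$). Consequently your truncation $q=(u_x-u_y)\indicator{A}$ cuts through a region where the truncated function is genuinely nonzero, and both $z-x=-\bigl((u_x-u_y)\indicator{G\setminus A}\bigr)(f^*)^{-1}$ and $z-y=q(f^*)^{-1}$ pick up contributions of size $O(1)$ (neither small nor integer-valued) at coordinates adjacent to the boundary of $A$; the strong TMP demands closeness to $y$ on \emph{all} of $G\setminus A$, so this is fatal as written. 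The paper avoids this by lifting the \emph{difference} $x-y\in X$ (using the group structure of $X$) to the canonical lift $\tilde z\in([-1/2,1/2)^n)^G$, so that the agreement on the collar makes $\tilde z$ genuinely small there; then $\tilde z f^*$ is small \emph{and} integer-valued on an annulus, hence identically zero there. Second, even with the correct lift, truncating at exactly $A$ does not work: the syndrome $\tilde z f^*$ can be nonzero on the thin layer $A^{-1}K_1\setminus A^{-1}$ (with $K_1\supseteq\supp f^*$), because the convolution there mixes unknown values inside $A^{-1}$ with small collar values. The paper therefore truncates at the fattened window $A^{-1}K_1$, so that the complementary piece $v'$ is supported beyond the vanishing annulus $A^{-1}K_1K_2K_2^{-1}\setminus A^{-1}K_1$; the supports of $u'$ and $v'$ are then separated by a gap of size $K_2K_2^{-1}$, every seam estimate becomes a tail estimate on $(f^*)^{-1}$, and closeness to $y$ on the collar portion of $G\setminus A$ is recovered by the triangle inequality through $x$ using the hypothesis $\rho_{FA\setminus A}(x,y)\le\delta$. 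Both fixes are available within your framework, but without them the construction you wrote down fails at the seam, and your third step (forcing $z(g^{(i)})^*=0$) inherits the same defect since it leans on those closeness estimates.
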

\begin{proof} Let $G\curvearrowright X$ be a finitely presented expansive algebraic action of $G$. Then we can write $\widehat{X}$ as $(\ZZ G)^n/(\ZZ G)^kg$ for some  $n, k\in \NN$ and $g\in M_{k, n}(\ZZ G)$. Since $G\curvearrowright X$ is expansive, by~\cite[Theorem 3.1]{ChungLi2015} we can find an $f\in M_n(\ZZ G)$ such that $f$ is invertible in $M_n(\ell^1(G))$ and the rows of $f$ are all contained in $(\ZZ G)^kg$.
	Write the rows of $g$ as $g_1, \dots, g_k$. Then we may identify $X$ with
	$$\{x\in ((\RR/\ZZ)^G)^n : xg_j^*=0_{(\RR/\ZZ)^G} \mbox{ for all } 1\le j\le k\}.$$
	In particular, for each $x\in X$, we have $xf^*=(0_{(\RR/\ZZ)^G}, \dots, 0_{(\RR/\ZZ)^G})$.

	Applying our assumption to $f^*$ and $g_j^*$ we find an $\eta>0$ such that for any $x\in (\ZZ G)^n$ and $1\le j\le k$, if
	$x(f^*)^{-1}g_j^*\not\in \ZZ G$, then $\|x(f^*)^{-1}g_j^*-y\|_\infty\ge \eta$ for all $y\in \ZZ G$.
	
	Denote by $P$ the natural projection $(\RR^n)^G\rightarrow ((\RR/\ZZ)^G)^n$ modulo $\ZZ$. For each bounded function $u\colon G\rightarrow \RR^n$, we put
	$\|u\|_\infty=\sup_{s\in G}\|u_s\|_\infty$.
	Set $\|f\|_1=\sum_{i, j=1}^n\|f_{i, j}\|_1$.
	
	Consider the metric $\rho$ on $(\RR/\ZZ)^n$ given by
	$$\rho(a+\ZZ^n, b+\ZZ^n)=\min_{m\in \ZZ^n}\|a-b-m\|_\infty$$
	for all $a, b\in \RR^n$. Then we may think of $\rho$ as a continuous pseudometric on $((\RR/\ZZ)^n)^G$ via setting
	$$\rho(x, y)=\rho(x_{e_G}, y_{e_G})$$
	for all $x, y\in ((\RR/\ZZ)^n)^G$. This is a dynamically generating continuous pseudometric on $X$, thus~\Cref{P-generating} applies.
	
	Let $\varepsilon>0$.
	Take $0<\tau<\frac{\min(\varepsilon, \eta)}{\|f\|_1}$.
	Take a finite subset  $K_1$ of $G$ containing the support of $f^*$ and $\{e_G\}$.
	Take a large nonempty finite subset $K_2$ of $G$ containing $e_G$ such that
	\begin{align*}
	\sum_{s\in G\setminus K_2}\sum_{i=1}^n|((f^*)^{-1}g_j^*)_{i, s}|<\tau \mbox{ for all } 1\le j\le k, \mbox{ and }
	\sum_{s\in G\setminus K_2}\sum_{i, j=1}^n|((f^*)^{-1})_{i,j, s}|<\tau.
	\end{align*}
	Take $0<\delta<\min(\frac{1}{\|f\|_1}, \frac{\varepsilon}{2})$. Put $F=K_1K_2K_2^{-1}K_1^{-1}$, which is a  finite subset of $G$ containing $e_G$.
	
	Let $A\Subset G$ and $x, y\in X$ with $\rho_{FA\setminus A}(x, y)\le \delta$.
	Put $z=x-y\in X$. There is a unique $\tilde{z}\in ([-1/2, 1/2)^n)^G$ satisfying $P(\tilde{z})=z$.
	Then
	\begin{align*}
	\max_{s\in A^{-1}K_1K_2K_2^{-1}K_1^{-1}\setminus A^{-1}}\|\tilde{z}_s\|_\infty &=\max_{s\in K_1K_2K_2^{-1}K_1^{-1}A\setminus A}\|\tilde{z}_{s^{-1}}\|_\infty\\
	&=\max_{s\in FA\setminus A}\|\tilde{z}_{s^{-1}}\|_\infty\\
	&=\max_{s\in FA\setminus A}\rho(sz, 0_X)\\
	&=\max_{s\in FA\setminus A}\rho(sx, sy)\\
	&=\rho_{FA\setminus A}(x, y)\le \delta,
	\end{align*}
	and hence
	$$ \max_{s\in A^{-1}K_1K_2K_2^{-1}\setminus A^{-1}K_1}\|(\tilde{z}f^*)_s\|_\infty\le (\max_{s\in A^{-1}K_1K_2K_2^{-1}K_1^{-1}\setminus A^{-1}}\|\tilde{z}_s\|_\infty)\|f^*\|_1\le \delta \|f\|_1<1.$$
	Since $zf^*=(0_{(\RR/\ZZ)^G}, \dots, 0_{(\RR/\ZZ)^G})$, we have $\tilde{z}f^*\in (\ZZ^n)^G$. Therefore
	$(\tilde{z}f^*)_s=0_{\RR^n}$ for all $s\in A^{-1}K_1K_2K_2^{-1}\setminus A^{-1}K_1$. We also have
	$$ \|\tilde{z}f^*\|_\infty\le \|\tilde{z}\|_\infty\|f^*\|_1\le\frac{\|f\|_1}{2}.$$
	
	Define $u'\in (\ZZ G)^n$ by $u'_s=(\tilde{z}f^*)_s$ for all $s\in A^{-1}K_1$ and $u'_s=0_{\ZZ^n}$ for all $s\in G\setminus A^{-1}K_1$.
	Also put $v'=\tilde{z}f^*-u'$. Then the supports of $u'$ and $v'$ are contained in $A^{-1}K_1$ and $G\setminus A^{-1}K_1K_2K_2^{-1}$ respectively. Note that
	$$ \max(\|u'\|_\infty, \|v'\|_\infty)=\|\tilde{z}f^*\|_\infty\le\frac{\|f\|_1}{2}.$$
	
	Let $1\le j\le k$.
	Then
	$$\sup_{s\in G\setminus A^{-1}K_1K_2}|(u'(f^*)^{-1}g_j^*)_s|\le \|u'\|_\infty\sum_{s\in G\setminus K_2}\sum_{i=1}^n|((f^*)^{-1}g_j^*)_{i, s}|\le \frac{\tau \|f\|_1}{2}<\eta,$$
	and
	$$\max_{s\in  A^{-1}K_1K_2}|(v'(f^*)^{-1}g_j^*)_s|\le \|v'\|_\infty\sum_{s\in G\setminus K_2}\sum_{i=1}^n|((f^*)^{-1}g_j^*)_{i, s}|\le \frac{\tau \|f\|_1}{2}<\eta.$$
	Since $z\in X$, we have $zg_j^*=0_{(\RR/\ZZ)^G}$, and hence $u'(f^*)^{-1}g_j^*+v'(f^*)^{-1}g_j^*=\tilde{z}g_j^*\in \ZZ^G$.
	It follows that $\|u'(f^*)^{-1}g_j^*-u_j\|_\infty<\eta$ for some $u_j\in \ZZ G$, and hence $u'(f^*)^{-1}g_j^*\in \ZZ G$ by our choice of $\eta$.
	
	Put $u=P(u'(f^*)^{-1})\in ((\RR/\ZZ)^G)^n$ and $v=P(v'(f^*)^{-1})\in ((\RR/\ZZ)^G)^n$. Note that $u+v=z=x-y$. For each $1\le j\le k$, since $u'(f^*)^{-1}g_j^*\in \ZZ G$, we have $ug_j^*=0_{(\RR/\ZZ)^G}$.
	Thus $u\in X$. We have
	\begin{align*}
	\sup_{s\in G\setminus A^{-1}K_1K_2}\rho(u_s, 0_{(\RR/\ZZ)^n})&\le \sup_{s\in G\setminus A^{-1}K_1K_2}\|(u'(f^*)^{-1})_s\|_\infty\\
	&\le \|u'\|_\infty\sum_{s\in G\setminus K_2}\sum_{i, j=1}^n|((f^*)^{-1})_{i,j, s}|\le \frac{\tau \|f\|_1}{2}<\frac{\varepsilon}{2},
	\end{align*}
	and similarly
	$$ \max_{s\in A^{-1}K_1K_2}\rho(v_s, 0_{(\RR/\ZZ)^n})\le  \max_{s\in A^{-1}K_1K_2}\|(v'(f^*)^{-1})_s\|_\infty\le \|v'\|_\infty\sum_{s\in G\setminus K_2}\sum_{i, j=1}^n|((f^*)^{-1})_{i,j, s}|<\frac{\varepsilon}{2}.$$
	Then
	$$ \rho_{G\setminus K_2^{-1}K_1^{-1}A}(u+y, y)=\sup_{s\in G\setminus A^{-1}K_1K_2}\rho((u+y)_s, y_s)=\sup_{s\in G\setminus A^{-1}K_1K_2}\rho(u_s, 0_{(\RR/\ZZ)^n})<\frac{\varepsilon}{2},$$
	and
	\begin{align*}
	\rho_{K_2^{-1}K_1^{-1}A}(u+y, x)&=
	\max_{s\in  A^{-1}K_1K_2}\rho((u+y)_s, x_s)\\
	&= \max_{s\in  A^{-1}K_1K_2}\rho((x-v)_s, x_s)
	=  \max_{s\in  A^{-1}K_1K_2}\rho(v_s, 0_{(\RR/\ZZ)^n})<\frac{\varepsilon}{2}.
	\end{align*}
	Now we have
	\begin{align*}
	\rho_{K_2^{-1}K_1^{-1}A\setminus A}(u+y, y)&\le \rho_{K_2^{-1}K_1^{-1}A\setminus A}(u+y, x)+\rho_{K_2^{-1}K_1^{-1}A\setminus A}(x, y)\\
	&\le \rho_{K_2^{-1}K_1^{-1}A}(u+y, x)+\rho_{FA\setminus A}(x, y)
	<\frac{\varepsilon}{2}+\delta<\varepsilon,
	\end{align*}
	and
	\begin{align*}
	\rho_{FA\setminus K_2^{-1}K_1^{-1}A}(u+y, x)&\le \rho_{FA\setminus K_2^{-1}K_1^{-1}A}(u+y, y)+\rho_{FA\setminus K_2^{-1}K_1^{-1}A}(y, x)\\
	&\le \rho_{G\setminus K_2^{-1}K_1^{-1}A}(u+y, y)+\rho_{FA\setminus A}(y, x)
	<\frac{\varepsilon}{2}+\delta<\varepsilon.
	\end{align*}
	Finally,
	$$ \rho_{G\setminus A}(u+y, y)=\max(\rho_{G\setminus K_2^{-1}K_1^{-1}A}(u+y, y), \rho_{K_2^{-1}K_1^{-1}A\setminus A}(u+y, y))<\varepsilon,$$
	and
	$$\rho_{FA}(u+y, x)=\max(\rho_{ K_2^{-1}K_1^{-1}A}(u+y, x), \rho_{FA\setminus K_2^{-1}K_1^{-1}A}(u+y, x))<\varepsilon.$$
	From~\Cref{P-generating} we conclude that $G\curvearrowright X$ has the strong TMP.
\end{proof}

\section{Markovian properties of minimal actions}\label{section:minimial}

In this section we prove Theorem~\ref{T-minimal exp tmp iff noasym}, which characterizes the TMP for minimal expansive actions.
We say $G \curvearrowright X$ is \define{minimal} if every closed $G$-invariant subset of $X$ is either equal to $X$ or empty.

\begin{theorem} \label{T-minimal exp tmp iff noasym}
	Let $G \curvearrowright X$ be a minimal expansive action. Then $G \curvearrowright X$ has the TMP if and only if it has no off-diagonal asymptotic pairs.
\end{theorem}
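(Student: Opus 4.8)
The plan is to prove the two implications separately. Expansivity lets me pass to the uniform versions of the Markov properties, while minimality enters through uniform recurrence. Since the action is expansive, the metric $d$ itself is dynamically generating, so \Cref{P-generating} applies and I will read ``$d_{K}(x,y)$ small'' as ``$x$ and $y$ carry the same local pattern on $K$'', exactly as in the symbolic picture of \Cref{C-subshift}. I fix an expansivity constant $c$ once and for all.

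For the direction ``TMP $\Rightarrow$ no off-diagonal asymptotic pair'' I would first upgrade the hypothesis to the uniform TMP via \Cref{P-exp TMP to uniform TMP}. Assume for contradiction that $(x_0,y_0)$ is an off-diagonal asymptotic pair; since asymptoticity is $G$-invariant I may translate it so that $d(x_0,y_0)>c$. Fix $\varepsilon<c/2$, take the corresponding $\delta$ from the uniform TMP, and use asymptoticity to find a finite $F$ with $d(gx_0,gy_0)\le\delta$ for $g\notin F$. Enlarging the core to some $A\supseteq F\cup\{e_G\}$ and letting $B$ be a uniform $(\varepsilon,\delta)$-memory set for $A$, the pair agrees up to $\delta$ on the annulus $B\setminus A$ yet differs by more than $c$ on $A$ (at $e_G$). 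In symbolic terms the local patterns of $x_0$ and $y_0$ on the core share the same boundary and are therefore freely interchangeable at every occurrence of that boundary.

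The step I expect to be the main obstacle is converting this free interchangeability into a contradiction with minimality, along the lines of Salo's theorem that a minimal subshift admits no interchangeable patterns. By minimality the common boundary pattern is uniformly recurrent, so the legal swap positions are relatively dense; from them I extract a $B$-separated set $V$, and invoking the uniform memory property for the possibly infinite set $V$ (legitimate by the remark following the definition of uniform memory set) I can realize, inside $X$, every configuration obtained from the background $y_0$ by independently inserting the $x_0$-defect at a prescribed subset of $V$. The task is then to choose these insertions so as to produce a point of $X$ that fails to be uniformly recurrent, for instance one whose orbit omits a pattern that minimality forces to appear, which is the desired contradiction. Isolating this as an auxiliary statement ``minimal $+$ uniform TMP $\Rightarrow$ no off-diagonal asymptotic pair'' confines the delicate combinatorics to a single lemma.

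For the converse I would prove the rigidity statement that for each $\varepsilon\in(0,c]$ there is $\delta>0$ such that every core $A$ admits a finite $B\supseteq A$ with $d_{B\setminus A}(x,y)\le\delta\Rightarrow d_{G\setminus A}(x,y)\le\varepsilon$; granting this, the memory property holds by simply taking $z=x$. The rigidity combines the expansive principle that a large window determines a point up to $\varepsilon$ (\Cref{L-close PO}(3)) with a compactness argument: were it to fail, one would obtain pairs agreeing on ever larger annuli yet separated by more than $\varepsilon$ at positions $g_n$ which, as the annuli exhaust $G$, must escape to infinity; recentering at $g_n$ and passing to a limit yields an off-diagonal pair whose surviving agreement should contradict the absence of asymptotic pairs. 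The delicate point, and where minimality is again used through uniform recurrence, is that the escaping witness could a priori leave only ``one-sided'' agreement in the limit; promoting this to agreement off a finite set, so that the limit is a genuine asymptotic pair, is the crux of this direction.
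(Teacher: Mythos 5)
Your proposal has a fatal flaw in the converse direction (``no off-diagonal asymptotic pairs $\Rightarrow$ TMP''): the rigidity statement you reduce to is false for every \emph{infinite} expansive system, so the ``crux'' you flag (promoting one-sided agreement of the recentered limit to two-sided agreement) is not a delicate step to be completed --- it cannot be completed. Indeed, suppose your rigidity holds. Take $\varepsilon=c$, shrink $\delta\le c$ (shrinking $\delta$ only weakens the hypothesis of the implication), and apply it to $A=\{e_G\}$, obtaining $B$. If $d_B(x,y)\le\delta$, then $d_{B\setminus A}(x,y)\le\delta$, so rigidity gives $d_{G\setminus A}(x,y)\le c$, while on $A$ we have $d(x,y)\le\delta\le c$; hence $\sup_{g\in G}d(gx,gy)\le c$ and expansivity forces $x=y$. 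By compactness, $X$ is covered by finitely many sets of $d_B$-diameter at most $\delta$, each containing at most one point, so $X$ is finite. But the theorem is not only about finite systems: there exist infinite minimal expansive actions with no off-diagonal asymptotic pairs (for $G=\ZZ$, the Thue--Morse subshift --- two points differing in finitely many coordinates must be equal, since unique de-substitution forces equal phases and halves the number of differences; or take an infinite minimal $\ZZ^2$-SFT, which has the POTP and hence the TMP). For such systems rigidity fails while the TMP holds, so ``$z=x$'' can never serve as the gluing point. The paper's \Cref{P-no asym to TMP} avoids this by negating the TMP itself rather than a stronger determinism: the failure of TMP yields pairs $(x_n,y_n)$ agreeing up to $\delta$ on $B_n\setminus A$ for which \emph{no} admissible $z$ exists; a plain, non-recentered limit $(\bar x,\bar y)$ then agrees up to $\delta\le c$ off the finite set $A$, hence is asymptotic by \Cref{lema_expansivo}, and it must be off-diagonal because if $\bar x=\bar y$ then $z=y_n$ would itself be an admissible gluing for large $n$. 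The universally quantified ``no $z$ works'' is exactly the information your rigidity formulation discards, and it is what pins the limit pair off the diagonal; note also that this direction needs no minimality at all.

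For the direction ``TMP $\Rightarrow$ no off-diagonal asymptotic pairs'' your outline coincides with the paper's route (\Cref{P-exp TMP to uniform TMP} followed by the Salo-inspired \Cref{P-minimal uniform TMP no asym}), but the content of that auxiliary lemma is precisely what is missing. The paper does not merely ``choose insertions to produce a non-uniformly-recurrent point'': it fixes representatives $\omega_j$ of the finitely many minimal sets, uses minimality and compactness to produce a finite set $K_4$ such that every point of $X$ enters, under some $t\in K_4$, an $\varepsilon$-neighbourhood (on a large window) of some $\omega_j$, and then runs a pigeonhole argument --- choosing $W_2$ with $2^{|W_2|}>N|K_1|^4|W_2|^4$ --- over all insertion patterns against all pairs (return time, minimal set), so that the glued point defeats \emph{every} possible return simultaneously, contradicting $K_4^{-1}U=X$. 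Your sketch contains no substitute for this counting step. So as written, one direction of your proof is the paper's argument with its core deferred, and the other rests on an intermediate statement that is genuinely false.
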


Theorem~\ref{T-minimal exp tmp iff noasym} follows from Theorem~\ref{P-exp TMP to uniform TMP} and Propositions~\ref{P-no asym to TMP} and~\ref{P-minimal uniform TMP no asym} below.

The following result is \cite[Lemma 6.2]{ChungLi2015}, and also follows from the part (3) of Lemma~\ref{L-close PO}.

\begin{lemma} \label{lema_expansivo}
Let $G\curvearrowright X$ be an expansive action with an expansivity
	constant $c>0$. Then $\mathtt{A}^{c}_2(X,G) = \mathtt{A}_2(X,G).$
\end{lemma}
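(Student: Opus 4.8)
The plan is to prove the two inclusions separately, with essentially all of the content concentrated in one of them. The inclusion $\mathtt{A}_2(X,G)\subseteq \mathtt{A}^c_2(X,G)$ is immediate from the definitions: we have $\mathtt{A}_2(X,G)=\bigcap_{\varepsilon>0}\mathtt{A}^\varepsilon_2(X,G)$, and since $c>0$ the set $\mathtt{A}^c_2(X,G)$ is one of the terms appearing in this intersection, so it contains the intersection. No use of expansivity is needed for this direction.

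For the reverse inclusion $\mathtt{A}^c_2(X,G)\subseteq \mathtt{A}_2(X,G)$, which is where expansivity enters, I would route the argument through part (3) of~\Cref{L-close PO}, exactly as suggested in the text. Let $(x,y)\in \mathtt{A}^c_2(X,G)$, so there is some $F\Subset G$ with $d(gx,gy)\le c$ for all $g\in G\setminus F$. Fix $\varepsilon>0$. Applying~\Cref{L-close PO}(3) with the expansivity constant $c$ produces a finite set $W\Subset G$ such that whenever $a,b\in X$ satisfy $d_W(a,b)\le c$, one has $d(a,b)<\varepsilon$. The natural candidate for a witnessing finite set at level $\varepsilon$ is then $F'=W^{-1}F$.

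The key step is a short bookkeeping computation. For $g\in G\setminus F'$ and any $w\in W$, I claim $wg\notin F$: otherwise $g\in W^{-1}F=F'$, a contradiction. Hence $d\big(w(gx),w(gy)\big)=d\big((wg)x,(wg)y\big)\le c$, and taking the supremum over $w\in W$ gives $d_W(gx,gy)\le c$. By the choice of $W$ this yields $d(gx,gy)<\varepsilon$. Thus $d(gx,gy)\le\varepsilon$ for all $g\in G\setminus F'$, that is, $(x,y)\in \mathtt{A}^\varepsilon_2(X,G)$. Since $\varepsilon>0$ was arbitrary, $(x,y)\in \mathtt{A}_2(X,G)$, which completes the inclusion and the proof.

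There is no serious obstacle here; the only point requiring mild care is the observation that a pointwise estimate $d(gx,gy)\le c$ holding off a finite set automatically upgrades to a uniform-window estimate $d_W(gx,gy)\le c$ off the slightly enlarged finite set $W^{-1}F$. This is precisely what allows the uniform expansivity statement of~\Cref{L-close PO}(3) to convert ``eventually within $c$'' into ``eventually within $\varepsilon$''. One could alternatively argue by contradiction, extracting a sequence $g_n\to\infty$ with $d(g_nx,g_ny)\ge\varepsilon$ and using compactness, but passing through~\Cref{L-close PO}(3) as indicated in the text is cleaner and avoids any limiting argument.
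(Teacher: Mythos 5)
Your proof is correct and follows exactly the route the paper itself indicates: the paper gives no self-contained argument, citing \cite[Lemma 6.2]{ChungLi2015} and remarking that the statement "follows from part (3) of Lemma~\ref{L-close PO}", which is precisely the reduction you carry out (the trivial inclusion plus the observation that $d(gx,gy)\le c$ off a finite set $F$ upgrades to $d_W(gx,gy)\le c$ off $W^{-1}F$). Your bookkeeping with the left action, $w(gx)=(wg)x$, and the choice $F'=W^{-1}F$ are exactly what is needed, so this is a faithful filling-in of the paper's intended proof.
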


\begin{proposition} \label{P-no asym to TMP}
	If $G\curvearrowright X$ is expansive and has no off-diagonal asymptotic pairs, then $G \curvearrowright X$ has the TMP.
\end{proposition}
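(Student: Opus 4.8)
The plan is to reduce the TMP to a rigidity statement about boundaries determining interiors, and then to establish that rigidity by a compactness argument powered by the absence of off-diagonal asymptotic pairs. First I would isolate the following \emph{rigidity claim}: for every $\varepsilon>0$ there is $\delta>0$, which I may assume satisfies $\delta\le\varepsilon$, such that for every $A\Subset G$ there is a finite $B\supseteq A$ with the property that any $x,y\in X$ with $d_{B\setminus A}(x,y)\le\delta$ already satisfy $d_A(x,y)\le\varepsilon$. Morally this says that, when there are no asymptotic pairs, the values of an orbit on a sufficiently large boundary region $B\setminus A$ pin down its values on $A$ up to $\varepsilon$.

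Granting the rigidity claim, the TMP would follow immediately by taking $z=y$. Indeed, if $d_{B\setminus A}(x,y)\le\delta$, then $d_{G\setminus A}(y,z)=0\le\varepsilon$, while $d_B(x,z)=d_B(x,y)=\max\bigl(d_A(x,y),\,d_{B\setminus A}(x,y)\bigr)\le\max(\varepsilon,\delta)=\varepsilon$, so $B$ is an $(\varepsilon,\delta)$-memory set for $A$. This bookkeeping is the routine part, and I would dispatch it quickly.

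The substance lies in the rigidity claim, which I would prove by contradiction. Fix an expansivity constant $c>0$; since the claimed property only gets easier as $\delta$ shrinks, I may test it along $\delta=1/n$ for large $n$ with $1/n\le c$. If rigidity failed for some $\varepsilon_0>0$, then for such an $n$ there would be a fixed $A\Subset G$ so that for \emph{every} finite $B\supseteq A$ there exist $x,y\in X$ with $d_{B\setminus A}(x,y)\le 1/n$ yet $d_A(x,y)>\varepsilon_0$. I would exhaust $G$ by increasing finite sets $C_k\supseteq A$, take the resulting pairs $(x_k,y_k)$ for $B=C_k$, and extract a convergent subsequence $x_k\to x$, $y_k\to y$ in the compact space $X\times X$. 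For each fixed $g\in G\setminus A$ the coordinate $g$ eventually lies in $C_k\setminus A$, so $d(gx,gy)\le 1/n$ in the limit; hence $(x,y)$ is $(1/n)$-asymptotic with finite exceptional set $A$, and since $1/n\le c$ this gives $(x,y)\in\mathtt{A}^{c}_2(X,G)$. Continuity of $d_A$ on the finite set $A$ yields $d_A(x,y)\ge\varepsilon_0$, so $x\ne y$. By Lemma~\ref{lema_expansivo} we have $\mathtt{A}^{c}_2(X,G)=\mathtt{A}_2(X,G)$, and the hypothesis of no off-diagonal asymptotic pairs gives $\mathtt{A}_2(X,G)=\triangle_2(X)$; thus $x=y$, a contradiction.

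The hard part will be exactly this compactness step, and the subtlety I expect to have to handle carefully is the interplay of quantifiers: I must keep $A$ \emph{fixed} while letting the boundary window $B=C_k$ exhaust $G$, so that the limit pair inherits asymptoticity off $A$ from the vanishing boundary discrepancy, while the fixed interior set $A$ retains the uniform separation $\varepsilon_0$. Once this limit pair is produced, everything collapses onto Lemma~\ref{lema_expansivo} and the hypothesis, so no further estimates are needed; in particular minimality is not used in this direction.
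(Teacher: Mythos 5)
Your proof is correct and follows essentially the same route as the paper's: argue by contradiction, fix $A$, exhaust $G$ by finite sets containing it, extract an accumulation point of the witnessing pairs, and use the expansivity constant together with Lemma~\ref{lema_expansivo} to upgrade the limit to an off-diagonal asymptotic pair. The only difference is organizational: you package the observation that $z=y$ serves as the memory-set witness into a prior ``rigidity'' claim (so off-diagonality of the limit is automatic from $d_A\ge\varepsilon_0$), whereas the paper negates the TMP directly and rules out a diagonal limit at the end by noting that $z=y_n$ would otherwise have worked.
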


\begin{proof}
	Let $c>0$ be an expansivity constant for $G\curvearrowright X$. If $G \curvearrowright X$ does not have the TMP then there exists $\varepsilon >0$ such that for every $\delta>0$ there is $A \Subset G$ such that for every $B \Subset G$ which contains $A$ there exist $x,y \in X$ such that $d_{B \setminus A}(x,y) \leq \delta$ but for every $z \in X$ we have that either $d_B(x,z)>\varepsilon$ or $d_{G \setminus A}(y,z)>\varepsilon$. Choose $\delta < \min(c,\varepsilon)$ and consider an increasing sequence $\{B_n\}_{n \in \NN}$ of finite subsets of $G$ such that $\bigcup_{n \in \NN}B_n = G$ and $B_n \supset A$ and let $(x_n,y_n)$ be a pair for which $d_{B_n \setminus A}(x_n,y_n) \leq \delta$ but for every $z \in X$ we have that either $d_{B_n}(x_n,z)>\varepsilon$ or $d_{G \setminus A}(y_n,z)>\varepsilon$. By compactness of $X\times X$, we may extract an accumulation point $(\bar{x},\bar{y})$ of $\{(x_n,y_n)\}_{n \in \NN}$.
	
	From the choice of $B_n$ and $(x_n,y_n)$ it follows that for every $g \notin A$ we get that $d(g\bar{x},g\bar{y})\leq \delta$ and hence by the choice $\delta < c$ and~\Cref{lema_expansivo} we have that $(\bar{x},\bar{y})$ is an asymptotic pair. If $\bar{x}=\bar{y}$ then for every $\varepsilon' > 0$ there would be $n \in \NN$ such that $d_A(x_n,y_n) < \varepsilon'$. Taking $\varepsilon' < \delta$ yields a contradiction because $z = y_n$ would satisfy that $d_{B_n}(x_n,z)\leq\varepsilon$ and $d_{G \setminus A}(y_n,z)\leq\varepsilon$. Therefore $(\bar{x},\bar{y})$ is an off-diagonal asymptotic pair.
\end{proof}

Salo~\cite{salo} communicated to us a proof that minimal actions of finitely generated groups on subshifts do not have exchangeable patterns. We use some of his ideas to prove the following generalization of his result.

\begin{proposition}\label{P-minimal uniform TMP no asym}
Assume that $G\curvearrowright X$ has only finitely many minimal closed $G$-invariant
	subsets and has the uniform TMP. Then $X$ has no off-diagonal asymptotic pairs $(x_{1},x_{2})$ with $\overline{Gx_{1}}$ minimal.
\end{proposition}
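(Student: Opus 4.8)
The plan is to argue by contradiction. Suppose $(x_1,x_2)$ is an off-diagonal asymptotic pair with $Y:=\overline{Gx_1}$ minimal. Replacing $(x_1,x_2)$ by $(sx_1,sx_2)$ for a suitable $s\in G$ does not affect the hypotheses (since $\overline{G(sx_1)}=Y$), so I may assume the pair is separated at the identity: $d(x_1,x_2)=3\alpha>0$. Fix $0<\varepsilon<\alpha$ and let $\delta>0$ be the constant the uniform TMP assigns to $\varepsilon$; shrinking $\delta$ I may assume $\delta<6\alpha$, so that $e_G$ belongs to every defect set below. Using asymptoticity, pick $A\Subset G$ with $e_G\in A$ and $d(gx_1,gx_2)\le\delta/2$ for all $g\notin A$, and let $B\supseteq A$ be a uniform $(\varepsilon,\delta)$-memory set for $A$.

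The engine is an implantation construction. As $Y$ is minimal, $x_1$ is uniformly recurrent, so the return set $R:=\{v\in G:d_B(vx_1,x_1)\le\delta/2\}$ is syndetic. Given any $B$-separated $V\subseteq R$, I feed the uniform memory-set property (in its infinite form, by the Remark after the definition) the background $y=x_1$ together with $x_v:=v^{-1}x_2$ for $v\in V$. The required hypothesis $d_{(B\setminus A)v}(v^{-1}x_2,x_1)\le\delta$ splits by a triangle inequality: on $(B\setminus A)v\subseteq G\setminus Av$ the pair $(v^{-1}x_2,v^{-1}x_1)$ is $\delta/2$-close by asymptoticity, and $v\in R$ makes $v^{-1}x_1$ and $x_1$ be $\delta/2$-close there. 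The property then outputs $z_V\in X$ with $d_B(vz_V,x_2)\le\varepsilon$ for all $v\in V$ and $d_{G\setminus AV}(z_V,x_1)\le\varepsilon$; informally, $z_V$ is $x_1$ with a copy of the $x_2$-defect implanted at every $v\in V$.

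The decisive step is to manufacture infinitely many pairwise distinct minimal sets, contradicting the finiteness hypothesis. Fix an increasing exhaustion $e_G\in\Phi_n=\Phi_n^{-1}\nearrow G$ with $\Phi_1\supseteq B$, and let $V_n\subseteq R$ be a maximal $\Phi_n$-separated subset; then $V_n$ is $B$-separated, and by maximality together with the syndeticity of $R$ it is itself syndetic. Marking a point $w$ by the sites $D(w)$ at which it carries an implanted $x_2$-defect, one checks that $D(z_{V_n})$ consists of the clusters $Av$ ($v\in V_n$), which are $\Phi_n$-separated and syndetic; as this is a closed and translation-covariant feature, the same holds for every point of any minimal set $M_n\subseteq\overline{Gz_{V_n}}$. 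Consider the invariant $g^*(M):=\{s\ne e_G:\ \exists\,w\in M \text{ with defect clusters at } e_G \text{ and } s\}$. Syndeticity forces $g^*(M_n)\ne\varnothing$, while $\Phi_n$-separation forces $g^*(M_n)\cap(\Phi_n^{-1}\Phi_n\setminus\{e_G\})=\varnothing$. If only finitely many minimal sets existed, some one of them would coincide with $M_n$ for infinitely many $n$, and its $g^*$ would then avoid $\bigcup_n(\Phi_n^{-1}\Phi_n\setminus\{e_G\})=G\setminus\{e_G\}$, i.e.\ be empty --- a contradiction.

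Two matters need care, the second being the real obstacle. First, the marker $D$ must detect only the implanted defects. When $x_2\notin Y$ this is clean: with $\beta:=d(x_2,Y)>0$ and $\varepsilon<\beta/3$ chosen at the outset, every site away from $AV$ stays at distance $>\varepsilon$ from $x_2$, so $D(z_V)$ is exactly the bounded neighbourhood $AV$ of the implants and inherits the separation and syndeticity of $V_n$; the within-cluster bookkeeping (distinguishing clusters rather than individual sites) is routine. The genuinely delicate situation is $x_2\in Y$, where the orbit of $x_1$ enters every neighbourhood of $x_2$ and the implant is \emph{seamless}, since $x_1$ and $x_2$ agree off $A$; here intrinsic recurrences to $x_2$ pollute the marker and the counting breaks down. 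I expect this case to be closed exactly along the lines of Salo's theorem that a minimal system carries no interchangeable patterns: using the uniform TMP to exchange the $x_1$- and $x_2$-patterns at all recurrence sites at once yields a point of $X$ from which the pattern of $x_1$ at $e_G$ has been erased, contradicting the uniform recurrence imposed by minimality of $Y$. Arranging these simultaneous exchanges so that they do not interfere and do not recreate the erased pattern is the main technical hurdle of the argument.
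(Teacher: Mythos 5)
Your implantation engine --- feeding the uniform TMP the background $x_1$ together with copies $v^{-1}x_2$ at a $B$-separated set of return sites of the minimal point --- is exactly the mechanism of the paper's proof. But your argument is incomplete at precisely the decisive point: the case $x_2\in Y:=\overline{Gx_1}$ is never proved. Your marker $D$ recognizes implanted defects by proximity to $x_2$, which requires $d(x_2,Y)>0$; when $x_2\in Y$ the background orbit returns arbitrarily close to $x_2$, the marker cannot separate implants from intrinsic recurrences, and you concede this, offering only the hope that an exchange argument ``along the lines of Salo's theorem'' will close it. That unwritten argument is the substance of the proposition, not a technical appendix to it. Moreover, the case you leave open cannot be set aside: the proposition's application in the paper is Theorem~\ref{T-minimal exp tmp iff noasym}, where the action itself is minimal, so $Y=X$ and $x_2\in Y$ automatically --- in the one situation the paper needs, your proof establishes nothing. (It is also the typical situation in general: asymptotic pairs living inside a single minimal subshift.)

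It is worth seeing how the paper's proof sidesteps the obstacle you ran into: it never attempts to recognize implanted defects intrinsically, so the ``seamless implant'' problem never arises. Instead, the finiteness of the family of minimal sets is used to bound the number of local \emph{views}, not to manufacture infinitely many minimal sets. Fix representatives $\omega_1,\dots,\omega_N$ of the $N$ minimal sets; minimality plus compactness gives a finite $K_4$ so that every point of $X$, after translation by some $t\in K_4$, is $\varepsilon$-close over a large window to some $\omega_j$. On each block $(W_2)s$ of potential implantation sites there are $2^{|W_2|}$ implant patterns $\varphi\colon (W_2)s\to\{1,2\}$, and since $\varepsilon<\frac{1}{5}d(x_1,x_2)$ distinct patterns are $5\varepsilon$-separated in the relevant sup-distance; hence each of the at most $N|C_s|<2^{|W_2|}$ views $(t,\omega)$ is $2\varepsilon$-compatible with at most one pattern, and by pigeonhole some pattern $\varphi_s$ is $2\varepsilon$-incompatible with \emph{all} of them. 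Implanting according to these patterns via the uniform TMP produces a point $z\in X$ which, by the covering property, must be $\varepsilon$-close to some view over some block --- contradicting the choice of $\varphi_s$. Nothing in this counting argument cares whether $x_2\in Y$, which is exactly what your approach is missing.
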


\begin{proof}
	List the minimal closed $G$-invariant subsets of $X$ as $Y_{1},\dots ,Y_{N}$.
    Take a point $\omega _{j}\in Y_{j}$ for each $1\leq j\leq N$. Put $\Omega
	=\{\omega _{1},\dots ,\omega _{N}\}$. Assume that $X$ has an off-diagonal asymptotic pair $(x_{1},x_{2})$ such that $%
	\overline{Gx_{1}}$ is minimal. Take $0<\varepsilon <\frac{1}{5}d(x_{1},x_{2})$.
	
	Since $G\curvearrowright X$ has the uniform TMP, there is some $\tau >0$ such
	that for any $A\Subset G$ there is some $B\Subset G$ containing $A$ such
	that for any $B$-separated $V\Subset G$ and any $x_v\in X$ for $v\in V$ and $y\in X$ satisfying $d_{(B\setminus A)v}(x_{v},y)\le \tau $ for all $v\in V$, there is some $z\in X$ such that $d_{Bv}(z,x_{v})\leq
	\frac{\varepsilon}{2}$ for all $v\in V$ and $d_{G\setminus AV}(z,y)\leq
	\frac{\varepsilon}{2}$.
	
	Since $(x_{1},x_{2})$ is asymptotic, there is some $A\Subset G$ containing $e_{G}$ such that $d_{G \setminus A}(x_{1},x_{2})<\frac{\tau}{2}$. Then we have $B$ as above for $A$. Take $0<\theta <\frac{\varepsilon}{2}$ such that for any
	$x,y\in X$ with $d(x,y)\le \theta$, one has $d_{B \setminus A}(x,y)<\frac{\tau}{2}$. Since $\overline{Gx_{1}}$ is minimal, there is some $
	K_{1}^{\prime }\Subset G$ containing $e_{G}$ such that for any $y\in \overline{Gx_{1}}$ one has $\min_{s\in K_{1}^{\prime }}d(x_{1},sy)<\theta $.
	Put $K_{1}=BK_{1}^{\prime }\Subset G$. Then $K_{1}\supset B$.
	
	Since $G$ is infinite, we can take a $K_{1}$-separated $W_{2}\Subset G$ with
	$2^{|W_{2}|}>N|K_{1}|^{4}|W_{2}|^{4}$. Put $K_{2}=K_{1}W_{2}\Subset G$. Then
	$|K_{2}|\leq|K_{1}|\cdot |W_{2}|$ and $K_{2}\supset W_{2}$. Put $K_{3}=K_{2}K_{2}^{-1}K_{2}\Subset G$.
	
	Denote by $U$ the union of the open $d_{K_{1}^{\prime }K_{3}}$-balls of radius $\varepsilon$ around each $\omega \in \Omega$. Then $GU\supset \bigcup_{j=1}^{N}Y_{j}$%
	. If $GU\neq X$, then $X\setminus GU$ contains a minimal closed $G$%
	-invariant subset, which is impossible. Thus $GU=X$. Since $X$ is compact,
	this means that there is some $K_{4}\Subset G$ with $K_{4}^{-1}U=X$, i.e.
	for any $y\in X$ one has $\min_{s\in K_{4}}\min_{\omega \in \Omega
	}d_{K_{1}^{\prime }K_{3}}(\omega ,sy)<\varepsilon $. Take a maximal $K_{2}$%
	-separated subset $W_{4}$ of $K_{4}$. Then
	\begin{align} \label{E-no asym1}
	K_{2}^{-1}K_{2}W_{4}\supset K_{4}.
	\end{align}
	
	Since $W_4$ is $K_2$-separated and $K_2\supset W_2$, we know that $W_4$ is
	$W_2$-separated.
	
	We claim that $W_2W_4$ is $K_1$-separated. Let $\gamma_1, \gamma_2\in W_2W_4$
	with $K_1\gamma_1\cap K_1\gamma_2\neq \varnothing$. Say, $\gamma_j=h_js_j$ with $h_j\in W_2, s_j\in W_4$ for
	$j=1, 2$. From
	\begin{align*}
	K_2s_1\cap K_2s_2=K_1W_2s_1\cap K_1W_2s_2\supset K_1\gamma_1\cap
	K_1\gamma_2
	\end{align*}
	we know that $K_2s_1\cap K_2s_2\neq \varnothing$, and hence $s_1=s_2$. Then $%
	K_1h_1\cap K_1h_2\neq \varnothing$, and consequently $h_1=h_2$. Therefore $%
	\gamma_1=\gamma_2$. This proves our claim.
	
	Fix $y_{1}\in \overline{Gx_{1}}$. For each $\gamma \in W_{2}W_{4}$, by our choice of $K_{1}^{\prime }$ we can
	find some $g_{\gamma }\in K_{1}^{\prime }$ with $d(x_{1},g_{\gamma }\gamma
	y_{1})<\theta $. Put $V^{\prime }=\{g_{\gamma }\gamma : \gamma \in
	W_{2}W_{4}\}\Subset G$. For any $\gamma ,\gamma ^{\prime }\in W_{2}W_{4}$,
	if $Bg_{\gamma }\gamma \cap Bg_{\gamma ^{\prime }}\gamma ^{\prime }\neq
	\varnothing $, then using that $W_{2}W_{4}$ is $K_{1}$-separated we get $
	\gamma =\gamma ^{\prime }$. Thus $V^{\prime }$ is $B$-separated, and the map
	$W_{2}W_{4}\rightarrow V^{\prime }$ sending $\gamma $ to $g_{\gamma }\gamma $
	is a bijection.
	
	Let $s\in W_{4}$. Denote by $C_{s}$ the set of $t\in G$ satisfying $
	K_{3}t\supset K_{2}s$. For each $t\in C_{s}$ one has $t\in K_{3}^{-1}K_{2}s
	$, and hence
	\begin{align*}
	|C_{s}|\leq |K_{3}^{-1}K_{2}|\leq
	|K_{2}|^{4}\leq|K_{1}|^{4}|W_{2}|^{4}<\frac{1}{N}2^{|W_{2}|}.
	\end{align*}%
	Note that for any distinct maps $\varphi ,\varphi ^{\prime
	}\colon (W_{2})s\rightarrow \{1,2\}$, one has $\max_{\gamma \in (W_{2})s}d(x_{\varphi
		(\gamma )},x_{\varphi ^{\prime }(\gamma )})>5\varepsilon $. Thus for each $
	t\in C_{s}$ and $\omega \in \Omega $, there is at most one map $\varphi
	\colon (W_{2})s\rightarrow \{1,2\}$ which satisfies that $\max_{\gamma \in (W_{2})s}d(g_{\gamma
	}\gamma t^{-1}\omega ,x_{\varphi (\gamma )})<2\varepsilon $. Since $
	N|C_{s}|<2^{|W_{2}|}$, we can find some map $\varphi _{s}\colon (W_{2})s\rightarrow
	\{1,2\}$ such that for every $t\in C_{s}$ and $\omega \in \Omega $ one has
	\begin{align} \label{E-no asym2}
	\max_{\gamma \in (W_{2})s}d(g_{\gamma }\gamma t^{-1}\omega ,x_{\varphi
		_{s}(\gamma )})\geq 2\varepsilon .
	\end{align}
	
	Since $W_{4}$ is $W_{2}$-separated, $W_{2}W_{4}$ is the disjoint union of $
	(W_{2})s$ for $s\in W_{4}$. Then we can define a map $\varphi
	\colon W_{2}W_{4}\rightarrow \{1,2\}$ by taking $\varphi $ to be $\varphi _{s}$ on
	$(W_{2})s$ for all $s\in W_{4}$. Put $V=\{g_{\gamma }\gamma : \gamma \in
	\varphi ^{-1}(2)\}\subset V^{\prime }$. Then $V$ is $B$-separated. For
	each $\gamma \in \varphi ^{-1}(2)$, putting $x_{g_{\gamma }\gamma
	}=(g_{\gamma }\gamma )^{-1}x_{2}$ and using $d(x_{1},g_{\gamma }\gamma
	y_{1})<\theta $ we have
	\begin{align*}
	d(ux_{g_{\gamma }\gamma },uy_{1})& =d(u(g_{\gamma }\gamma
	)^{-1}x_{2},u(g_{\gamma }\gamma )^{-1}(g_{\gamma }\gamma y_{1})) \\
	& \leq d(u(g_{\gamma }\gamma )^{-1}x_{2},u(g_{\gamma }\gamma
	)^{-1}x_{1})+d(u(g_{\gamma }\gamma )^{-1}x_{1},u(g_{\gamma }\gamma
	)^{-1}(g_{\gamma }\gamma y_{1})) \\
	& <\frac{\tau}{2}+\frac{\tau}{2}=\tau
	\end{align*}
	for all $u\in (B\setminus A)g_{\gamma }\gamma $. Thus there is some $z\in X$
	satisfying $d_{Bg_{\gamma}\gamma}(z,x_{g_{\gamma }\gamma })\leq \frac{\varepsilon}{2}$ for all $
	\gamma \in \varphi ^{-1}(2)$ and $d_{G \setminus AV}(z,y_{1})\leq \frac{\varepsilon}{2}$.
	
	By our choice of $K_{4}$, there are some $t\in K_{4}$ and $\omega \in \Omega
	$ with $d_{K_{1}^{\prime }K_{3}}(\omega ,tz)<\varepsilon $. From \eqref{E-no
		asym1} we have $t\in K_{2}^{-1}K_{2}s$ for some $s\in W_{4}$. Then $s\in
	K_{2}^{-1}K_{2}t$, and hence $K_{2}s\subset K_{3}t$. Thus $t\in C_{s}$,
	and
	\begin{align*}
	\max_{\gamma \in (W_{2})s}d(g_{\gamma }\gamma t^{-1}\omega ,g_{\gamma }\gamma
	z)& \leq d_{K_{1}^{\prime }(W_{2})s}(t^{-1}\omega ,z) \\
	& \leq d_{K_{1}^{\prime }K_{2}s}(t^{-1}\omega ,z) \\
	& \leq d_{K_{1}^{\prime }K_{3}t}(t^{-1}\omega ,z) \\
	& =d_{K_{1}^{\prime }K_{3}}(\omega ,tz)<\varepsilon .
	\end{align*}
	From \eqref{E-no asym2} we can find some $\gamma \in (W_{2})s$ with $%
	d(g_{\gamma }\gamma t^{-1}\omega ,x_{\varphi _{s}(\gamma )})\geq
	2\varepsilon $. Then
	\begin{align} \label{E-no asym3}
	d(x_{\varphi _{s}(\gamma )},g_{\gamma }\gamma z)\geq d(g_{\gamma }\gamma
	t^{-1}\omega ,x_{\varphi _{s}(\gamma )})-d(g_{\gamma }\gamma t^{-1}\omega
	,g_{\gamma }\gamma z)>2\varepsilon -\varepsilon =\varepsilon .
	\end{align}
	
	Now we have $\varphi _{s}(\gamma )=1$ or $2$. Consider first the case $
	\varphi _{s}(\gamma )=2$. We have $d(g_{\gamma }\gamma z,x_{2})=d(g_{\gamma
	}\gamma z,g_{\gamma }\gamma x_{g_{\gamma }\gamma })\leq \frac{\varepsilon}{2}$,
	contradicting~\eqref{E-no asym3}. Next consider the case $\varphi
	_{s}(\gamma )=1$. We have $g_{\gamma }\gamma \in V^{\prime }\setminus
	V\subset G\setminus AV$. Then $d(g_{\gamma }\gamma z,g_{\gamma }\gamma
	y_{1})\leq \frac{\varepsilon}{2}$. Therefore
	\begin{equation*}
	d(x_{1},g_{\gamma }\gamma z)\leq d(x_{1},g_{\gamma }\gamma
	y_{1})+d(g_{\gamma }\gamma y_{1},g_{\gamma }\gamma z)<\theta +\frac{\varepsilon}{2}<\varepsilon ,
	\end{equation*}%
	again contradicting~\eqref{E-no asym3}. Thus $X$ has no off-diagonal
	asymptotic pairs $(x_{1},x_{2})$ with $\overline{Gx_{1}}$ minimal.
\end{proof}

\section{Topological entropy and asymptotic pairs}\label{section:ent_asympt}

	In this section we will explore the consequences of having Markovian properties on the relation between asymptotic pair and independence entropy pairs.

\subsection{From asymptotic pairs to independence entropy pairs}

In this subsection we shall give conditions under which the existence of an off-diagonal asymptotic pair gives rise to IE-pairs. We provide a result for orbit IE-pairs which applies to all groups (Theorem~\ref{T-A to IE}), and a result for $\Sigma$-IE-pairs  which applies to sofic groups (Theorem~\ref{Theorem_asymptotic_pairs_give_SOFIC_entropy_pairs}).

\begin{theorem} \label{T-A to IE}
	Let $G\curvearrowright X$ be an expansive action with the TMP. Let $k\in {\mathbb{N}}$ and $(x_{1},\dots ,x_{k},x_{k+1})\in X^{k+1}$ such that $(x_{1},\dots ,x_{k})\in \IE_{k}(X,G)$ and $(x_{k},x_{k+1})$ is an asymptotic pair. Then $(x_{1},\dots ,x_{k},x_{k+1})\in \IE_{k+1}(X,G)$.
\end{theorem}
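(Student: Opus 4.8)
The plan is to show that for every product neighbourhood $U_1\times\cdots\times U_{k+1}$ of $(x_1,\dots,x_k,x_{k+1})$ the tuple $(U_1,\dots,U_{k+1})$ has positive independence density. Fix $\varepsilon>0$ so small that the open ball $B(x_j,\varepsilon)\subset U_j$ for each $j$; since an independence set for a componentwise smaller tuple is also one for the larger tuple, it suffices to treat the tuple of $\varepsilon$-balls. As $G\curvearrowright X$ is expansive with the TMP, by Theorem~\ref{P-exp TMP to uniform TMP} it has the uniform TMP, so for $\varepsilon_1:=\varepsilon/2$ there is $\delta_1>0$ such that every finite $A$ admits a uniform $(\varepsilon_1,\delta_1)$-memory set. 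Put $\delta:=\min(\delta_1/2,\varepsilon/2)$ and use that $(x_k,x_{k+1})$ is asymptotic to pick $F_0\Subset G$ with $e_G\in F_0$ and $d_{G\setminus F_0}(x_k,x_{k+1})\le\delta$. Finally let $B\supset F_0$ be a uniform $(\varepsilon_1,\delta_1)$-memory set for $A=F_0$.

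The key device is to replace the $k$-th target ball by a \emph{window} neighbourhood. I would set $\tilde U_j:=B(x_j,\varepsilon/2)$ for $j<k$ and $\tilde U_k:=\{x\in X:d_B(x,x_k)<\delta\}$, which is an open neighbourhood of $x_k$ forcing any of its points to stay $\delta$-close to $x_k$ on the whole window $B$, not merely at $e_G$. Since $(x_1,\dots,x_k)\in\IE_k(X,G)$, the tuple $(\tilde U_1,\dots,\tilde U_k)$ has positive independence density $q>0$. Now fix $F\Subset G$, take an independence set $J\subset F$ for $(\tilde U_1,\dots,\tilde U_k)$ with $|J|\ge q|F|$, and thin it greedily to a $B$-separated subset $J'$ with $|J'|\ge|J|/|B^{-1}B|$; being a subset of $J$, it is still an independence set for $(\tilde U_1,\dots,\tilde U_k)$.

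I claim $J'$ is an independence set for $(B(x_1,\varepsilon),\dots,B(x_{k+1},\varepsilon))$, which yields independence density at least $q/|B^{-1}B|>0$ for the $(k+1)$-tuple and finishes the proof. Given $I\subset J'$ and a colouring $\phi\colon I\to\{1,\dots,k+1\}$, split $I=I'\sqcup I''$ with $I''=\phi^{-1}(k+1)$, and let $\phi'$ agree with $\phi$ on $I'$ and equal $k$ on $I''$. The independence of $J'$ gives $w\in X$ with $sw\in\tilde U_{\phi'(s)}$ for all $s\in I$; in particular $d_B(sw,x_k)<\delta$ for every $s\in I''$. The final step is to switch the colour $k$ to $k+1$ on $I''$ by one application of the uniform memory set $B$ for $A=F_0$, with separated set $V=I''$, local data $x_v:=v^{-1}x_{k+1}$ and background $y:=w$.

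The compatibility hypothesis $d_{(B\setminus F_0)v}(x_v,w)\le\delta_1$ holds because for $g'\in B\setminus F_0$ one has $d(g'x_{k+1},g'(vw))\le d(g'x_{k+1},g'x_k)+d(g'x_k,g'(vw))\le\delta+\delta\le\delta_1$, the first term by asymptoticity (as $g'\notin F_0$) and the second because $vw\in\tilde U_k$. The memory set then produces $z\in X$ with $d_{Bv}(z,v^{-1}x_{k+1})\le\varepsilon_1$ for $v\in I''$ and $d_{G\setminus F_0I''}(z,w)\le\varepsilon_1$. For $s\in I''$ this gives $d(sz,x_{k+1})\le\varepsilon_1<\varepsilon$; for $s\in I'$ one checks, using $e_G\in B$, $F_0\subset B$ and the $B$-separation of $J'$, that $s\notin F_0I''$, so $d(sz,sw)\le\varepsilon_1$ and hence $sz\in B(x_{\phi(s)},\varepsilon)$. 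Thus $z\in\bigcap_{s\in I}s^{-1}B(x_{\phi(s)},\varepsilon)$, proving the claim. I expect the only delicate point to be the compatibility estimate, which is precisely what forces the window neighbourhood $\tilde U_k$: the bare IE-property controls $sw$ only at the origin, whereas the gluing needs control on the entire boundary window $B\setminus F_0$.
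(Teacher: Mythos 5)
Your proposal is correct and takes essentially the same route as the paper: it upgrades the TMP to the uniform TMP via Theorem~\ref{P-exp TMP to uniform TMP}, thins the independence set to a $B$-separated one, recolours $k+1$ to $k$ to get a witness point, and then glues in the translates $v^{-1}x_{k+1}$ using one application of the uniform memory set, exactly as in the paper's Proposition~\ref{P-A to IE}. The only (cosmetic) difference is how the compatibility estimate on the window is secured: the paper shrinks the radius $\delta$ so that $d(x,y)\le\delta$ forces $d_{B'\setminus A}(x,y)<\tau/2$ by uniform continuity over the finite set $B'\setminus A$, whereas you build this control directly into the product neighbourhood through $\tilde U_k=\{x : d_B(x,x_k)<\delta\}$; both exploit the same freedom in the definition of IE-tuples and the remaining verifications coincide step by step.
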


Theorem~\ref{T-A to IE} follows from Theorem~\ref{P-exp TMP to uniform TMP} and Proposition~\ref{P-A to IE} below.

Whenever $G$ is amenable, we have $x \in \IE_1(X,G)$ if and only if $x$ is in the the support of an invariant Borel probability measure, see~\cite[Lemma 12.6]{KerrLiBook2016}. For a non-amenable group $G$ every element in the support of some $G$-invariant Borel probability measure is in $\IE_1(X,G)$ but the converse may not hold, see~\Cref{theorem_naive_IE_cites}.

A direct application of Theorems~\ref{T-A to IE} and~\ref{theorem_naive_IE_cites} yields the following result.

\begin{corollary}\label{color_naive_UPE}
	Let $G\curvearrowright X$ be an expansive action with the TMP.
	\begin{enumerate}
		\item Suppose that $(x,y)\in \R_2(X,G)\setminus \triangle_2(X)$ and $x \in \IE_1(X,G)$, then $(x,y) \in \IE_2(X,G)$. In particular we have $\hnaive(G\curvearrowright X) > 0$.
		\item If $\IE_1(X,G) = X$ and $\overline{\R_2(X,G)} = X^2$, then $G\curvearrowright X$ has naive UPE of all orders.
	\end{enumerate}
\end{corollary}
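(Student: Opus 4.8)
The plan is to derive both items directly from \Cref{T-A to IE} (which upgrades an IE-tuple by appending an asymptotically-paired point) combined with \Cref{theorem_naive_IE_cites}, so the only real content beyond bookkeeping is one soft limiting argument for item (2).

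For item (1), I would invoke \Cref{T-A to IE} in its smallest case $k=1$. Here the hypothesis $x\in\IE_1(X,G)$ plays the role of $(x_1,\dots,x_k)=(x)\in\IE_1(X,G)$, while the off-diagonal asymptotic pair $(x,y)$ plays the role of the asymptotic pair $(x_k,x_{k+1})=(x_1,x_2)$. The theorem then yields $(x,y)\in\IE_2(X,G)$ at once. Since $(x,y)\notin\triangle_2(X)$, this gives $\IE_2(X,G)\setminus\triangle_2(X)\neq\varnothing$, and \Cref{theorem_naive_IE_cites}(1) converts this into $\hnaive(G\curvearrowright X)>0$.

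For item (2), I would prove $\IE_k(X,G)=X^k$ for every $k\ge 1$ by induction on $k$, the base case $k=1$ being exactly the hypothesis $\IE_1(X,G)=X$. For the inductive step, assume $\IE_k(X,G)=X^k$ and fix an arbitrary tuple $(z_1,\dots,z_{k+1})\in X^{k+1}$. Using $\overline{\R_2(X,G)}=X^2$, I would choose asymptotic pairs $(a_n,b_n)\in\R_2(X,G)$ with $(a_n,b_n)\to(z_k,z_{k+1})$. The inductive hypothesis gives $(z_1,\dots,z_{k-1},a_n)\in\IE_k(X,G)$, and since $(a_n,b_n)$ is asymptotic, \Cref{T-A to IE} yields $(z_1,\dots,z_{k-1},a_n,b_n)\in\IE_{k+1}(X,G)$ for every $n$. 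As $n\to\infty$ we have $(z_1,\dots,z_{k-1},a_n,b_n)\to(z_1,\dots,z_{k+1})$, so it remains only to pass membership in $\IE_{k+1}(X,G)$ to the limit.

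The single point requiring care — and where I expect the only genuine (though mild) obstacle to lie — is the closedness of $\IE_{k+1}(X,G)$ in $X^{k+1}$. This is immediate from the definition of orbit IE-tuple: if $\mathbf{z}$ is a limit of IE-tuples and $U_1\times\cdots\times U_{k+1}$ is any product neighborhood of $\mathbf{z}$, then by openness it is also a product neighborhood of all sufficiently close IE-tuples in the sequence, hence has positive independence density, so $\mathbf{z}$ itself is an IE-tuple. With closedness in hand the limit $(z_1,\dots,z_{k+1})$ lies in $\IE_{k+1}(X,G)$, completing the induction and establishing $\IE_k(X,G)=X^k$ for all $k\ge 2$, i.e. naive UPE of all orders.
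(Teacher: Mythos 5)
Your proof is correct and is essentially the paper's own route: the paper states \Cref{color_naive_UPE} as ``a direct application of Theorems~\ref{T-A to IE} and~\ref{theorem_naive_IE_cites}'', which is precisely your argument---the $k=1$ case of \Cref{T-A to IE} plus \Cref{theorem_naive_IE_cites}(1) for item (1), and induction via \Cref{T-A to IE} together with density of asymptotic pairs and the (routine, correctly justified) closedness of $\IE_{k}(X,G)$ for item (2).
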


For $x \in X$ and $\delta>0$, denote by $B_{\delta}(x) = \{  y\in X : d(y,x) < \delta  \}$ the open ball of radius $\delta$ centered at $x$.

\begin{proposition}\label{P-A to IE}
    Suppose that $G\curvearrowright X$ has the uniform TMP. Let $k\in {\mathbb{N}}$ and $(x_{1},\dots ,x_{k},x_{k+1})\in X^{k+1}$ such that $(x_{k},x_{k+1})$ is an asymptotic pair. Then $(x_{1},\dots ,x_{k})\in \IE_{k}(X,G)$ if and only if $(x_{1},\dots ,x_{k},x_{k+1})\in \IE_{k+1}(X,G)$.
\end{proposition}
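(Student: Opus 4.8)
The plan is to prove the two implications separately; only the implication $(x_1,\dots,x_k)\in\IE_k(X,G)\Rightarrow(x_1,\dots,x_{k+1})\in\IE_{k+1}(X,G)$ requires the uniform TMP, the reverse being the routine fact that IE-tuples restrict to IE-tuples. For the reverse implication I would argue that, given a product neighborhood $U_1\times\cdots\times U_k$ of $(x_1,\dots,x_k)$, the set $U_1\times\cdots\times U_k\times X$ is a product neighborhood of $(x_1,\dots,x_{k+1})$ and hence $(U_1,\dots,U_k,X)$ has positive independence density; any independence set for $(U_1,\dots,U_k,X)$ is also one for $(U_1,\dots,U_k)$ since a map $I\to\{1,\dots,k\}$ is in particular a map into $\{1,\dots,k+1\}$, so $(U_1,\dots,U_k)$ has positive independence density.

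For the main implication, fix a product neighborhood $U_1\times\cdots\times U_{k+1}$ of $(x_1,\dots,x_{k+1})$ and choose $\varepsilon>0$ with $\overline{B_\varepsilon(x_i)}\subset U_i$ for all $i$. Applying the uniform TMP to $\varepsilon/2$ gives a $\delta>0$, which I shrink so that $\delta\le\varepsilon$; using that $(x_k,x_{k+1})$ is asymptotic I then take $A\Subset G$ with $e_G\in A$ and $d_{G\setminus A}(x_k,x_{k+1})\le\delta/2$, and let $B\supset A$ be a uniform $(\varepsilon/2,\delta)$-memory set for $A$. The crucial device is to work with the \emph{dynamical} neighborhoods $V_i:=\{u\in X:d_B(u,x_i)<\delta/2\}$ of $x_i$ for $1\le i\le k$: since $(x_1,\dots,x_k)\in\IE_k(X,G)$, the tuple $(V_1,\dots,V_k)$ has some positive independence density $q$.

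The gluing step then proceeds as follows. Fixing $F\Subset G$, I take an independence set $J\subset F$ for $(V_1,\dots,V_k)$ with $|J|\ge q|F|$ and pass to a maximal $B$-separated subset $J'$, which stays an independence set and satisfies $|J'|\ge q|F|/|B^{-1}B|$. I then claim $J'$ is an independence set for $(U_1,\dots,U_{k+1})$. Given $I\Subset J'$ and $\omega\colon I\to\{1,\dots,k+1\}$, I replace each value $k+1$ by $k$ to obtain $\omega'$ and use independence of $J'$ to get $w$ with $d_B(sw,x_{\omega'(s)})<\delta/2$ for all $s\in I$; then I feed the uniform memory set $B$ the $B$-separated set $V=\{v\in I:\omega(v)=k+1\}$ with $x_v:=v^{-1}x_{k+1}$ and $y:=w$. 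The memory-set hypothesis holds because, for $t\in B\setminus A$, the triangle inequality gives $d(tx_{k+1},t\cdot vw)\le d(tx_{k+1},tx_k)+d(tx_k,t\cdot vw)<\delta/2+\delta/2$, the first term by asymptoticity (as $t\notin A$) and the second by $vw\in V_k$. The output $z$ then satisfies $vz\in U_{k+1}$ for $v\in V$ (from the $v$-coordinate of $d_{Bv}(z,x_v)\le\varepsilon/2$) and $sz\in U_{\omega(s)}$ for the remaining $s$ (since $B$-separation forces $s\notin AV$, whence $d(sz,sw)\le\varepsilon/2$ and $d(sw,x_{\omega(s)})<\delta/2$ give $d(sz,x_{\omega(s)})\le\varepsilon$). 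This yields independence density $\ge q/|B^{-1}B|>0$ for every product neighborhood, so $(x_1,\dots,x_{k+1})\in\IE_{k+1}(X,G)$.

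The step I expect to be the main obstacle is calibrating the constants so that the two independent sources of closeness — the asymptoticity of $(x_k,x_{k+1})$ off $A$ and the dynamical closeness $d_B(vw,x_k)<\delta/2$ inherited from the independence set — add up to exactly the $\delta$-threshold of the memory set, while the $B$-separation of $J'$ simultaneously keeps every non-$(k{+}1)$ coordinate inside $G\setminus AV$ so that the second conclusion of the memory set applies to it. Using dynamical rather than metric neighborhoods $V_i$ is what makes closeness persist across the whole of $B$, and is essential for both halves of this balancing act.
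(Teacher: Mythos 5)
Your proof is correct and follows essentially the same route as the paper's: asymptoticity supplies $A$, the uniform TMP supplies a uniform memory set $B\supset A$, one passes to a maximal $B$-separated subset of an independence set for small neighborhoods of $(x_1,\dots,x_k)$, recolors $k+1\mapsto k$, and glues the translates $v^{-1}x_{k+1}$ into the independence witness via the memory set, exactly as in the paper. The only cosmetic difference is that you use dynamical neighborhoods $V_i=\{u : d_B(u,x_i)<\delta/2\}$, whereas the paper uses metric balls $B_\delta(x_i)$ with $\delta$ chosen by uniform continuity of the action over the finite memory set; both devices play the identical role of propagating closeness across $B$.
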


\begin{proof}
	The ``if'' part is trivial. We shall prove the ``only if'' part.
	Let $\varepsilon >0$. Then there is some $\tau >0$ such that for any $A\Subset G$ there is some $B^{\prime }\Subset G$ containing $A$ such that
	for any $B^{\prime }$-separated  $V\Subset G$ and any $x_v\in  X$ for $v\in V$ and $y\in X$ satisfying $d_{(B^{\prime }\setminus A)v}(x_{v},y)\le \tau $ for all $v\in V$, there is some $z\in X$ such that $d_{B'v}(z,x_{v})\leq \frac{\varepsilon}{2}$ for all $v\in V$ and $d_{G\setminus AV}(z,y)\leq \frac{\varepsilon}{2}$.
	
	Since $(x_{k},x_{k+1})$ is an asymptotic pair, there is some $A\Subset G$ containing
	$e_{G}$ such that $d_{G \setminus A}(x_{k},x_{k+1})<\frac{\tau}{2}$.
	Then we have $B^{\prime } \supset A$ as above. Take $0<\delta <\frac{\varepsilon}{2}$ such
	that for any $x,y\in X$ with $d(x,y)\le \delta $ one has $d_{B^{\prime }\setminus A}(x,y)<\frac{\tau}{2}$.
	
	Since $(x_{1},\dots ,x_{k})\in \IE_{k}(X,G)$, the tuple $%
	(B_{\delta}(x_{1}),\dots ,B_{\delta}(x_{k}))$ has independence density $q>0$.
	
	Let $F\Subset G$. Then there is some $F^{\prime }\subset F$ with $|F^{\prime }|\geq q|F|$ such that $F^{\prime }$ is an independence set for $(B_{\delta}(x_{1}),\dots ,B_{\delta}(x_{k}))$. Take a maximal $B^{\prime }$%
	-separated subset $J$ of $F^{\prime }$. Then $B^{\prime -1}B^{\prime
	}J\supset F^{\prime }$ and hence
	\begin{equation*}
	|J|\geq \frac{|F^{\prime }|}{|B^{\prime}|^2}\geq \frac{q|F|}{|B^{\prime}|^2}.
	\end{equation*}
	
	We claim that $J$ is an independence set for $(B_{\varepsilon}(x_{1}),\dots
	,B_{\varepsilon}(x_{k}),B_{\varepsilon}(x_{k+1}))$. Consider an arbitrary map $f \colon
	J\to \{1,\dots,k+1\}$ and put $V=f^{-1}(k+1)$, which is $B^{\prime}$-separated. Define $g\colon J\to
	\{1,\dots,k\}$ by $g=f$ on $J \setminus V$ and $g=k$ on $V$. Then there is
	some $y\in \bigcap_{s\in J}s^{-1}B_{\delta}(x_{g(s)})$. Put $x_{v}=v^{-1}x_{k+1}$ for each $v\in V$. For any $v\in V$  we have $d(x_{k},vy)\le \delta $, and hence for $s\in
	(B^{\prime }\setminus A)v$,
	\begin{align*}
	d(sx_{v},sy)& =d(sv^{-1}x_{k+1},sv^{-1}(vy))\leq d(sv^{-1}x_{k+1},sv^{-1}x_{k})+d(sv^{-1}x_{k},sv^{-1}(vy))<\frac{\tau}{2}+\frac{\tau}{2}=\tau .
	\end{align*}%
	Thus there is some $z\in X$ such that $d_{B'v}(z,x_{v})\leq \frac{\varepsilon}{2}$ for
	all $v\in V$ and $d_{G \setminus AV}(z,y)\leq \frac{\varepsilon}{2}$. For any $v\in f^{-1}(k+1)=V$, we have $d(vz,x_{k+1})=d(vz,vx_{v})\leq \frac{\varepsilon}{2}$. For any $v\in J\setminus V$,
	we have
	\begin{equation*}
	d(vz,x_{f(v)})\leq d(vz,vy)+d(vy,x_{f(v)})\leq \frac{\varepsilon}{2}+\delta
	<\varepsilon .
	\end{equation*}%
	Therefore $z\in \bigcap_{s\in J}s^{-1}B_{\varepsilon}(x_{f(s)})$. This proves our claim.
	
	Now we conclude that $(B_{\varepsilon}(x_{1}),\dots
	,B_{\varepsilon}(x_{k}),B_{\varepsilon}(x_{k+1}))$ has independence density at least $\frac{q}{|B^{\prime|^2}}$. Therefore $(x_{1},\dots ,x_{k},x_{k+1})\in \IE_{k+1}(X,G)$.
\end{proof}

Next we shall prove Theorem~\ref{Theorem_asymptotic_pairs_give_SOFIC_entropy_pairs}, which is the analogue of Theorem~\ref{T-A to IE} for sofic topological entropy.

\begin{theorem}\label{Theorem_asymptotic_pairs_give_SOFIC_entropy_pairs}
	Let $G$ be a sofic group and $\Sigma$ a sofic
	approximation sequence for $G$. Suppose that $G\curvearrowright X$ is expansive and has the TMP. Let $k\in {\mathbb{N}}$ and $(x_{1},\dots
	,x_{k},x_{k+1})\in X^{k+1}$ such that $(x_{1},\dots ,x_{k})\in \IE_{k}^{\Sigma }(X,G)$ and $(x_{k},x_{k+1})$ is an asymptotic pair. Then $%
	(x_{1},\dots ,x_{k},x_{k+1})\in \IE_{k+1}^{\Sigma }(X,G)$.
\end{theorem}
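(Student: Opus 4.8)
The plan is to mirror the strategy behind Theorem~\ref{T-A to IE}: first reduce to the uniform TMP, then promote a $\Sigma$-independence set for the $k$-tuple to one for the $(k+1)$-tuple by flipping the last color along the asymptotic pair. Since $G\curvearrowright X$ is expansive and has the TMP, Theorem~\ref{P-exp TMP to uniform TMP} gives that it has the uniform TMP, and I fix an expansivity constant to work with throughout. It suffices to prove that for every $\varepsilon>0$ the tuple of balls $(B_\varepsilon(x_1),\dots,B_\varepsilon(x_{k+1}))$ has positive upper independence density over $\Sigma$, since every product neighborhood of $(x_1,\dots,x_{k+1})$ contains such a product of balls.

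Fixing $\varepsilon>0$, I would first extract the flip parameters exactly as in the proof of Proposition~\ref{P-A to IE}: from the uniform TMP obtain $\tau>0$ and, using that $(x_k,x_{k+1})$ is asymptotic, a finite $A\ni e_G$ with $d_{G\setminus A}(x_k,x_{k+1})<\tau/2$, together with a uniform memory set $B'\supset A$ and a scale $\delta_0>0$ small enough that $d(x,y)\le\delta_0$ forces $d_{B'\setminus A}(x,y)<\tau/2$. Since $(x_1,\dots,x_k)\in\IE_{k}^{\Sigma}(X,G)$, the tuple $(B_{\delta_0}(x_1),\dots,B_{\delta_0}(x_k))$ has positive upper independence density $q>0$ over $\Sigma$. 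Given a test pair $(F,\delta'')$ I would invoke this with a much larger $F_0\supset F$ and a much smaller tolerance $\delta_0'$ to obtain, for infinitely many $i$, a $(d,F_0,\delta_0',\sigma_i)$-independence set $J\subset\{1,\dots,n_i\}$ for the $k$-tuple with $|J|\ge q n_i$, and then pass to a maximal subset $J'\subset J$ whose translates $\{\sigma_i(s)a:s\in B'\}$ are pairwise disjoint, so that $|J'|\ge q n_i/|B'|^2$. This fixed factor $1/|B'|^2$ is the sofic counterpart of the density loss incurred in Proposition~\ref{P-A to IE} when passing to a $B'$-separated subset.

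The core of the argument is to show that this $J'$ is a $(d,F,\delta'',\sigma_i)$-independence set for $(B_\varepsilon(x_1),\dots,B_\varepsilon(x_{k+1}))$. Given a coloring $\omega\colon J'\to\{1,\dots,k+1\}$, I set $V=\omega^{-1}(k+1)$, let $\tilde\omega$ agree with $\omega$ off $V$ and equal $k$ on $V$, extend it to $J$, and pick $\varphi\in\Map(d,F_0,\delta_0',\sigma_i)$ realizing it, so $\varphi(a)\in B_{\delta_0}(x_{\tilde\omega(a)})$ on $J'$. For each $v\in V$ the point $\varphi(v)$ lies near $x_k$, and—just as in Proposition~\ref{P-A to IE}, from $d_{B'\setminus A}(x_{k+1},\varphi(v))<\tau$—the TMP produces a flip point $z_v$ with $z_v$ close to $x_{k+1}$ on $B'$ and close to the orbit of $\varphi(v)$ off $A$. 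I then define $\psi$ by $\psi(\sigma_i(s)v)=sz_v$ on each window $\sigma_i(B')v$ (well defined by disjointness) and $\psi=\varphi$ elsewhere. By construction $\psi(v)=z_v\in B_\varepsilon(x_{k+1})$ for $v\in V$ and $\psi(a)=\varphi(a)\in B_\varepsilon(x_{\omega(a)})$ for the remaining $a\in J'$, so $\psi$ realizes $\omega$.

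The hard part will be verifying that $\psi\in\Map(d,F,\delta'',\sigma_i)$, i.e. that the local modifications have not destroyed approximate equivariance; this is the step that genuinely separates the sofic setting from the orbit setting of Proposition~\ref{P-A to IE}, where equivariance is automatic because $z$ is a genuine point. I would bound $\tfrac{1}{n_i}\sum_v d(\psi(\sigma_i(s)v),s\psi(v))^2$ for each $s\in F$ by splitting the vertices into: interior window vertices, where $\psi$ follows the genuine orbit $s\mapsto sz_v$ and the error vanishes whenever $\sigma_i$ composes correctly; vertices untouched by any window, where the error is that of $\varphi$ and is at most $(\delta_0')^2$ on average; the vertices on which $\sigma_i$ fails to respect the group law, a fraction tending to $0$ with $i$; and the window-boundary vertices, where one compares $sz_v$ with $\varphi$ using the off-$A$ closeness of $z_v$ to $\varphi(v)$ and the asymptoticity of $(x_k,x_{k+1})$. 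The delicate point is that this boundary comparison, together with the symmetric ``incoming'' comparison of vertices mapped into a window by $s$, must be driven below $\delta''$ uniformly in $s\in F$; controlling it forces a careful multi-scale choice of the TMP tolerance and of $F_0,\delta_0'$ while retaining the bound $|J'|\ge q n_i/|B'|^2$. Once this $L^2$-estimate is secured for every $s\in F$, the tuple $(B_\varepsilon(x_1),\dots,B_\varepsilon(x_{k+1}))$ has positive upper independence density $q/|B'|^2$ over $\Sigma$, and as $\varepsilon>0$ is arbitrary we conclude $(x_1,\dots,x_{k+1})\in\IE_{k+1}^{\Sigma}(X,G)$.
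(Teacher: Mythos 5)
Your global strategy matches the paper's (reduce to the uniform TMP via \Cref{P-exp TMP to uniform TMP}, pass to a $B'$-separated independence set $J'$ with the uniform density loss $q/|B'|^2$, and flip the $(k+1)$-colored vertices), and you correctly locate the hard step. But the construction you propose for that step has a genuine gap that parameter tuning cannot repair. Your $\psi$ switches between the true orbit $s\mapsto sz_v$ inside each window $\sigma_i(B')v$ and $\varphi$ outside. At a boundary vertex $a=\sigma_i(t)v$ with $t\in B'$ but $st\notin B'$, you must compare $s\psi(a)=stz_v$ with $\psi(\sigma_i(s)a)\approx st\varphi(v)$, and the only bound the TMP gives you is $d(stz_v,st\varphi(v))\le\varepsilon/2$, since $st\in G\setminus A$: the flip error off $A$ is of the \emph{fixed} size $\varepsilon$ and does not decay or shrink with $F_0,\delta_0'$. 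These boundary vertices occur with density bounded below by a constant depending only on $B'$, $F$ and $q$ (each window contributes roughly $|B'\setminus s^{-1}B'|$ of them, and the coloring may put \emph{all} of $J'$ in color $k+1$, so the number of windows is $\sim q n_i/|B'|^2$; note that $B'$ is fixed by the TMP and cannot be taken F\o lner for a general sofic group). Hence the mean-square equivariance defect of $\psi$ is bounded below by a positive constant independent of $\delta''$, so $\psi\notin\Map(d,F,\delta'',\sigma_i)$ once $\delta''$ is below that constant. The natural escape --- performing the flips at accuracy $\delta''$ rather than $\varepsilon$ --- also fails: the memory set, and therefore the separation required of $J'$, would then depend on $\delta''$, and the resulting density $q/|B'_{\delta''}|^2$ no longer yields a single $q>0$ valid for all $(F,\delta'')$, which is exactly what the definition of positive upper independence density over $\Sigma$ demands.

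The paper's proof avoids this with two ingredients absent from your proposal. First, it exploits expansivity through part (3) of \Cref{L-close PO}: given the target $\delta$, choose $K_1\Subset G$ so that $d_{K_1}(x,y)\le c$ forces $d(x,y)<\delta/2$. Then one never needs the flips to be $\delta$-accurate; it suffices that $s\psi(a)$ and $\psi(\sigma_i(s)a)$ have orbits within the expansivity constant $c$ on $K_1$, which is compatible with flips of fixed size $\varepsilon<c/2$. Second, to make this comparison possible, the flip is organized per vertex rather than per window: \emph{every} good vertex $a$ receives its own point $\psi(a)$, obtained by a uniform-TMP flip of $\varphi(a)$ along the family of windows $B't$ for $t\in V_a=\{t\in K:\sigma_i(t)a\in f^{-1}(k+1)\}$. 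Since $V_{\sigma_i(s)a}$ is essentially $V_a s^{-1}$ at vertices where $\sigma_i$ respects the group law, the two flipped points agree up to $c$ on $K_1$ by a case analysis --- there are no patching boundaries at all, and the only vertices contributing error $\ge\delta/2$ form a fraction $\le 4\eta$. Finally, this case analysis requires \emph{pointwise} approximate equivariance of $\varphi$ at the vertices of the independence set ($d(s\varphi(a),\varphi(\sigma_i(s)a))\le\kappa$ for $s\in K$), which the $L^2$ condition $\varphi\in\Map(d,F_0,\delta_0',\sigma_i)$ alone does not provide at specified vertices; the paper secures it with \Cref{L-indep set are good}, a pigeonhole/Karpovsky--Milman upgrade (\Cref{L-KM}) of the independence set. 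Any correct proof along your lines would need analogues of both of these ideas.
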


A direct consequence of Theorems~\ref{theorem_sofic_IE_cites} and \ref{Theorem_asymptotic_pairs_give_SOFIC_entropy_pairs} yields the following.

\begin{corollary}\label{color_sofic_UPE}
	Let $G$ be a sofic group,  $\Sigma$ a sofic approximation sequence for $G$, and $G\curvearrowright X$ an expansive action with the TMP.
	\begin{enumerate}
		\item Suppose that $(x,y)\in \R_2(X,G)\setminus \triangle_2(X)$ and $x \in \IE^{\Sigma}_1(X,G)$, then $(x,y) \in \IE^{\Sigma}_2(X,G)$. In particular we have  $\hsof(G\curvearrowright X) > 0$.
		\item If $\IE^{\Sigma}_1(X,G) = X$ and $\overline{\R_2(X,G)} = X^2$, then $G\curvearrowright X$ has sofic UPE of all orders.
	\end{enumerate}
\end{corollary}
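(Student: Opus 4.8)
The plan is to deduce both parts directly from \Cref{Theorem_asymptotic_pairs_give_SOFIC_entropy_pairs} and \Cref{theorem_sofic_IE_cites}, since the substantive work is already contained in the former. For part (1), I would apply \Cref{Theorem_asymptotic_pairs_give_SOFIC_entropy_pairs} with $k=1$: the singleton $(x)$ lies in $\IE_1^{\Sigma}(X,G)$ by hypothesis, and $(x,y)=(x_1,x_2)$ is an asymptotic pair, so the theorem gives $(x,y)\in\IE_2^{\Sigma}(X,G)$. As $(x,y)\notin\triangle_2(X)$ we have $x\neq y$, whence $\IE_2^{\Sigma}(X,G)\setminus\triangle_2(X)\neq\varnothing$, and \Cref{theorem_sofic_IE_cites}(2) yields $\hsof(G\curvearrowright X)>0$.

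For part (2) I would prove by induction on $k$ that $\IE_k^{\Sigma}(X,G)=X^k$ for every $k\ge 1$ (in particular for every $k\ge 2$), which is exactly sofic UPE of all orders. The base case $k=1$ is the hypothesis $\IE_1^{\Sigma}(X,G)=X$. Assuming $\IE_k^{\Sigma}(X,G)=X^k$, take any asymptotic pair $(x_k,x_{k+1})\in\R_2(X,G)$ and arbitrary $x_1,\dots,x_{k-1}\in X$; the inductive hypothesis puts $(x_1,\dots,x_k)\in\IE_k^{\Sigma}(X,G)$, so \Cref{Theorem_asymptotic_pairs_give_SOFIC_entropy_pairs} gives $(x_1,\dots,x_k,x_{k+1})\in\IE_{k+1}^{\Sigma}(X,G)$. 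Letting the last two coordinates carry the asymptotic pair, this shows
\[
X^{k-1}\times\R_2(X,G)\subseteq\IE_{k+1}^{\Sigma}(X,G).
\]

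The one structural fact to record is that $\IE_{k+1}^{\Sigma}(X,G)$ is closed in $X^{k+1}$: if a product neighbourhood $U$ of a limit tuple contains some $\Sigma$-IE-tuple $\mathbf{x}^{(j)}$, then $U$ is also a product neighbourhood of $\mathbf{x}^{(j)}$ and hence, by definition, has positive upper independence density over $\Sigma$; thus every product neighbourhood of the limit has positive density and the limit is again a $\Sigma$-IE-tuple. Taking closures in the displayed inclusion and using the hypothesis $\overline{\R_2(X,G)}=X^2$ gives
\[
X^{k+1}=\overline{X^{k-1}\times\R_2(X,G)}\subseteq\IE_{k+1}^{\Sigma}(X,G)\subseteq X^{k+1},
\]
so $\IE_{k+1}^{\Sigma}(X,G)=X^{k+1}$, completing the induction. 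The main (and only mild) obstacle is this closedness observation together with bookkeeping which coordinate carries the asymptotic pair when invoking \Cref{Theorem_asymptotic_pairs_give_SOFIC_entropy_pairs}; everything else is a formal combination of the two cited results.
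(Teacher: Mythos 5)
Your proof is correct and follows essentially the same route the paper intends, since the paper states the corollary as a direct consequence of \Cref{theorem_sofic_IE_cites} and \Cref{Theorem_asymptotic_pairs_give_SOFIC_entropy_pairs}. The only step you make explicit that the paper leaves implicit is the closedness of $\IE_{k+1}^{\Sigma}(X,G)$ (needed to pass from density of $\R_2(X,G)$ to all of $X^2$); your direct verification from the definition is fine, and this fact is also recorded in \cite[Theorem 12.39]{KerrLiBook2016}.
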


\begin{example}\label{hsmodel}
	Let $G$ be a sofic group, $F \Subset G$ which does not contain $e_G$, and  $\Sigma$  a sofic approximation sequence for $G$. The \define{$F$ hard-square model} of $G$ is the subshift $X_{\mbox{hard}, F} \subset \{0,1\}^G$ consisting of all $x \in \{0,1\}^G$ for which $x^{-1}(1)$ is an independence set in the right Cayley graph of $G$ given by $F$, that is \[X_{\mbox{hard}, F} = \left\{ x\in \{0,1\}^G : \mbox{ for every } g \in G \mbox{ and } s \in F, x(g)x(gs)=0\right\}. \]
	
	$X_{\mbox{hard}, F}$ is a subshift of finite type, and therefore has the POTP and thus the TMP by Proposition~\ref{prop_POTP_implies_sTMP}. It is easy to see that any point which is asymptotic to $0^G$ is in $\IE_1^{\Sigma}(X_{\mbox{hard},F},G)$ and hence $\IE_1^{\Sigma}(X_{\mbox{hard},F},G) = X_{\mbox{hard},F}$, moreover, $\overline{\R_2(X_{\mbox{hard},F},G)} = (X_{\mbox{hard},F})^2$. By~\Cref{color_sofic_UPE} we obtain that $G\curvearrowright X_{\mbox{hard},F}$ has sofic UPE of all orders for every sofic approximation sequence.
	Although this result is not very surprising, as far as we now, this is the first proof of (uniform) positive topological sofic entropy of hard-square models on sofic groups.
\end{example}

To prove Theorem~\ref{Theorem_asymptotic_pairs_give_SOFIC_entropy_pairs} we need to make some preparations. The following is a result of  Karpovsky and Milman \cite{KarpovskyMilman1954}. See also~\cite[Lemma 12.14]{KerrLiBook2016}.

\begin{lemma} \label{L-KM}
Let $k\ge 2$ and $\lambda>1$. Then there is a $c>0$ such that for every $n\in \mathbb{N}$ and $S\subset \{1, 2, \dots, k\}^{\{1, 2, \dots, n\}}$ with
$|S|\ge ((k-1)\lambda)^n$ there is an $I\subset \{1, 2, \dots, n\}$ satisfying $|I|\ge cn$ and $S|_I=\{1, 2, \dots, k\}^I$.
\end{lemma}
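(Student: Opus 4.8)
The plan is to deduce the statement from a $k$-ary generalization of the Sauer--Shelah lemma together with a standard entropy estimate for partial binomial sums. Say that $S$ \emph{shatters} a set $I\subseteq\{1,\dots,n\}$ if $S|_I=\{1,\dots,k\}^I$; the desired conclusion is exactly the existence of a shattered set $I$ with $|I|\ge cn$. I would therefore argue by contraposition: assuming $S$ shatters no set of size $\ge cn$, I will bound $|S|$ from above and show it is strictly smaller than $((k-1)\lambda)^n$, contradicting the hypothesis.

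First, the combinatorial core: if $S\subseteq\{1,\dots,k\}^{\{1,\dots,n\}}$ shatters no set of size $d$, then
\[
|S|\le T(n,d):=\sum_{i=0}^{d-1}\binom{n}{i}(k-1)^{n-i}.
\]
(This notion is downward closed: if no $d$-set is shattered then no set of size $\ge d$ is, since every subset of a shattered set is shattered.) By Pascal's rule $T(n,d)=(k-1)T(n-1,d)+T(n-1,d-1)$, with $T(n,0)=0$ and $T(0,d)=1$ for $d\ge1$, and I would match this by induction on $n$. Project onto the first $n-1$ coordinates, set $P=\pi(S)$, and for $u\in P$ let $m(u)$ be the number of symbols $a$ with $u$ extended by $a$ lying in $S$; then $|S|=\sum_{u\in P}m(u)\le (k-1)|P|+|P_{\mathrm{full}}|$, where $P_{\mathrm{full}}=\{u:m(u)=k\}$, using $m(u)\le (k-1)+\mathbbm{1}[m(u)=k]$. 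Since shattering of $I\subseteq\{1,\dots,n-1\}$ by $P$ forces $S$ to shatter $I$, and shattering of such an $I$ by $P_{\mathrm{full}}$ forces $S$ to shatter $I\cup\{n\}$, the family $P$ shatters no $d$-set and $P_{\mathrm{full}}$ shatters no $(d-1)$-set; the inductive hypothesis gives $|P|\le T(n-1,d)$ and $|P_{\mathrm{full}}|\le T(n-1,d-1)$, and the recurrence closes the induction. The delicate point is the choice of the second family: because the shattering notion demands the strong equality $S|_I=\{1,\dots,k\}^I$, the usual binary trick (``$u$ has at least two extensions'') is insufficient, and one must take $P_{\mathrm{full}}$ to consist of the $u$ admitting \emph{all} $k$ extensions, which is precisely what promotes a shattered $I$ to a shattered $I\cup\{n\}$.

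Second, the entropy estimate. Set $d=\lceil cn\rceil$. Since $(k-1)^{-i}\le 1$ for $k\ge2$,
\[
|S|\le T(n,d)=(k-1)^n\sum_{i=0}^{d-1}\binom{n}{i}(k-1)^{-i}\le (k-1)^n\sum_{i=0}^{d-1}\binom{n}{i}.
\]
Writing $H(t)=-t\log_2 t-(1-t)\log_2(1-t)$ for the binary entropy and using $\sum_{i\le tn}\binom{n}{i}\le 2^{nH(t)}$ (valid for $t\le 1/2$), I would fix once and for all a constant $c\in(0,1/2]$ with $H(c)<\log_2\lambda$; this is possible because $H(c)\to0$ as $c\to0^+$ while $\log_2\lambda>0$. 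As $(d-1)/n\le c\le 1/2$ and $H$ is increasing on $[0,1/2]$, we get $\sum_{i=0}^{d-1}\binom{n}{i}\le 2^{nH(c)}<2^{n\log_2\lambda}=\lambda^n$, whence $|S|<((k-1)\lambda)^n$, contradicting $|S|\ge((k-1)\lambda)^n$. Hence $S$ shatters some $I$ with $|I|\ge cn$, as required, and this $c$ depends only on $\lambda$, not on $n$ or $S$.

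The main obstacle I anticipate lies entirely in the combinatorial lemma: identifying the correct $k$-ary weighting $(k-1)^{n-i}$ and the matching ``full-extension'' second family $P_{\mathrm{full}}$, since the full-projection notion of shattering does not interact with the naive extension-counting argument used for $k=2$. Once the recurrence $T(n,d)=(k-1)T(n-1,d)+T(n-1,d-1)$ is established, both the induction and the concluding entropy computation are routine.
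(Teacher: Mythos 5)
Your proof is correct. One thing to note at the outset: the paper does not actually prove this lemma --- it is quoted as an external result of Karpovsky and Milman, with a pointer to \cite[Lemma 12.14]{KerrLiBook2016} --- so there is no internal proof to compare against. What you have written is a complete, self-contained derivation, and it is essentially the standard proof of the cited result: the $k$-ary Sauer--Shelah bound (which is precisely the Karpovsky--Milman theorem) followed by a binary-entropy estimate. Your combinatorial core checks out: the recurrence $T(n,d)=(k-1)T(n-1,d)+T(n-1,d-1)$ with $T(n,0)=0$, $T(0,d)=1$ does match $T(n,d)=\sum_{i=0}^{d-1}\binom{n}{i}(k-1)^{n-i}$, and your induction is sound, including the subtle point you correctly isolate --- that with the strong (full-trace) notion of shattering one must take the second family to be $P_{\mathrm{full}}=\{u: m(u)=k\}$, since only full extension promotes a set shattered by $P_{\mathrm{full}}$ inside $\{1,\dots,n-1\}$ to a shattered set of one larger cardinality containing $n$; the bound $m(u)\le (k-1)+\indicator{m(u)=k}$ then closes the recurrence. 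The concluding step is also fine: with $c\in(0,1/2]$ chosen so that $H(c)<\log_2\lambda$ (possible since $H(c)\to 0$ as $c\to 0^+$), and $d=\lceil cn\rceil$, discarding the factors $(k-1)^{-i}\le 1$ gives $T(n,d)\le (k-1)^n 2^{nH(c)}<((k-1)\lambda)^n$, contradicting the hypothesis; your constant depends only on $\lambda$, which is stronger than required. The only stylistic caveat is that your proof reproves Karpovsky--Milman from scratch, which is more work than the paper asks of the reader, but as a blind verification of the statement it is exactly right.
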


\begin{lemma} \label{L-indep set are good}
Let $G$ be a sofic group and $\Sigma =\{\sigma_{i}\colon G \to \Sym(n_i)\}_{i\in {\mathbb{N}}}$ a sofic
	approximation sequence for $G$. Let $G\curvearrowright X$ be an action and $\mathbf{A}=(A_1, \dots, A_k)$ a tuple of subsets of $X$. Then $\mathbf{A}$ has positive upper independence  density over $\Sigma$ if and only if there exists $q>0$ such that for every $F\Subset G$ and $\delta>0$ there exists an  infinite set of  $i$ for which there is a set $J_i\subset \{1, \dots, n_i\}$ so that $|J_i|\ge qn_i$ and for every map $\omega: J_i\rightarrow \{1, \dots, k\}$ there is some $\varphi\in \Map(d, F, \delta, \sigma_i)$ satisfying that $\varphi(a)\in A_{\omega(a)}$ for all $a\in J_i$ and $d(s\varphi(a), \varphi(\sigma_i(s)a))\le \delta$ for all $a\in J_i$ and $s\in F$.
\end{lemma}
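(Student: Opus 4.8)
The ``if'' direction is immediate: any set $J_i$ witnessing the second condition is in particular a $(d,F,\delta,\sigma_i)$-independence set for $\mathbf{A}$ of cardinality at least $qn_i$, so $\mathbf{A}$ has positive upper independence density over $\Sigma$ with the same constant $q$. All the content lies in the forward direction, which the plan is to prove by combining a Chebyshev estimate with the combinatorial extraction afforded by Lemma~\ref{L-KM}.

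First I would convert the $\ell^2$ condition defining $\Map(d,F,\delta',\sigma_i)$ into pointwise control on most coordinates. If $\varphi\in\Map(d,F,\delta',\sigma_i)$ then $\frac1n\sum_{v}d(\varphi(\sigma_i(s)v),s\varphi(v))^2\le\delta'^2$ for each $s\in F$, so by Chebyshev's inequality the bad set $B_s^\varphi=\{v:d(s\varphi(v),\varphi(\sigma_i(s)v))>\sqrt{\delta'}\}$ has cardinality at most $\delta' n$. Hence the total bad set $B^\varphi=\bigcup_{s\in F}B_s^\varphi$ satisfies $|B^\varphi|\le|F|\delta' n$, and off $B^\varphi$ one has $d(s\varphi(v),\varphi(\sigma_i(s)v))\le\sqrt{\delta'}$ for all $s\in F$. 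Choosing $\delta'$ with $\delta'\le\delta$ and $\sqrt{\delta'}\le\delta$ guarantees both that such a $\varphi$ lies in $\Map(d,F,\delta,\sigma_i)$ and that the required closeness $d(s\varphi(a),\varphi(\sigma_i(s)a))\le\delta$ holds for every $a\notin B^\varphi$.

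The main obstacle is that the good set $B^\varphi$ depends on $\varphi$, hence on the pattern $\omega$ being realized, so one cannot simply intersect good sets over the exponentially many $\omega$. To circumvent this I would fix $F$ and $\delta$, pick $\lambda\in(1,\tfrac{k}{k-1})$ (possible since $k\ge2$) with the resulting constant $c>0$ from Lemma~\ref{L-KM}, and then choose $\delta'>0$ small enough, depending only on $F,\delta,q,k$, that $|F|\delta'\le q/2$ and that the exponential growth rate $\beta(\delta')$ below is sufficiently small (here $\beta(\delta')\to0$ as $\delta'\to0$). Applying the hypothesis with $(F,\delta')$ yields infinitely many $i$ admitting a $(d,F,\delta',\sigma_i)$-independence set $J=J_i$ with $|J|\ge qn_i$; fix such a large $i$ and write $n=n_i$. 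For each $\omega\colon J\to\{1,\dots,k\}$ choose a realizing $\varphi_\omega\in\Map(d,F,\delta',\sigma_i)$ and record its trace $D_\omega:=B^{\varphi_\omega}\cap J$, a subset of $J$ of size at most $|F|\delta' n$. Since the number of such subsets is at most $2^{\beta(\delta')n+o(n)}$, the pigeonhole principle produces a single $D$ with $D_\omega=D$ for at least $k^{|J|}2^{-\beta(\delta')n-o(n)}$ patterns. Restricting these to $J'':=J\setminus D$ (the $k^{|D|}$ choices on $D$ being absorbed) gives $S\subseteq\{1,\dots,k\}^{J''}$ with $|S|\ge k^{|J''|}2^{-\beta(\delta')n-o(n)}$. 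As $|J''|\ge(q/2)n$, smallness of $\beta(\delta')$ forces $|S|\ge((k-1)\lambda)^{|J''|}$ for all large $n$, so Lemma~\ref{L-KM} yields $I\subseteq J''$ with $|I|\ge c|J''|\ge\tfrac{cq}{2}n$ and $S|_I=\{1,\dots,k\}^I$.

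Setting the output set to be $I$ and $q':=\tfrac{cq}{2}$ then concludes the argument: given any $\omega'\colon I\to\{1,\dots,k\}$, surjectivity of $S$ onto $\{1,\dots,k\}^I$ provides a $\omega$ with $D_\omega=D$ and $\omega|_I=\omega'$, and $\varphi_\omega$ works, since $\varphi_\omega\in\Map(d,F,\delta,\sigma_i)$, $\varphi_\omega(a)\in A_{\omega(a)}=A_{\omega'(a)}$ for $a\in I$, and $I\subseteq J\setminus D_\omega$ avoids $B^{\varphi_\omega}$, so $d(s\varphi_\omega(a),\varphi_\omega(\sigma_i(s)a))\le\sqrt{\delta'}\le\delta$ for all $a\in I$, $s\in F$. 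Because $q'$ depends only on $k$ and $q$ and the construction runs for infinitely many $i$, this is exactly the second condition. The case $k=1$ degenerates and needs no Karpovsky--Milman input: there is a single pattern, so one simply deletes the bad set of its unique realizing map and takes $J\setminus B^\varphi$. The crux throughout is the pattern-dependence of the bad set, resolved by the pigeonhole-plus-Lemma~\ref{L-KM} combination.
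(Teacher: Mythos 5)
Your proof is correct and follows essentially the same route as the paper's: a Chebyshev estimate to convert the $\ell^2$ condition into pointwise control off a small bad set, a pigeonhole over the subexponentially many possible bad sets to find one common to exponentially many patterns (you pigeonhole on the traces $B^{\varphi_\omega}\cap J$ where the paper uses the good sets $W_\omega$), and then Lemma~\ref{L-KM} to extract a large subset on which every pattern is realizable, with the degenerate case $k=1$ handled separately. The remaining discrepancies (restricting to $J\setminus D$ before rather than after applying Lemma~\ref{L-KM}, and the threshold $\sqrt{\delta'}$ in place of the paper's rescaled $\delta'$) are cosmetic.
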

\begin{proof} The ``if'' part is trivial.

Assume that  $\mathbf{A}$ has  positive upper independence density over $\Sigma$. Then there exists $q>0$ such that for every $F\Subset G$ and $\delta>0$ there is an infinite set $I_{F, \delta}$ of $i$ for which $\mathbf{A}$ has a $(d, F, \delta, \sigma_i)$-independence set $J_i\subset\{1, \dots, n_i\}$ of cardinality at least $qn_i$.

Consider the case $k\ge 2$. Take $\eta>0$ small such that $k^{1-\eta}>k-1$.
Put $\lambda=k^{1-\eta}/(k-1)>1$. Then we have $c>0$ given by Lemma~\ref{L-KM} for $k$ and $\lambda$.
From Stirling's approximation formula (see example \cite[Lemma 10.1]{KerrLiBook2016}) it is easy to see that there is some $\tau'>0$ depending only on $k^{\eta q}$ such that $\sum_{0\le j\le \tau' n}\binom{n}{j}\le k^{\eta qn}$ for all  $n\in \mathbb{N}$. Put $\tau=\min(\tau', cq/2)>0$.

In the case $k=1$, we put $\tau=q/2$.

Let $F\Subset G$ and $\delta>0$. Put $\delta'=(\frac{\tau}{|F|+1})^{1/2}\delta >0$.

Let $i\in I_{F, \delta'}$. Then $\mathbf{A}$ has a $(d, F, \delta, \sigma_i)$-independence set $J_i\subset\{1, \dots, n_i\}$ of cardinality at least $qn_i$. For each $\omega: J_i\rightarrow \{1, 2, \dots, k\}$, take $\varphi_\omega\in \Map(d, F, \delta', \sigma_i)$ such that $\varphi_\omega(a)\in A_{\omega(a)}$ for all $a\in J_i$. Denote by $W_\omega$ the set of all $a\in \{1, \dots, n_i\}$ satisfying $d(s\varphi_\omega(a), \varphi_\omega(\sigma_i(s)a))\le \delta$ for all $s\in F$. Since $\varphi_\omega\in \Map(d, F, \delta', \sigma_i)$, we have $|W_\omega|/n_i\ge 1-\tau$.
In the case $k=1$, there is only one $\omega$ and setting $J_i'=J_i\cap W_\omega$ we have $|J_i'|/n_i\ge q/2$. Thus we may assume $k\ge 2$.
From our choice of $\tau$, we get that the number of choices for $W_\omega$ is at most $k^{\eta qn_i}$. Thus there is a set $\Omega\subset \{1, 2, \dots, k\}^{J_i}$ such that $W_\omega$ is the same for all $\omega\in \Omega$, which we denote by $W$, and
$$|\Omega|\ge k^{|J_i|}/k^{\eta qn_i}\ge k^{|J_i|}/k^{\eta |J_i|}=k^{(1-\eta)|J_i|}=((k-1)\lambda)^{|J_i|}.$$
By our choice of $c$, we find a $J_i'\subset J_i$ such that $|J_i'|\ge c|J_j|$ and $\Omega|_{J_i'}=\{1, 2, \dots, k\}^{J_i'}$.
Put $J_i''=J_i'\cap W$. Then
$$ |J_i''|/n_i\ge |J_i'|/n_i-(1-|W|/n_i)\ge cq-\tau\ge cq/2.$$
Let $g:J_i''\rightarrow \{1, 2, \dots, k\}$ be an arbitrary map. Take a $\omega\in \Omega$ such that $\omega|_{J_i''}=g$. Then $\varphi_\omega\in \Map(d, F, \delta', \sigma_i)\subset \Map(d, F, \delta, \sigma_i)$, $\varphi_\omega(a)\in A_{\omega(a)}=A_{g(a)}$ for all $a\in J_i''$, and for any $a\in J_i''$ and $s\in F$ we have $a\in W=W_{\omega}$, whence $d(s\varphi_\omega(a), \varphi_\omega(\sigma_i(s)a))\le \delta$. This proves the ``only if'' part.
\end{proof}

We are ready to prove Theorem~\ref{Theorem_asymptotic_pairs_give_SOFIC_entropy_pairs}.

\begin{proof}[Proof of Theorem~\ref{Theorem_asymptotic_pairs_give_SOFIC_entropy_pairs}]
	Let $c>0$ be an expansivity constant for $G\curvearrowright X$. Say, $\Sigma =\{\sigma_{i}\colon G \to \Sym(n_i)\}_{i\in {\mathbb{N}}}$.
	
	Let $0<\varepsilon<\frac{c}{2}$. It suffices to show that the tuple $(B_{\varepsilon}(x_{1}),\dots
	,B_{\varepsilon}(x_{k}),B_{\varepsilon}(x_{k+1}))$ has positive upper independence density over $\Sigma$. By~\Cref{P-exp TMP to uniform TMP} we have that $G \curvearrowright X$ satisfies the uniform TMP and thus there is some $\tau>0$ such that for any $A \Subset G$ there is some $B'\Subset G$ containing $A$ such that for any $B'$-separated  $V\Subset G$ and any $x_v\in X$ for $v\in V$ and $y\in X$ satisfying $d_{(B'\setminus A)v}(x_v, y)\le \tau$ for all $v\in V$, there is some $z\in X$ such that $d_{B'v}(z, x_v)\leq \frac{\varepsilon}{2}$ for all $v\in V$and $d_{G \setminus AV}(z, y)\leq \frac{\varepsilon}{2}$.
	
	Since $(x_k, x_{k+1})$ is asymptotic, there is some $A\Subset G$ containing $e_G$ such that $d_{G \setminus A}(x_k, x_{k+1})<\frac{\tau}{2}$. Then we have $B'$ for $A$ as above.
	Take $0<\theta<\frac{\varepsilon}{2}$ such that for any $x,y\in X$ with $d(x, y)\le 2\theta$ one has $d_{B'\setminus A}(x, y)<\frac{\tau}{2}$.
	
	Since $(x_1, \dots, x_k)\in \IE^\Sigma_k(X, G)$, the tuple $(B_{\theta}(x_1), \dots, B_{\theta}(x_k))$ has positive upper independence density over $\Sigma$. By Lemma~\ref{L-indep set are good}
there is some $q>0$ such that for every $F\Subset G$ and $\delta>0$ there is an infinite set $I_{F, \delta}$ of $i$ for which
there is a set $W_{F, \delta, i}\subset \{1, \dots, n_i\}$ so that $|W_{F, \delta, i}|\ge qn_i$ and for every map $\omega: W_{F, \delta, i}\rightarrow \{1, \dots, k\}$ there is a $\varphi\in \Map(d, F, \delta, \sigma_i)$ satisfying that $\varphi(a)\in B_\theta(x_{\omega(a)})$ for all $a\in W_{F, \delta, i}$ and $d(s\varphi(a), \varphi(\sigma_i(s)a))\le \delta$ for all $a\in W_{F, \delta, i}$ and $s\in F$.

	Let $F\Subset G$ and $\delta>0$.
	By~\Cref{L-close PO} there is some $K_1\Subset G$ such that for any $x,y \in X$, if $d_{K_1}(x, y)\leq c$, then $d(x, y)<\frac{\delta}{2}$. Take $0<\kappa\le \theta$ such that for any $x, y\in X$ with $d(x, y)\le \kappa$, one has $d_{K_1}(x, y)<\frac{c}{2}$.
	
	Put $K=B'\cup F\cup A^{-1}K_1\cup A^{-1}K_1F \Subset G$. Take $0<\eta<\frac{q}{2}$ such that $4 \eta \diam(X, d)^2\leq (\frac{\delta}{2})^2$.
	Put $\delta'=\kappa \sqrt{\eta/|K|}>0$.
	Let $i\in I_{K, \delta'}$ be sufficiently large so that $|W_i|\ge (1-\eta)n_i$ for
	\begin{align*}
	W_i=&\{a\in \{1,\dots,n_i\} :  \sigma_i(s)a\neq \sigma_i(t)a \mbox{ for all distinct } s,t \in K, \\
	& \quad \quad \mbox{ and } \sigma_i(s)\sigma_i(t)a=\sigma_i(st)a \mbox{ for all } s, t\in K\}.
	\end{align*}
	Take a maximal subset $J_i$ of $W_i\cap W_{K, \delta', i}$ subject to the condition that $\sigma_i(B')a\cap \sigma_i(B')b=\varnothing$ for all distinct $a, b\in J_i$. Then $(\sigma_i(B'))^{-1}\sigma_i(B')J_i\supset W_i\cap W_{K, \delta', i}$, and hence \[ |J_i|\ge \frac{|W_i\cap W_{K, \delta', i}|}{|B'|^2} \geq \frac{(q-\eta)n_i}{|B'|^2} \geq \frac{qn_i}{2|B'|^2}.\]
	
	Now it suffices to show that $J_i$ is a $(d, F, \delta, \sigma_i)$-independence set for $(B_{\varepsilon}(x_{1}),\dots
	,B_{\varepsilon}(x_{k}),B_{\varepsilon}(x_{k+1}))$. Let $f \colon J_i\rightarrow \{1,\dots,k+1\}$ be an arbitrary map and define $g\colon  J_i\rightarrow \{1,\dots,k\}$ by $g=f$ on $J_i\setminus f^{-1}(k+1)$ and $g=k$ on $f^{-1}(k+1)$.
	Then there is some $\varphi\in \Map(d, K, \delta', \sigma_i)$ such that $\varphi(a)\in B_{\theta}(x_{g(a)})$ for all $a \in J_i$ and
$d(s\varphi(a), \varphi(\sigma_i(s)a))\le \delta'$ for all $a\in W_{K, \delta', i}$ and $s\in K$.
	
	Denote by $W_\varphi$ the set of $a\in \{1, \dots, n_i\}$ satisfying $d(s\varphi(a), \varphi(\sigma_i(s)a))\le \kappa$ for all $s\in K$. Since $\varphi\in \Map(d, K, \delta', \sigma_i)$, we have $|W_\varphi|/n_i\ge 1-\eta$.
	
	Let $a\in W_i\cap W_\varphi$. Denote by $V_a$ ($V'_a$ resp.) the set of $t\in K$ satisfying $\sigma_i(t)a\in f^{-1}(k+1)$ ($\sigma_i(t)a\in J_i$ resp.). For each $t\in V'_a$, we have $\sigma_i(B't)a=\sigma_i(B')\sigma_i(t)a$. For any distinct $t_1, t_2\in V'_a$, we have $\sigma_i(t_1)a\neq \sigma_i(t_2)a$, and hence $\sigma_i(B't_1)a\cap \sigma_i(B't_2)a=\sigma_i(B')\sigma_i(t_1)a\cap \sigma_i(B')\sigma_i(t_2)a=\varnothing$, which implies that $B't_1\cap B't_2=\varnothing$. Thus $V'_a$ is $B'$-separated. Put $y_a=\varphi(a)$, and for each $t\in V_a$ put $x_{a, t}=t^{-1}x_{k+1}$. For any $t\in V_a$ and $s\in (B'\setminus A)t$, we have
	$$d(x_k, t\varphi(a))\le d(x_k, \varphi(\sigma_i(t)a))+d(\varphi(\sigma_i(t)a), t\varphi(a))\le \theta+ \kappa\le 2\theta,$$
	and hence
	\begin{align*}
	d(sx_{a, t}, sy_a)&=d(st^{-1}x_{k+1}, st^{-1}(t\varphi(a)))\le d(st^{-1}x_{k+1}, st^{-1}x_k)+d(st^{-1}x_k, st^{-1}(t\varphi(a)))< \frac{\tau}{2}+  \frac{\tau}{2}=\tau.
	\end{align*}
	Then there is some $\psi(a)\in X$ such that $d_{B't}(\psi(a), x_{a, t})\le \frac{\varepsilon}{2}$ for all $t\in V_a$ and $d_{G \setminus AV_a}(\psi(a), y_a)\le \frac{\varepsilon}{2}$.
	
	For $a\in \{1,\dots,n_i\}\setminus (W_i\cap W_\varphi)$, take $\psi(a)$ to be any point in $X$. Then we get a map $\psi\colon \{1,\dots,n_i\}\to X$. We claim that $\psi\in \Map(d, F, \delta, \sigma_i)$.
	
	Indeed, let $s\in F$ and put
	$W=(W_i\cap W_\varphi) \cap (\sigma_i(s))^{-1}(W_i\cap W_\varphi)$.
	We have
	\[|\{1,\dots,n_i\}\setminus W|\le 2|\{1,\dots,n_i\}\setminus W_i|+2|\{1,\dots,n_i\}\setminus W_\varphi|\le 4\eta n_i.\]
	Let $a\in W$ and $\gamma \in K_1$. If $\gamma s\in At$ for some $t\in V_a$, then
	$ts^{-1}\in A^{-1}K_1\subset K$ and
	$\sigma_i(ts^{-1})\sigma_i(s)a=\sigma_i(t)a\in f^{-1}(k+1)$, and hence $ts^{-1}\in V_{\sigma_i(s)a}$ and $\gamma \in Ats^{-1}$, from which we get
	\begin{align*}
d(\gamma s\psi(a), \gamma \psi(\sigma_i(s)a))&\le d(\gamma s\psi(a), \gamma sx_{a, t})+d(\gamma sx_{a, t},\gamma x_{\sigma_i(s)a, ts^{-1}})+ d(\gamma x_{\sigma_i(s)a, ts^{-1}}, \gamma \psi(\sigma_i(s)a))\\
	&\le \frac{\varepsilon}{2}+d(\gamma s t^{-1}x_{k+1}, \gamma st^{-1}x_{k+1})+\frac{\varepsilon}{2}=\varepsilon<c.
	\end{align*}
	If $\gamma \in At$ for some $t\in V_{\sigma_i(s)a}$, then $ts\in A^{-1}K_1F\subset K$ and $\sigma_i(ts)a=\sigma_i(t)\sigma_i(s)a\in f^{-1}(k+1)$, and hence $ts\in V_a$ and
	$\gamma s\in A(ts)$. Thus, if
	$\gamma s \in G\setminus AV_a$, then $\gamma \in G\setminus A V_{\sigma_i(s)a}$, and using  $d(s \varphi(a), \varphi(\sigma_i(s)a))\le \kappa$  and $\gamma \in K_1$ we have
	\begin{align*}
	d(\gamma s\psi(a), \gamma \psi(\sigma_i(s)a))&\le d(\gamma s\psi(a), \gamma s y_a)+d(\gamma s y_a, \gamma y_{\sigma_i(s)a})+d(\gamma y_{\sigma_i(s)a}, \gamma \psi(\sigma_i(s)a))\\
	&\le \frac{\varepsilon}{2}+d(\gamma s \varphi(a), \gamma \varphi(\sigma_i(s)a))+\frac{\varepsilon}{2}<\frac{\varepsilon}{2}+\frac{c}{2}+\frac{\varepsilon}{2}\le c.
	\end{align*}
	We conclude that $d_{K_1}(s\psi(a), \psi(\sigma_i(s)a))\leq c$ for every $a\in W$. From our choice of $K_1$, we obtain that $d(s\psi(a),  \psi(\sigma_i(s)a))<\frac{\delta}{2}$ for all $a\in W$. Therefore
	\begin{align*}
	\frac{1}{n_i}\sum_{a\in \{1,\dots,n_i\}}d(s\psi(a), \psi(\sigma_i(s)a))^2
	&= \frac{1}{n_i}\sum_{a\in W}d(s\psi(a), \psi(\sigma_i(s)a))^2+ \frac{1}{n_i}\sum_{a\in \{1,\dots,n_i\}\setminus W}d(s\psi(a), \psi(\sigma_i(s)a))^2\\
	&\le \frac{|W|}{n_i}\left(\frac{\delta}{2}\right)^2+\diam(X, d)^2\frac{|\{1,\dots,n_i\}\setminus W|}{n_i}\\
	&\le \left(\frac{\delta}{2}\right)^2+4\eta \diam(X, d)^2\le 2\left(\frac{\delta}{2}\right)^2<\delta^2.
	\end{align*}
	This proves our claim.
	
 We have $W_{K, \delta', i}\subset W_\varphi$, and hence $J_i\subset W_i\cap W_{K, \delta', i}\subset W_i\cap W_\varphi$.
	Since $e_G\in K$, we have $\sigma_i(e_G)a=a$ for all $a\in W_i$.
	For any $a\in f^{-1}(k+1)$, we have $e_G\in V_a$, and hence $d(\psi(a), x_{k+1})=d(\psi(a), x_{a, e_G})\le \frac{\varepsilon}{2}$.
	For any $a\in J_i\setminus f^{-1}(k+1)$, we have $e_G\in V'_a\setminus V_a\subset G\setminus AV_a$, whence $d(\psi(a), \varphi(a))=d(\psi(a), y_a)\le \frac{\varepsilon}{2}$ and consequently
	\[d(\psi(a), x_{f(a)})\le d(\psi(a), \varphi(a))+d(\varphi(a), x_{f(a)})\le \frac{\varepsilon}{2}+\theta<\varepsilon.\]
	We conclude that $\psi(a)\in B_{\varepsilon}(x_{f(a)})$ for all $a\in J_i$.
	This shows that $J_i$ is a $(d, F, \delta, \sigma_i)$-independence set for $(B_{\varepsilon}(x_{1}),\dots
	,B_{\varepsilon}(x_{k}),B_{\varepsilon}(x_{k+1}))$ as desired.
\end{proof}

	\subsection{From independence entropy pairs to asymptotic pairs}
	
	In this subsection we prove~\Cref{T-exp sTMP IE to asym} which gives sufficient conditions under which positive topological entropy for an action of an amenable group implies the existence of off-diagonal asymptotic pairs.

\begin{theorem} \label{T-exp sTMP IE to asym}
Let $G$ be an amenable group, and $G\curvearrowright X$ an expansive action with the strong TMP. Then  $\IE_k(X, G)\subseteq \overline{\mathtt{A}_k(X,G)}$ for every $k\ge 2$.
\end{theorem}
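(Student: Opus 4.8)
The plan is to show that for every $\varepsilon>0$ and every $\mathbf{x}=(x_1,\dots,x_k)\in\IE_k(X,G)$ there is an asymptotic $k$-tuple $(y_1,\dots,y_k)$ with $d(y_i,x_i)\le\varepsilon$ for all $i$; letting $\varepsilon\to 0$ then gives $\mathbf{x}\in\overline{\mathtt{A}_k(X,G)}$. Fix an expansivity constant $c$. First I would record, via Lemma~\ref{L-close PO}(3), that a $k$-tuple which is \emph{eventually $c$-close} (i.e. $d(gy_i,gy_j)\le c$ for all $i,j$ and all $g$ outside a finite set) is automatically asymptotic, so it suffices to produce an eventually $c$-close tuple close to $\mathbf{x}$. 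I then fix strong TMP data for a small parameter $\varepsilon'\le c/2$: some $\delta>0$ and a finite $F\ni e_G$ such that $FE$ is an $(\varepsilon',\delta)$-memory set for every $E\Subset G$. Taking the balls $U_i=B_r(x_i)$ for a small radius $r$, the hypothesis $\mathbf{x}\in\IE_k(X,G)$ yields a positive independence density $q>0$ for $(U_1,\dots,U_k)$. I also fix the expansivity window $W_0$ from Lemma~\ref{L-close PO}(3) for $\varepsilon'$ and let $M$ be the number of $d$-balls of radius $\delta/2$ needed to cover the compact space $X$.

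Next I would fix a large, sufficiently invariant F\o lner set $E$, an independence set $J\subseteq E$ with $|J|\ge q|E|$, and a realizer $z_\phi\in X$ for every pattern $\phi\in\{1,\dots,k\}^{J}$, i.e.\ $sz_\phi\in U_{\phi(s)}$ for all $s\in J$. The annulus $\partial E=FE\setminus E$ satisfies $|\partial E|=o(|E|)$, since $F$ is fixed and $\{E\}$ is F\o lner; classifying the $k^{|J|}$ realizers by their behaviour on $\partial E$ through the $\delta/2$-cover partitions them into at most $M^{|\partial E|}=e^{o(|E|)}$ classes, with two realizers in a common class agreeing to within $\delta$ on $\partial E$. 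A largest class $\mathcal{F}$ then has $|\mathcal{F}|\ge k^{|J|}/M^{|\partial E|}\ge((k-1)\lambda)^{|J|}$ for a suitable $\lambda>1$ with $(k-1)\lambda<k$, once $E$ is invariant enough. Applying the Karpovsky--Milman lemma (Lemma~\ref{L-KM}) to $\mathcal{F}\subseteq\{1,\dots,k\}^{J}$ produces $I\subseteq J$ with $|I|\ge c'|J|$ and $\mathcal{F}|_I=\{1,\dots,k\}^{I}$. Choosing $s_0\in I$ deep inside $E$ (so that $W_0 s_0\subseteq E\subseteq FE$), I select for each $i$ a pattern $\phi^{(i)}\in\mathcal{F}$ with $\phi^{(i)}(s_0)=i$. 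The realizers $z^{(i)}:=z_{\phi^{(i)}}$ lie in one boundary class, hence pairwise agree to within $\delta$ on $\partial E$, while $s_0 z^{(i)}\in U_i$ records the $i$-th target at the common coordinate $s_0$.

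Finally I would paste. Since $FE$ is an $(\varepsilon',\delta)$-memory set for $E$ and $d_{\partial E}(z^{(i)},z^{(1)})\le\delta$, I obtain $w_i\in X$ with $d_{FE}(w_i,z^{(i)})\le\varepsilon'$ and $d_{G\setminus E}(w_i,z^{(1)})\le\varepsilon'$ (taking $w_1:=z^{(1)}$). Outside $E$ every $w_i$ is within $2\varepsilon'\le c$ of the common $z^{(1)}$, so $(w_1,\dots,w_k)$ is eventually $c$-close and therefore asymptotic, and so is its translate $(s_0 w_1,\dots,s_0 w_k)$ since asymptoticity is $G$-invariant. On the window $W_0 s_0\subseteq FE$ we have $d_{W_0}(s_0 w_i,s_0 z^{(i)})\le\varepsilon'\le c$, so Lemma~\ref{L-close PO}(3) gives $d(s_0 w_i,s_0 z^{(i)})<\varepsilon'$; combined with $d(s_0 z^{(i)},x_i)<r$ this makes $y_i:=s_0 w_i$ satisfy $d(y_i,x_i)<\varepsilon'+r\le\varepsilon$, as required. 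The delicate point, and the step I expect to be the main obstacle, is exactly the combinatorial core of the second paragraph: a plain boundary pigeonhole only gives many realizers with common boundary behaviour, not a single boundary class exhibiting all $k$ symbols at one common interior coordinate. It is the marriage of the subexponential boundary count $|\partial E|=o(|E|)$, which crucially uses the \emph{bounded} memory $FE$ of the strong TMP and would fail for the plain TMP, with the Sauer--Shelah-type richness of Lemma~\ref{L-KM} that forces the coordinate $s_0$ to exist; lining up the quantifiers and the degree of invariance of $E$ is where most of the care is needed.
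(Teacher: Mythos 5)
Your proof is correct and is essentially the paper's own argument: the paper factors it as strong TMP plus amenability implies mean TMP (Proposition~\ref{P-amenable_implications_trivial}), then a boundary-pigeonhole and memory-set pasting argument (Proposition~\ref{ida}) producing $\varepsilon$-asymptotic tuples near the IE-tuple, and finally expansivity via Lemma~\ref{lema_expansivo}, which is exactly your F\o lner set, boundary-class count $M^{|\partial E|}=e^{o(|E|)}$, pasting, and eventually-$c$-close steps, inlined. The only divergence is that your Karpovsky--Milman/deep-coordinate/expansivity-window detour is unnecessary (so the step you flag as the main obstacle is in fact a non-issue): a largest boundary class has more than $(k-1)^{|J|}$ patterns and hence already exhibits all $k$ symbols at some single coordinate $s_0\in J$ (otherwise it would be contained in a product of $(k-1)$-element sets), and since $s_0\in E\subseteq FE$ the memory-set bound $d_{FE}(w_i,z^{(i)})\le\varepsilon'$ gives closeness at $s_0$ directly---this plain pigeonhole is precisely how Proposition~\ref{ida} proceeds.
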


Theorem~\ref{T-exp sTMP IE to asym} follows from Proposition~\ref{P-amenable_implications_trivial}, Lemma~\ref{lema_expansivo} and  Proposition~\ref{ida} below.

Combining Theorem~\ref{T-exp sTMP IE to asym} and part (2) of Theorem~\ref{theorem_naive_IE_cites} we obtain the following corollary.
Part (1) of it  was proven by Meyerovitch~\cite[Theorem 1.4]{Meyerovitch2017} under the stronger assumption that $G\curvearrowright X$ has the POTP.

	\begin{corollary} \label{C-exp sTMP IE to asym}
		Let $G$ be an amenable group, and $G\curvearrowright X$ an expansive action with the strong TMP.
\begin{enumerate}
\item If $\htop(G\curvearrowright X)>0$, then for every $k\ge 2$ there exists an $(x_1, \dots, x_k)\in \mathtt{A}_k(X,G)$ with $x_1, \dots, x_k$ distinct.
\item If $G\curvearrowright X$ has UPE of order $k$ for some $k\ge 2$, then $\mathtt{A}_k(X,G)$ is dense in $X^k$.
\end{enumerate}
	\end{corollary}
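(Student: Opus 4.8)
The plan is to deduce both parts directly from Theorem~\ref{T-exp sTMP IE to asym}, which under the present hypotheses ($G$ amenable, $G\curvearrowright X$ expansive with the strong TMP) gives the inclusion $\IE_k(X,G)\subseteq\overline{\mathtt{A}_k(X,G)}$ for every $k\ge 2$, combined with the entropy-to-IE dictionary recorded in Theorem~\ref{theorem_naive_IE_cites}. Beyond invoking these two results, the only real work is to pass from membership in a closure to the existence of honest asymptotic tuples with the stated features, which I would handle by an openness argument.

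For part (1), I would first apply part (2) of Theorem~\ref{theorem_naive_IE_cites}: since $G$ is amenable and $\htop(G\curvearrowright X)>0$, for the given $k$ there is an orbit IE-tuple $(a_1,\dots,a_k)\in\IE_k(X,G)$ whose entries are pairwise distinct. By Theorem~\ref{T-exp sTMP IE to asym} this tuple lies in $\overline{\mathtt{A}_k(X,G)}$, so there is a sequence of asymptotic tuples $(y^{(n)}_1,\dots,y^{(n)}_k)\in\mathtt{A}_k(X,G)$ converging to $(a_1,\dots,a_k)$ in $X^k$. Setting $\eta=\tfrac13\min_{i\ne j}d(a_i,a_j)>0$, for all large $n$ one has $d(y^{(n)}_i,a_i)<\eta$ for every $i$, whence by the triangle inequality $d(y^{(n)}_i,y^{(n)}_j)> d(a_i,a_j)-2\eta\ge\eta>0$ for $i\ne j$. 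Any such $(y^{(n)}_1,\dots,y^{(n)}_k)$ is then an asymptotic $k$-tuple with pairwise distinct entries, which is exactly the assertion.

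For part (2), I would use that $G\curvearrowright X$ having UPE of order $k$ means, by definition, $\IE_k(X,G)=X^k$ (recall that since $G$ is amenable the naive and sofic notions coincide and are simply called UPE). Theorem~\ref{T-exp sTMP IE to asym} then yields $X^k=\IE_k(X,G)\subseteq\overline{\mathtt{A}_k(X,G)}\subseteq X^k$, so $\overline{\mathtt{A}_k(X,G)}=X^k$; that is, $\mathtt{A}_k(X,G)$ is dense in $X^k$.

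The substantive content is entirely carried by Theorem~\ref{T-exp sTMP IE to asym}, so there is no serious obstacle at the level of the corollary; the one point requiring care is the transition in part (1) from membership in the closure $\overline{\mathtt{A}_k(X,G)}$ to the existence of a genuine asymptotic tuple with distinct coordinates. This is resolved by noting that pairwise distinctness is an open condition on $X^k$ and is therefore inherited by any sufficiently close approximating asymptotic tuple, the distinctness of the entries of the IE-tuple furnished by Theorem~\ref{theorem_naive_IE_cites}(2) being precisely what makes the approximation usable.
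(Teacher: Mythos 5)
Your proposal is correct and follows exactly the paper's route: the paper obtains the corollary by combining Theorem~\ref{T-exp sTMP IE to asym} with part (2) of Theorem~\ref{theorem_naive_IE_cites}, with part (2) of the corollary being immediate from $\IE_k(X,G)=X^k$. The only detail the paper leaves implicit is the passage from membership in $\overline{\mathtt{A}_k(X,G)}$ to an actual asymptotic tuple with distinct entries, which your openness argument supplies correctly.
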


\begin{remark} \label{R-exp mTMP IE to asym}
Theorem~\ref{T-exp sTMP IE to asym} and Corollary~\ref{C-exp sTMP IE to asym} also hold with the strong TMP replaced by the mean TMP.
\end{remark}

The following result was proven in~\cite[Theorem 4.2]{bg2018hierarchy} under the stronger assumption of the POTP. The proof is very similar but it is in fact much more natural in this general context.

	\begin{proposition}\label{ida}
Let $G\curvearrowright X$ be an action with the mean TMP. For every $k\ge 2$ and $\varepsilon>0$ we have $\IE_k(X,G)\subset \overline{\mathtt{A}^{\varepsilon}_k(X,G)}$.
	\end{proposition}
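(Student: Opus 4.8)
The plan is to prove that every $\mathbf{x}=(x_1,\dots,x_k)\in\IE_k(X,G)$ can be approximated to within an arbitrary precision $\gamma>0$ by an $\varepsilon$-asymptotic tuple, which places $\mathbf{x}$ in $\overline{\mathtt{A}^{\varepsilon}_k(X,G)}$. First I would fix $\varepsilon''\le\tfrac13\min(\varepsilon,\gamma)$ and apply the mean TMP for the parameter $\varepsilon''$ to obtain $\delta>0$, an increasing sequence $\{F_n\}$ with union $G$, and memory sets $\widetilde{F_n}$ for $F_n$ with $|\partial_n|=o(|F_n|)$, where $\partial_n:=\widetilde{F_n}\setminus F_n$. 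Next, choosing $\gamma'$ with $\varepsilon''+\gamma'<\gamma$, the product neighborhood $B_{\gamma'}(x_1)\times\dots\times B_{\gamma'}(x_k)$ has positive independence density, so there is $q>0$ such that every $F\Subset G$ carries an independence set of size at least $q|F|$. For each large $n$ I would take an independence set $J_n\subseteq F_n$ for $(B_{\gamma'}(x_1),\dots,B_{\gamma'}(x_k))$ with $|J_n|\ge q|F_n|$; crucially $J_n$ is disjoint from $\partial_n$. For every $\phi\colon J_n\to\{1,\dots,k\}$ independence furnishes a witness $w_\phi\in X$ with $sw_\phi\in B_{\gamma'}(x_{\phi(s)})$ for all $s\in J_n$.

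The heart of the argument, and the step I expect to be the main obstacle, is to produce witnesses that not only carry prescribed labels but also \emph{agree on the entire memory boundary} $\partial_n$ to within $\delta$ — this exact boundary agreement is precisely what the memory set for $F_n$ demands before one may splice, yet the $w_\phi$ are entirely uncontrolled on $\partial_n$. I would resolve this by a counting argument in the spirit of Lemma~\ref{L-KM}. Fix a $\tfrac{\delta}{2}$-net of $X$ of cardinality $M_0$ and group the $k^{|J_n|}$ labelings according to the net-cell of the boundary trace $(sw_\phi)_{s\in\partial_n}$; since there are at most $M_0^{|\partial_n|}$ cells, the largest cell $\mathcal{C}$ (a set of labelings) satisfies $|\mathcal{C}|\ge k^{|J_n|}/M_0^{|\partial_n|}$, and any two labelings in $\mathcal{C}$ have witnesses at $d_{\partial_n}$-distance at most $\delta$. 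Because $|J_n|\ge q|F_n|$ while $|\partial_n|=o(|F_n|)$, for all large $n$ one has $k^{|J_n|}/M_0^{|\partial_n|}>(k-1)^{|J_n|}$, so $\mathcal{C}$ cannot be contained in $\prod_{s\in J_n}\pi_s(\mathcal{C})$ with every coordinate projection $\pi_s(\mathcal{C})$ omitting a label; hence some position $s_\star\in J_n$ has $\pi_{s_\star}(\mathcal{C})=\{1,\dots,k\}$. Choosing, for each $i$, a labeling $\phi_i\in\mathcal{C}$ with $\phi_i(s_\star)=i$ and setting $v_i:=w_{\phi_i}$, I obtain points satisfying $s_\star v_i\in B_{\gamma'}(x_i)$ and $d_{\partial_n}(v_i,v_j)\le\delta$ for all $i,j$.

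The conclusion then follows by splicing. For each $i$ I would apply the $(\varepsilon'',\delta)$-memory set $\widetilde{F_n}$ for $F_n$ to the pair $(v_i,v_1)$ — legitimate since $d_{\widetilde{F_n}\setminus F_n}(v_i,v_1)=d_{\partial_n}(v_i,v_1)\le\delta$ — obtaining $y_i\in X$ with $d_{\widetilde{F_n}}(y_i,v_i)\le\varepsilon''$ and $d_{G\setminus F_n}(y_i,v_1)\le\varepsilon''$. For $g\notin F_n$ and any $i,j$ the triangle inequality gives $d(gy_i,gy_j)\le 2\varepsilon''\le\varepsilon$, so $(y_1,\dots,y_k)\in\mathtt{A}^{\varepsilon}_k(X,G)$; and since $s_\star\in F_n\subseteq\widetilde{F_n}$ we get $d(s_\star y_i,x_i)\le d(s_\star y_i,s_\star v_i)+d(s_\star v_i,x_i)\le\varepsilon''+\gamma'<\gamma$. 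Finally, because translating a tuple only moves the finite asymptoticity window, $\mathtt{A}^{\varepsilon}_k(X,G)$ is $G$-invariant, so the translated tuple $(s_\star y_1,\dots,s_\star y_k)$ is again $\varepsilon$-asymptotic and lies within $\gamma$ of $\mathbf{x}$, which finishes the approximation. Apart from the counting step, the argument is just triangle-inequality bookkeeping, with the parameters chosen in the order $\varepsilon''$, then $\delta$ and $M_0$, then $\gamma'$ and $q$, and finally $n$ large.
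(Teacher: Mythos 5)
Your proof is correct and follows essentially the same route as the paper's: mean TMP memory sets with boundary of size $o(|F_n|)$, independence sets $J_n\subset F_n$ for small balls around the $x_i$, a pigeonhole-plus-projection count over boundary traces (the paper uses a $\delta$-cover and the inequality $(k/(k-1))^{|J_m|}>|\mathcal{C}|^{|\widetilde{F_m}\setminus F_m|}$, which is algebraically the same as your $k^{|J_n|}/M_0^{|\partial_n|}>(k-1)^{|J_n|}$ with a $\delta/2$-net), splicing each witness against a fixed reference witness via the memory set, and finally translating by the distinguished coordinate $s_\star$ (the paper's $\gamma\in J_m$). The only differences are cosmetic: you also splice the first coordinate, and you decouple the approximation precision from the asymptoticity parameter $\varepsilon$, which the paper handles implicitly.
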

	
	\begin{proof}
		Let $(x_1,  \dots, x_k)\in\IE_k(X,G)$.
It suffices to show that for any $\varepsilon>0$ there exists an $\varepsilon
		$-asymptotic tuple in $B_{\varepsilon}(x_1)\times \dots \times B_{\varepsilon}(x_k)$. By the mean TMP there exist $\delta >0$ and an increasing sequence
		$\left\{ F_{n}\right\}_{n \in \NN}$ of finite subsets of $G$ with union $G$ such that for each $n \in \NN$ there is an $(\frac{\varepsilon}{2},\delta)$-memory set $\widetilde{F_n}$ for $F_n$ so that $|\widetilde{F_n}\setminus F_n| = o(|F_n|)$. Let $\mathcal{C}=\left\{  C_{i}\right\}  $ be a finite $\delta$-cover
		of $X$, that is, for every $C_i$, $\sup_{z, w\in C_i}d(z, w)\le \delta$. For every $n\in\NN$ we define
		\begin{align*}
			K_{n}  =\bigvee\nolimits_{g\in F_{n}}g^{-1}\mathcal{C} \ \ \mbox{ and } \ \  \partial K_{n}  =\bigvee\nolimits_{g\in\widetilde{F_n}\setminus F_n
			}g^{-1}\mathcal{C}.
		\end{align*}
		Let $\mathbf{U}=(B_{\frac{\varepsilon}{2}}(x_1), \dots, B_{\frac{\varepsilon}{2}}(x_k))$. Since $(x_1, \dots, x_k)$
		is an orbit IE-tuple, there exists $q > 0$ such that for every $n\in \NN$ there exists an independence set $J_n \subset F_n$ for $\mathbf{U}$ such that $|J_n| \geq q|F_n|$. We also have that for all $n\in\NN$.
		\[
		\left\vert\partial K_{n}\right\vert\leq\left\vert \mathcal{C}\right\vert ^{\left\vert \widetilde{F_n}\setminus F_n\right\vert }.
		\]
		Since $|\widetilde{F_n}\setminus F_n| = o(|F_n|)$, for all
		sufficiently large $m \in \NN$ we have $|\widetilde{F_m}\setminus F_m| < q\frac{\log(k/(k-1))}{\log(|\mathcal{C}|)}|F_m|$ and thus,
		\[ (\frac{k}{k-1})^{\left\vert J_m\right\vert} \geq (\frac{k}{k-1})^{q\left\vert F_{m}\right\vert}
		>\left\vert \mathcal{C}\right\vert ^{\left\vert \widetilde{F_m}\setminus F_{m}\right\vert } \geq \left\vert\partial K_{m}\right\vert.\]
		In particular, as $J_m$ is an independence set for $\mathbf{U}$, there exist
		$\gamma \in J_{m}$, $C \in\partial K_{m}$ and $x^{\prime}_1,\dots, x^{\prime}_k\in C$ such that
		\begin{align*}
			\gamma x^{\prime}_j \in B_{\frac{\varepsilon}{2}}(x_j) \ \ \mbox{ for every } \ \  1\le j\le k.
		\end{align*}

		Let $2\le j\le k$. Since $x^{\prime}_1, x^{\prime}_j\in C$, for every $g\in \widetilde{F_m}\setminus F_m$ we have that $d(gx^{\prime}_1, gx^{\prime}_j)\leq\delta$ and thus $d_{\widetilde{F_m}\setminus F_m}(x^{\prime}_1, x^{\prime}_j) \leq \delta$. As $\widetilde{F_m}$ is an $(\frac{\varepsilon}{2},\delta)$-memory set for $F_m$, there exists $z^{\prime}_j \in X$ such that  $d_{\widetilde{F_m}}(x^{\prime}_j,z^{\prime}_j)\leq\frac{\varepsilon}{2}$ and $d_{G\setminus F_m}(x^{\prime}_1, z^{\prime}_j)\leq\frac{\varepsilon}{2}$. By definition, $(x^{\prime}_1, z^{\prime}_j)$ is an $\frac{\varepsilon}{2}$-asymptotic pair and as $\gamma \in F_m \subset \widetilde{F_m}$ we obtain that $d(\gamma x^{\prime}_j, \gamma z^{\prime}_j)\leq \frac{\varepsilon}{2}$ and hence $\gamma z^{\prime}_j \in B_{\varepsilon}(x_j)$.

Defining $z_j = \gamma z^{\prime}_j$ for $2\le j\le k$ and $z_1 = \gamma x^{\prime}_1$ yields that $(z_1, \dots, z_k) \in \R^{\varepsilon}_k(X,G) \cap ( B_{\varepsilon}(x_1)\times \dots \times B_{\varepsilon}(x_k))$. \end{proof}

	\begin{remark}\label{rtheorem_A_to_IE}
		Proposition~\ref{ida} does not hold in general for actions with the
		(uniform) TMP. Indeed,~\Cref{example_TOM} is an expansive algebraic action of a countable amenable group with positive topological entropy and no off-diagonal asymptotic pairs. By~\Cref{P_algebraic_have_wtmp} and~\Cref{P-exp TMP to uniform TMP} we conclude that this action has the uniform TMP but cannot have the mean TMP as it would contradict Remark~\ref{R-exp mTMP IE to asym}.
	\end{remark}

	Corollary~\ref{C-exp sTMP IE to asym} does not hold in general for actions of non-amenable groups and sofic topological entropy. In order to illustrate this, we need to introduce a few concepts from measurable dynamics. Given a sofic group $G$, a sofic approximation sequence $\Sigma$ for $G$, and an action $G\curvearrowright X$ and a $G$-invariant Borel probability measure $\mu$ on $X$, we say $G \curvearrowright (X,\mu)$ is \define{Bernoulli} if it is isomorphic to a p.m.p. action of the form $G \curvearrowright (Y^G,\nu^G)$ where $Y$ is a compact metrizable space, $\nu$ is a Borel probability measure on $Y$, $\nu^G$ is the product measure on $Y^G$ and the action $G\curvearrowright Y^G$ is given by $gy(h) = y(g^{-1}h)$ for every $y \in Y^G$ and $g \in G$. We say that $G \curvearrowright (X,\mu)$ has \define{completely positive entropy} with respect to $\Sigma$ if every nontrivial measurable factor $G \curvearrowright (X',\mu')$ of $G \curvearrowright (X,\mu)$ has positive measure-theoretical sofic entropy with respect to $\Sigma$ (by nontrivial we mean that $\nu$ does not have an atom of full measure). A theorem of Kerr~\cite[Theorem 2.6]{Kerr2014} states that every Bernoulli measure has completely positive entropy with respect to any sofic approximation sequence $\Sigma$.

\begin{example}\label{E-pm}
	Let $F_{2} = \langle a,b \mid \varnothing \rangle$ be the free group on two generators. The \define{perfect matchings subshift} $X_{\textrm{pm}}$ is the SFT consisting of the configurations $x \in \{a,b,a^{-1},b^{-1}\}^{F_2}$ such that for every $g\in F_2$ we have $(x(g))^{-1} = x(g\cdot x(g))$. By a result of Lyons
and Nazarov there is an $F_2$-invariant Borel probability measure $\mu_{\textrm{pm}}$ on $X_{\textrm{pm}}$ which is a nontrivial factor of a Bernoulli measure~\cite[Theorem 1.1]{LyonsNazarov2011}. By the result of Kerr, we obtain that $h_{\mu_{\textrm{pm}}}^{\Sigma}(F_2 \curvearrowright X_{\textrm{pm}})>0$ for every sofic approximation sequence $\Sigma$. By the variational principle for actions of sofic groups, we conclude that $F_2 \curvearrowright X_{\textrm{pm}}$ has positive topological sofic entropy for every sofic approximation sequence $\Sigma$. However, the next proposition shows that $F_2 \curvearrowright X_{\textrm{pm}}$ has no off-diagonal asymptotic pairs.
\end{example}

\begin{proposition} \label{P-pm}
The action $F_2\curvearrowright X_{\textrm{pm}}$ has no off-diagonal asymptotic pairs.
\end{proposition}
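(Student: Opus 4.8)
The plan is to identify $X_{\textrm{pm}}$ with the set of perfect matchings of the Cayley graph of $F_2$ and then run a symmetric-difference argument. The Cayley graph of $F_2$ with respect to $\{a,b\}$ is the $4$-regular tree $T$, with vertex set $F_2$ and $g$ adjacent to $gs$ for $s\in\{a,b,a^{-1},b^{-1}\}$. Unwinding the defining condition $(x(g))^{-1}=x(g\cdot x(g))$, a configuration $x\in X_{\textrm{pm}}$ assigns to each vertex $g$ the generator $x(g)$ pointing to its matched neighbour $g\cdot x(g)$, and the condition says exactly that this neighbour points back at $g$. Hence $x$ encodes a perfect matching $M_x=\{\{g,\,g\cdot x(g)\}:g\in F_2\}$ of $T$, and $x\mapsto M_x$ is a bijection between $X_{\textrm{pm}}$ and the set of perfect matchings of $T$. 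Under this identification $x_1(g)\neq x_2(g)$ holds exactly for those $g$ incident to an edge of the symmetric difference $M_{x_1}\triangle M_{x_2}$.

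Next I would use the standard characterization of asymptoticity for subshifts: a pair $(x_1,x_2)$ lies in $\R_2(X_{\textrm{pm}},F_2)$ if and only if $x_1$ and $x_2$ agree outside a finite subset of $F_2$. This is immediate from the dynamically generating pseudometric $\rho(x,y)=\mathbbm{1}[x(e_{F_2})\neq y(e_{F_2})]$ of the previous section: taking $\varepsilon<1$ in the definition of $\R_2$ forces $\rho(gx_1,gx_2)=0$, i.e.\ $x_1(g^{-1})=x_2(g^{-1})$, for all $g$ outside a finite set. Thus it suffices to prove that any two perfect matchings $M_1,M_2$ of $T$ that agree outside a finite set of vertices must be equal.

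The core of the argument is the classical observation about symmetric differences of matchings. In $M_1\triangle M_2$ every vertex has degree $0$ or $2$: a vertex matched to the same neighbour in both matchings has degree $0$, while a vertex $v$ matched to distinct neighbours $w_1$ (in $M_1$) and $w_2$ (in $M_2$) is incident to exactly the two edges $\{v,w_1\}\in M_1\setminus M_2$ and $\{v,w_2\}\in M_2\setminus M_1$. Hence each nonempty connected component of $M_1\triangle M_2$ is $2$-regular, so it is either a cycle or a bi-infinite path. Since $T$ is a tree it contains no cycles, and a finite path would have degree-$1$ endpoints; therefore every nonempty component is a bi-infinite path, meeting infinitely many vertices. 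But a vertex lies in some edge of $M_1\triangle M_2$ precisely when $x_1$ and $x_2$ disagree there, and by asymptoticity this disagreement set is finite. Consequently $M_1\triangle M_2=\varnothing$, giving $M_1=M_2$ and thus $x_1=x_2$, so $\R_2(X_{\textrm{pm}},F_2)\setminus\triangle_2(X_{\textrm{pm}})=\varnothing$.

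The main, and essentially only, obstacle is bookkeeping: making the degree count in $M_1\triangle M_2$ fully precise and ruling out finite components of the $2$-regular subgraph. The acyclicity of $T$ does all the work here, since a finite connected $2$-regular graph is a cycle, which a tree cannot contain. Once this is pinned down, no off-diagonal asymptotic pair can exist.
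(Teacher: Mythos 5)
Your proof is correct, but it takes a recognizably different route from the paper's. The paper argues directly in the coordinates of the subshift: writing $F$ for the disagreement set (finite by asymptoticity), it picks $g\in F$ whose reduced word $w_1\cdots w_k$ has maximal length, observes that $x$ and $y$ agree at every neighbour $gs$ with $s\neq w_k^{-1}$, and applies the defining involution $(x(g))^{-1}=x(g\cdot x(g))$ twice to force $x(g)=w_k^{-1}=y(g)$, a contradiction --- an extremal ``outermost disagreement'' argument carried out with bare hands on reduced words. You instead pass to the matching picture and invoke the classical lemma that the symmetric difference of two perfect matchings has all degrees $0$ or $2$, hence decomposes into alternating cycles and bi-infinite alternating paths; since the Cayley graph of $F_2$ is a tree, cycles are impossible, so a nonempty symmetric difference would meet infinitely many vertices, contradicting finiteness of the disagreement set. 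Both proofs exploit the same propagation phenomenon (a disagreement between two perfect matchings of a tree cannot terminate at a furthest point), but yours is more conceptual and visibly more general: it shows that on any locally finite tree two perfect matchings agreeing outside a finite set must coincide, so the statement depends on $F_2$ only through the acyclicity of its Cayley graph. The paper's version buys brevity and avoids any graph-theoretic setup. Your two supporting steps --- the identification of $X_{\textrm{pm}}$ with perfect matchings via $M_x=\{\{g,\,g\cdot x(g)\}:g\in F_2\}$, and the reduction of asymptoticity to agreement off a finite set via the clopen condition $x(e_{F_2})=y(e_{F_2})$ --- are both accurate, so there is no gap.
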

\begin{proof}
	Let $(x,y)\in (X_{\textrm{pm}})^2$ be an asymptotic pair. Let $F \Subset G$ be the set of $g \in F_2$ such that $x(g)\neq y(g)$. If $x\neq y$ then $F \neq \varnothing$. Let $g \in F$ such that the length of the reduced word $w_1w_2\dots w_k \in \{a,b,a^{-1},b^{-1}\}^*$ which represents $g$ is maximized. Then we have that $gs \notin F$ for every $s \in  \{a,b,a^{-1},b^{-1}\} \setminus \{ w_k^{-1} \}$. If $x(g)=s$ for some $s \in  \{a,b,a^{-1},b^{-1}\} \setminus \{w_k^{-1} \}$, then $y(gs)=x(gs)  = s^{-1}$ and thus $y(g)=y(gss^{-1}) = s$ which contradicts $g \in F$. Then we have $x(g)=w_k^{-1}$. Using the same argument we get $y(g) = w_k^{-1}$ and thus $x(g)=y(g)$ contradicting again that $g \in F$. Therefore $F = \varnothing$ and thus $x = y$.
\end{proof}

\section{Applications}\label{section:applications}

In this section we shall put together the results of~\Cref{section:resultados_basicos,section:algebraico,section:minimial,section:ent_asympt} to obtain several results about Markovian measures, algebraic actions, and minimal actions.

\subsection{Supports of Markovian measures}\label{subsection_gibbs}

Recall that a p.m.p. action $G\curvearrowright (X,\mu)$ of an amenable group has completely positive entropy if every nontrivial measurable factor of $G\curvearrowright (X, \mu)$ has positive measure-theoretical entropy.
%There exist Markovian measures for $\ZZ^2$-actions that have completely positive entropy but are not Bernoulli, see for instance~\cite{hoffman}.
\begin{corollary}
	Let $G$ be an amenable group, $X$ a $G$-subshift, and $\mu$ a $G$-invariant Markovian measure on $X$. The following hold:
	\begin{enumerate}
		\item If $h_{\mu}(G \curvearrowright X)>0$, then $G\curvearrowright$ $\supp(\mu)$ has off-diagonal asymptotic pairs.
		\item If $G \curvearrowright (X,\mu)$ has completely positive entropy, then the asymptotic pairs of $G\curvearrowright$ $\supp(\mu)$ are dense in $\supp(\mu) \times \supp(\mu)$.
	\end{enumerate}
\end{corollary}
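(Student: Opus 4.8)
The plan is to pass to the subsystem $G \curvearrowright Y$, where $Y \isdef \supp(\mu)$, and then feed it into the machinery of the previous sections that links the strong TMP to asymptotic pairs. First I would observe that $Y$ is a closed $G$-invariant subset of the subshift $X$, hence itself a $G$-subshift, and therefore expansive (over a finite alphabet distinct configurations are separated at a single coordinate after a shift). Since $\mu$ is Markovian, Proposition~\ref{lemma_Markovian_measures_have_STMP} gives that $G \curvearrowright Y$ has the strong TMP. As $G$ is amenable and $G \curvearrowright Y$ is expansive with the strong TMP, both Theorem~\ref{T-exp sTMP IE to asym} and Corollary~\ref{C-exp sTMP IE to asym} are available for $G \curvearrowright Y$, and each item of the statement reduces to producing the appropriate entropy positivity for $G \curvearrowright Y$.

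For item (1), since $\mu$ is carried by $Y$ we have $h_{\mu}(G\curvearrowright X) = h_{\mu}(G\curvearrowright Y)$, so by the variational principle \cite[Theorem 5.2.7]{Ollagnier1985book} applied to $G \curvearrowright Y$ one gets $\htop(G\curvearrowright Y) \geq h_{\mu}(G\curvearrowright Y) > 0$. Corollary~\ref{C-exp sTMP IE to asym}(1) with $k=2$ then yields a pair $(x_1,x_2) \in \mathtt{A}_2(Y,G)$ with $x_1 \neq x_2$, which is precisely an off-diagonal asymptotic pair of $G \curvearrowright \supp(\mu)$.

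For item (2), the goal is to upgrade measure-theoretic complete positive entropy into topological uniform positive entropy of $G\curvearrowright Y$. Because $\mu$ is fully supported on $Y$, the p.m.p. system $G\curvearrowright(Y,\mu)$ coincides with $G\curvearrowright(X,\mu)$ and hence has completely positive entropy. I would then invoke the measure-theoretic local entropy theory of Kerr and Li \cite{KerrLiBook2016}: complete positive entropy is equivalent to the equality $\IE_2^{\mu}(Y,G) = Y^2$ of the set of $\mu$-IE-pairs with the full square of the support, and every $\mu$-IE-tuple is an orbit IE-tuple, so $\IE_2(Y,G) = Y^2$; that is, $G\curvearrowright Y$ has UPE of order $2$. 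Corollary~\ref{C-exp sTMP IE to asym}(2) then shows that $\mathtt{A}_2(Y,G)$ is dense in $Y^2 = \supp(\mu)\times\supp(\mu)$, as desired. (Equivalently one can finish through Theorem~\ref{T-exp sTMP IE to asym}, using $\IE_2(Y,G) = Y^2$ and $\IE_2(Y,G) \subseteq \overline{\mathtt{A}_2(Y,G)}$.)

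The only substantial step is the bridge used in (2): identifying measure-theoretic complete positive entropy with $\mu$-uniform positive entropy on the support (every off-diagonal pair of $Y$ being a $\mu$-IE-pair) together with the inclusion of $\mu$-IE-tuples into topological IE-tuples. Both facts hold for amenable $G$ in Kerr--Li's framework, so this is where I expect the main work (and the reliance on external results) to lie; everything else is a direct application of the results already established in the paper, and item (1) is essentially immediate once expansivity and the strong TMP of $G\curvearrowright\supp(\mu)$ are in hand.
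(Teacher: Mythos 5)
Your proposal is correct and follows essentially the same route as the paper: strong TMP of $G \curvearrowright \supp(\mu)$ via Proposition~\ref{lemma_Markovian_measures_have_STMP}, the variational principle for item (1), the Kerr--Li local entropy theory bridge (completely positive entropy gives $\IE_2(\supp(\mu),G) = \supp(\mu)\times\supp(\mu)$, via measure IE-pairs sitting inside topological IE-pairs) for item (2), and Corollary~\ref{C-exp sTMP IE to asym} to conclude. The only difference is that the paper compresses your item (2) bridge into a citation of \cite[Theorems 2.27 and 2.21]{KerrLi2009}, which is exactly the pair of facts you spell out.
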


\begin{proof}
	If $h_{\mu}(G \curvearrowright X)>0$, we have that \[h_{\text{top}}(G \curvearrowright \supp(\mu)) \geq h_{\mu}(G \curvearrowright \supp(\mu)) = h_{\mu}(G\curvearrowright X) >0,\]
where the first inequality comes from the variational principle.

If $G \curvearrowright (X,\mu)$ has completely positive entropy, then $\IE_2(\supp(\mu), G)=\supp(\mu) \times \supp(\mu)$ by~\cite[Theorems 2.27 and 2.21]{KerrLi2009}.

By~\Cref{lemma_Markovian_measures_have_STMP} we have that $G\curvearrowright \supp(\mu)$ has the strong TMP.
Now the corollary follows from Corollary~\ref{C-exp sTMP IE to asym}.
\end{proof}

\subsection{Algebraic actions}

	For an action $G\curvearrowright X$ of $G$  on a compact metrizable group $X$ by continuous automorphisms, we say $x\in X$ is a \define{homoclinic point} \cite{LindSchmidt1999} if $sx\to e_X$ as $G\ni s\to \infty$, where $e_X$ denotes the identity element of $X$.
The homoclinic points form a $G$-invariant normal subgroup of $X$,  called the \define{homoclinic group} and denoted  by $\Delta(X,G)$.
Using the fact that $X$ admits a translation-invariant compatible metric (see the proof of Proposition~\ref{P_algebraic_have_wtmp}),
it is easy to see that for each $k\ge 2$, one has
\begin{align*}
\R_k(X, G)&=\{(yx_1, \dots, yx_k): x_1, \dots, x_k\in \Delta(X, G), y\in X\}\\
&=\{(x_1y, \dots, x_ky): x_1, \dots, x_k\in \Delta(X, G), y\in X\}.
\end{align*}
We say $x\in X$ is an \define{IE point} \cite[Definition 7.2]{ChungLi2015} \cite[page 247]{KerrLi2013} if $(x,e_X)\in \IE_2(X,G)$.
The IE points form a $G$-invariant closed normal subgroup of $X$ \cite[Theorem 6.4]{KerrLi2013}, called the \define{IE group} and denoted  by $\IE(X,G)$.
Furthermore, for each $k\in \NN$, we have \cite[Theorem 6.4]{KerrLi2013}
\begin{align*}
\IE_k(X, G)&=\{(yx_1, \dots, yx_k): x_1, \dots, x_k\in \IE(X, G), y\in X\}\\
&=\{(x_1y, \dots, x_ky): x_1, \dots, x_k\in \IE(X, G), y\in X\}.
\end{align*}

\begin{corollary}\label{P_homoclinic_points}
	Let $G\curvearrowright X$ be an expansive action of $G$ on a compact metrizable group $X$ by continuous automorphisms. We have:
	\begin{enumerate}
		\item $\Delta(X, G)\subset \IE(X, G)$. In particular, $\Delta(X,G)^k\subset \IE_{k}(X,G)$ for every $k\in \NN$.
		\item If $G$ is sofic and $\Sigma$ is any sofic approximation sequence for $G$, then $\Delta(X,G)^k\subset \IE^{\Sigma}_{k}(X,G)$ for every $k\in \NN$.
	\end{enumerate}
\end{corollary}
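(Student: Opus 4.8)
The plan is to deduce both inclusions from the two transfer theorems already available, Theorem~\ref{T-A to IE} for the orbit case and Theorem~\ref{Theorem_asymptotic_pairs_give_SOFIC_entropy_pairs} for the sofic case, after noting that their hypotheses are met: the action is expansive by assumption and has the TMP by Proposition~\ref{P_algebraic_have_wtmp}. These transfer theorems let us \emph{append} an asymptotic partner to the last coordinate of an IE-tuple, and since any two homoclinic points form an asymptotic pair (by the description of $\R_k(X,G)$ recalled above, taking $y=e_X$), the homoclinic coordinates can be grown one at a time out of the $G$-fixed point $e_X$.

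For part (1), first I would observe that $\IE_1(X,G)=X$: the normalized Haar measure on $X$ is $G$-invariant (continuous automorphisms preserve it) and has full support, so $X\subset\IE_1(X,G)$ by part (4) of Theorem~\ref{theorem_naive_IE_cites}. Now fix $x\in\Delta(X,G)$. Then $(x,e_X)\in\R_2(X,G)$ is an asymptotic pair and $(x)\in\IE_1(X,G)$, so Theorem~\ref{T-A to IE} with $k=1$ yields $(x,e_X)\in\IE_2(X,G)$, that is, $x\in\IE(X,G)$. This gives $\Delta(X,G)\subset\IE(X,G)$. The ``in particular'' statement then follows from the structure of $\IE_k$ recalled above: taking $y=e_X$ in $\IE_k(X,G)=\{(yx_1,\dots,yx_k):x_1,\dots,x_k\in\IE(X,G),\ y\in X\}$ shows $\IE(X,G)^k\subset\IE_k(X,G)$, whence $\Delta(X,G)^k\subset\IE_k(X,G)$.

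For part (2), the clean coset argument of part (1) is unavailable because no group-structure formula for $\IE^\Sigma_k$ is at hand, so I would replace it by an induction. The base observation is that $e_X\in\IE^\Sigma_1(X,G)$: given any neighborhood $U$ of $e_X$, the constant map $\varphi\equiv e_X$ lies in $\Map(d,F,\delta,\sigma_i)$ for every $F\Subset G$, $\delta>0$ and $\sigma_i$ (since $s\varphi(v)=s\,e_X=e_X=\varphi(\sigma_i(s)v)$, the defining sum is $0$) and takes values in $U$, so all of $\{1,\dots,n_i\}$ is a $(d,F,\delta,\sigma_i)$-independence set for $(U)$; hence $(U)$ has upper independence density $1$ and $e_X\in\IE^\Sigma_1(X,G)$. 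Then I would show by induction on $k\ge 0$ that $(e_X,x_1,\dots,x_k)\in\IE^\Sigma_{k+1}(X,G)$ for all $x_1,\dots,x_k\in\Delta(X,G)$: the step from $k$ to $k+1$ appends $x_{k+1}\in\Delta(X,G)$ to the last coordinate, which is itself homoclinic, so the relevant consecutive pair is asymptotic and Theorem~\ref{Theorem_asymptotic_pairs_give_SOFIC_entropy_pairs} applies. Finally, since IE-tuples are closed under passing to subtuples (a $(d,F,\delta,\sigma)$-independence set for a tuple is one for any subtuple, as a coloring by the subtuple's labels extends to a coloring by all labels), discarding the first coordinate gives $(x_1,\dots,x_k)\in\IE^\Sigma_k(X,G)$, i.e. $\Delta(X,G)^k\subset\IE^\Sigma_k(X,G)$.

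The genuinely structural content lies entirely in the two transfer theorems, which are already proved; the rest is bookkeeping. The points that need care are the two ``base'' computations showing $e_X$ is an IE-point — trivial precisely because $e_X$ is $G$-fixed, yielding full orbit independence density in part (1) and a constant microstate in part (2) — and threading the asymptotic-pair hypothesis through \emph{consecutive} coordinates of the growing tuple, which is why $e_X$ (or the most recently appended homoclinic point) is always kept as the coordinate to which the next point is attached. I expect the only mildly delicate step to be the sofic induction, where the absence of an $\IE^\Sigma$ coset description forces the explicit subtuple-closure argument in place of the one-line homogeneity used in part (1).
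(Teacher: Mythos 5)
Your proposal is correct and follows essentially the same route as the paper: establish the TMP via Proposition~\ref{P_algebraic_have_wtmp}, use expansivity to invoke the two transfer theorems (Theorems~\ref{T-A to IE} and~\ref{Theorem_asymptotic_pairs_give_SOFIC_entropy_pairs}) with $e_X$ as an IE/$\Sigma$-IE point, and iterate in the sofic case using closure of IE-tuples under subtuples. The only cosmetic differences are that the paper starts from $e_X\in\IE_1(X,G)$ rather than your Haar-measure observation that $\IE_1(X,G)=X$, and it discards the $e_X$ coordinate immediately after the first sofic application instead of carrying it through the induction; both variants are valid bookkeeping around the same core argument.
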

\begin{proof}
	It is quite easy to see that $e_X \in \IE_1(X,G)$ and $e_X \in \IE_1^{\Sigma}(X,G)$. By~\Cref{P_algebraic_have_wtmp}  we have that $G\curvearrowright X$ has the TMP. Therefore, using~\Cref{T-A to IE} and~\Cref{Theorem_asymptotic_pairs_give_SOFIC_entropy_pairs} we obtain respectively that $\{e_X\}\times \Delta(X,G) \subset \IE_{2}(X,G)$ and $\{e_X\}\times \Delta(X,G) \subset \IE^{\Sigma}_{2}(X,G)$. From the first inclusion we get (1). From the second inclusion  we get  that $\Delta(X,G)\subset \IE^{\Sigma}_{1}(X,G)$. Therefore by repeating the above argument we obtain that $\Delta(X,G)^2\subset \IE^{\Sigma}_{2}(X,G)$. Iterating the above argument yields (2).\end{proof}

%	In the case $X$ is abelian, part (1) of~\Cref{P_homoclinic_points} is also a consequence of~\cite[Theorem 6.5]{KerrLi2013}, \cite[Theorem %5.6]{ChungLi2015} and the fact that every expansive algebraic action of $G$ is finitely generated~\cite[Proposition 2.2 and Corollary 2.16]{Schmidt1995}.
In the case $X$ is abelian and $G$ is amenable,  part (1) of~\Cref{P_homoclinic_points} was known first as a consequence of
\cite[Theorems 5.6 and 7.8]{ChungLi2015} and the fact that every expansive algebraic action of $G$ is finitely generated~\cite[Proposition 2.2 and Corollary 2.16]{Schmidt1995}. In general, when $X$ is abelian, part (1) of~\Cref{P_homoclinic_points} is also a consequence of~\cite[Theorem 6.5]{KerrLi2013} and \cite[Theorem 5.6]{ChungLi2015}.

A straightforward application of~\Cref{P_homoclinic_points} together with Theorems~\ref{theorem_naive_IE_cites} and \ref{theorem_sofic_IE_cites} yields the following result:

\begin{corollary}
	\label{coralg}
	Let $G\curvearrowright X$ be an expansive action of $G$ on a compact metrizable group $X$ by continuous automorphisms. The following hold:
	\begin{enumerate}
		\item If $\Delta(X,G)\neq \{e_X\}$, then $\hnaive(G \curvearrowright X) >0$.
		\item If $\Delta(X,G)$ is dense in $X$, then $G \curvearrowright X$ has naive UPE of all orders.
	\end{enumerate}
Furthermore, if $G$ is sofic with $\Sigma$ a sofic approximation sequence for $G$, then:
	\begin{enumerate}
		\item If $\Delta(X,G)\neq \{e_X\}$, then $\hsof(G \curvearrowright X) >0$.
		\item If $\Delta(X,G)$ is dense in $X$, then $G \curvearrowright X$ has sofic UPE of all orders.
	\end{enumerate}
\end{corollary}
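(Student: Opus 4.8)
The plan is to derive all four assertions directly from~\Cref{P_homoclinic_points} together with the characterizations of positive entropy and uniform positive entropy recorded in Theorems~\ref{theorem_naive_IE_cites} and~\ref{theorem_sofic_IE_cites}. The two positive-entropy statements (item (1) in each block) amount to producing a single off-diagonal IE-pair, while the two uniform-positive-entropy statements (item (2) in each block) amount to showing that the relevant set of IE-tuples exhausts $X^k$. First I would record the trivial observation that $e_X\in\Delta(X,G)$, since $s\,e_X=e_X\to e_X$, so that $e_X$ is always available as one coordinate of a homoclinic tuple.

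For the positive-entropy parts, suppose $\Delta(X,G)\neq\{e_X\}$ and fix $x\in\Delta(X,G)$ with $x\neq e_X$. In the naive case,~\Cref{P_homoclinic_points}(1) gives $\Delta(X,G)^2\subset\IE_2(X,G)$, so the pair $(x,e_X)$ lies in $\IE_2(X,G)\setminus\triangle_2(X)$; hence $\IE_2(X,G)\setminus\triangle_2(X)\neq\varnothing$ and Theorem~\ref{theorem_naive_IE_cites}(1) yields $\hnaive(G\curvearrowright X)>0$. In the sofic case, the identical pair $(x,e_X)$ lies in $\IE_2^{\Sigma}(X,G)\setminus\triangle_2(X)$ by~\Cref{P_homoclinic_points}(2), and Theorem~\ref{theorem_sofic_IE_cites}(2) gives $\hsof(G\curvearrowright X)>0$.

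For the uniform-positive-entropy parts, assume $\Delta(X,G)$ is dense in $X$. In the naive case I would exploit the structural description recalled just before the corollary: the IE points $\IE(X,G)$ form a \emph{closed} subgroup of $X$ and $\IE_k(X,G)=\{(yx_1,\dots,yx_k): x_1,\dots,x_k\in\IE(X,G),\ y\in X\}$. Since $\Delta(X,G)\subset\IE(X,G)$ by~\Cref{P_homoclinic_points}(1) and $\Delta(X,G)$ is dense, closedness forces $\IE(X,G)=X$; taking $y=e_X$ in the structural formula then gives $\IE_k(X,G)=X^k$ for every $k\ge 2$, i.e.\ naive UPE of all orders. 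For the sofic case the convenient group-theoretic formula for $\IE_k$ is only stated for orbit IE-tuples, so instead I would argue directly that $\IE_k^{\Sigma}(X,G)$ is closed in $X^k$: if $\mathbf{x}$ is a limit of $\Sigma$-IE-tuples and $U_1\times\dots\times U_k$ is any product neighborhood of $\mathbf{x}$, it is also a product neighborhood of some nearby $\Sigma$-IE-tuple, so $(U_1,\dots,U_k)$ has positive upper independence density over $\Sigma$, whence $\mathbf{x}\in\IE_k^{\Sigma}(X,G)$. Since~\Cref{P_homoclinic_points}(2) gives $\Delta(X,G)^k\subset\IE_k^{\Sigma}(X,G)$ and $\Delta(X,G)^k$ is dense in $X^k$, closedness yields $\IE_k^{\Sigma}(X,G)=X^k$, i.e.\ sofic UPE of all orders. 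The only point requiring care is this last closedness argument, which replaces the group-structure shortcut unavailable in the sofic setting; otherwise everything is an immediate combination of the quoted results.
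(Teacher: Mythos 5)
Your proof is correct and is essentially the paper's own argument: the paper states this corollary as a ``straightforward application'' of \Cref{P_homoclinic_points} together with Theorems~\ref{theorem_naive_IE_cites} and~\ref{theorem_sofic_IE_cites}, giving no further details. Your filling-in --- the pair $(x,e_X)$ with $x\in\Delta(X,G)\setminus\{e_X\}$ for the two positive-entropy items, and density of $\Delta(X,G)^k$ in $X^k$ combined with closedness of $\IE(X,G)$ (respectively, the elementary closedness of $\IE_k^{\Sigma}(X,G)$ in $X^k$) for the two UPE items --- is exactly the intended reasoning.
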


\begin{remark}
	Note that we do not need to assume that $X$ is abelian for any of the results so far in this section.
\end{remark}

The previous corollaries are false without the  expansivity assumption. In~\cite[Example 7.5]{LindSchmidt1999} Lind and Schmidt constructed an  algebraic action of $\ZZ^3$ with zero topological entropy and
nontrivial homoclinic points.

Possibly the most interesting corollary in this section is the following.

\begin{corollary}\label{corolario_chingon}
	Let $G\curvearrowright X$ be an expansive algebraic action of an amenable group such that either:
	\begin{enumerate}
		\item $\ZZ G$ is left Noetherian, or
		\item $G$ satisfies the strong Atiyah conjecture, there is an upper bound on the orders of finite subgroups of $G$, and $G \curvearrowright X$ is finitely presented.
	\end{enumerate}
	Then $\overline{\Delta(X,G)} = \IE(X,G)$.
\end{corollary}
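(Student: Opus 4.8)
The plan is to prove the two inclusions separately, drawing on the machinery already assembled in the previous sections. The inclusion $\overline{\Delta(X,G)} \subseteq \IE(X,G)$ is essentially immediate and uses neither hypothesis (1) nor (2): by part (1) of~\Cref{P_homoclinic_points} we already have $\Delta(X,G) \subseteq \IE(X,G)$, and since $\IE(X,G)$ is a closed subgroup of $X$, passing to closures yields $\overline{\Delta(X,G)} \subseteq \IE(X,G)$.

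The substance lies in the reverse inclusion $\IE(X,G) \subseteq \overline{\Delta(X,G)}$. Here the key input is that under either hypothesis the action has the strong TMP, which is exactly the content of~\Cref{T-expansive algebraic to STMP} (the left Noetherian case handling (1) and the strong Atiyah conjecture case handling (2)). Since $G$ is amenable and $G \curvearrowright X$ is expansive with the strong TMP, I would invoke~\Cref{T-exp sTMP IE to asym} in the case $k=2$ to obtain $\IE_2(X,G) \subseteq \overline{\R_2(X,G)}$.

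It then remains to translate this inclusion of pairs into the desired inclusion of subgroups, and this is where the abelian structure of $X$ enters. First I would unwind the definition of the IE group: an element $x \in \IE(X,G)$ is precisely one with $(x, e_X) \in \IE_2(X,G)$, and hence $(x,e_X) \in \overline{\R_2(X,G)}$ by the previous step. Next I would use the explicit description $\R_2(X,G) = \{(yx_1, yx_2) : x_1, x_2 \in \Delta(X,G),\ y\in X\}$ recorded just above, together with the continuity of the map $\psi\colon X^2 \to X$ given by $\psi(a,b) = ab^{-1}$. Because $X$ is abelian one has $\psi(yx_1, yx_2) = x_1 x_2^{-1} \in \Delta(X,G)$, and conversely every $w \in \Delta(X,G)$ equals $\psi(w, e_X)$, so $\psi$ maps $\R_2(X,G)$ onto $\Delta(X,G)$. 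Continuity then gives $\psi(\overline{\R_2(X,G)}) \subseteq \overline{\psi(\R_2(X,G))} = \overline{\Delta(X,G)}$, and applying $\psi$ to the point $(x,e_X)$ yields $x = \psi(x,e_X) \in \overline{\Delta(X,G)}$, as required.

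I do not anticipate any serious obstacle at this level: all the heavy lifting has already been carried out in establishing the strong TMP (\Cref{T-expansive algebraic to STMP}) and in the passage from IE-tuples to (closures of) asymptotic tuples (\Cref{T-exp sTMP IE to asym}). The only point demanding a little care is that the final translation step genuinely relies on commutativity of $X$, both for the clean description of $\R_2(X,G)$ via $\Delta(X,G)$ and for the homomorphism-like behavior of $\psi$; this is available throughout since $X$ is abelian in the algebraic setting.
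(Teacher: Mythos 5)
Your proposal is correct and takes essentially the same route as the paper's proof: establish the strong TMP via \Cref{T-expansive algebraic to STMP}, then combine \Cref{T-exp sTMP IE to asym} (with $k=2$) and \Cref{P_homoclinic_points}. The paper leaves implicit the final translation from $\IE_2(X,G)\subseteq \overline{\R_2(X,G)}$ and $\Delta(X,G)\subseteq \IE(X,G)$ to the equality $\overline{\Delta(X,G)}=\IE(X,G)$; your argument with the continuous map $\psi(a,b)=ab^{-1}$ and the description of $\R_2(X,G)$ in terms of $\Delta(X,G)$ just spells out that routine step.
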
	

\begin{proof}
	By~\Cref{T-expansive algebraic to STMP} the hypotheses above imply that $G\curvearrowright X$ has the strong TMP. Using Theorem~\ref{T-exp sTMP IE to asym} and Corollary~\ref{P_homoclinic_points} yields the result.
\end{proof}

For an action $G\curvearrowright X$ of an amenable group $G$ on a compact metrizable group $X$ by continuous automorphisms, denoting by $\mu_X$ the normalized Haar measure of $X$, one has $h_{\text{top}}(G\curvearrowright X)=h_{\mu_X}(G \curvearrowright X)$ \cite[Theorem 2.2]{Deninger2006}. See also~\cite[Proposition 13.2]{KerrLiBook2016}.
For such an action,
$\IE(X, G)\neq \{e_X\}$ exactly when  $h_{\text{top}}(G\curvearrowright X)>0$~\cite[Theorem 7.3]{ChungLi2015}. Furthermore, $\IE(X, G)=X$ exactly when $G\curvearrowright X$ has completely positive topological entropy in the sense that every nontrivial (i.e. not reduced to a single point) topological factor of $G\curvearrowright X$ has positive topological entropy~\cite[Theorem 7.4]{ChungLi2015}, also exactly when the p.m.p. action $G\curvearrowright (X, \mu_X)$ has completely positive entropy in the sense that every nontrivial measurable factor of $G\curvearrowright (X, \mu_X)$ has positive measure-theoretical entropy~\cite[Theorem 8.1]{ChungLi2015}.
Now Theorem~\ref{T-main} follows from Corollary~\ref{corolario_chingon}.

We will see now that Theorem~\ref{T-main} does not hold for naive topological entropy if the group is not amenable (even if the finitely presented expansive algebraic action is an SFT).

	\begin{example}
		\label{example1}
		Let $F_{2} = \langle a,b \mid \varnothing \rangle$ be the free group on two generators. For $g \in F_2$ denote by $|g|$ the length of the reduced word on $\{a,b,a^{-1},b^{-1}\}^*$ representing $g$. Also, for $g,h \in F_2$ define their distance by $\delta(g,h)=|g^{-1}h|$. Write $B_n = \{ g \in F_2 : |g|\leq n\}$ and $\partial B_n = B_n \setminus B_{n-1}$.
		
		Consider the $5$-dot shift in $F_2$ given by	\[
		X_{\cross}=\left\{  x\in (\ZZ/ 2\ZZ)^{F_{2}}%
		: \sum_{s\in B_1}x(gs)=0\text{ for every }g\in
		F_{2}\right\}.\]
				Let $\FF_{\cross}$ be the finite set of all $p \in (\ZZ/ 2\ZZ)^{B_1}$ such that $\sum_{s \in B_1} p(s) \neq 0$. Clearly \[X_{\cross} = (\ZZ/2\ZZ)^{F_2}\setminus \bigcup_{g \in F_2, p \in \FF_{\cross}}g[p].\]
Thus $X_{\cross}$ is a subshift of finite type. It is also clear that $F_2\curvearrowright X_{\cross}$ is an algebraic action and $\widehat{X_{\cross}}=\ZZ F_2/J$, where $J$ is the left ideal of $\ZZ F_2$ generated by $2$ and $\sum_{s\in B_1}s$. Thus  $F_2\curvearrowright X_{\cross}$ is a finitely presented expansive algebraic action.
	\end{example}
	\begin{proposition}
		$X_{\cross}$ has no off-diagonal asymptotic pairs.
	\end{proposition}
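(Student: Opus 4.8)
The plan is to reduce the statement to triviality of the homoclinic group and then exploit the fact that the Cayley graph of $F_2$ is a tree. First I would use the description of asymptotic pairs for algebraic actions recorded earlier, namely $\R_2(X,G)=\{(yx_1,yx_2):x_1,x_2\in\Delta(X,G),\,y\in X\}$, which is available here since $X_{\cross}$ is a compact abelian group admitting a translation-invariant metric. If $(x,y)$ is an off-diagonal asymptotic pair, then $z:=x-y$ is a \emph{nonzero} element of the homoclinic group $\Delta(X_{\cross},F_2)$. For a $G$-subshift which happens to be a group, the homoclinic points are exactly the finitely supported configurations lying in it; thus it suffices to show that the only finitely supported $z\in X_{\cross}$ is $z=0$. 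Note $z\in X_{\cross}$ automatically, since $X_{\cross}$ is the subgroup of $(\ZZ/2\ZZ)^{F_2}$ cut out by the $\ZZ/2\ZZ$-linear relations $\sum_{s\in B_1}z(gs)=0$.

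Next, suppose toward a contradiction that $z\in X_{\cross}$ has finite nonempty support $F$, and choose $g\in F$ of maximal word length $k=|g|$. The key step is to evaluate the defining relation not at $g$ but at a neighbor of $g$ that is farther from the identity. Recall that in the $4$-regular tree $F_2$, every $h\neq e_{F_2}$ has a unique neighbor of length $|h|-1$ (its parent) and three neighbors of length $|h|+1$ (its children), while $e_{F_2}$ has four children; in particular $g$ admits a child $w=gs$ with $|w|=k+1$. The five elements $w,wa,wb,wa^{-1},wb^{-1}$ consist of $w$ itself (length $k+1$), the parent $ws^{-1}=g$ (length $k$), and three elements of length $k+2$. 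By maximality of $k$, none of the length-$(k+1)$ or length-$(k+2)$ elements belongs to $F$, so $z$ vanishes on all of them, and the relation $\sum_{t\in B_1}z(wt)=0$ collapses to $z(w)+z(g)=0$. Since $w\notin F$ we get $z(g)=z(w)=0$, contradicting $g\in F$. Hence $F=\varnothing$, so $z=0$ and $x=y$; therefore $X_{\cross}$ has no off-diagonal asymptotic pairs.

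The only genuine subtlety, which I would flag as the crux, is the choice of vertex at which to test the linear relation: centering it at $g$ merely yields $z(g)=z(ws^{-1}\!\cdot s)$ and propagates the value toward the root without contradiction, whereas centering it at the child $w$ isolates a single support value and kills it. This manoeuvre works precisely because the tree has no cycles, so a maximal-length support point has all of its children—and all of their remaining neighbors—outside the support. I would also remark that the same conclusion follows more abstractly from the fact that $(\ZZ/2\ZZ)F_2$ is a domain, $F_2$ being bi-orderable: finitely supported elements of $X_{\cross}$ are exactly the right annihilators of $f=\sum_{s\in B_1}s$ in $(\ZZ/2\ZZ)F_2$, and the combinatorial argument above is in effect a hands-on leading-term computation establishing this zero-divisor statement.
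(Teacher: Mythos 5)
Your proof is correct and its core is the same as the paper's: the paper also takes an extremal point $g$ of disagreement (maximal word length) and evaluates the defining relation at a child $gu$, so that all remaining terms lie where the two configurations agree, forcing $x(g)=y(g)$. Your passage through the homoclinic group (working with $z=x-y$ instead of the pair, justified by the translation-invariant metric and the group structure of $X_{\cross}$) is just a linear repackaging of that same argument, so the two proofs are essentially identical.
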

	
	\begin{proof}
		Suppose there exists an asymptotic pair $(x,y)$ such that $x \neq y$. Let $n$ be the smallest integer such that $x|_{F_2 \setminus B_n} = y|_{F_2 \setminus B_n}$. It follows that there is $g \in \partial B_n$ such that $x(g)\neq y(g)$.
		
		Let $u \in B_1$ such that $gu \in \partial B_{n+1}$. For every $h \in B_1 \setminus \{u^{-1},e_{F_2}\}$ we have $guh \in \partial B_{n+2}$. By definition, we have that every $z \in X_{\cross}$ satisfies $\sum_{s \in B_1}z(gus) = 0$ and thus $z(g) = \sum_{s \in B_1 \setminus \{ u^{-1}\}} z(gus)$. Hence, using that $x|_{F_2 \setminus B_n} = y|_{F_2 \setminus B_n}$ we obtain \[ x(g) = \sum_{s \in B_1 \setminus \{ u^{-1}\}} x(gus) = \sum_{s \in B_1 \setminus \{ u^{-1}\}} y(gus) = y(g), \]
		which contradicts $x(g) \neq y(g)$.
	\end{proof}

		Let $X \subset \Lambda^G$ be a $G$-subshift and let $\FF \subset \bigcup_{A \Subset G}\Lambda^A$. Fix $F \Subset G$. The set of globally admissible patterns of support $F$ is $L_F(X) = \{p \in \Lambda^F : [p] \cap X \neq \varnothing  \}$. The set of $\FF$-locally admissible patterns of support $F$ is $L^{\texttt{loc}}_F(\FF) = \{p \in \Lambda^F : \mbox{ for every } q \in \FF \mbox{ and } g \in G, [p] \not\subset g[q] \}$. Note that whenever $\FF$ generates $X$ in the sense that $X = \Lambda^G \setminus \bigcup_{g \in G, p \in \FF}g[p]$, we have $L_F(X) \subset L^{\texttt{loc}}_F(\FF)$.

	For $s, t, g \in F_2$ let $(s, t)_g = \frac{1}{2}( \delta (s,g)+\delta (t,g)-\delta (s, t))$. For $F \Subset F_2$, the \define{span} of $F$ is the set $\texttt{Span}(F)$ of all $g \in F_2$ for which there are $s, t \in F$ such that $(s, t)_g =0$. We say that $F$ is \define{connected} if $F = \texttt{Span}(F)$. This is the same as saying that $F$ is connected  in the right Cayley graph of $F_2$ given by the generators $\{a,b,a^{-1},b^{-1}\}$.

	\begin{lemma}\label{claim_example1}
		Let $A, B$ be connected finite subsets of $F_2$ such that $A \subset B$ and $|B\setminus A| =1$. For every $p \in L^{\texttt{loc}}_A(\FF_{\cross})$ there exists $q \in L^{\texttt{loc}}_B(\FF_{\cross})$ such that $q|_A = p$.
	\end{lemma}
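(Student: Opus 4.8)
The plan is to first reinterpret membership in $L^{\texttt{loc}}_F(\FF_{\cross})$ in purely linear terms. Unwinding the definition of the forbidden set $\FF_{\cross}$ together with the cylinder containment $[p]\subset g[q]$, one sees that a pattern $p\in(\ZZ/2\ZZ)^F$ lies in $L^{\texttt{loc}}_F(\FF_{\cross})$ if and only if for every $g\in F_2$ with $gB_1\subset F$ one has $\sum_{s\in B_1}p(gs)=0$ in $\ZZ/2\ZZ$; that is, local admissibility is exactly the conjunction of the parity constraints centered at those $g$ whose entire ball $gB_1$ is contained in $F$. Writing $B=A\cup\{w\}$ with $w\notin A$, the task reduces to choosing a single value $q(w)\in\ZZ/2\ZZ$ and setting $q|_A=p$, so that all constraints centered in $B$ hold. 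A constraint whose center $g$ satisfies $gB_1\subset B$ and $w\notin gB_1$ forces $gB_1\subset A$, hence is already satisfied by $p$; so only constraints centered at some $g$ with $w\in gB_1$ (equivalently $g\in wB_1$, using $B_1=B_1^{-1}$) and $gB_1\subset B$ can be affected by the choice of $q(w)$.

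The decisive structural input is that the Cayley graph of $F_2$ with respect to $\{a,b,a^{-1},b^{-1}\}$ is a tree. Since $A$ is connected and $B=A\cup\{w\}$ is connected with $w\notin A$, the vertex $w$ is adjacent to at least one vertex of $A$; and if it were adjacent to two distinct vertices of $A$, a path inside $A$ joining them together with the two edges through $w$ would form a cycle, which is impossible in a tree. Thus $w$ has a unique neighbour $v\in A$, and its three remaining neighbours lie outside $B$. I would then enumerate the possible centers $g\in wB_1$ of an active new constraint: the choice $g=w$ is excluded because $wB_1$ contains the three neighbours of $w$ that are not in $B$, while any neighbour $g\neq w$ of $w$ with $g\in B$ must lie in $A$ and be adjacent to $w$, forcing $g=v$. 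Hence at most one new constraint is active, namely the one centered at $v$, and it is active precisely when $vB_1\subset B$.

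With at most one active new constraint the extension is immediate and consistency is automatic. If no constraint centered at $v$ is active, set $q(w)=0$; otherwise $w$ occurs exactly once in the five-element ball $vB_1$, so the parity equation $\sum_{h\in vB_1}q(h)=0$ is linear in $q(w)$ with nonzero coefficient and admits the unique solution $q(w)=\sum_{h\in vB_1\setminus\{w\}}p(h)$. In either case $q|_A=p$ and every parity constraint centered in $B$ holds, so $q\in L^{\texttt{loc}}_B(\FF_{\cross})$, as required. I expect the only genuine obstacle to be the consistency issue that would arise if several new constraints involving $w$ were simultaneously active, since they could a priori demand incompatible values of $q(w)$; the connectivity of $A$, and hence the uniqueness of the neighbour $v$, is exactly what rules this out, so no global parity-check computation is needed.
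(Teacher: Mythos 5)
Your proof is correct and follows essentially the same route as the paper's: your unique neighbour $v$ of $w=gs$ is the paper's $g\in A$, your observation that the three other neighbours of $w$ lie outside $B$ is the paper's statement that $gsu\notin A$ for $u\in B_1\setminus\{s^{-1}\}$, and your formula $q(w)=\sum_{h\in vB_1\setminus\{w\}}p(h)$ (when $vB_1\subset B$, else $0$) is exactly the paper's definition of $q(gs)$. The only cosmetic difference is that you verify admissibility directly by enumerating the possible centers of constraints through $w$, while the paper argues by contradiction; the content is identical.
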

	
	\begin{proof}
		As $|B\setminus A| =1$ and $B \supset A$ is connected, there exists $g \in A$ and $s \in \partial B_1$ such that $gs \in B \setminus A$. As $A$ is connected, if $u \in B_1 \setminus \{s^{-1} \}$ then $gsu \notin A$.
		
		For $h \in A$ we let $q(h) = p(h)$. If $gB_1 \subset B$, then we define $q(gs) = \sum_{v \in  B_1 \setminus \{s \}} p(gv)$, otherwise we define $q(gs) = 0$. By definition, $q|_A = p$. Let us show that $q \in L^{\texttt{loc}}_B(\FF_{\cross})$.
		
		Suppose that $q \notin L^{\texttt{loc}}_B(\FF_{\cross})$. Then there is some $h\in B$ such that $hB_1\subset B$ and $\sum_{v\in B_1}q(hv)\neq 0$. As $q|_A = p \in L^{\texttt{loc}}_A(\FF_{\cross})$, we have $gs\in hB_1$. Since for every $u \in B_1 \setminus \{s^{-1} \}$ we have $gsu \notin A$,  the only possibility is that $h=g$.  Then $gB_1\subset B$ and by definition of $q(gs)$ we have $\sum_{v\in B_1}q(gv)=0$, which is a contradiction.
 \end{proof}
	
	\begin{lemma}\label{claim_example4}
		Let $F$ be a connected finite subset of $F_2$. Then $L^{\texttt{loc}}_F(\FF_{\cross}) = L_F(X_{\cross})$.
	\end{lemma}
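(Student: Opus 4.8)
The plan is to prove only the nontrivial inclusion $L^{\texttt{loc}}_F(\FF_{\cross}) \subseteq L_F(X_{\cross})$, since the reverse inclusion $L_F(X_{\cross}) \subseteq L^{\texttt{loc}}_F(\FF_{\cross})$ holds for \emph{every} $F \Subset F_2$: indeed $\FF_{\cross}$ generates $X_{\cross}$ (as noted in Example~\ref{example1}, $X_{\cross} = (\ZZ/2\ZZ)^{F_2}\setminus \bigcup_{g \in F_2,\, p \in \FF_{\cross}}g[p]$), and $L_F(X) \subseteq L^{\texttt{loc}}_F(\FF)$ holds whenever $\FF$ generates $X$. So I would fix a connected $F \Subset F_2$ and a pattern $p \in L^{\texttt{loc}}_F(\FF_{\cross})$, and construct a configuration $x \in X_{\cross}$ with $x|_F = p$, which shows $p \in L_F(X_{\cross})$.

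First I would build an exhaustion of $F_2$ by connected finite sets that grows one element at a time. Since the right Cayley graph of $F_2$ on $\{a,b,a^{-1},b^{-1}\}$ is connected and locally finite and $F$ is connected, I can write $F_2 = \bigcup_{n\ge 0} E_n$ with $E_0 = F$, each $E_n$ connected, $E_n \subset E_{n+1}$, and $|E_{n+1}\setminus E_n| = 1$ for all $n$. Concretely, at each stage one adjoins to $E_n$ a vertex of $F_2 \setminus E_n$ adjacent to $E_n$ (which preserves connectedness), and a breadth-first enumeration ensures that every element of $F_2$ is eventually absorbed.

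Next I would extend $p$ along this exhaustion by iterating~\Cref{claim_example1}. Setting $p_0 = p$, and given $p_n \in L^{\texttt{loc}}_{E_n}(\FF_{\cross})$, I apply~\Cref{claim_example1} to the pair $E_n \subset E_{n+1}$ (both connected, differing by one element) to obtain $p_{n+1} \in L^{\texttt{loc}}_{E_{n+1}}(\FF_{\cross})$ with $p_{n+1}|_{E_n} = p_n$. These patterns are pairwise compatible, so they glue to a single configuration $x \in (\ZZ/2\ZZ)^{F_2}$ with $x|_{E_n} = p_n$ for all $n$; in particular $x|_F = p$. Finally, to see $x \in X_{\cross}$, take any $g \in F_2$; its cross $gB_1$ is finite, so $gB_1 \subseteq E_n$ for some $n$, and unwinding the definitions of $L^{\texttt{loc}}$ and $\FF_{\cross}$, the local admissibility of $p_n$ gives exactly $\sum_{s\in B_1} x(gs) = \sum_{s\in B_1} p_n(gs) = 0$. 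As $g$ was arbitrary, $x \in X_{\cross}$, whence $p \in L_F(X_{\cross})$.

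The only genuinely delicate point is the bookkeeping in the first step: arranging a one-element-at-a-time exhaustion of $F_2$ whose intermediate sets stay connected, so that every inductive invocation of~\Cref{claim_example1} is legitimate. Once that is in place, the remainder is a routine gluing argument together with an unwinding of the definition of local admissibility.
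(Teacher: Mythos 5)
Your proposal is correct and follows essentially the same route as the paper: exhaust $F_2$ by connected finite sets starting from $F$ and growing one element at a time (the paper uses $C_m = FB_m$ with an intermediate enumeration, you use a breadth-first construction, but both work for the same reason -- every new vertex is adjacent to the current set), then iterate~\Cref{claim_example1} and glue the resulting patterns into a configuration. The only cosmetic difference is in the last step, where the paper deduces $x \in X_{\cross}$ from the cylinder intersection $\bigcap_n [p_n]$ being a singleton that avoids all forbidden translates, while you verify directly that $\sum_{s \in B_1} x(gs) = 0$ for each $g$ once $gB_1$ is absorbed in some $E_n$; both are valid.
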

	
	\begin{proof}
		Let $p \in L^{\texttt{loc}}_F(\FF_{\cross})$ and  consider an increasing sequence $\{A_n\}_{n \in \NN}$ of finite subsets of $F_2$ with union $F_2$ such that $A_1=F$. In order to show that $p \in L_F(X_{\cross})$ it suffices to construct a sequence $p_n \in L^{\texttt{loc}}_{A_n}(\FF_{\cross})$ such that $p_1 = p$ and for $n \geq 1$, $p_{n+1}|_{A_{n}} = p_{n}$. Indeed, by definition $[p_{n}] \supset [p_{n+1}]$ and thus $\bigcap_{n \in \NN}[p_n]$ is nonempty. As the sets $A_n$ increase to $F_2$, the set $\bigcap_{n \in \NN}[p_n]$ is a singleton, say $\{x\}$. If $x\in g[p]$ for some $g \in F_2$ and $q \in \FF_{\cross}$, then $p_n\subseteq g[q]$ for some large $n$, which is impossible as $p_n \in L^{\texttt{loc}}_{A_n}(\FF_{\cross})$. Therefore
$x \in X_{\cross} \cap [p]$.
		
		For $m \geq 0$, consider $C_m = FB_m$. Let $h_1,\dots,h_{k(m)}$ be an enumeration of $C_{m+1} \setminus C_{m}$ and for $i \in \{1,\dots,k(m)\}$ let $D_{m,i} = C_m \cup \{h_1,\dots,h_i\}$ and note that $D_{m,k(m)} = C_{m+1}$. For $n\in \NN$ define recursively $A_1 = C_1 = F$ and \[A_{n+1} = \begin{cases}
		D_{m,i+1} & \mbox{ if } A_{n} = D_{m,i} \mbox{ and } i < k(m)\\
		D_{m,1} & \mbox{ if } A_{n} = C_m 	
		\end{cases}.\]
		
		Note that each $A_n$ is a connected subset of $F_2$, $|A_{n+1}\setminus A_n|=1$ for every $n\in \NN$ and that $A_n$ increases to $F_2$. Assume inductively that we have $p_{n}  \in L^{\texttt{loc}}_{A_n}(\FF_{\cross})$. By~\Cref{claim_example1} there is $p_{n+1} \in L^{\texttt{loc}}_{A_{n+1}}(\FF_{\cross})$ such that $p_{n+1}|_{A_n} = p_n$.
\end{proof}

		We say a pair of sets $A, B\subset F_2$ are $k$-separated if $\delta (A,B) = \inf_{(g,h) \in A \times B}\delta (g,h) \geq k$. We say a collection $A_1,\dots, A_n \subset F_2$ is $k$-separated if it is pairwise $k$-separated.

	\begin{lemma}\label{claim_example2}
		Let $A,B$ be a $3$-separated pair of connected subsets of $F_2$. Then for every $p \in L^{\texttt{loc}}_{A \cup B}(\FF_{\cross})$ there exists $q \in L^{\texttt{loc}}_{\texttt{Span}(A \cup B)}(\FF_{\cross})$ such that $q|_{A \cup B} = p$.
	\end{lemma}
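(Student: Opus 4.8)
The plan is to use the tree geometry of $F_2$. Since $A$ and $B$ are connected they are convex subtrees of the Cayley tree, and being disjoint they are joined by a unique geodesic bridge. First I would record this picture: there is a unique geodesic segment $P=\{v_0,v_1,\dots,v_m\}$ with $v_0=a_0\in A$, $v_m=b_0\in B$ and $m=\delta(A,B)\ge 3$ realizing the distance between $A$ and $B$, its interior $v_1,\dots,v_{m-1}$ is disjoint from $A\cup B$ (otherwise $a_0,b_0$ would not be closest points), and $\texttt{Span}(A\cup B)=A\cup\{v_1,\dots,v_{m-1}\}\cup B$. This last identity follows because a geodesic joining two points of $A$ stays in $A$ (convexity), likewise for $B$, and a geodesic from a point of $A$ to a point of $B$ decomposes as a path in $A$, followed by $P$, followed by a path in $B$.

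Next I would carry out the crucial local analysis: determine exactly which crosses $gB_1$ are entirely supported in $\texttt{Span}(A\cup B)$. Each interior bridge vertex $v_i$ ($1\le i\le m-1$) has, among its four neighbours, only the two adjacent bridge vertices $v_{i-1},v_{i+1}$ inside the span (its other two neighbours branch off and lie outside the convex hull), so no cross centred at a bridge vertex is ever fully supported. Moreover the only bridge vertex adjacent to $A$ is $v_1$ (adjacent to $a_0$) and the only one adjacent to $B$ is $v_{m-1}$ (adjacent to $b_0$). Consequently the crosses lying fully in the span are precisely: those already inside $A$, those already inside $B$, and possibly the cross centred at $a_0$ (which then also contains $v_1$) and the cross centred at $b_0$ (which then also contains $v_{m-1}$). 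In particular $v_1$ is constrained only by the cross at $a_0$, $v_{m-1}$ only by the cross at $b_0$, and $v_2,\dots,v_{m-2}$ by no cross at all.

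With this in hand I would construct $q$ as follows. Applying \Cref{claim_example1} repeatedly along the connected chain $A\subset A\cup\{v_1\}\subset\cdots\subset A\cup\{v_1,\dots,v_{m-2}\}=:A'$ (each term connected and differing from the previous by one vertex), I extend $p|_A$ to some $\tilde p\in L^{\texttt{loc}}_{A'}(\FF_{\cross})$; this automatically satisfies the cross at $a_0$, which is now supported inside $A'$. I then define $q$ to equal $\tilde p$ on $A'$ and $p$ on $B$, and on the last interior vertex $v_{m-1}$ I set the unique value forced by the cross at $b_0$ when that cross is active (all other neighbours of $b_0$ lie in $B$, so the value is determined by $p|_B$) and set it to $0$ otherwise. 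Finally I would verify $q\in L^{\texttt{loc}}_{\texttt{Span}(A\cup B)}(\FF_{\cross})$ by running through the list of active crosses established above; each is satisfied immediately, either by $\tilde p\in L^{\texttt{loc}}_{A'}$, by $p|_B$, or by our choice at $v_{m-1}$.

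The main obstacle, and exactly where the $3$-separation hypothesis enters, is the compatibility of the two junctions. The cross at $a_0$ forces the value on $v_1$ from the $A$-side, while the cross at $b_0$ forces the value on $v_{m-1}$ from the $B$-side; both constraints are unavoidable and would clash if they pinned down the same vertex. Because $m\ge 3$ we have $v_1\neq v_{m-1}$, so the two forced values live on distinct bridge vertices while the intermediate vertices remain free, which decouples the two ends and makes the extension possible. (Were $A$ and $B$ only $2$-separated, the bridge would have a single interior vertex simultaneously constrained from both sides, and no extension need exist.)
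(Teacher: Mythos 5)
Your proof is correct and takes essentially the same route as the paper's: locate the closest pair and the bridge geodesic joining the two subtrees, use \Cref{claim_example1} to extend across the junctions, fill the remaining interior bridge vertices with zeros, and observe that $3$-separation makes the two junction constraints land on distinct vertices ($v_1\neq v_{m-1}$, i.e.\ the paper's $n\ge 2$). The only cosmetic difference is that the paper applies \Cref{claim_example1} once symmetrically on each side (extending $A$ to $A\cup\{h_1\}$ and $B$ to $B\cup\{h_n\}$) and sets the middle to $0$, whereas you iterate it along the bridge from the $A$-side and handle the $B$-side junction by hand; your explicit enumeration of which crosses are fully supported in $\texttt{Span}(A\cup B)$ is precisely the verification the paper leaves implicit.
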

	
	\begin{proof}
		Let $h_A \in A$ and $h_B \in B$ such that $\delta (h_A,h_B) = \inf_{(g,h) \in A \times B}\delta (g,h)$. As $A,B$ are connected  and $F_2$ has no cycles, it follows that $\texttt{Span}(A \cup B) = A \cup B \cup \texttt{Span}(\{h_A,h_B\}).$
		Let $h_0,h_1,\dots,h_n,h_{n+1} \in F_2$ be an enumeration of $\texttt{Span}(\{h_A,h_B\})$ such that $h_0= h_A$, $h_{n+1} = h_B$ and $A \cup \{h_1,\dots, h_k\}$ is connected for every $1 \leq k\leq n$. As $A,B$ are $3$-separated, it follows that $\delta (h_A,h_B)\geq 3$ and thus $n \geq 2$. By~\Cref{claim_example1} there are $p_A \in L^{\texttt{loc}}_{A \cup \{h_1\}}(\FF_{\cross})$ and $p_B \in L^{\texttt{loc}}_{B \cup \{h_{n}\}}(\FF_{\cross})$ such that $p_A|_{A} = p|_A$ and $p_B|_{B} = p|_B$. Let $q \in \{0,1\}^{\texttt{Span}(A \cup B)}$ be defined by \[q(g) = \begin{cases}
		p_A(g) & \mbox{ if }g \in A \cup \{ h_1\} \\
		p_B(g) & \mbox{ if }g \in B \cup \{ h_n\} \\
		0 & \mbox{ otherwise.}
		\end{cases}   \]
		It can be verified directly from the definition that $q|_{A \cup B} = p$ and that $q \in L^{\texttt{loc}}_{\texttt{Span}(A \cup B)}(\FF_{\cross})$.\end{proof}
	
	\begin{lemma}\label{claim_example3}
		Let $\mathcal{A}$ be a $5$-separated finite collection of finite connected subsets of $F_2$. Let  $p_A \in L^{\texttt{loc}}_{A}(\FF_{\cross})$ for each $A\in \mathcal{A}$. Then there exists $p \in L^{\texttt{loc}}_{\texttt{Span}(\bigcup_{A \in \mathcal{A}} A)}(\FF_{\cross})$ such that $p|_{A}= p_A$ for all $A\in \mathcal{A}$.
	\end{lemma}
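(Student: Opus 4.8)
The plan is to build the desired pattern $p$ by gluing the sets one at a time, at each stage invoking Lemma~\ref{claim_example2} to pass from the span of the already-processed sets to the span after absorbing one more set. Concretely, I would first isolate a purely combinatorial statement: the members of $\mathcal{A}$ can be enumerated as $A_{1},\dots ,A_{m}$ so that for every $j\ge 2$ one has $\delta\big(A_{j},\texttt{Span}(A_{1}\cup\dots\cup A_{j-1})\big)\ge 3$. Granting such an ordering, I would argue by induction on $j$: assuming a pattern $p^{(j-1)}\in L^{\texttt{loc}}_{S_{j-1}}(\FF_{\cross})$ has been constructed on $S_{j-1}=\texttt{Span}(A_{1}\cup\dots\cup A_{j-1})$ with $p^{(j-1)}|_{A_{i}}=p_{A_{i}}$ for $i<j$, I would define a pattern on the disjoint union $S_{j-1}\cup A_{j}$ equal to $p^{(j-1)}$ on $S_{j-1}$ and to $p_{A_{j}}$ on $A_{j}$. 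Since $S_{j-1}$ and $A_{j}$ are $3$-separated while every window $gB_{1}$ has diameter $2$, no forbidden configuration of $\FF_{\cross}$ can straddle the two pieces, so this glued pattern lies in $L^{\texttt{loc}}_{S_{j-1}\cup A_{j}}(\FF_{\cross})$; Lemma~\ref{claim_example2} then extends it to $p^{(j)}\in L^{\texttt{loc}}_{S_{j}}(\FF_{\cross})$, where $S_{j}=\texttt{Span}(S_{j-1}\cup A_{j})=\texttt{Span}(A_{1}\cup\dots\cup A_{j})$ by the hull property of the span in a tree. After $m$ steps this produces the required $p=p^{(m)}$.

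The combinatorial ordering statement I would prove by induction on $m$, the content being to exhibit a single set $A_{i^{*}}$ with $\delta\big(A_{i^{*}},\texttt{Span}(\bigcup_{k\ne i^{*}}A_{k})\big)\ge 3$; placing $A_{i^{*}}$ last and ordering the remaining (still pairwise $5$-separated) sets by the inductive hypothesis then finishes the induction, since when $A_{i^{*}}$ is appended the current span is exactly $\texttt{Span}(\bigcup_{k\ne i^{*}}A_{k})$. Here I would use that the Cayley graph of $F_2$ is a tree, so $T=\texttt{Span}(\bigcup_{k}A_{k})$ is a finite subtree and the convex hull of any subfamily is connected; contracting each $A_{k}$ to a point yields a finite tree $\widehat{T}$ whose leaves are precisely the sets that attach to the rest along a single geodesic, the relevant leg being the distance from such a set to the span of the others.

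The main obstacle, and the only place where $5$-separation (rather than $3$-separation) is essential, is the existence of $A_{i^{*}}$. I would argue by contradiction: if every leaf-set of $\widehat{T}$ had distance at most $2$ to the span of the remaining sets, then no leaf-set could attach directly to another set, as that attachment would force distance $\ge 5$; hence every leaf-set attaches to a genuine branch point of $T$. At a branch point $v$ any two attached sets lie in distinct directions, so the distance between them equals the sum of their two legs and is $\ge 5$, whence at most one attached set can have leg $\le 2$. This produces an injection from the leaves of $\widehat{T}$ into its branch points, forcing $\#\text{leaves}\le \#\text{branch points}$, which contradicts the elementary inequality $\#\text{branch points}\le \#\text{leaves}-2$ obtained from the degree sum $\sum_{v}(\deg v-2)=-2$. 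The quantitative heart is exactly that two legs summing to at least $5$ must have a summand at least $3$, which is what supplies the $3$-separation demanded by Lemma~\ref{claim_example2}; with only $4$-separation both legs could equal $2$ and the argument would collapse. The routine verifications left for the full write-up are the hull property $\texttt{Span}(\texttt{Span}(U)\cup V)=\texttt{Span}(U\cup V)$, the fact that contracting connected subtrees of a tree again gives a tree, and the bookkeeping that the glued patterns restrict correctly to each $A_{i}$.
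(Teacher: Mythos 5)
Your proof is correct, and its overall skeleton matches the paper's: induct on $|\mathcal{A}|$, peel off one set that is $3$-separated from the span of the remaining ones, glue its pattern to the inductively obtained pattern (no window $gB_1$, having diameter $2$, can straddle two pieces at distance $\ge 3$), extend via Lemma~\ref{claim_example2}, and use idempotence of $\texttt{Span}$ to identify the resulting support. Where you genuinely diverge is in the key combinatorial step, namely the existence of some $A\in\mathcal{A}$ with $\delta\bigl(A,\texttt{Span}(\bigcup_{C\in\mathcal{A}\setminus\{A\}}C)\bigr)\ge 3$. The paper obtains this by an extremal choice: fix $A_0\in\mathcal{A}$, take $A$ maximizing $\delta(A,A_0)$, and rule out $\delta\le 2$ through explicit geodesic computations in the tree, each case contradicting maximality of $\delta(A,A_0)$. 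You instead contract each $A_k$ to a vertex and run a global Euler-type count: if every leaf-set had leg at most $2$, then $5$-separation forces each to attach at a branch point (attachment inside another set would give distance $\le 2<5$), distinct leaf-sets must attach at distinct branch points (two legs $\le 2$ meeting at a common branch point, necessarily in distinct directions, would give distance $\le 4<5$), so $\#\text{leaves}\le\#\text{branch points}$, contradicting $\#\text{branch points}\le\#\text{leaves}-2$ from the degree sum. Both arguments exploit $5$-separation in exactly the same quantitative way (two legs summing to at least $5$ force one of them to be at least $3$, which is what Lemma~\ref{claim_example2} needs), so neither is sharper than the other; your counting argument is arguably more conceptual and symmetric, avoiding the paper's delicate case analysis, at the price of the graph-theoretic bookkeeping you correctly flag as needing verification: leaves of a convex hull lie in the generating set (so all leaves of $\widehat{T}$ are contracted sets), contracting pairwise disjoint connected subtrees of a tree yields a tree, and the hull identity $\texttt{Span}(\texttt{Span}(U)\cup V)=\texttt{Span}(U\cup V)$. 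All of these are routine in a tree, so your proof goes through.
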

	
	\begin{proof}
		We proceed by induction on the size of $\mathcal{A}$. If $|\mathcal{A}|=1$ the result is trivial. Let $|\mathcal{A}|=n >1$ and assume the result for all $5$-separated collections of cardinality at most $n-1$. By~\Cref{claim_example2} it is enough to show that there is $A \in \mathcal{A}$ which is $3$-separated from $\texttt{Span}(\bigcup_{C \in \mathcal{A} \setminus \{A\}}C)$.
		
		Let us verify the above property. Pick $A_0 \in \mathcal{A}$ and choose $A \in \mathcal{A}$ such that $\delta (A,A_0)$ achieves the maximum. We claim that $\delta (A, \texttt{Span}(\bigcup_{C \in \mathcal{A} \setminus \{A\}}C)) \geq 3$. Indeed, let $g_0 \in A_0$ and $g \in A$ such that $\delta (A,A_0) = \delta (g,g_0)$. Assume that $\delta (A, \texttt{Span}(\bigcup_{C \in \mathcal{A} \setminus \{A\}}C)) \leq 2$. Then there exist $h \in \texttt{Span}(\bigcup_{C \in \mathcal{A} \setminus \{A\}}C) \setminus \bigcup_{C \in \mathcal{A} \setminus \{A\}}C$ and $\bar{g}\in A$ such that $\delta (\bar{g},h)=\delta (A, \texttt{Span}(\bigcup_{C \in \mathcal{A} \setminus \{A\}}C)) \leq 2$.
It follows that there are distinct $A_1, A_2\in \mathcal{A}\setminus \{A\}$ and $g_1\in A_1$, $g_2\in A_2$ such that $h\in \texttt{Span}(\{g_1, g_2\})$ and $\texttt{Span}(\{g_1, g_2\})\cap \bigcup_{C \in \mathcal{A} \setminus \{A\}}C=\{g_1, g_2\}$. Without loss of generality we may assume that $A_1\neq A_0$. If $\bar{g}\neq g$, then
\begin{align*}
\delta(A_1, A_0)=\delta(g_1, g_0)=\delta(g_1, \bar{g})+\delta(\bar{g}, g)+\delta(g, g_0)>\delta(g, g_0)=\delta(A, A_0),
\end{align*}
which contradicts our choice of $A$. Thus $\bar{g}=g$. We have $\texttt{Span}(\{g, g_0\})\cap \texttt{Span}(\{g, g_1\})=\texttt{Span}(\{g, h_1\})$ for some $h_1$. If $h_1\in \texttt{Span}(\{g, h\})$, then
\begin{align*}
\delta(A_0, A_1)&=\delta(g_0, g_1)\\
&=\delta(g_0, h_1)+\delta(h_1, g_1)\\
&=\delta(g_0, g)-\delta(h_1, g)+\delta(g, g_1)-\delta(g, h_1)\\
&\ge \delta(g_0, g)+\delta(g, g_1)-2\delta(h, g)\\
&\ge \delta(g_0, g)+5-4>\delta(g_0, g)=\delta(A_0, A),
\end{align*}
which contradicts our choice of $A$. Thus $h_1\not\in \texttt{Span}(\{g, h\})$. Then $A_2\neq A_0$, and $\texttt{Span}(\{g, g_0\})\cap \texttt{Span}(\{g, g_2\})=\texttt{Span}(\{g, h_2\})$ for some $h_2\in \texttt{Span}(\{g, h\})$. Similar to the above, we get $\delta(A_0, A_2)>\delta(A_0, A)$, which again contradicts our choice of $A$.
\end{proof}

	\begin{proposition}
		$F_2\curvearrowright X_{\cross}$ has naive UPE of all orders.
	\end{proposition}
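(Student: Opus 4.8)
The plan is to show directly that every tuple $(x_1,\dots,x_k)\in X_{\cross}^{k}$ is an orbit IE-tuple; since $k\ge 2$ and the tuple are arbitrary, this gives $\IE_k(X_{\cross},F_2)=X_{\cross}^{k}$ for all $k$, which is exactly naive UPE of all orders. Because $X_{\cross}$ is a subshift, cylinders form a neighborhood basis, so every product neighborhood of $(x_1,\dots,x_k)$ contains one of the form $[x_1|_{B_m}]\times\cdots\times[x_k|_{B_m}]$, where I take $m\in\NN$ large enough to work for all coordinates at once. It therefore suffices to bound from below the independence density of the tuple $\mathbf{U}=([x_1|_{B_m}],\dots,[x_k|_{B_m}])$ by a constant depending only on $m$.

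The central observation I would isolate is that a finite set $J\subset F_2$ is an independence set for $\mathbf{U}$ as soon as the family $\{s^{-1}B_m:s\in J\}$ is a $5$-separated collection of connected sets. Indeed, fix any $\phi\colon J\to\{1,\dots,k\}$. Since $X_{\cross}$ is shift-invariant, the point $s^{-1}x_{\phi(s)}$ lies in $X_{\cross}$, and its restriction to $s^{-1}B_m$ is the pattern $p_s$ given by $p_s(s^{-1}w)=x_{\phi(s)}(w)$ for $w\in B_m$; being the restriction of a genuine configuration, $p_s$ is globally, hence locally, admissible. Left translation is an automorphism of the right Cayley graph, so each $s^{-1}B_m$ is connected, and the $5$-separation then lets me apply~\Cref{claim_example3} to glue the patterns $p_s$ into a single $p\in L^{\texttt{loc}}_{\texttt{Span}(\bigcup_{s\in J}s^{-1}B_m)}(\FF_{\cross})$. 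As the span of a finite subset of a tree is a subtree and hence connected,~\Cref{claim_example4} promotes $p$ to a globally admissible pattern, yielding $z\in X_{\cross}$ with $z|_{s^{-1}B_m}=p_s$, i.e. $sz\in[x_{\phi(s)}|_{B_m}]$ for every $s\in J$. Thus $\bigcap_{s\in J}s^{-1}[x_{\phi(s)}|_{B_m}]\neq\varnothing$, and running the same argument over any nonempty $I\subseteq J$ shows $J$ is an independence set for $\mathbf{U}$.

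It then remains to exhibit, inside an arbitrary $F\Subset F_2$, such a set $J$ of positive density. Using that $\delta$ is a left-invariant metric, the triangle inequality gives $\delta(s^{-1}B_m,t^{-1}B_m)\ge\delta(s^{-1},t^{-1})-2m$, so it is enough that the centers $\{s^{-1}:s\in J\}$ be $(2m+5)$-separated. I would choose $J\subseteq F$ so that $J^{-1}$ is a maximal $(2m+5)$-separated subset of $F^{-1}$. Maximality forces every point of $F^{-1}$ to lie within distance $2m+4$ of $J^{-1}$, whence $F^{-1}\subseteq\bigcup_{h\in J^{-1}}hB_{2m+4}$ and so $|J|=|J^{-1}|\ge|F|/|B_{2m+4}|$. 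Setting $q=1/|B_{2m+4}|>0$, which depends only on $m$, shows $\mathbf{U}$ has independence density at least $q$, and the proof concludes.

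The heavy combinatorics is already carried out in~\Cref{claim_example1,claim_example2,claim_example3,claim_example4}, so the genuine content here is the dictionary between independence sets for cylinder tuples and $5$-separated families of balls, together with the elementary packing bound in the tree. The steps requiring the most care will be the bookkeeping of the shift convention $gx(h)=x(g^{-1}h)$ when identifying the pattern $p_s$ on $s^{-1}B_m$ with the cylinder constraint $sz\in[x_{\phi(s)}|_{B_m}]$, and the check that it is $5$-separation of the balls themselves (not merely of their centers) that feeds into~\Cref{claim_example3}, which is why the separation radius for the centers must be taken as $2m+5$ rather than $5$.
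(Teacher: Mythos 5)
Your proof is correct, and its combinatorial engine is exactly the paper's: reduce to ball cylinders, extract a maximal separated family of translated balls inside an arbitrary finite set, glue the locally admissible patterns with~\Cref{claim_example3}, promote to global admissibility with~\Cref{claim_example4} via connectedness of the span, and get a density bound from the tree packing estimate. Where you genuinely diverge is the top-level reduction. The paper begins with ``it suffices to show $\IE(X_{\cross},F_2)=X_{\cross}$'' and only verifies that pairs are IE-pairs; passing from that statement to UPE of \emph{all} orders uses the algebraic structure of $X_{\cross}$ through the Kerr--Li structure theorem for IE-tuples of actions on compact groups (\cite[Theorem 6.4]{KerrLi2013}, quoted in the paper's Section 7.2). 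You instead verify the $k$-tuple independence condition directly for every $k$, which the gluing lemma supports at no extra cost; this makes the argument self-contained (no IE-group machinery, no use of the group structure of $X_{\cross}$ at this stage) and would survive verbatim for an SFT with the same gluing properties but no algebraic structure. A second, smaller difference is in your favor: you track the inversion built into the definition of independence sets --- the relevant sets are $\bigcap_{s\in I}s^{-1}U_{\phi(s)}$, hence the balls $s^{-1}B_m$, hence the requirement that $J^{-1}$ be $(2m+5)$-separated inside $F^{-1}$ --- whereas the paper works with $\bigcap_{a\in A}a[x_{\varphi(a)}|_{B_n}]$ for a separated $A\Subset D$, which strictly speaking exhibits $A^{-1}$ as the independence set and needs the harmless substitution $D\mapsto D^{-1}$ to match the definition of independence density; your bookkeeping is the cleaner of the two.
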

	
	\begin{proof}
It suffices to show $\IE(X_{\cross}, F_2)=X_{\cross}$.
		Let $(x_1, x_2) \in (X_{\cross})^2$. Then it suffices to show that for every $n \in \NN$ there exists $\alpha >0$ such that for every $D \Subset F_2$ there exists a set $A \Subset D$ such that $|A| \geq \alpha |D|$ and for any map $\varphi \colon A \to \{1, 2\}$ we have \[X_{\cross}\cap \bigcap_{a \in A} a[x_{\varphi(a)}|_{B_n}] \neq \varnothing.\]
		
		Put $\alpha=|B_{4+2n}|^{-1}$. Fix $D \Subset F_2$ and let $A$ be any maximal $(5+2n)$-separated subset of $D$. Then $AB_{4+2n}\supset D$, and hence $|A| \geq |B_{4+2n}|^{-1}|D|=\alpha |D|$. Consider the collection $\mathcal{A} = \{aB_n : a \in A\}$ and for each $a \in A$ the pattern $p_a = (ax_{\varphi(a)})|_{aB_n}$. By definition, $\mathcal{A}$ is $5$-separated and each $p_a \in L_{aB_n}(X_{\cross})$, therefore, by~\Cref{claim_example3}, there exists $p \in L^{\texttt{loc}}_{\texttt{Span}(\bigcup_{a \in A} aB_n)}(\FF_{\cross})$ such that $p|_{aB_n}= p_a$ for every $a \in A$. In other words, $[p] \subset \bigcap_{a \in A} a[x_{\varphi(a)}|_{B_n}]$.
		
		Furthermore, as $\texttt{Span}(\bigcup_{a \in A} aB_n)$ is connected, by~\Cref{claim_example4} we have $[p]\cap  X_{\cross} \neq \varnothing$ and therefore $X_{\cross}\cap \bigcap_{a \in A} a[x_{\varphi(a)}|_{B_n}] \neq \varnothing$.
\end{proof}

\subsection{Minimal actions}

\begin{corollary}\label{cor_minimalhasentropy0}
	Let $G$ be an amenable group and $G \curvearrowright X$ a minimal expansive action with the strong TMP. Then $\htop(G\curvearrowright X) = 0$.
\end{corollary}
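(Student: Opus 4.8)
The plan is to argue by contradiction, combining the characterization of the TMP for minimal expansive actions with the passage from independence entropy pairs to asymptotic pairs that the strong TMP provides.

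First I would record that minimality forces $X$ to be the unique minimal closed $G$-invariant subset of itself, so that $\overline{Gx}=X$ is minimal for every $x\in X$. Since $G\curvearrowright X$ has the strong TMP, it has the TMP (Proposition~\ref{prop_POTP_implies_sTMP}), and as it is moreover minimal and expansive, Theorem~\ref{T-minimal exp tmp iff noasym} applies and yields that $G\curvearrowright X$ has no off-diagonal asymptotic pairs. Consequently $\mathtt{A}_2(X,G)=\triangle_2(X)$, which is a closed subset of $X^2$; hence $\overline{\mathtt{A}_2(X,G)}=\triangle_2(X)$.

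Next I would suppose, for contradiction, that $\htop(G\curvearrowright X)>0$. By part (2) of Theorem~\ref{theorem_naive_IE_cites} (here is where amenability of $G$ is used), there is a pair $(x_1,x_2)\in \IE_2(X,G)$ with $x_1\neq x_2$. On the other hand, Theorem~\ref{T-exp sTMP IE to asym} gives $\IE_2(X,G)\subseteq \overline{\mathtt{A}_2(X,G)}=\triangle_2(X)$, which forces $x_1=x_2$. This contradiction shows $\htop(G\curvearrowright X)=0$.

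As for the difficulty: all of the substantive work is carried out in the cited results, so the corollary is essentially a bookkeeping consequence of them. The one point deserving (minor) care is the observation that the absence of off-diagonal asymptotic pairs makes $\mathtt{A}_2(X,G)$ equal to the diagonal and therefore already closed; without this remark, membership of an off-diagonal IE-pair in the closure $\overline{\mathtt{A}_2(X,G)}$ would not by itself yield the contradiction, since the diagonal always lies in that closure. One could alternatively replace the appeal to Theorem~\ref{T-minimal exp tmp iff noasym} by Theorem~\ref{P-exp TMP to uniform TMP} together with Proposition~\ref{P-minimal uniform TMP no asym}: the action is then seen to have the uniform TMP and to possess the single minimal closed invariant set $X$, so that there are no off-diagonal asymptotic pairs $(x_1,x_2)$ with $\overline{Gx_1}$ minimal, which by minimality again rules out all off-diagonal asymptotic pairs.
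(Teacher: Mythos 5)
Your proof is correct and follows essentially the same route as the paper's: strong TMP implies TMP, then Theorem~\ref{T-minimal exp tmp iff noasym} rules out off-diagonal asymptotic pairs, and positive entropy is then excluded via the passage from independence entropy pairs to asymptotic pairs. The only cosmetic difference is that you unwind Corollary~\ref{C-exp sTMP IE to asym} into its two ingredients (part (2) of Theorem~\ref{theorem_naive_IE_cites} and Theorem~\ref{T-exp sTMP IE to asym}), handling the closure subtlety yourself, whereas the paper cites that corollary directly.
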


\begin{proof}
By Proposition~\ref{P-amenable_implications_trivial} the action $G\curvearrowright X$ has the TMP. Then by  Theorem~\ref{T-minimal exp tmp iff noasym}
there is no off-diagonal asymptotic pair. From Corollary~\ref{C-exp sTMP IE to asym} we conclude that $\htop(G\curvearrowright X) = 0$.
\end{proof}

The previous result was proven for minimal ${\ZZ}^{d}$-SFTs in~\cite[Corollary 2.3]{QuasTrow2000} and for minimal $G$-SFTs of any amenable group in~\cite[Corollary 3.17]{Barbieri2019} using the formalism of group quasi-tilings. Our result jointly generalizes these previous theorems from the context of subshifts  and the POTP.

\begin{remark}
	\Cref{cor_minimalhasentropy0} also holds if we just assume that $G \curvearrowright X$ has the mean TMP instead of the strong TMP.
\end{remark}

We do not know whether the result fails for non-amenable groups. We believe the following question might not be easy.
\begin{question}
	Does there exist a sofic group (and some sofic approximation sequence) for which there exists a minimal SFT with positive topological sofic entropy?
\end{question}

\bibliographystyle{abbrv}
\bibliography{ref}

\Addresses
	
\end{document}